 \newtheorem{teor}{Theorem}[section]
 \newtheorem{corollary}[teor]{Corollary}
 \newtheorem{lemma}[teor]{Lemma}
 \newtheorem{prop}[teor]{Proposition}
 \theoremstyle{definition}
 \newtheorem{defi}[teor]{Definition}
 \theoremstyle{remark}
 \newtheorem{remark}[teor]{Remark}
 \newtheorem*{exa}{Example}
 \numberwithin{equation}{section}
\newlength{\depthofsumsign}
\newcommand{\hector}[1][1.65]{% only for \displaystyle
    \mathop{%
        \raisebox
            {-#1\depthofsumsign+1\depthofsumsign}
            {\scalebox
                {#1}
                {$\chi$}%
            }
    }
}
\newcommand{\chisota}[1][3pt]{%
  \mathrel{\raisebox{#1}{$\hector$}}%
}
\newcommand{\chisotazo}[1][3pt]{%
  \mathrel{\raisebox{#1}{$\hector$}}%
}
\begin{document}

%-------------------------------------------------------------------------
% editorial commands: to be inserted by the editorial office
%
%\firstpage{1} \volume{228} \Copyrightyear{2004} \DOI{003-0001}
%
%
%\seriesextra{Just an add-on}
%\seriesextraline{This is the Concrete Title of this Book\br H.E. R and S.T.C. W, Eds.}
%
% for journals:
%
%\firstpage{1}
%\issuenumber{1}
%\Volumeandyear{1 (2004)}
%\Copyrightyear{2004}
%\DOI{003-xxxx-y}
%\Signet
%\commby{inhouse}
%\submitted{March 14, 2003}
%\received{March 16, 2000}
%\revised{June 1, 2000}
%\accepted{July 22, 2000}
%
%
%
%---------------------------------------------------------------------------
%Insert here the title, affiliations and abstract:
%

\title[Generalized Polarization Modules]{Generalized Polarization Modules}

%----------Author 1
\author[H\'ector J. Blandin N.]{H\'ector J. Blandin N.}

\address{%
Laboratoire de Combinatoire et d'informatique Math\'ematique (LaCIM)\\
Universit\'e du Qu\'ebec \`{a} Montr\'eal (UQ\`AM)\\
CP 8888, Succ. Centre-ville\\ Montr\'eal (Qu\'ebec) H3C 3P8\\
Canada}

\email{hectorblandin@gmail.com}

\thanks{This work was completed with the support of our
\TeX-pert.}
%----------Author 2
%\author{A Second Author}
%\address{The address of\br
%the second author\br
%sitting somewhere\br
%in the world}
%\email{dont@know.who.knows}
%----------classification, keywords, date
\subjclass{Primary 05E05 ; Secondary 05E05, 20C30}

\keywords{Representations of the symmetric group $\mathfrak{S}_n$, polynomial representations of $GL_{\ell}(\mathbb{C})$, symmetric polynomials, diagonally symmetric polynomial, polarization operators, generalized polarization operators, polarization modules, Hilbert series, Frobenius characteristic, $n$-Exceptions}

\date{January 1, 2004}
%----------additions
%\dedicatory{To my grandparents and my fiance}
%%% ----------------------------------------------------------------------

\begin{abstract}
This work enrols the research line of M. Haiman on the Operator Theorem (the former Operator Conjecture). Given a $\mathfrak{S}_n$-stable family $F$ of homogeneous polynomials in the variables $x_{ij}$ with $1\leq i\leq \ell$ and $1\leq j\leq n$. We define the polarization module generated by the family $F$, as the smallest vector space closed under taking partial derivatives and closed under the action of polarization operators that contains $F$. These spaces are representations of the direct product $\mathfrak{S}_n\times{GL}_{\ell}(\mathbb{C})$. We compute the graded Frobenius characteristic of these modules. We use some basic tools to study these spaces and give some in-depth calculations of low degree examples of a family or a single symmetric polynomial.
% For several cases of $\mathfrak{S}_n$-stable families of homogeneous polynomials in $n$ variables, for every $n\geq 1$, we show general formulas for this graded % characteristic in a global manner, independent of the value of $\ell$.
\end{abstract}

%%% ----------------------------------------------------------------------
\maketitle
%%% ----------------------------------------------------------------------
\tableofcontents
% ------------------------------------------------------------------------

\noindent This paper is a full version of \cite{Blandin}. Here we present the proofs of all theorems announced in \cite{Blandin}. This work is inspired by a theorem of M. Haiman (the former Operator Conjecture, see \cite{Haiman1} Conjecture 5.1.1. and \cite{Haiman3}). This Haiman's theorem states that the smallest subspace of $\mathbb{C}[x_1,\ldots,x_n]$ closed under taking partial derivatives ${\frac{\partial }{\partial x_i}}$, closed under the action of generalized polarization operators $E_{p}=\sum_{j=1}^{n}y_{j}\frac{\partial^p }{\partial x_j^p}$, and that contains the Vandermonde determinant  
$$\Delta_{n}({\mathbf x})~:=~\prod_{1\leq i<j\leq n}(x_{i}~-~x_{j}),$$ coincides with the space $\mathcal{D}_n$ of diagonally harmonic polynomials of $\mathfrak{S}_n$ (for more details see \cite{FBergeron2,FBergeronEminem,Haiman1,Haiman3}). 
%The space $\mathcal{D}_n$ consists of all polynomials $f$ in the variables $x_1,\ldots,x_n,y_1,\ldots,y_n$ killed by the power sum differential operators $\sum_{j=1}^{n}\frac{\partial^{h} }{\partial x_j^{h}}\frac{\partial^{k} }{\partial y_j^{k}}$ with $1\leq h+k\leq n$. In other words, this space is generated by the Vandermonde determinant as a module over the algebra of operators $\mathbb{C}\left[\frac{\partial }{\partial x_1},\ldots,\frac{\partial }{\partial x_n},E_1,\ldots,E_{n-1}\right]$. In this work we generalize this construction by considering polynomials in $\ell$ sets of $n$ variables, that is, polynomials in the variables $x_{11},\ldots,x_{1n},\ldots,x_{\ell 1},\ldots,x_{\ell n}$. We are interested in the decomposition into irreducible submodules of these spaces. 

We generalise the context of the Operator Theorem to the context of polynomials in the matrix variables $X=(x_{ij})$, with $1\leq i\leq \ell$ and $1\leq j\leq n$. The diagonal action of $\mathfrak{S}_n$ on these polynomials is defined by permuting the columns of $X$. We say that a family $F$ of homogeneous polynomials (in $X=(x_{ij})$) is $\mathfrak{S}_n$-stable if $F$ is closed under the diagonal action of $\mathfrak{S}_n$. Given any such family $F$, we define the polarization module $\mathcal{M}_F$ generated by the family $F$ as the smallest vector space closed under taking partial derivatives $\frac{\partial }{\partial x_{ij}}$, closed under the action of generalized polarization operators $E_{i,k}^{(p)}=\sum_{j=1}^{n}x_{ij}\frac{\partial^{p} }{\partial x_{kj}^{p}}$, that contains $F$. The diagonal action of $\mathfrak{S}_n$ makes $\mathcal{M}_F$ into an $\mathfrak{S}_n$-module. The closure by the action of polarization operators $E_{i,k}^{(p)}$ is equivalent to the closure by the action $x_{ij}\longmapsto \sum_{k=1}^{\ell}m_{ik}x_{kj}$ where $M=(m_{ij})\in{GL}_{\ell}(\mathbb{C})$ (see \cite{ProcesiKraft,Procesi}). Then, with this action $\mathcal{M}_F$ is also a polynomial representation of ${GL}_{\ell}(\mathbb{C})$. The actions of $\mathfrak{S}_n$ and ${GL}_{\ell}(\mathbb{C})$ on $\mathcal{M}_F$ commute and this implies that $\mathcal{M}_{F}$ is a representation of the direct product $\mathfrak{S}_n\times{GL}_{\ell}(\mathbb{C})$. In particular, when the family $F$ is the orbit of a single homogeneous polynomial $f$, that is, $F=\{\sigma\cdot f \ \vert \ \sigma\in\mathfrak{S}_n\}$ we denote the polarization module generated by $F$ simply as $\mathcal{M}_f$. Also, we call $\mathcal{M}_f$ the polarization module generated by $f$. This construction correspond to certain important spaces in algebraic combinatorics (see \cite{FBergeron,FBergeronEminem}) and algebraic geometry (see \cite{Geramita}).

The goal of this paper is to study the decomposition into irreducible submodules, under the action of $\mathfrak{S}_n\times{GL}_{\ell}(\mathbb{C})$, of polarization modules $\mathcal{M}_F$. To do this we compute explicitly the graded Frobenius characteristic of $\mathcal{M}_F$ in the form
\begin{equation}\label{Formula1Introduccion}
\mathcal{M}_{F}({\bold q},{\bold w})=\sum_{\lambda\vdash n}
\,\sum_{\vert\mu\vert\leq d}b_{\lambda,\mu}s_{\mu}({\bold q})s_{\lambda}({\bold w}) 
\end{equation}
where $b_{\lambda,\mu}\in\mathbb{N}$, ${\bold q}=q_1,q_2,\ldots,q_{\ell}$, ${\bold w}=w_1,w_2,\ldots$, and $d$ is the maximal degree of polynomials in $F$. Here the Schur functions $s_{\mu}({\bold q})$ encode the irreducibles for $GL_{\ell}(\mathbb{C})$ while $s_{\lambda}({\bold w})$ encode the irreducibles for $\mathfrak{S}_n$. The coefficients $b_{\lambda,\mu}$ are the multiplicities of irreducible submodules under the action of $\mathfrak{S}_n\times{GL}_{\ell}(\mathbb{C})$ (see \cite{FBergeron} for more details). 
%A theorem of F. Bergeron shows that these coefficients are independent of $\ell$, in other words, the value of $\ell$ only appears in formula (\ref{Formula1Introduccion}) as the numbers of ${\bold q}$ variables in $s_{\mu}({\bold q})$ (see \cite{FBergeron}). Furthermore, he shows in \cite{FBergeron} that $\mu$ has at most $n$ parts. This leads us to obtain a general formula for (\ref{Formula1Introduccion}) that holds for any  $\ell\geq 1$ if we can compute it for every $\ell\leq n$ . 

In this paper we prove the decomposition into irreducible submodules of the polarization modules generated by each of the polynomials $p_1^d$, $p_d$, and $e_d$ for any $d\geq 1$ as announced in \cite{Blandin}. We construct an explicit linear basis of each module and then we compute the graded Frobenius characteristic of $\mathcal{M}_{p_1^d}$, $\mathcal{M}_{p_d}$ and $\mathcal{M}_{e_d}$. 
%We propose a formula for the graded Frobenius series of the polarization module generated by the monomial symmetric function $m_{(2,1^{d-2})}$, in any degree $d$. For instance, we also believe that, for $d\geq 5$ we must have
%$\mathcal{M}_{e_{d-1,1}}\cong\mathcal{M}_{m_{(2,1^{d-2})}}$ as $\mathfrak{S}_n\times{GL}_{\ell}(\mathbb{C})$-modules. We believe that these are modules are the elementary building blocks for a general classification of polarization modules generated by a given homogeneous symmetric polynomial in any number of variables $n$. 
%Experimental evidence has shown that the last assertion is true up to degree 5 and $n\leq 6$. In particular, we completely determine the classification of polarization modules generated by a single homogeneous symmetric polynomial when the degree is 2 or 3. For the case of degree 4 and 5 we have a conjectural classification that seems to be complete. This framework leads us to think that the Hilbert series of the polarization modules generated by a single homogeneous symmetric polynomial, in any degree, is always $h$-positive (see Problem \ref{Hpositividad}). 

Obviously, we have $\mathcal{M}_f\cong\mathcal{M}_{k\cdot f}$ for every scalar $k$. In order to classify, up to isomorphism, polarization modules generated by a given homogeneous symmetric polynomial of any degree $d$, we identify any non zero homogeneous symmetric polynomial $f$ of degree $d$, written in the monomial basis as $f~=~\sum_{\lambda\vdash d}c_{\lambda}m_{\lambda}$, with a point in the real projective space $\mathbb{RP}^{p(d)-1}$, where $p(d)$ is the number of integer partitions of $d$. The homogeneous coordinates of the corresponding  point are ordered according to the following order on integer partitions of $d$:
$(d)$, $(d-1,1)$, $(d-2,2)$, $(d-2,1,1)$,$\ldots$, $(1,1,\ldots,1)$. In degree 2, we show that there are two types of polarization modules 
$\mathcal{M}_{p_1^2}$ and $\mathcal{M}_{p_2}$ up to isomorphism. More precisely, if $[a:b]\in\mathbb{RP}^1$ and $f=a\cdot m_{2}+b\cdot m_{11}$ then $\mathcal{M}_{f}\cong\mathcal{M}_{p_1^2}$ when $[a:b]=[1:2]$, while $[a:b]\neq [1:2]$ implies $\mathcal{M}_{f}\cong\mathcal{M}_{p_2}$.
Notice that the last statement is independent on the number of variables $n$. The situation when the degree is 3 is more complicated, in this case, we will need to introduce the notion of $n$-exception to completely classify these polarization modules. Let $n\geq 3$, a point $[a:b:c]\in\mathbb{RP}^{2}$ is a $n$-exception if and only if\ $[a:b:c]\neq[1:3:6]$ and $6a(2b+(n-2)c)=4(n-1)b^2$. When $n=2$, $[a:b:c]$ is a 2-exception if and only if $b=0$ or $b=3a$. There are three types of polarization modules generated by a single polynomial of the form $f=a\cdot m_3+b\cdot m_{21}+c\cdot m_{111}$. If $[a:b:c]=[1:3:6]$ then $\mathcal{M}_f\cong\mathcal{M}_{p_1^3}$; if $[a:b:c]$ is a $n$-exception then $\mathcal{M}_f\cong\mathcal{M}_{p_3}$; otherwise, 
$\mathcal{M}_f\cong\mathcal{M}_{h_3}$. These results are valid for any $\ell$ (the number of sets of $n$ variables). Also, we will see that $n$-exceptions appear in any degree $d\geq 3$.  Characterizing $n$-exceptions for degrees higher than 3 is a problem for the future. A Theorem for the existence of $n$-exceptions in degree greater or equal than 4 will be discussed in (see \cite{BlandinExceptions}). 

%For particular $\mathfrak{S}_n$-stable families of homogeneous polynomials we completely describe the graded Frobenius characteristic of the associated polarization module. Also, we propose open problems about the constraints for the multiplicities of irreducible submodules of polarization modules generated by any $\mathfrak{S}_n$-stable family consisting of homogeneous polynomials of degree at most 2, and also, for any $\mathfrak{S}_n$-stable families of homogeneous polynomials of degree at most 3. More precisely, we compute the graded Frobenius characteristic of polarization modules generated by the family $\mathcal{T}_d$ of all monomials of degree $d$, when $d=2,3$ in any number of variables $n$. If $F$ is any set of polynomials of degree $d$ at most, then $\mathcal{M}_{F}\subseteq\mathcal{M}_{\mathcal{T}_d}$. This implies that the multiplicities of the irreducible $\mathfrak{S}_n\times{GL}_{\ell}(\mathbb{C})$-modules of $\mathcal{M}_{F}$ are less than or equal to the corresponding multiplicities in $\mathcal{M}_{\mathcal{T}_d}$.

\section{Preliminaries and some notations}

\noindent Let $X$ be a $\ell\times n$ matrix of commuting and independent variables $x_{ij}$, in symbols:
\begin{equation}
X:=\left(\begin{array}{cccc} x_{11}&x_{12} &\dots & x_{1n}\\
x_{21}&x_{22} &\dots & x_{2n}\\
\vdots& \vdots & \ddots & \vdots \\
x_{\ell 1}&x_{\ell 2} &\dots & x_{\ell n}\\\end{array}\right).
\end{equation}
For any fixed integer $i$, we call the $i^{th}$-row of $X$, denoted by ${\bold x}_{i}:=(x_{i1},\ldots,x_{in})$ the $i^{th}$ set of $n$ variables. For any $j$, $X_{j}$ denotes the $j^{th}$-column of $X$. The last convention is adopted for any $\ell\times n$ matrix of exponents $A$. Then monomials are defined as follows:
\begin{equation}
X^{A}:=x_{11}^{a_{11}}\cdots x_{ij}^{a_{ij}}\cdots x_{\ell{n}}^{a_{\ell{n}}}.
\end{equation}
these monomials form a linear basis of the $\mathbb{K}$-vector space $\mathcal{R}_{n}^{(\ell)}:=\mathbb{K}[X]$ of polynomials in $\ell$ sets of $n$ variables. The (vector) \textbf{degree} $\deg\big(X^{A}\big)$ lies in $\mathbb{N}^{\ell}$ and is given by
$$\deg\left(X^{A}\right):=\left(\sum_{j=1}^{n}a_{1j},\dots,\sum_{j=1}^{n}a_{\ell j}\right).$$
For each ${\bold d}\in\mathbb{N}^{\ell}$, we denote by $\mathcal{R}_{n,{\mathbf d}}^{(\ell)}$ the span of degree ${\mathbf d}$ monomials in $\mathcal{R}_{n}^{(\ell)}$. Then $\mathcal{R}_{n}^{(\ell)}$ is a 
$\mathbb{N}^{\ell}$-graded vector space, that is, 
$$\mathcal{R}_{n}^{(\ell)}=\bigoplus_{{\mathbf d}\in\mathbb{N}^{\ell}}\mathcal{R}_{n,{\mathbf d}}^{(\ell)}.$$
We write any polynomial $f(X)\in\mathcal{R}_{n}^{(\ell)}$ in the form \ $f(X)=\sum_{A\in\mathbb{N}^{\ell\times n}}f_{A}X^{A}$. A polynomial $f(X)\in\mathcal{R}_{n}^{(\ell)}$ is said to be \textbf{homogeneous} if it satisfies $f(QX)={\bold q}^{\bold d}f(X)$, where $Q$ is the diagonal matrix
\begin{equation*}
Q=\left(\begin{array}{ccc}
q_{1}&\cdots&0\\
\vdots&\ddots&\vdots\\
0&\cdots& q_{\ell}
\end{array}\right),\ \ {\bold q}:=(q_1,\ldots,q_{\ell}),\ \ \ {\bold q}^{\bold d}:=q_{1}^{d_1}\cdots q_{\ell}^{d_{\ell}},
\end{equation*}

\noindent In this work we consider homogeneous subspaces $\mathcal{V}$ of $\mathcal{R}_{n}^{(\ell)}$, that is, a subspace $\mathcal{V}$ that affords a basis of homogeneous polynomials. The degree ${\bold d}$ homogeneous component of $\mathcal{V}$ is denoted by $\mathcal{V}_{\bold d}$. So, $\mathcal{V}_{\bold d}:=\mathcal{V}\cap\mathcal{R}_{n,{\bold d}}^{(\ell)}$, and the \textbf{Hilbert series} of $\mathcal{V}$ is defined as
$$\mathcal{V}({\bold q}):=\sum_{{\bold d}\in\mathbb{N}^{\ell}}\dim(\mathcal{V}_{\bold d}){\bold q}^{\bold d},$$ 
%where ${\bold q}:=(q_1,\ldots,q_{\ell})$ and ${\bold q}^{\bold %d}:=q_{1}^{d_1}\cdots q_{\ell}^{d_{\ell}}$.
On $\mathcal{V}$ we consider two linear group actions:
\begin{enumerate}
\item The (left) diagonal action of $\mathfrak{S}_n$, \ \ $\sigma\cdot X^A:=x_{1\sigma(1)}^{a_{11}}\cdots x_{i\sigma(j)}^{a_{ij}} \cdots
x_{{\ell}\sigma(n)}^{a_{{\ell}n}}$,\ $\forall\sigma\in\frak{S}_n$.
\item The (right) action of $GL_{\ell}(\mathbb{C})$, \ \ $X^A\cdot M :=(MX)^A$,\ $\forall M\in GL_{\ell}(\mathbb{C})$, that is, $x_{ij}\longmapsto \sum_{k=1}^{\ell}m_{ik}x_{kj}$, for every matrix $M=(m_{ij})\in GL_{\ell}(\mathbb{C})$.
\end{enumerate}
It's not hard to show that these two group actions on $\mathcal{V}$ commute, and then we can consider $\mathcal{V}$ as a representation of the direct product $\mathfrak{S}_n\times{GL}_{\ell}(\mathbb{C})$, with the (left) action, 
$$(\sigma,M)\cdot X^{A}:=\sigma\cdot (M^{-1}X)^{A}.$$ 
Then we have a direct sum decomposition of the form (see \cite{BruceSagan,FultonHarris,Procesi}) 
\begin{equation}\label{DecompIrred}
\mathcal{V}=\bigoplus_{\lambda\vdash n}\bigoplus_{\mu} b_{\lambda,\mu}\,\mathcal{W}_{\mu} \otimes \mathcal{S}^{\lambda},
\end{equation}
where $b_{\lambda,\mu}\in\mathbb{N}$, the\ $\mathcal{S}^{\lambda}$ are irreducible $\frak{S}_n$-modules and the $\mathcal{W}_{\mu}$ are irreducible polynomial representations of  $GL_{\ell}(\mathbb{C})$. The \textbf{graded Frobenius characteristic} of $\mathcal{V}$ is defined as follows
\begin{equation}
\mathcal{V}({\bold q},{\bold w}):=\sum_{{\bold d}\in\mathbb{N}^{\ell}}\left(
\frac{1}{n!}\sum_{\sigma\in\frak{S}_n}{\chisotazo}_{\mathcal{V}_{\bold d}}(\sigma)\,
\,p_{_{\lambda(\sigma)}}({\bold w})\right){\bold q}^{\bold d},
\end{equation}
where $\chisotazo_{\mathcal{V}_{\bold d}}$ is the $\mathfrak{S}_n$-character of $\mathcal{V}_{\bold d}$ and $p_{\lambda(\sigma)}({\bold w}):=p_{1}({\bold w})^{c_1(\sigma)}\cdots{p_{n}({\bold w})^{c_{n}(\sigma)}}$. 

One can show that the graded Frobenius characteristic of $\mathcal{V}$ has the following form:

\begin{equation}\label{FormaGeneralDelaCaracteristicaDeFrobenius}
\mathcal{V}({\bold q},{\bold w})=\sum_{\lambda\vdash n}\sum_{\mu}b_{\lambda,\mu}s_{\mu}({\bold q})s_{\lambda}({\bold w}),
\end{equation}

\noindent where $b_{\lambda,\mu}$ are the multiplicities in formula (\ref{DecompIrred}). A theorem of F. Bergeron shows that the multiplicities $b_{\lambda,\mu}$ in formula (\ref{FormaGeneralDelaCaracteristicaDeFrobenius}) do not depend on $\ell$ and that $\ell(\mu)\leq n$ (see \cite{FBergeron} for more details). The Schur functions $s_{\mu}({\bold q})$ encode the irreducible polynomial representations of $GL_{\ell}(\mathbb{C})$ in $\mathcal{V}$ of type $\mu$, and the Schur functions $s_{\lambda}({\bold w})$ encodes the irreducible $\mathfrak{S}_n$-modules in $V$ of type $\lambda$. Recall that the Hilbert Series of $\mathcal{V}$ is obtained by replacing each Schur function $s_{\lambda}({\bold w})$ by the number $f^{\lambda}$ (the number of standard Young tableaux of shape 
$\lambda$) in the formula (\ref{FormaGeneralDelaCaracteristicaDeFrobenius}). Notice that the Hilbert series of $\mathcal{V}$ coincides with the character of $\mathcal{V}$ as a representation of $GL_{\ell}(\mathbb{C})$ with the (right side) action given by $f(X)\longmapsto f(MX)$, for all $M\in{GL}_{\ell}(\mathbb{C})$ and for all $f(X)\in\mathcal{V}$. \\

 One can also understand the Frobenius series with the following approach (see, \cite{Haiman3} page 387). Any homogeneous subspace $\mathcal{V}$ of $\mathcal{R}_n^{(\ell)}$ closed by polarizations is a $\mathcal{R}_{n}^{(\ell)}$-module (regarding $\mathcal{R}_{n}^{(\ell)}$ as a $\mathbb{C}$-algebra) with the action of $x_{ij}$ of $f(X)$, given by 
\[ x_{ij}\cdot f(X):=E_{i,j}(f),\ \ E_{i,j}:=\sum_{r=1}^{n}x_{ir}\frac{\partial }{\partial x_{jr}}. \]
On $\mathcal{V}$ we have the diagonal action of $\mathfrak{S}_n$ that commutes with this action. Then $\mathcal{V}$ has a canonical decomposition 
  \[  \mathcal{V}=\bigoplus_{\lambda\vdash n} \mathcal{V}_{\lambda} \otimes_{\mathbb{C}} \mathcal{S}^{\lambda},\ \ \  \mathcal{V}_{\lambda}:=\mathrm{Hom}_{\mathfrak{S}_n}\big(\mathcal{V},\mathcal{S}^{\lambda}\big), \]
in which for each partition $\lambda$, $\mathcal{S}^{\lambda}$ is an irreducible representation of $\mathfrak{S}_n$ and $\mathcal{V}_{\lambda}$ is a $\mathbb{N}^{\ell}$-graded ${\mathbb{C}[{\bold x}_1,\ldots,{\bold x}_{\ell}]}^{\mathfrak{S}_n}$-module. Notice that
%for each $\lambda$ the space $\mathcal{V}_{\lambda}$ is a $GL_{\ell}(\mathbb{C})$-module. %Last assertion implies that $\mathcal{V}$ is a $GL_{\ell}(\mathbb{C})$-module. 
each space $\mathcal{V}_{\lambda}$ is a $GL_{\ell}(\mathbb{C})$-module with character given by its Hilbert series $\mathcal{V}_{\lambda}({\bold q})$. Then we can consider the graded Frobenius characteristic in the following manner
\[ \mathcal{V}({\bold q},{\bold w})=\sum_{\lambda\vdash n}\mathcal{V}_{\lambda}({\bold q})s_{\lambda}({\bold w}). \]
\noindent We will describe linear bases of generalized polarization modules generated by a single homogeneous symmetric polynomial. In several cases, our main tool for proving these results are diagonally symmetric polynomials (see \cite{MercedesRosas}). We adopt the notations of \cite{MercedesRosas}. Recall that polynomial $f$ invariant under the diagonal action of $\mathfrak{S}_n$  is said to be \textbf{a diagonally symmetric polynomial}. For any $i$ such that $1\leq i\leq \ell$, we set 
$p_{d}({\bold x}_{i}):=x_{i1}^{d}+\cdots+x_{in}^{d}$,\ $e_{1}({\bold x}_{i}):=x_{i1}+\cdots+x_{in}$.  
In this work we will use the following diagonally symmetric polynomials:
\begin{enumerate}
\item For each ${\bold d}=(d_1,\ldots,d_{\ell})\in\mathbb{N}^{\ell}$ we set ${\bold d}!:=d_{1}!\cdots d_{\ell}!$, $\vert{\bold d}\vert:=d_1+\cdots+d_{\ell}$ and
\[ X_{j}^{\bold d}:=x_{1j}^{d_1}\cdots x_{{\ell}j}^{d_{\ell}} \]
\[ e_{1}^{\bold d}(X):=e_{1}({\bold x}_1)^{d_1}\cdots e_{1}({\bold x}_{\ell})^{d_{\ell}}. \]
\item The \textbf{diagonal power sum} polynomials are
\begin{equation*}
p_{\bold d}(X):=\sum_{j=1}^{n}X_{j}^{\bold d}=\sum_{j=1}^{n}x_{1j}^{d_1}\cdots x_{\ell,j}^{d_{\ell}},
\end{equation*}
and the generating series of $p_{\bold d}(X)$ is the following:
\begin{equation}\label{SerieGeneratrizSumasPotenciasDiagonales}
\sum_{{\bold d}\in\mathbb{N}^{\ell}-\{\bold 0\}}p_{\bold d}\left(X\right)\frac{\vert{\bold d}\vert!\,{\bold t}^{\bold d}}{{\bold d}!\vert{\bold d}\vert}=\log\left(\,\prod_{j= 1}^{n}\frac{1}{1-\sum_{i=1}^{\ell}t_{j}x_{ij}}\right).
\end{equation}
\item The \textbf{MacMahom symmetric elementary} polynomials $e_{\bold d}(X)$ have the generating series:
\begin{equation}\label{SerieGeneratrizMultisimetricasElementales}
\sum_{{\bold d}\in\mathbb{N}^{\ell}}
e_{\bold d}\left(X\right){\bold t}^{\bold d}=\prod_{j= 1}^{n}
\left(1+\sum_{i=1}^{\ell}t_{j}x_{ij}\right).
\end{equation}
where ${\bold t}^{\bold d}:=t_{1}^{d_1}\cdots t_{\ell}^{d_{\ell}}$.
\end{enumerate}
One can show that an explicit form for $e_{\bold d}(X)$ is
\begin{equation*}
e_{\bold d}(X)=\sum_{B\subseteq[n]}\,\sum_{\{g:B\rightarrow{[\ell]}\ : \ {\vert{g^{-1}}(i)\vert=d_{i}}\}}\,\prod_{b\in B}x_{g(b),b}.
\end{equation*}
where $[n]:=\{1,2,\ldots,n\}$ and $[\ell]:=\{1,2,\ldots,\ell\}$. For each $B\subseteq [n]$ the second sum is taken over the set of all maps from $B$ to $[\ell]$.

%%Clearly $e_{1}^{\bold d}(X)$ is a diagonally symmetric polynomial
%%%%%%%%%%%%%%%%%%%%%%%%%%%%%%%%%%%%%%%%%%%%
%%%%%%%%%%%%%%%%%%%%%%%%%%%%%%%%%%%%%%%%%%%%

\section{Definitions and discussions}\label{Definitions}

\noindent We denote the partial derivative operator on $\mathcal{R}_{n}^{(\ell)}$ by $\displaystyle{\partial_{ij}:=\frac{\partial \ \ }{\partial x_{ij}}}$ and for any $p\geq 1$ we set $\displaystyle{\partial_{ij}^p:=\frac{\partial^p \ \ }{\partial x_{ij}^p}}$. We use the \textbf{generalized polarization operators $E_{i,k}^{(p)}$} (see, \cite{HWeyl,Hunziker,Procesi}) given by 
$$E_{i,k}^{(p)}:=\sum_{j=1}^{n}x_{ij}\partial_{kj}^{p}.$$ 
For $p=1$ we simply write $E_{i,k}:=E_{i,k}^{(1)}$ (see \cite{Procesi}, p.40,41.). For any $r\geq 0$ we set $E_{i,k}^{0}$ to be the identity operator on $\mathcal{R}_{n}^{(\ell)}$, and $E_{i,k}^{\,r}:=E_{ik}\circ\cdots\circ E_{i,k}$ to be the composition of $E_{i,k}$ with itself $r$ times. In particular, when $p=1$ this polarization operators satisfy the identity (see \cite{Procesi})
\begin{equation}\label{201601151427}
E_{i,j}E_{h,k}-E_{h,k}E_{i,j}=\delta_{j,h}E_{i,k}-\delta_{i,k}E_{h,j}. 
\end{equation}
so, when $i\neq k$ and $j\neq h$ they commute. For instance, $E_{2,1}E_{3,1}=E_{3,1}E_{2,1}$.
\begin{lemma}[see C. Procesi \cite{Procesi}, p.43]
\label{PolarizacionesEikInyectivas}
Let $g({\bold x}_1)$ be an homogeneous polynomial of degree $d$ in the variables ${\bold x}_1=x_{11},\ldots,x_{1n}$. The polarization operators $E_{i,1}$ and $E_{1,i}$ satisfy the identity \ $E_{1,i}E_{i,1}\big(g({\bold x}_1)\big)=d\cdot g({\bold x}_1)$ if $i>1$. Furthermore, $E_{i,1}\big(g({\bold x}_1)\big)$ is an homogeneous polynomial.
\end{lemma}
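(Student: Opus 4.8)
The plan is to compute the effect of $E_{1,i}E_{i,1}$ on a monomial basis of the degree-$d$ homogeneous polynomials in ${\bold x}_1$ and verify that each basis element is an eigenvector with eigenvalue $d$. Since $E_{1,i}$ and $E_{i,1}$ are linear operators, it suffices to check the identity on monomials $x_{11}^{a_1}\cdots x_{1n}^{a_n}$ with $a_1+\cdots+a_n=d$. Note first that, because $i>1$, the operator $E_{i,1}=\sum_{j=1}^{n}x_{ij}\partial_{1j}$ replaces one factor $x_{1j}$ by $x_{ij}$ (in all possible ways, with multiplicity), so it lands in a polynomial that is homogeneous of degree $1$ in the ${\bold x}_i$ variables and degree $d-1$ in the ${\bold x}_1$ variables; this already gives the ``furthermore'' clause, since the monomials produced all have the same bidegree. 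Then $E_{1,i}=\sum_{j=1}^{n}x_{1j}\partial_{ij}$ undoes this, replacing the unique ${\bold x}_i$ factor by the corresponding ${\bold x}_1$ factor.

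The key computation is the following. Applying $E_{i,1}$ to $m=\prod_{j}x_{1j}^{a_j}$ gives $\sum_{j:\,a_j\geq 1} a_j\, x_{ij}\,x_{1j}^{a_j-1}\prod_{k\neq j}x_{1k}^{a_k}$. Now apply $E_{1,i}$: in the $j$-th term, $\partial_{ij}$ acts nontrivially only on the single factor $x_{ij}$ (there is no other ${\bold x}_i$ variable present, since $i>1$ and $m$ involves only ${\bold x}_1$), producing back $x_{1j}^{a_j}\prod_{k\neq j}x_{1k}^{a_k}=m$; the terms $x_{1k}\partial_{ik}$ for $k\neq j$ annihilate the $j$-th term. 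Hence $E_{1,i}E_{i,1}(m)=\sum_{j:\,a_j\geq 1}a_j\, m=\big(\sum_j a_j\big)m=d\cdot m$. Summing over the monomials appearing in $g({\bold x}_1)$ with their coefficients yields $E_{1,i}E_{i,1}\big(g({\bold x}_1)\big)=d\cdot g({\bold x}_1)$.

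The only subtlety — and the single place where the hypothesis $i>1$ is essential — is the claim that when $E_{1,i}$ is applied after $E_{i,1}$, the derivative $\partial_{ij}$ sees exactly one factor $x_{ij}$ in each term, and that $\partial_{ik}$ for $k\ne j$ kills that term. This is precisely because $g({\bold x}_1)$ and hence each intermediate monomial contains no ${\bold x}_i$ variables before $E_{i,1}$ is applied, so $E_{i,1}$ introduces a single, simple occurrence of one such variable. (If $i=1$ the two operators would coincide and the clean cancellation fails.) I expect the write-up to be short: the main obstacle is purely bookkeeping — tracking which term of the double sum survives — and there is no real analytic or combinatorial difficulty.
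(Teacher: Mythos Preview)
Your proof is correct and follows essentially the same approach as the paper's own proof. The only cosmetic difference is that the paper works directly with the general polynomial $g$ --- computing $E_{1,i}\big(\sum_j x_{ij}\,\partial_{1j}g\big)=\sum_r x_{1r}\,\partial_{1r}g$ via the Kronecker delta and then invoking Euler's identity $\sum_r x_{1r}\,\partial_{1r}g=d\cdot g$ --- whereas you verify this monomial by monomial; the underlying mechanism (namely that $\partial_{ir}$ sees only the single factor $x_{ij}$ because $i>1$) is identical.
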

\begin{proof}
We start by applying the operator $E_{i,1}$ to $g({\bold x}_1)$:
\[\left(\sum_{j=1}^{n}x_{ij}\frac{\partial }{\partial x_{1j}}\right)
g({\bold x}_1)=\sum_{j=1}^{n}x_{ij}\frac{\partial g}{\partial x_{1j}}({\bold x}_1),\]
then, we apply to both sides $E_{1,i}$, and since $i>1$ we get
\begin{align*}
&\left(\sum_{r=1}^{n}x_{1r}\frac{\partial }{\partial x_{ir}}\right)
\left(\sum_{j=1}^{n}x_{ij}\frac{\partial g}{\partial x_{1j}}({\bold x}_1)\right)=\sum_{r=1}^{n}x_{1r}\left(\sum_{j=1}^{n}\delta_{r,j}\frac{\partial g}{\partial x_{1j}}({\bold x}_1)\right)\\
&=\sum_{r=1}^{n}x_{1r}\frac{\partial g}{\partial x_{1r}}({\bold x}_1)
=d\cdot g({\bold x}_1).
\end{align*}
In the last step, we use the Euler's identity (see \cite{Humphreys}) that asserts for any homogeneous polynomial $g\in\mathbb{K}[x_{11},\ldots,x_{1n}]$ of degree $d$ we have
\[ \displaystyle{\sum_{r=1}^{n}x_{1r}\frac{\partial g}{\partial x_{1r}}({\bold x}_1)=d\cdot g({\bold x}_1)}. \]
\end{proof} 
\begin{remark}
We can check that if $f=f(x_{i1},\ldots,x_{in})$ is an homogeneous polynomial of total degree $d$ in the variables $x_{i1},\ldots,x_{in}$ then
\[ E_{k,i}^{d}(f)=d!\cdot f(x_{k1},\ldots,x_{kn}).  \]
\end{remark}

\subsection{Polarization and Restitution operators}

\noindent Let ${\bold d}\in\mathbb{N}^{\ell}$. We define the ${\bold d}$-\textbf{polarization} operator by 
$$E^{{\bold d}}:=\frac{d_{1}!}{(d_1+\cdots+d_{\ell})!}\,E_{{\ell},1}^{d_{\ell}}\circ\dots\circ E_{2,1}^{d_{2}},$$  
and the ${\bold d}$-\textbf{restitution} operator by 
$$E_{{\bold d}}:=\frac{1}{d_{2}!\cdots d_{\ell}!}\,E_{1,2}^{d_2}\circ\dots\circ E_{1,{\ell}}^{d_{\ell}}.$$ 
To study the effect of iterated polarizations operators we have to consider the following classical result (see 
\cite{Hunziker,ProcesiKraft,Procesi}). We set $\mathcal{R}_{n}:=\mathcal{R}_{n}^{(1)}=\mathbb{C}[x_{11},\ldots,x_{1n}]$. For any homogeneous polynomial 
$f\in\mathcal{R}_{n}$ of degree $d$ (in the variables ${\bold x}_1:=x_{11},\ldots,x_{1n}$) we have
\[ f\left(\sum_{i=1}^{\ell}t_{i}x_{i1},\ldots,\sum_{i=1}^{\ell}t_{i}x_{in}\right)
=\sum_{k_1+k_2+\cdots+k_{\ell}=d}
\,\frac{t_1^{k_1}t_2^{k_2}\cdots\,t_{\ell}^{k_{\ell}}}{k_2!\cdots\,k_{\ell}!}\,\cdot
E_{\ell,1}^{k_{\ell}}\cdots\,E_{2,1}^{k_{2}}
\big(f({\bold x}_1)\big). \]
Clearly we can write the last identity simply as:
\begin{equation}\label{FormulillaPolarizato}
f\left(\sum_{i=1}^{\ell}t_{i}{\bold x}_i\right)
%%=f\left(\sum_{i=1}^{\ell}t_{i}x_{i1},\ldots,\sum_{i=1}^{\ell}t_{i}x_{i%%n}\right)
=\sum_{\vert{\bold d}\vert=d}E^{\bold d}\big(f({\bold x}_1)\big)
\,{d}!\frac{{\bold t}^{\bold d}}{{\bold d}!}.
\end{equation}
The above formula will help us to prove Lemma \ref{201504211846pm}.
\subsection{Polarization and some diagonally symmetric polynomials}
One can check directly the first identity in formulas (\ref{201504201124a}) and (\ref{201504201124b}). The other identities follow from formula (\ref{FormulillaPolarizato}) and the generating series (\ref{SerieGeneratrizSumasPotenciasDiagonales}) and (\ref{SerieGeneratrizMultisimetricasElementales}).
\begin{lemma}\label{201504211846pm}
If ${\bold d}\in\mathbb{N}^{\ell}$ is such that  $\boldsymbol{\vert}{\bold d}\boldsymbol{\vert}=d$, then we have the identities:
\begin{align}\label{201504201124a}
&E^{\bold d}\big(e_{1}({\bold x}_1)^{d}\big)=e_{1}^{\bold d}(X), \ \ \ E^{\bold d}\big(p_{d}({\bold x}_1)\big)=p_{\bold d}(X),\ \ \
E^{\bold d}\big(e_{d}({\bold x}_1)\big)=\frac{{\bold d}!}{d!}\,e_{\bold d}(X),
%&E^{\bold d}\big(h_{m}({\bold x}_1)\big)=\frac{{\bold %d}!}{m!}\,h_{\bold d}(X).
\end{align}
\begin{align}\label{201504201124b}
&E_{\bold d}\big(e_{1}^{\bold d}(X)\big)=e_{1}({\bold x}_1)^{d},
\ \ \
E_{\bold d}\big(p_{\bold d}(X)\big)=p_{d}({\bold x}_1),
\ \ \
E_{\bold d}\big(e_{\bold d}(X)\big)=\frac{d!}{{\bold d}!}\,e_{d}({\bold x}_1), 
%&E_{\bold d}\big(h_{\bold d}(X)\big)=\frac{m!}{{\bold %d}!}\,h_{m}({\bold x}_1).
\end{align}
\end{lemma}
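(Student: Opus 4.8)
The plan is to verify each of the six identities directly. The first identity in \eqref{201504201124a}, namely $E^{\bold d}\big(e_1({\bold x}_1)^d\big)=e_1^{\bold d}(X)$, can be checked by hand: the operators $E_{i,1}$ act on a product of linear forms by the Leibniz rule, and since $E_{i,1}$ replaces an occurrence of an ${\bold x}_1$-variable $x_{1j}$ by $x_{ij}$, iterating $E_{\ell,1}^{d_\ell}\circ\cdots\circ E_{2,1}^{d_2}$ on $e_1({\bold x}_1)^d=(x_{11}+\cdots+x_{1n})^d$ distributes the factors among the rows; the combinatorial prefactor $d_1!/(d_1+\cdots+d_\ell)!$ in the definition of $E^{\bold d}$ is exactly what collapses the multinomial bookkeeping to give the clean product $e_1({\bold x}_1)^{d_1}\cdots e_1({\bold x}_\ell)^{d_\ell}$. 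Alternatively, and more uniformly, all three identities in \eqref{201504201124a} follow from the master formula \eqref{FormulillaPolarizato}: applying $f\mapsto f\big(\sum_i t_i{\bold x}_i\big)$ and extracting the coefficient of $d!\,{\bold t}^{\bold d}/{\bold d}!$ yields $E^{\bold d}(f({\bold x}_1))$ on the left.

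So the key step is: for $f=e_1({\bold x}_1)^d$, $f=p_d({\bold x}_1)$, and $f=e_d({\bold x}_1)$, identify $f\big(\sum_{i=1}^\ell t_i{\bold x}_i\big)$ and read off the ${\bold t}^{\bold d}$-coefficient. For $f=e_1^d$ this is immediate since $e_1\big(\sum_i t_i{\bold x}_i\big)=\sum_{i,j}t_i x_{ij}=\sum_i t_i e_1({\bold x}_i)$, and expanding the $d$-th power gives $e_1^{\bold d}(X)$ with coefficient $d!/{\bold d}!$, matching \eqref{FormulillaPolarizato}. For $f=p_d$ and $f=e_d$ I would instead invoke the generating series \eqref{SerieGeneratrizSumasPotenciasDiagonales} and \eqref{SerieGeneratrizMultisimetricasElementales}: substituting $x_{1j}\mapsto\sum_{i=1}^\ell t_i x_{ij}$ into $\prod_j(1+s\,x_{1j})$ or $\log\prod_j(1-s\,x_{1j})^{-1}$ turns the single auxiliary variable $s$ together with the substitution into exactly the multivariate generating series for $e_{\bold d}(X)$ and $p_{\bold d}(X)$; comparing coefficients of $s^d$ on one side with the $\mathbf t$-expansion on the other produces the identities, the factor $\frac{{\bold d}!}{d!}$ in the $e_d$ case arising from the mismatch between the normalization of \eqref{FormulillaPolarizato} and that of \eqref{SerieGeneratrizMultisimetricasElementales}.

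For the restitution identities \eqref{201504201124b}, the cleanest route is to show that $E_{\bold d}$ is a one-sided inverse to $E^{\bold d}$ on the relevant polynomials, using Lemma \ref{PolarizacionesEikInyectivas}. Concretely, $E_{1,i}E_{i,1}$ acts as multiplication by the ${\bold x}_1$-degree on polynomials in ${\bold x}_1$ alone, and more generally, after a polarization $E_{i,1}$ has introduced row-$i$ variables, applying $E_{1,i}$ the appropriate number of times (with the factorial normalizations in $E_{\bold d}$) restores the original polynomial; the normalization constants $1/(d_2!\cdots d_\ell!)$ in $E_{\bold d}$ are chosen precisely so that $E_{\bold d}\circ E^{\bold d}=\mathrm{id}$ on homogeneous $f({\bold x}_1)$ of degree $d$. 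Since $e_1^{\bold d}(X)=E^{\bold d}(e_1({\bold x}_1)^d)$, $p_{\bold d}(X)=E^{\bold d}(p_d({\bold x}_1))$, and $e_{\bold d}(X)=\frac{d!}{{\bold d}!}E^{\bold d}(e_d({\bold x}_1))$ by \eqref{201504201124a}, applying $E_{\bold d}$ and using $E_{\bold d}E^{\bold d}=\mathrm{id}$ gives \eqref{201504201124b} directly, with the factor $\frac{d!}{{\bold d}!}$ in the $e_d$ case carried through.

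The main obstacle I anticipate is not conceptual but bookkeeping: getting all the multinomial and factorial constants to line up, in particular reconciling the normalization $d!\,{\bold t}^{\bold d}/{\bold d}!$ in \eqref{FormulillaPolarizato} with the normalizations built into \eqref{SerieGeneratrizSumasPotenciasDiagonales} and \eqref{SerieGeneratrizMultisimetricasElementales} and into the definitions of $E^{\bold d}$ and $E_{\bold d}$. I would handle this by fixing conventions once, checking the scalar prefactor in the smallest nontrivial case (e.g.\ $\ell=2$, $d=2$), and then propagating. A secondary point requiring a little care is verifying that $E_{\bold d}E^{\bold d}=\mathrm{id}$ when $\ell>2$, since the operators $E_{i,1}$ for different $i$ commute among themselves and the $E_{1,i}$ for different $i$ likewise commute, so the claim reduces to the two-row case $\ell=2$ treated by Lemma \ref{PolarizacionesEikInyectivas}, applied one row at a time.
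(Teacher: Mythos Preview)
Your proposal is correct and follows essentially the same approach as the paper. The paper's proof is extremely terse: it says the $e_1$ identities in both \eqref{201504201124a} and \eqref{201504201124b} are checked directly, and the remaining identities follow from the master polarization formula \eqref{FormulillaPolarizato} together with the generating series \eqref{SerieGeneratrizSumasPotenciasDiagonales} and \eqref{SerieGeneratrizMultisimetricasElementales}. Your treatment of \eqref{201504201124a} is exactly this, made explicit.

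For \eqref{201504201124b} you take a slightly different route: rather than appealing again to generating series, you deduce the restitution identities from \eqref{201504201124a} via $E_{\bold d}\circ E^{\bold d}=\mathrm{id}$ on degree-$d$ polynomials in ${\bold x}_1$. This is clean and correct. One small caveat: Lemma~\ref{PolarizacionesEikInyectivas} only gives $E_{1,i}E_{i,1}(g)=d\cdot g$, whereas you need the iterated version $E_{1,i}^{k}E_{i,1}^{k}(g)=\frac{d!}{(d-k)!}\,k!\,g$ for $g$ of ${\bold x}_1$-degree $d$ and ${\bold x}_i$-degree $0$. This is the standard $\mathfrak{sl}_2$ computation (using $[E_{1,i},E_{i,1}]=E_{1,1}-E_{i,i}$) and telescopes across the rows to give exactly the constant $\frac{d!}{d_1!}\,d_2!\cdots d_\ell!$ that cancels the normalizations in $E^{\bold d}$ and $E_{\bold d}$; you have flagged this as needing care, and it does go through, but be aware it is not literally the statement of Lemma~\ref{PolarizacionesEikInyectivas}.
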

\begin{corollary}
For any ${\bold d}\in\mathbb{N}^{\ell}$ such that
$\big\vert{\bold d}\big\vert=d$ we have the identities:  $\mathcal{M}_{e_{1}^{d}}=\mathcal{M}_{e_{1}^{\bold d}}$,\ $\mathcal{M}_{p_{d}}=\mathcal{M}_{p_{{\bold d}}}$,\ $\mathcal{M}_{e_{d}}=\mathcal{M}_{e_{{\bold d}}}$. %,\ $\mathcal{M}_{h_{d}}=\mathcal{M}_{h_{{\bold d}}}$.
\end{corollary}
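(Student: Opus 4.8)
The plan is to show each equality of polarization modules by a two-way inclusion, using Lemma \ref{201504211846pm} together with the closure properties that define these modules. Recall that $\mathcal{M}_g$ is, by definition, the smallest subspace of $\mathcal{R}_n^{(\ell)}$ containing the $\mathfrak{S}_n$-orbit of $g$ that is closed under all $\partial_{ij}$ and under all generalized polarization operators $E_{i,k}^{(p)}$; equivalently (as noted in the introduction) it is closed under the $GL_\ell(\mathbb{C})$-action $x_{ij}\mapsto\sum_k m_{ik}x_{kj}$. The point is that the $\mathbf{d}$-polarization operator $E^{\mathbf d}$ is, up to the scalar $d_1!/d!$, a composite of the operators $E_{\ell,1}^{d_\ell}\circ\cdots\circ E_{2,1}^{d_2}$, each of which is a polarization operator (with $p=1$), and likewise the $\mathbf d$-restitution operator $E_{\mathbf d}$ is a scalar multiple of a composite of the $E_{1,i}$. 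Hence any module closed under polarizations is automatically closed under both $E^{\mathbf d}$ and $E_{\mathbf d}$.

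First I would fix $\mathbf d\in\mathbb{N}^\ell$ with $|\mathbf d|=d$ and treat, say, the case $\mathcal{M}_{p_d}=\mathcal{M}_{p_{\mathbf d}}$; the other three cases are verbatim the same with $e_1^d$, $e_d$, $h_d$ in place of $p_d$ (for $h_d$ one uses the analogue of Lemma \ref{201504211846pm} for complete homogeneous polynomials, which follows from the generating series in exactly the same way). For the inclusion $\mathcal{M}_{p_{\mathbf d}}\subseteq\mathcal{M}_{p_d}$: by the first identity of \pref{201504201124a}, $p_{\mathbf d}(X)=E^{\mathbf d}\big(p_d({\bold x}_1)\big)$, and since $p_d({\bold x}_1)$ lies in $\mathcal{M}_{p_d}$ (it is the seed polynomial, fixed by the diagonal $\mathfrak{S}_n$-action) and $E^{\mathbf d}$ is a composite of polarization operators up to a scalar, we get $p_{\mathbf d}(X)\in\mathcal{M}_{p_d}$. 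Then the whole $\mathfrak{S}_n$-orbit of $p_{\mathbf d}$ lies in $\mathcal{M}_{p_d}$ because $\mathcal{M}_{p_d}$ is $\mathfrak{S}_n$-stable, and since $\mathcal{M}_{p_{\mathbf d}}$ is the \emph{smallest} module closed under derivatives and polarizations containing that orbit, minimality gives $\mathcal{M}_{p_{\mathbf d}}\subseteq\mathcal{M}_{p_d}$. For the reverse inclusion one argues symmetrically using \pref{201504201124b}: $p_d({\bold x}_1)=E_{\mathbf d}\big(p_{\mathbf d}(X)\big)$, the operator $E_{\mathbf d}$ is again a scalar multiple of a composite of polarization operators $E_{1,i}$, so $p_d({\bold x}_1)\in\mathcal{M}_{p_{\mathbf d}}$, and minimality of $\mathcal{M}_{p_d}$ yields $\mathcal{M}_{p_d}\subseteq\mathcal{M}_{p_{\mathbf d}}$.

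There is essentially one mild subtlety to check, and it is the only place requiring any care: one must make sure the seed polynomials $p_d({\bold x}_1)$ and $p_{\mathbf d}(X)$ are genuinely \emph{nonzero} (so the scalar factors $d_1!/d!$ and $1/(d_2!\cdots d_\ell!)$ in $E^{\mathbf d}$, $E_{\mathbf d}$ may be divided out harmlessly) and that applying $E^{\mathbf d}$ to the $\mathfrak{S}_n$-orbit of $p_d$ lands inside the module generated by $p_d$ — this is immediate because $E^{\mathbf d}$ commutes with the diagonal $\mathfrak{S}_n$-action on the $x_{1j}$ only in a restricted sense, but in fact we do not need commutation: we just need that $\mathcal{M}_{p_d}$ is closed under $E^{\mathbf d}$ and $\mathfrak{S}_n$, which it is by definition, so $E^{\mathbf d}$ of the orbit element $\sigma\cdot p_d$ stays inside $\mathcal{M}_{p_d}$. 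I would also remark once that $h_d$ can be written as a polynomial in the $p_k$ (Newton's identities) or handled directly via its generating series $\prod_j (1-\sum_i t_j x_{ij})^{-1}$, whichever is cleaner, so that the $h_d$ case is covered by the same mechanism. The main obstacle is therefore not mathematical depth but bookkeeping: being explicit that "closed under polarization operators" includes closure under the specific composites $E^{\mathbf d}$ and $E_{\mathbf d}$, and invoking the minimality clause in the definition of $\mathcal{M}_{(-)}$ correctly in both directions.
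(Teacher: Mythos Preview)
Your argument is correct and is exactly the intended one: the paper states this as an immediate corollary of Lemma~\ref{201504211846pm} without spelling out a proof, and what you have written is precisely the two-way inclusion via $E^{\bold d}$ and $E_{\bold d}$ together with minimality that makes the word ``corollary'' justified. Your remark that the $h_d$ case follows from the analogous generating-series identity (not explicitly recorded in the lemma) is also the right way to handle that last case.
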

\subsection{Generalized Polarization Modules}
Let $F$ be any subset of $\mathcal{R}_{n}^{(\ell)}$ consisting only of homogeneous polynomials. The polarization module generated by $F$ is the smallest $\mathbb{K}$-vector space $\mathcal{M}_{F}$ that satisfies the following axioms:
\begin{enumerate}
\item $\sigma\cdot g\in\mathcal{M}_{F}$, for all $g\in F$, and every permutation $\sigma\in\mathfrak{S}_n$,\\
\item $\mathcal{M}_{F}$ contains the set $F$,
\item $\mathcal{M}_{F}$ is closed under partial derivatives 
$\displaystyle{\partial_{i,j}=\frac{\partial}{\partial x_{ij}}}$,
\item $\mathcal{M}_{F}$ closed under the action of generalized polarization operators $$\displaystyle{E_{i,k}^{(p)}:=\sum_{j=1}^{n}x_{ij}\frac{\partial^p }{\partial x_{kj}^p}}.$$ 
\end{enumerate} 
In order to explain the construction of polarization modules starting from a set of homogeneous polynomials we describe the derivative closure and polarization closure of an homogeneous subspace $\mathcal{V}$ of $\mathcal{R}_{n}^{(\ell)}$. We say that $\mathcal{V}$ is \textbf{closed under partial derivatives} if for every $g\in\mathcal{V}$ we have $\partial_{ij}(g)\in\mathcal{V}$, for all $(i,j)$ such that $1\leq i\leq \ell$ and $1\leq j\leq n$. We say that $\mathcal{V}$ is \textbf{closed under polarization} if $E_{i,k}^{(p)}(g)\in \mathcal{V}$ for all $g\in \mathcal{V}$ and all suitable triples $(i,k,p)$. For any collection of subespaces which are separately closed under derivatives, their intersection is also closed under derivatives, so we can, define the \textbf{derivative closure $\boldsymbol{\mathcal{D}}(\mathcal{V})$ of $\mathcal{V}$} as the smallest subspace of $\mathcal{R}_{n}^{(\ell)}$ closed under derivatives that contains $\mathcal{V}$. Similarly, we define the \textbf{polarization closure $\boldsymbol{\mathcal{E}}(\mathcal{V})$ of $\mathcal{V}$}. The following lemma shows that we can compute the polarization module generated by $F$ in two ways:
\begin{lemma}
Let $\mathcal{V}$ be a homogeneous subspace of $\mathcal{R}_n^{(\ell)}$, then one has
\[ \boldsymbol{\mathcal{E}}(\boldsymbol{\mathcal{D}}(\mathcal{V}))=\boldsymbol{\mathcal{D}}(\boldsymbol{\mathcal{E}}(\mathcal{V})). \]
\end{lemma}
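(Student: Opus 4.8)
The plan is to show that both $\boldsymbol{\mathcal{E}}(\boldsymbol{\mathcal{D}}(\mathcal{V}))$ and $\boldsymbol{\mathcal{D}}(\boldsymbol{\mathcal{E}}(\mathcal{V}))$ coincide with the smallest subspace of $\mathcal{R}_n^{(\ell)}$ that contains $\mathcal{V}$ and is closed under both partial derivatives and polarization operators; call this common target $\mathcal{C}(\mathcal{V})$. First I would note that $\mathcal{C}(\mathcal{V})$ is well defined, since an arbitrary intersection of subspaces closed under all the $\partial_{ij}$ and all the $E_{i,k}^{(p)}$ is again closed under all of them and still contains $\mathcal{V}$. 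Both $\boldsymbol{\mathcal{E}}(\boldsymbol{\mathcal{D}}(\mathcal{V}))$ and $\boldsymbol{\mathcal{D}}(\boldsymbol{\mathcal{E}}(\mathcal{V}))$ contain $\mathcal{V}$, so it suffices to prove that each one is closed under both families of operators: then each contains $\mathcal{C}(\mathcal{V})$, while conversely each is clearly contained in $\mathcal{C}(\mathcal{V})$ (a single application of one closure operator cannot leave a space that is already closed under everything), forcing all three to be equal.

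For $\boldsymbol{\mathcal{E}}(\boldsymbol{\mathcal{D}}(\mathcal{V}))$: it is closed under polarization by construction, so the only thing to check is closure under partial derivatives. I would argue that $\boldsymbol{\mathcal{D}}(\mathcal{V})$, being already closed under the $\partial_{ij}$, has the property that applying any single $E_{i,k}^{(p)}$ to an element lands in a space still closed under derivatives; more precisely, the key computational input is the commutator identity
\begin{equation*}
\partial_{ab}\,E_{i,k}^{(p)} \;=\; E_{i,k}^{(p)}\,\partial_{ab} \;+\; \delta_{a,i}\,\partial_{kb}^{\,p},
\end{equation*}
which one checks directly from $E_{i,k}^{(p)}=\sum_{j}x_{ij}\partial_{kj}^{p}$ and the Leibniz rule (the extra term appears only when $a=i$, and it is $\delta_{a,i}$ times $\partial_{kj}^{p}$ applied in the variable $b$). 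Using this, if $W$ is the span of $\{E_{i,k}^{(p)}(g) : g\in\boldsymbol{\mathcal{D}}(\mathcal{V})\}$ for a fixed triple $(i,k,p)$, then $\partial_{ab}$ maps $E_{i,k}^{(p)}(g)$ to $E_{i,k}^{(p)}(\partial_{ab}g)+\delta_{a,i}\partial_{kb}^{p}(g)$, and both summands lie in $\boldsymbol{\mathcal{E}}(\boldsymbol{\mathcal{D}}(\mathcal{V}))$ because $\partial_{ab}g$ and $\partial_{kb}^{p}g$ stay in $\boldsymbol{\mathcal{D}}(\mathcal{V})$. Iterating this over arbitrary finite compositions of polarization operators (an induction on the number of factors, pushing each $\partial_{ab}$ past one $E$ at a time and collecting the homogeneous correction terms, each of which is itself a lower-complexity element of the space) shows $\boldsymbol{\mathcal{E}}(\boldsymbol{\mathcal{D}}(\mathcal{V}))$ is closed under all $\partial_{ij}$, hence equals $\mathcal{C}(\mathcal{V})$.

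For $\boldsymbol{\mathcal{D}}(\boldsymbol{\mathcal{E}}(\mathcal{V}))$ the argument is symmetric: it is closed under derivatives by construction, and the same commutator identity, read the other way, shows that if $g$ lies in $\boldsymbol{\mathcal{E}}(\mathcal{V})$ and one applies a sequence of partial derivatives to it, then applying any $E_{i,k}^{(p)}$ afterwards can be rewritten as an $E_{i,k}^{(p)}$ applied first followed by derivatives, plus correction terms of the form $\partial_{kb}^{p}$ applied to intermediate expressions — all of which stay inside $\boldsymbol{\mathcal{D}}(\boldsymbol{\mathcal{E}}(\mathcal{V}))$ since $E_{i,k}^{(p)}(g)\in\boldsymbol{\mathcal{E}}(\mathcal{V})$ already. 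Thus $\boldsymbol{\mathcal{D}}(\boldsymbol{\mathcal{E}}(\mathcal{V}))$ is closed under polarization as well, so it too equals $\mathcal{C}(\mathcal{V})$, and we conclude $\boldsymbol{\mathcal{E}}(\boldsymbol{\mathcal{D}}(\mathcal{V}))=\boldsymbol{\mathcal{D}}(\boldsymbol{\mathcal{E}}(\mathcal{V}))$. The main obstacle is purely bookkeeping: setting up the induction on compositions of operators so that the finitely many homogeneous correction terms produced by repeated use of the commutator are all manifestly in the target space; once the commutator identity is in hand, no genuinely new idea is needed, but the statement should be phrased carefully since $\boldsymbol{\mathcal{D}}$ and $\boldsymbol{\mathcal{E}}$ involve infinitely many generators and one must be sure the closures are built from finite compositions only (which holds because every element of $\mathcal{R}_n^{(\ell)}$ is annihilated by high enough derivatives, keeping all orbits finite-dimensional in each degree).
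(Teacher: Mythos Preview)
Your proposal is correct and rests on the same key computational input as the paper's proof, namely the commutator identity $\partial_{ab}E_{i,k}^{(p)}=E_{i,k}^{(p)}\partial_{ab}+\delta_{a,i}\partial_{kb}^{p}$. The paper argues by direct mutual inclusion rather than by introducing the common target $\mathcal{C}(\mathcal{V})$, and it treats the single-operator case somewhat informally; your framing via $\mathcal{C}(\mathcal{V})$ together with an explicit induction on the length of operator compositions is a cleaner way to organize the same argument, but it is not a genuinely different route.
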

\begin{proof} 
The product rule of derivatives gives us the following formula:
\begin{equation}\label{DEVigualEDV}
\partial_{\alpha,\beta}E_{i,k}^{(p)}=\delta_{\alpha,i}\partial_{k\beta}^{p}+E_{i,k}^{(p)}\partial_{\alpha,\beta}.
\end{equation}
We start by showing that $\boldsymbol{\mathcal{D}}\left(\boldsymbol{\mathcal{E}}(\mathcal{V})\right)
\subseteq\boldsymbol{\mathcal{E}}\left(\boldsymbol{\mathcal{D}}(\mathcal{V})\right)$. An arbitrary element of $\boldsymbol{\mathcal{D}}(\boldsymbol{\mathcal{E}}(V))$ is a linear combination of sequences of partial derivatives followed by $E$-operators. So, it's enough to prove the inclusion for such sequences. From \ref{DEVigualEDV} we argue that any sequence of partial derivatives followed by $E$-operators can be rewritten as a sum of terms of two following kinds: 
\begin{itemize}
\item[(1)] a sequence of $E$-operators followed by partial derivatives, 
\item[(2)] a sequence of partial derivatives.
\end{itemize}
Now we observe that the terms of type (1) obviously belong to $\boldsymbol{\mathcal{E}}\left(\boldsymbol{\mathcal{D}}(\mathcal{V})\right)$. To show that the terms of type (2) belong to $\boldsymbol{\mathcal{E}}\left(\boldsymbol{\mathcal{D}}(\mathcal{V})\right)$ we have to argue that 
$\boldsymbol{\mathcal{D}}(\mathcal{V})\subseteq\boldsymbol{\mathcal{E}}\left(\boldsymbol{\mathcal{D}}(\mathcal{V})\right)$. We see that in both cases each term belongs to $\boldsymbol{\mathcal{E}}\left(\boldsymbol{\mathcal{D}}(\mathcal{V})\right)$. This give the desired inclusion. 
%To show this, we use the fact that any sequence of partial derivatives followed by polarization operators can be writen as a sum of terms of %the form $E$-operators followed by partial derivatives and the identity below (coming from the product rule of derivatives):
%where $\delta_{\alpha,i}$ is the Kronecker delta. Since $\mathcal{V}$ is an homogeneous subspace of $\mathcal{R}_{n}^{(\ell)}$ then every polynomial in $\mathcal{V}$ is a sum of homogeneous polynomials belonging to $\mathcal{V}$. So, we simply consider an homogeneous polynomial  $g\in\mathcal{V}$ ( with $\mathrm{deg}(g)=(d_1,\ldots,d_{\ell})$ ). We have two cases, if $\alpha\neq i$ then $\delta_{\alpha,i}=0$ this implies $\partial_{\alpha,\beta}E_{i,k}^{(p)}=E_{i,k}^{(p)}\partial_{\alpha,\beta}$. Thus, for every $g\in\mathcal{V}$ we see that 
%\[\partial_{\alpha,\beta}E_{i,k}^{(p)}(g)=E_{i,k}^{(p)}\partial_{\alpha,\beta}(g)\in\boldsymbol{\mathcal{E}}\left(\boldsymbol{\mathcal{D}}(\mat%hcal{V})\right),\ \ \text{when}\ \alpha\neq i.\]
%In the case that $\alpha=i$, we get 
%\begin{equation*}
%\partial_{i,\beta}E_{i,k}^{(p)}(g)=\partial_{k,\beta}^{p}(g)+E_{i,k}^{(p)}\partial_{i,\beta}(g)\in\boldsymbol{\mathcal{E}}\left(\boldsymbol{\ma%thcal{D}}(\mathcal{V})\right).
%\end{equation*}
%because $\partial_{k,\beta}^{p}(g)\in\boldsymbol{\mathcal{D}}(\mathcal{V})\subseteq\boldsymbol{\mathcal{E}}\left(\boldsymbol{\mathcal{D}}(\mathcal{V})\right)$. Therefore, $\boldsymbol{\mathcal{D}}\left(\boldsymbol{\mathcal{E}}(\mathcal{V})\right)
%\subseteq\boldsymbol{\mathcal{E}}\left(\boldsymbol{\mathcal{D}}(\mathcal{V})\right)$.
Conversely, we must show that \[\boldsymbol{\mathcal{E}}\left(\boldsymbol{\mathcal{D}}(\mathcal{V})\right)
\subseteq\boldsymbol{\mathcal{D}}\left(\boldsymbol{\mathcal{E}}(\mathcal{V})\right). \] 
In a similar way, as before we start from formula \ref{DEVigualEDV} to get
\begin{equation}\label{DEVigualEDV2}
E_{i,k}^{(p)}\partial_{\alpha,\beta}(g)
=\partial_{\alpha,\beta}E_{i,k}^{(p)}(g)-\delta_{\alpha,i}\partial_{k,\beta}^{p}(g),
\end{equation}
this implies that any sequence of $E$-operators followed by a sequence of partial derivatives can be rewritten as a sum of terms of the two following kinds:
\begin{itemize}
\item[(3)] a sequence of partial derivatives followed by a sequence of $E$-operators, 
\item[(4)] a sequence of partial derivatives.
\end{itemize}
Immediately we that the terms of type (3) belogns to $\boldsymbol{\mathcal{D}}\left(\boldsymbol{\mathcal{E}}(\mathcal{V})\right)$. Also the terms of type (4) satisfy the same inclusion because by definition we have 
\[\mathcal{V}\subseteq\boldsymbol{\mathcal{E}}(\mathcal{V})
\subseteq\boldsymbol{\mathcal{D}}\left(\boldsymbol{\mathcal{E}}(\mathcal{V})\right),\]
so, $\boldsymbol{\mathcal{D}}\left(\boldsymbol{\mathcal{E}}(\mathcal{V})\right)$ is a subspace closed by derivatives that contains $\mathcal{V}$. Since $\boldsymbol{\mathcal{D}}(\mathcal{V})$ is the smallest vector space with this property we must have 
\[\boldsymbol{\mathcal{D}}(\mathcal{V})\subseteq\boldsymbol{\mathcal{D}}\left(\boldsymbol{\mathcal{E}}(\mathcal{V})\right).\]
%we now that $\partial_{\alpha,\beta}E_{i,k}^{(p)}(g)\in\boldsymbol{\mathcal{D}}\left(\boldsymbol{\mathcal{E}}(\mathcal{V})\right)$ and $\delta_{\alpha,i}\partial_{k,\beta}^{p}(g)\in\boldsymbol{\mathcal{D}}(\mathcal{V})$
%so, we just have to show that $\boldsymbol{\mathcal{D}}(\mathcal{V})\subseteq\boldsymbol{\boldsymbol{\mathcal{D}}}\left(\boldsymbol{\mathcal{E}}(\mathcal{V})\right)$. Indeed, by definition we have 
%\[\mathcal{V}\subseteq\boldsymbol{\mathcal{E}}(\mathcal{V})
%\subseteq\boldsymbol{\mathcal{D}}\left(\boldsymbol{\mathcal{E}}(\mathcal{V})\right),\]
%then, $\boldsymbol{\mathcal{D}}\left(\boldsymbol{\mathcal{E}}(\mathcal{V})\right)$ is a subspace closed by derivatives that contains $\mathcal{V}$. Since $\boldsymbol{\mathcal{D}}(\mathcal{V})$ is the smallest vector space with this property we must have $\boldsymbol{\mathcal{D}}(\mathcal{V})\subseteq\boldsymbol{\mathcal{D}}\left(\boldsymbol{\mathcal{E}}(\mathcal{V})\right)$. Finally, with formula (\ref{DEVigualEDV2}) and the last considerations we see that $E_{i,k}^{(p)}\partial_{\alpha,\beta}(g)\in\boldsymbol{\mathcal{D}}\left(\boldsymbol{\mathcal{E}}(\mathcal{V})\right).$
In both cases a term of types (3) or (4) belong to $\boldsymbol{\mathcal{D}}\left(\boldsymbol{\mathcal{E}}(\mathcal{V})\right)$. Thus, we have the inclusion $\boldsymbol{\mathcal{E}}\left(\boldsymbol{\mathcal{D}}(\mathcal{V})\right)
\subseteq\boldsymbol{\mathcal{D}}\left(\boldsymbol{\mathcal{E}}(\mathcal{V})\right)$. 
\end{proof}

We set $\boldsymbol{\mathcal{P}}(\mathcal{V}):=\boldsymbol{\mathcal{E}}(\boldsymbol{\mathcal{D}}(\mathcal{V}))$.
\noindent A subset $F$ of $\mathcal{R}_{n}^{(\ell)}$ is called a \textbf{homogeneous stable family} if the following conditions hold:
\begin{enumerate}
\item $F$ consists only of homogeneous polynomials, 
\item $F$ is \textbf{stable} (or $\mathfrak{S}_n$-stable w.r.t. the diagonal action of $\mathfrak{S}_n$), that is, for any permutation $\sigma\in\mathfrak{S}_n$ we have $\sigma\cdot g\in F$, for all $g\in F$.
\end{enumerate}
%%\subsubsection{The modules $\mathcal{M}_{F}$}
\begin{defi}
For a given homogeneous stable family $F$,  we set $\mathcal{M}_{F}$ to be the smallest $\mathbb{K}$-vector space closed under derivatives and closed under polarization containing the family $F$. We call the vector space $\mathcal{M}_{F}$ the \textbf{polarization module generated by the family $F$}.
\end{defi}
From the above considerations we can readly describe the polarization module generated by $F$ as follows:
\begin{prop}
If $F$ is an homogeneous stable family then
\begin{equation*}
\mathcal{M}_{F}:=\boldsymbol{\mathcal{P}}\big(\mathbb{K}\{F\}\big),
\end{equation*}
where $\mathbb{K}\{F\}$ denotes the $\mathbb{K}$-vector space spanned by $F$.
\end{prop} 
%For any set of polynomials $A$ we also denote by $\mathbb{K}\big[ A\big]$ the $\mathbb{K}$-span of $A$. 
\begin{remark}
When the family $F$ consists of only one homogenous polynomial $f\in\mathcal{R}_{n}^{(\ell)}$, we denote by $\mathcal{M}_f$ the polarization module generated by the family $\{\sigma\cdot f\ \vert\ \sigma\in\mathfrak{S}_n\}$. In symbols,
\[ \mathcal{M}_{f}:=\boldsymbol{\mathcal{P}}\big(\mathbb{K}\{\sigma\cdot f\ \vert\ \sigma\in\mathfrak{S}_n\}\big).\]
\end{remark}
\subsection{Properties of polarization modules}

The property of being closed under polarization operators  implies that any polarization module is a polynomial representation of $GL_{\ell}(\mathbb{C})$ with the (right) action $f(X)\cdot M:=f(MX)$. In fact, we have the following well known result: 
\begin{lemma}[see C. Procesi, \cite{Procesi} page 83.]
A subspace $V$ of $\mathcal{R}_{n}^{(\ell)}$ is closed under the action of polarization operators $E_{i,k}$ (when $p=1$) if and only if $V$ is a $GL_{\ell}(\mathbb{C})$-module with the action $X^{A}\cdot M=(MX)^{A}$.
\end{lemma}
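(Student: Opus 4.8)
The plan is to match, degree by degree, the polarization operators $E_{i,k}$ with the infinitesimal generators of the $GL_{\ell}(\mathbb{C})$-action, and then to pass between the group and its Lie algebra by means of the exponential map. The first step is a reduction to finite dimensions. Write $\mathcal{R}_{n}^{(\ell)}=\bigoplus_{d\geq 0}W_{d}$ for the grading by \emph{total} degree, so that each $W_{d}$ is finite-dimensional and both the operators $E_{i,k}$ and the action $f(X)\cdot M=f(MX)$ preserve every $W_{d}$. I claim both hypotheses force $V=\bigoplus_{d}(V\cap W_{d})$: if $V$ is a $GL_{\ell}(\mathbb{C})$-submodule, the scalar matrices $cI$ act on $f=\sum_{d}f_{d}$ (decomposition into total-degree-homogeneous parts) by $f\mapsto\sum_{d}c^{d}f_{d}\in V$, and a Vandermonde argument over distinct values of $c$ extracts each $f_{d}$; if instead $V$ is closed under all $E_{i,k}$, the same argument applies with the Euler operator $E:=E_{1,1}+\cdots+E_{\ell,\ell}=\sum_{i,j}x_{ij}\partial_{ij}$, which acts on $W_{d}$ as multiplication by $d$, playing the role of the scalar matrices. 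Hence it suffices to prove the equivalence on each finite-dimensional piece $V\cap W_{d}$.

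The second step records the infinitesimal calculation. For $N=(N_{ac})\in\mathfrak{gl}_{\ell}(\mathbb{C})$, expanding $\exp(tN)X$ to first order in $t$ and using the chain rule gives
\[
\left.\frac{d}{dt}\right|_{t=0}f\big(\exp(tN)X\big)=\sum_{a,c}N_{ac}\,E_{c,a}(f),
\]
so the differential at the identity of the polynomial representation $\rho_{d}\colon M\mapsto\big(f\mapsto f(MX)\big)$ of $GL_{\ell}(\mathbb{C})$ on $W_{d}$ sends $N$ to the operator $D_{N}:=\sum_{a,c}N_{ac}E_{c,a}$. Thus the linear span of $\{D_{N}:N\in\mathfrak{gl}_{\ell}(\mathbb{C})\}$ coincides with the span of the $E_{i,k}$; in fact $D_{\varepsilon_{ki}}=E_{i,k}$, where $\varepsilon_{ki}$ is the matrix unit with a single $1$ in entry $(k,i)$.

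For the implication ``closed under the $E_{i,k}$ $\Rightarrow$ $GL_{\ell}(\mathbb{C})$-module'': $V\cap W_{d}$ is stable under every $E_{i,k}$, hence under every $D_{N}$, hence ---being finite-dimensional--- under $\exp(D_{N})=\rho_{d}(\exp N)$, the last equality being the general fact that the morphism of Lie groups $\rho_{d}$ intertwines the exponential maps, with $d\rho_{d}=D_{\bullet}$. Since $\exp\colon\mathfrak{gl}_{\ell}(\mathbb{C})\to GL_{\ell}(\mathbb{C})$ is surjective (every invertible complex matrix has a logarithm; equivalently, $GL_{\ell}(\mathbb{C})$ is connected, hence generated by the image of $\exp$), $V\cap W_{d}$ is stable under all of $\rho_{d}(GL_{\ell}(\mathbb{C}))$, i.e.\ $V$ is a $GL_{\ell}(\mathbb{C})$-submodule. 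For the converse, if $V$ is a $GL_{\ell}(\mathbb{C})$-submodule then the curve $t\mapsto f\big((I+t\varepsilon_{ki})X\big)$ takes values in $V\cap W_{d}$ (the matrices $I+t\varepsilon_{ki}$ being invertible for all $t$), and since $V\cap W_{d}$ is finite-dimensional, its derivative at $t=0$ ---which one computes to be exactly $E_{i,k}(f)$--- again lies in $V\cap W_{d}$. Hence $V$ is closed under every $E_{i,k}$.

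The step I expect to be the main obstacle is precisely this first reduction: $V$ is a priori an arbitrary, possibly infinite-dimensional subspace, and the $GL_{\ell}(\mathbb{C})$-action does \emph{not} respect the $\mathbb{N}^{\ell}$-multidegree ---only the total degree--- so one must genuinely argue that both hypotheses descend to the finite-dimensional total-degree components before the Lie-theoretic correspondence applies. Once that reduction is in place, the remainder is the classical dictionary between rational representations of $GL_{\ell}(\mathbb{C})$ and the action of $\mathfrak{gl}_{\ell}(\mathbb{C})$ by polarization operators.
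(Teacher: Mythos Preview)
The paper does not actually prove this lemma; it is quoted directly from Procesi with a page reference and no argument is given. Your proof is therefore not being compared against anything in the paper itself, and it stands on its own as a correct and complete treatment of the standard Lie-theoretic equivalence.

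Your argument is sound. The reduction to total-degree components via the Euler operator (resp.\ scalar matrices) and Vandermonde is exactly the right way to handle the infinite-dimensionality, and the identification $d\rho_d(\varepsilon_{ki})=E_{i,k}$ together with surjectivity of the complex matrix exponential does the rest. One cosmetic point: when $k=i$ the matrix $I+t\varepsilon_{ii}$ is \emph{not} invertible at $t=-1$, so your parenthetical ``invertible for all $t$'' is literally false in that case; but you only need invertibility in a neighbourhood of $0$ to differentiate, so the argument is unaffected. If you want to avoid the case split altogether you can use $\exp(t\varepsilon_{ki})$ instead of $I+t\varepsilon_{ki}$ throughout the converse direction, which is invertible for every $t$ and has the same derivative at $0$.
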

\noindent For any $g\in\mathcal{R}_{n}^{(\ell)}$ we have the identities: 
\begin{align}
&\sigma\cdot E_{i,k}^{(p)}(g)=E_{i,k}^{(p)}(\sigma\cdot g)\label{201504201232c1}\\   
&\sigma\cdot \partial_{ij}(g)=\partial_{i,\sigma(j)}(\sigma\cdot g).\label{201504201232c2}
\end{align}
%then polarization and partial derivation are compatible with the decomposition into $\mathfrak{S}_n$-irreducible. 
%So therefore, for a submodule $V$ of $\mathcal{R}_{n}$ that decomposes as
%\begin{equation*}
%V=\bigoplus_{\lambda\vdash n}a_{\lambda}\,\mathcal{S}^{\lambda},\ \ a_{\lambda}\in\mathbb{N},
%\end{equation*}
%where $\mathcal{S}^{\lambda}$ are irreducible representations of $\mathfrak{S}_n$. we have
%\begin{equation*}
%E_{i,j}^{(p)}\left(V\right)=\bigoplus_{\lambda\vdash n}\,a_{\lambda}\,E_{i,j}^{(p)}\big(\mathcal{S}^{\lambda}\big).
%\end{equation*}
Identities (\ref{201504201232c1}) and (\ref{201504201232c2}) imply the following assertions for any $\mathfrak{S}_n$-stable family $F$
\begin{lemma}
$\mathcal{M}_{F}$ is a representation of $\frak{S}_n$ with the diagonal action of $\mathfrak{S}_n$.
\end{lemma}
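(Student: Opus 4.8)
The plan is to exploit the minimality of $\mathcal{M}_F$ together with the commutation identities (\ref{201504201232c1}) and (\ref{201504201232c2}). Fix a permutation $\sigma\in\mathfrak{S}_n$ and consider the image $\sigma\cdot\mathcal{M}_F:=\{\sigma\cdot g\ \vert\ g\in\mathcal{M}_F\}$. Since the diagonal action of $\sigma$ is a linear automorphism of $\mathcal{R}_n^{(\ell)}$ that merely permutes the columns of exponent matrices, $\sigma\cdot\mathcal{M}_F$ is again a homogeneous subspace of $\mathcal{R}_n^{(\ell)}$, with the same dimension as $\mathcal{M}_F$ in each multidegree. I would first record that $F\subseteq\sigma\cdot\mathcal{M}_F$: from $F\subseteq\mathcal{M}_F$ we get $\sigma\cdot F\subseteq\sigma\cdot\mathcal{M}_F$, and $\sigma\cdot F=F$ because $F$ is a homogeneous stable family.

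Next I would check that $\sigma\cdot\mathcal{M}_F$ is closed under partial derivatives and under the polarization operators. Take $h=\sigma\cdot g\in\sigma\cdot\mathcal{M}_F$ with $g\in\mathcal{M}_F$. Identity (\ref{201504201232c2}), applied with $j$ replaced by $\sigma^{-1}(j)$, gives $\partial_{ij}(h)=\partial_{i,\sigma(\sigma^{-1}(j))}(\sigma\cdot g)=\sigma\cdot\partial_{i,\sigma^{-1}(j)}(g)$; since $\mathcal{M}_F$ is closed under $\partial_{i,\sigma^{-1}(j)}$ this lies in $\sigma\cdot\mathcal{M}_F$. Likewise, identity (\ref{201504201232c1}) gives $E_{i,k}^{(p)}(h)=E_{i,k}^{(p)}(\sigma\cdot g)=\sigma\cdot E_{i,k}^{(p)}(g)\in\sigma\cdot\mathcal{M}_F$, using that $E_{i,k}^{(p)}(g)\in\mathcal{M}_F$. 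Hence $\sigma\cdot\mathcal{M}_F$ is a homogeneous subspace of $\mathcal{R}_n^{(\ell)}$, closed under derivatives and under polarization, that contains $F$.

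By the very definition $\mathcal{M}_F=\boldsymbol{\mathcal{P}}(\mathbb{K}\cdot F)$ is the smallest subspace with these properties containing $F$, so $\mathcal{M}_F\subseteq\sigma\cdot\mathcal{M}_F$. Running the same argument with $\sigma^{-1}$ in place of $\sigma$ yields $\mathcal{M}_F\subseteq\sigma^{-1}\cdot\mathcal{M}_F$, that is, $\sigma\cdot\mathcal{M}_F\subseteq\mathcal{M}_F$. Therefore $\sigma\cdot\mathcal{M}_F=\mathcal{M}_F$ for every $\sigma\in\mathfrak{S}_n$, which is precisely the assertion that $\mathcal{M}_F$ is stable under the diagonal action of $\mathfrak{S}_n$ and hence a representation of $\mathfrak{S}_n$.

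I do not expect a genuine obstacle here; the argument is a standard minimality-plus-closure argument. The only points demanding care are the index bookkeeping in (\ref{201504201232c2}) — the operator $\partial_{ij}$ does not commute with $\sigma$ literally, but gets conjugated into $\partial_{i,\sigma^{-1}(j)}$ — and the observation that the diagonal action preserves homogeneity, so the minimality characterization of $\mathcal{M}_F$ among homogeneous closed subspaces legitimately applies to $\sigma\cdot\mathcal{M}_F$.
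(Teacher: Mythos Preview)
Your proposal is correct and is precisely the argument the paper has in mind: the paper states this lemma immediately after identities (\ref{201504201232c1}) and (\ref{201504201232c2}) and simply declares that they imply it, without spelling out the details. Your minimality-plus-closure argument, including the careful index bookkeeping for $\partial_{ij}$, is the natural (and correct) way to fill in what the paper leaves implicit.
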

Recall that the two actions of $\mathfrak{S}_n$ and $GL_{\ell}(\mathbb{C})$ on $\mathcal{R}_{n}^{(\ell)}$ commute, so, we can assert that
\begin{lemma}
$\mathcal{M}_{F}$ is a\ $\frak{S}_{n}\times GL_{\ell}(\mathbb{C})$-module with the action $$(\sigma,M)\cdot f(X):=f(M^{-1}X_{\sigma}),$$
where $X_{\sigma}$ is the $\ell\times n$ matrix with $x_{i\sigma(j)}$ at the entry in the $i^{th}$ row and $j^{th}$ column.
\end{lemma}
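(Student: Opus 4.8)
\textit{Proof proposal.} The plan is to deduce the statement from three facts that are already in place: $\mathcal{M}_F$ carries the diagonal action of $\mathfrak{S}_n$ (the preceding lemma); $\mathcal{M}_F$ is a $GL_\ell(\mathbb{C})$-submodule of $\mathcal{R}_n^{(\ell)}$ for the action $X^A\cdot M=(MX)^A$, by closure under the operators $E_{i,k}$ together with Procesi's lemma (\cite{Procesi}, p.~83); and these two actions commute on all of $\mathcal{R}_n^{(\ell)}$. First I would translate the two actions into evaluation notation. Extending $\sigma\cdot X^A$ linearly one checks that $\sigma\cdot f(X)=f(X_\sigma)$, and that this is a \emph{left} action because permuting the columns of $X$ by $\sigma_2$ after permuting by $\sigma_1$ gives $X_{\sigma_1\sigma_2}$, i.e. $(X_{\sigma_1})_{\sigma_2}=X_{\sigma_1\sigma_2}$. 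For the linear group I would pass from the right action $f(X)\cdot M=f(MX)$ of the earlier lemma to the associated left action $M\ast f:=f(M^{-1}X)$; this is the standard inversion trick, and it is a left action since $(M_1M_2)^{-1}=M_2^{-1}M_1^{-1}$.

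Next I would record the one genuinely structural point: column permutation commutes with left multiplication by a fixed $\ell\times\ell$ matrix, so $(M^{-1}X)_\sigma=M^{-1}(X_\sigma)$. Consequently $\sigma\cdot(M\ast f)(X)=f(M^{-1}X_\sigma)=M\ast(\sigma\cdot f)(X)$, which is exactly identity (\ref{201504201232c1}) for $p=1$ read at the level of group elements. Now invoke the general principle that two commuting left actions of groups $G$ and $H$ on a set assemble into a left action of $G\times H$ via $(g,h)\cdot v:=g\cdot(h\cdot v)$: the associativity $((g_1,h_1)(g_2,h_2))\cdot v=(g_1,h_1)\cdot((g_2,h_2)\cdot v)$ reduces, after using that each of $g\cdot(-)$ and $h\cdot(-)$ is itself a left action, to the single interchange $h_1\cdot(g_2\cdot w)=g_2\cdot(h_1\cdot w)$, which we have just verified. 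Applying this with $G=\mathfrak{S}_n$, $H=GL_\ell(\mathbb{C})$ yields $(\sigma,M)\cdot f(X)=\sigma\cdot(M\ast f)(X)=f(M^{-1}X_\sigma)$, which is the claimed formula; the identity element $(\mathrm{id},I)$ acts trivially since $I^{-1}X_{\mathrm{id}}=X$.

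It remains to see that $\mathcal{M}_F$ is stable under this combined action. By the preceding lemma it is stable under $\sigma\cdot(-)$ for all $\sigma\in\mathfrak{S}_n$; by Procesi's lemma it is stable under $f(X)\mapsto f(MX)$ for all $M\in GL_\ell(\mathbb{C})$, hence also under $M\ast(-)=(M^{-1}(-))\cdot$ since $M^{-1}$ again ranges over all of $GL_\ell(\mathbb{C})$. Therefore $\mathcal{M}_F$ is invariant under $(\sigma,M)\cdot(-)$, and the action above makes $\mathcal{M}_F$ an $\mathfrak{S}_n\times GL_\ell(\mathbb{C})$-module. The only point requiring care — more bookkeeping than difficulty — is keeping the composition order straight on both sides (left versus right actions, and $\sigma_1\sigma_2$ versus $\sigma_2\sigma_1$) so that the two inversions in $(M_1M_2)^{-1}=M_2^{-1}M_1^{-1}$ land on the correct factors.
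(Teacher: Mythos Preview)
Your proof is correct and follows the same route the paper takes: the paper does not give a separate proof of this lemma but simply asserts it after the sentence ``Recall that the two actions of $\mathfrak{S}_n$ and $GL_{\ell}(\mathbb{C})$ on $\mathcal{R}_{n}^{(\ell)}$ commutes and so, we can assert that\ldots'', relying on the preceding lemma for the $\mathfrak{S}_n$-action, on Procesi's lemma for the $GL_\ell(\mathbb{C})$-action, and on the commutation already noted. You have spelled out exactly these ingredients and the standard passage from commuting actions to a $G\times H$-action, so your argument is a fleshed-out version of what the paper leaves implicit.
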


\begin{remark}
We often use the notation ${\bold x}:=(x_1,\ldots,x_n)={\bold x}_1$, that is, ${\bold x}:={\bold x}_{1}$. Also, recall that ${\bold q}=(q_1,\ldots,q_{\ell})$ and ${\bold w}=(w_1,w_2,\ldots)$.
\end{remark}

\section{Fast examples}

For short we often write the Schur polynomial $s_{\mu}({\bold q})$ simply as $s_{\mu}$. The formulas below hold for any $\ell\geq 1$:
\begin{enumerate}
\item We consider $n=3$ and $f=x_{11}x_{22}x_{33}$, then the graded Frobenius characteristic of $\mathcal{M}_f$ is given by

\begin{align*}
\mathcal{M}_{f}({\bold q},{\bold w})=&(1+s_1+s_2+s_3)s_{3}({\bold w})+(s_1+s_2+s_{1,1}+s_{2,1})s_{2,1}({\bold w})\\
&+(s_{1,1}+s_{1,1,1})s_{1,1,1}({\bold w}).
\end{align*}

\noindent from the equation above we can immediately get the Hilbert series 

\[ \mathcal{M}_{f}({\bold q})
=1+\boldsymbol{3}s_1({\bold q})+\boldsymbol{3}s_2({\bold q})+\boldsymbol{3}s_{1,1}({\bold q})+s_3({\bold q})+\boldsymbol{2}s_{2,1}({\bold q})+s_{1,1,1}({\bold q}). \]
%les deux formules ci-haut sont valides pour tout $\ell$. 

\item
Let $g$ be the symmetrization of $f$, that is
\[g:=x_{11}x_{22}x_{33}+x_{11}x_{23}x_{32}+x_{12}x_{21}x_{33}+x_{12}x_{23}x_{31}+x_{13}x_{22}x_{31}+x_{13}x_{21}x_{32}.\]
\noindent in this case we obtain 

\[ \mathcal{M}_{f}({\bold q},{\bold w})
=(1+s_1({\bold q})+s_2({\bold q})+s_3({\bold q}))\cdot s_{3}({\bold w})
+(s_1({\bold q})+s_2({\bold q}))\cdot s_{2,1}({\bold w}),\]
\noindent and the Hilbert series is simply
\[\mathcal{M}_{f}({\bold q})=1+\boldsymbol{3}s_1({\bold q})+\boldsymbol{3}s_2({\bold q})+s_3({\bold q}).\]  
\noindent More details will be explained in a corollary of Theorem \ref{ProposicionTRES}.

\item We take the set ${F}=\big\{h_{1,1}({\bold x}),h_{2}({\bold x})\big\}$, then the graded Frobenius series of $\mathcal{M}_{F}$ is  
\begin{equation*}
\mathcal{M}_{F}({\bold q},{\bold w})
={\big(}1+s_1({\bold q})+\boldsymbol{2}s_{2}({\bold q}){\big)}\cdot 
s_{n}({\bold w})
+s_1({\bold q})\cdot s_{n-1,1}({\bold w}).
\end{equation*}
\item For instance, in degree 3 we can take  
${F}=\big\{e_{_3}({\bold x}),m_{_{21}}({\bold x}),s_{_{21}}({\bold x})\big\}$, then the graded Frobenius series is the following:
\begin{equation*}
\mathcal{M}_{{F}}({\bold q},{\bold w})
={\big(}1+s_1({\bold q})+\boldsymbol{2}s_{2}({\bold q})+\boldsymbol{2}s_{3}({\bold q}){\big)}\cdot s_{n}({\bold w})
+{\big(}s_1({\bold q})+\boldsymbol{2}s_2({\bold q}){\big)}\cdot s_{n-1,1}({\bold w}).
\end{equation*}
\item  In this case we take  ${F}=\big\{p_{_3}({\bold x}),p_{_{21}}({\bold x}),p_{_{111}}({\bold x})\big\}$, then we get
\begin{equation*}
\mathcal{M}_{{F}}({\bold q},{\bold w})
={\big(}1+s_1({\bold q})+\boldsymbol{2}s_{2}({\bold q})+\boldsymbol{3}s_{3}({\bold q}){\big)}\cdot s_{n}({\bold w})
+{\big(}s_1({\bold q})+\boldsymbol{2}s_2({\bold q}){\big)}\cdot s_{n-1,1}({\bold w}).
\end{equation*}

%Plus de d\'etails concernant les deux derni\`{e}res exemples ci-haut seront expliqu\'es dans les conjectures  
%\ref{ConjeturaGrado2Familia} et \ref{ConjeturaGrado3Familia}.

\item Let $F$ be the following set of homogeneous polynomials of degree 4,
\begin{equation*}
F:=\big\{m_{_{22}}({\bold x}),p_{_{31}}({\bold x})\big\},
\end{equation*}
in this case, the graded Frobenius series is the following:
\begin{equation*}
(1+s_1+\boldsymbol{2}s_2+\boldsymbol{2}s_3+s_{2,1}+\boldsymbol{2}s_4)\cdot s_{n}({\bold w})+(s_1+\boldsymbol{2}s_2+s_{1,1}+\boldsymbol{2}s_3)\cdot s_{n-1,1}({\bold w})+s_{2}\cdot s_{n-2,2}({\bold w}).
\end{equation*} 
\end{enumerate}

\section{Detailed examples}\label{EjemplosDetallados}

\subsection{The polarization module $\mathcal{M}_{e_1^d}$}

Let's start by describing elements in the polarization module $\mathcal{M}_{e_1^{d}}$. If $d\geq 1$ and we set $f=e_{1}({\bold x}_1)^{d}=(x_{11}+\cdots+x_{1n})^d$, then $\partial_{1i}(f)=d\cdot e_{1}({\bold x}_1)^{d-1}$. In general, if $d\geq r\geq 1$ then
\[ \partial_{1i}^r(f)=\frac{d!}{(d-r)!}\cdot e_{1}({\bold x}_1)^{d-r}.\]
Then $\{1,e_{1}({\bold x}_1),e_{1}({\bold x}_1)^{2},\ldots,e_{1}({\bold x}_1)^{d}\}\subseteq\mathcal{M}_{e_1^{d}}$. Taking polarizations of 
$f$ we obtain 
\begin{equation*}
E_{i,1}^{\,r}(f)=\frac{d!}{(d-r)!}\,e_{1}({\bold x}_i)^{r}e_{1}({\bold x}_1)^{d-r}.
\end{equation*} 
In particular, $E_{i,1}^{\,d}(f)=d!\cdot e_{1}({\bold x}_i)$. If ${\bold d}\in\mathbb{N}^{\ell}$ is such that $\vert{\bold d}\vert=d$, we can generate by iterated polarizations the polynomial $e_{1}^{\bold d}(X)$:
\begin{equation}\label{201504211237}
e_{1}({\bold x}_{1})^{d_1}e_{1}({\bold x}_{2})^{d_2}\cdots e_{1}({\bold x}_{{\ell}})^{d_{\ell}}=\frac{d_1!}{d!}\,E_{{\ell},1}^{d_{\ell}}\cdots E_{2,1}^{d_2}\,(f).
\end{equation}
Then for any vector ${\bold a}\in\mathbb{N}^{\ell}$ such that $0\leq \vert{\bold a}\vert\leq d$ we can generate any polynomial of the form $e_{1}^{\bold a}(X)$ by taking partial derivatives of (\ref{201504211237}). Therefore,

\[ \big\{e_{1}^{\bold a}(X)\,:\,{\bold a}\in\mathbb{N}^{\ell},\,0\leq \vert{\bold a}\vert\leq d \big\}\subseteq\mathcal{M}_{e_1^d} \]

\noindent In the following we will show that the set 
\[ \mathcal{B}:=\big\{e_{1}^{\bold a}(X)\,:\,{\bold a}\in\mathbb{N}^{\ell},\,0\leq \vert{\bold a}\vert\leq d \big\}, \]
is indeed an homogeneous basis of $\mathcal{M}_{e_{1}^{d}}$. Since no two elements of $\mathcal{B}$ have the same vector degree, the set $\mathcal{B}$ is linearly independent. 
We can show that $\mathcal{B}$ is closed under derivatives and generalized polarization operators $E_{i,k}^{(p)}$ and contains $f$. This implies that $\mathcal{B}$ is a set of generators of 
$\mathcal{M}_{e_1^d}$. 
First, we compute the partial derivative
\[ \partial_{kj} e_{1}^{\bold a}(X)=a_k\cdot e_{1}({\bold x}_1)^{a_1}\cdots e_{1}({\bold x}_k)^{a_k-1}\cdots e_{1}({\bold x}_{\ell})^{a_{\ell}},\]
this shows that
\begin{equation} 
E_{i,k}\big(e_{1}^{\bold a}(X)\big)=a_k\cdot e_{1}^{\bold c}(X),
\end{equation}

where ${\bold c}\in\mathbb{N}^{\ell}$ is defined by
$c_{j}:=\begin{cases} 
a_k-1& \text{if}\ j=k,\\
a_i+1 & \text{if}\ j=i,\\ 
a_j & \text{otherwise}.
\end{cases}$ 
%For any $r$ such that $0\leq r\leq a_k$
%\[ \partial_{kj}^r e_{1}^{\bold a}(X)=\frac{a_k!}{(a_k-r)!}\cdot %e_{1}({\bold x}_1)^{a_1}\cdots e_{1}({\bold x}_k)^{a_k-r}\cdots %e_{1}({\bold x}_{\ell})^{a_{\ell}}, \]
%in general, we have
Iterating this, we see
\[ E_{i,k}^{\,r}\big(e_{1}^{\bold a}(X)\big)=\frac{a_k!}{(a_k-r)!}\cdot e_{1}({\bold x}_i)^r\cdot e_{1}({\bold x}_1)^{a_1}\cdots{e_{1}({\bold x}_k)^{a_{k}-r}}\cdots e_{1}({\bold x}_{\ell})^{a_{\ell}}. \]

If $p>1$ then, assuming $a_{k}\geq p$, we get
\begin{equation}\label{201504211348pm} 
{\partial_{kj}^p } e_{1}^{\bold a}(X)
=\frac{a_k!}{(a_k-p)!}
e_{1}({\bold x}_1)^{a_1}\cdots e_{1}({\bold x}_k)^{a_k-p}\cdots e_{1}({\bold x}_{\ell})^{a_{\ell}},\ \forall j.
\end{equation}
Taking the whole sum we get
\begin{align*}
\sum_{j=1}^{n}x_{ij}\partial_{kj}^{p}\big(e_{1}^{\bold a}(X)\big)
&=\sum_{j=1}^{n}x_{ij}\cdot \frac{a_k!}{(a_k-p)!}
e_{1}({\bold x}_1)^{a_1}\cdots e_{1}({\bold x}_k)^{a_k-p}\cdots e_{1}({\bold x}_{\ell})^{a_{\ell}}\\
&=\frac{a_k!}{(a_k-p)!}
e_{1}({\bold x}_1)^{a_1}\cdots e_{1}({\bold x}_k)^{a_k-p}\cdots e_{1}({\bold x}_{\ell})^{a_{\ell}}\left(\sum_{j=1}^{n}x_{ij}\right)\\
&=\frac{a_k!}{(a_k-p)!}\cdot{e_{1}({\bold x}_i)}\cdot
e_{1}({\bold x}_1)^{a_1}\cdots e_{1}({\bold x}_k)^{a_k-p}\cdots e_{1}({\bold x}_{\ell})^{a_{\ell}}.
\end{align*}
We can simply write 
\begin{equation}\label{201504211405pm} 
E_{i,k}^{(p)}\big(e_{1}^{\bold a}(X)\big)=
\begin{cases}
\displaystyle{\frac{a_k!}{(a_k-p)!}\,e_{1}^{\bold b}(X)} &\ \text{if}\ a_k\geq p,\\
&\\
\qquad 0 & \text{otherwise},
\end{cases} 
\end{equation}
where ${\bold b}=(b_1,\ldots,b_{\ell})$ is given by
\[b_{j}=\begin{cases} a_{j} & \text{if}\ j\neq k\ \text{et}\ j\neq i,\\a_{i}+1& j=i,\\
a_{k}-p& j=k.\\ 
\end{cases}\]
Thus, $E_{i,k}^{(p)}\big(e_{1}^{\bold a}(X)\big)\in\mathbb{K}\{\mathcal{B}\}$. If $a_k<p$ then $E_{i,k}^{(p)}\big(e_{1}^{\bold a}(X)\big)=0$. Thus, formulas (\ref{201504211348pm}) and (\ref{201504211405pm}) shows that  $\mathbb{K}\{\mathcal{B}\}$ (the $\mathbb{K}$-span of $\mathcal{B}$) is closed by derivatives and polarizations. Evidently $e_{1}^{d}({\bold x}_1)$ belongs to $\mathbb{K}\{\mathcal{B}\}$. The above considerations imply that 
\begin{equation*}
\mathcal{M}_{e_1^d}=\mathbb{K}\{\mathcal{B}\}.
\end{equation*}
and we already saw that $\mathcal{B}$ is linearly independent, so $\mathcal{B}$ is a linear basis of $\mathcal{M}_{e_1^d}$. In fact, we can assert that
\[ \mathcal{M}_{e_1^d}=\mathbb{K}_{\leq d}\big[e_{1}({\bold x}_1),\ldots,e_{1}({\bold x}_{\ell})\big], \]
that is, $\mathcal{M}_{e_1^d}$ equals the set of all polynomials of degree at most $d$ in the variables $e_{1}({\bold x}_1),\ldots,e_{1}({\bold x}_{\ell})$. Then
\begin{equation*}
\dim\big(\mathcal{M}_{e_{1}^{d}}\big)
=\displaystyle{{\ell+d\choose d}},\ \text{for every}\ 
\ell\geq {\bold 1}.
\end{equation*}

\subsection{The polarization module $\mathcal{M}_{p_d}$} 

If $d\geq 1$ and $f=p_{d}({\bold x}_1)=x_{11}^d+\cdots+x_{1n}^d$, we obtain\ $\partial_{1j}f=d\cdot x_{1j}^{d-1}$. In general, if $d\geq r\geq 1$ we get
\[ \partial_{1j}^r(f)=\frac{d!}{(d-r)!}\cdot x_{1j}^{d-r}.\]
This implies that $\{x_{1j}^k\,:\,0\leq k\leq d-1\}\subseteq\mathcal{M}_{p_d}$. Taking polarizations of $x_{1j}^k$ we obtain the polynomials
\[ E_{i,1}^{\,r}\big(x_{1j}^k\big)=\frac{k!}{(k-r)!}\cdot x_{ij}^{r}x_{1j}^{k-r}\]
\[ E_{i,1}^{(p)}\big(x_{1j}^k\big)=\frac{k!}{(k-p)!}\cdot x_{ij}x_{1j}^{k-p}.\]
In general, if ${\bold k}=(k_1,\ldots,k_{\ell})$ is such that $\vert{\bold k}\vert=k$ then we have (see, \cite{Hunziker})
\[ X_{j}^{\bold k}:=x_{\ell j}^{k_{\ell}}\cdots x_{1j}^{k_1}
=\frac{k_{1}!}{k!}\cdot E_{\ell,1}^{k_{\ell}}\cdots E_{2,1}^{k_1}\big(x_{1j}^k\big).\]
This implies that
\[ \{X_{j}^{\bold a}\,:\, 1\leq j\leq n,\, {\bold a}\in\mathbb{N}^{\ell},\,0\leq\vert{\bold a}\vert\leq d-1\}\subseteq\mathcal{M}_{p_d}.\]

Recall from Lemma \ref{201504211846pm} that for any ${\bold d}\in\mathbb{N}^{\ell}$ with $\vert{\bold d}\vert=d$ we obtain $E^{\bold d}(f)=p_{\bold d}(X)$. Therefore
\[ \big\{p_{\bold b}(X)\,:\,{\bold b}\in\mathbb{N}^{\ell},\,\vert{\bold b}\vert=d\big\}\subseteq\mathcal{M}_{p_d}.\]
The above considerations show that the set 
\[ \mathcal{B}:=\big\{p_{\bold b}(X)\,:\,{\bold b}\in\mathbb{N}^{\ell},\,\vert{\bold b}\vert=d\big\}\bigcup\big\{X_{j}^{\bold a}\,:\, 1\leq j\leq n,\, {\bold a}\in\mathbb{N}^{\ell},\,0\leq\vert{\bold a}\vert\leq d-1\big\}, \]
is contained in $\mathcal{M}_{p_d}$. Clearly, we have the identity:
\begin{equation}\label{201601111403}
E_{a,b}^{(p)}\big(x_{c,d}^{m}\big)=\delta_{b,c}\frac{m!}{(m-p)!}x_{a,d}x_{c,d}^{m-p}. 
\end{equation}
 we can show the following result 
\begin{lemma}
The $\mathbb{K}$-span of $\mathcal{B}$ is closed under derivatives and closed under the action of generalized polarization operators $E_{i,k}^{(p)}$. 
\end{lemma}
\begin{proof}
Using the usual derivatives rules and formula \ref{201601111403} we get:
\begin{align*}
\partial_{i,j}^r\big(X_{s}^{\bold a}\big)
&=\delta_{sj}\frac{a_{i}!}{(a_{i}-r)!}\cdot x_{1s}^{a_1}\cdots{x_{is}^{a_{i}-r}}\cdots x_{\ell{s}}^{a_{\ell}}\in\mathbb{K}\{\mathcal{B}\},\\
E_{i,k}^{\,r}\big(X_{s}^{\bold a}\big)&=
\frac{a_k!}{(a_k-r)!}\cdot x_{is}^{r}\,x_{1s}^{a_1}\cdots {x_{ks}^{a_k-r}}\cdots x_{\ell{s}}^{a_{\ell}}\in\mathbb{K}\{\mathcal{B}\},\\
E_{i,k}^{(p)}\big(X_{s}^{\bold a}\big)&=
\frac{a_k!}{(a_k-p)!}\cdot x_{is}\,x_{1s}^{a_1}\cdots {x_{ks}^{a_k-p}}\cdots x_{\ell{s}}^{a_{\ell}}\in\mathbb{K}\{\mathcal{B}\},\\
\partial_{i,j}^r\big(p_{\bold b}\big)&=
\frac{b_{i}!}{(b_i-r)!}\cdot x_{1j}^{b_1}\cdots{x_{ij}^{b_i-r}}
\cdots x_{\ell{j}}^{b_{\ell}}\in\mathbb{K}\{\mathcal{B}\},\\
E_{i,k}^{\,r}\big(p_{\bold b}\big)&=
 \frac{a_k!}{(a_k-r)!}\cdot 
\sum_{j=1}^{n}x_{ij}^{r}x_{1j}^{b_1}\cdots{x_{kj}^{b_i-r}}
\cdots {x_{\ell{j}}^{b_{\ell}}}\in\mathbb{K}\{\mathcal{B}\}\\
E_{i,k}^{(p)}\big(p_{\bold b}\big)&=
\frac{a_k!}{(a_k-p)!}\cdot 
\sum_{j=1}^{n}x_{ij}x_{1j}^{b_1}\cdots{x_{kj}^{b_i-p}}\cdots
{x_{\ell{j}}^{b_{\ell}}}\in\mathbb{K}\{\mathcal{B}\}.  
\end{align*}
\end{proof}
Then $\mathcal{B}$ satisfies the axioms of the definition of polarization module. Since $\mathcal{M}_f$ is the smallest with these properties we must have that $\mathcal{M}_f\subseteq \mathbb{K}\{\mathcal{B}\}$. Also, no two polynomials in $\mathcal{B}$ have the same multidegree, so, $\mathcal{B}$ is linearly independent. Therefore $\mathcal{B}$ is an homogeneous basis of $\mathcal{M}_{p_d}$.
 
\section{Main theorems}

\begin{teor}\label{ProposicionUNO}
Let $d$ be a positive integer. The following formulae hold for any $\ell\geq 1$
\begin{equation}\label{ProposicionUNOPrimeraParte}
\mathcal{M}_{e_{1}^{d}}({\bold q},{\bold w})=\left(\sum_{j=0}^{d}s_{j}({\bold q})\right)s_{n}({\bold w}).
\end{equation}
\begin{align}\label{ProposicionUNOSegundaParte}
\mathcal{M}_{p_{d}}({\bold q},{\bold w})&=\left(\sum_{j=0}^{m}s_{j}({\bold q})\right){s}_{n}({\bold w})
+\left(\sum_{j=1}^{m-1}s_{j}({\bold q})\right){s}_{n-1,1}({\bold w}).
\end{align}
\begin{align}\label{ProposicionUNOTerceraParte}
\mathcal{M}_{e_{d}}({\bold q},{\bold w})
&=\sum_{i=0}^{\lfloor{d/2}\rfloor}\left(\sum_{j=i}^{d-i}s_{j}({\bold q})\right)s_{n-i,i}({\bold w}).
\end{align}
\end{teor}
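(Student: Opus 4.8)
The plan is to compute each of the three graded Frobenius characteristics directly from the explicit homogeneous bases constructed in Section~\ref{EjemplosDetallados}, by grouping the basis monomials according to their behaviour as $\mathfrak{S}_n$-modules and then reading off the $GL_\ell$-content from their multidegrees. Recall that the graded Frobenius characteristic packages, for each $\mathfrak{S}_n$-irreducible $\mathcal{S}^\lambda$ appearing in $\mathcal{M}_F$, the Hilbert series of the corresponding isotypic multiplicity space $\mathcal{V}_\lambda$ as a $GL_\ell(\mathbb{C})$-character, i.e.\ as a symmetric function $\mathcal{V}_\lambda({\bold q})$ expressed in the Schur basis $s_\mu({\bold q})$; so the task is, degree by degree in ${\bold d}\in\mathbb{N}^\ell$, to identify the $\mathfrak{S}_n$-module structure of $(\mathcal{M}_F)_{\bold d}$ and sum.

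For \eqref{ProposicionUNOPrimeraParte} this is essentially immediate: we showed $\mathcal{M}_{e_1^d}=\mathbb{K}_{\le d}[e_1({\bold x}_1),\dots,e_1({\bold x}_\ell)]$, a space on which $\mathfrak{S}_n$ acts trivially (each $e_1({\bold x}_i)$ is symmetric), so every isotypic component is of type $(n)$; the multiplicity space in multidegree ${\bold d}$ with $|{\bold d}|=j$ is spanned by the single monomial $e_1({\bold x}_1)^{d_1}\cdots e_1({\bold x}_\ell)^{d_\ell}$, so summing $\sum_{|{\bold d}|=j}{\bold q}^{\bold d}=h_j({\bold q})=s_j({\bold q})$ over $0\le j\le d$ gives the stated $(\sum_{j=0}^d s_j({\bold q}))\,s_n({\bold w})$. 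The first step I would carry out in detail is therefore just this bookkeeping, to fix the method.

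For \eqref{ProposicionUNOSegundaParte} I would split the basis $\mathcal{B}$ of $\mathcal{M}_{p_d}$ into the two families already identified: the ``column monomials'' $X_j^{\bold a}$ with $0\le|{\bold a}|\le d-1$, and the diagonal power sums $p_{\bold b}(X)$ with $|{\bold b}|=d$. The power sums $p_{\bold b}$ span a space where $\mathfrak{S}_n$ acts trivially, contributing $h_d({\bold q})\,s_n({\bold w})=s_d({\bold q})s_n({\bold w})$. For fixed ${\bold a}$ with $|{\bold a}|=k\le d-1$, the span of $\{X_1^{\bold a},\dots,X_n^{\bold a}\}$ is the permutation module $\mathbb{C}^n$ of $\mathfrak{S}_n$, which decomposes as $\mathcal{S}^{(n)}\oplus\mathcal{S}^{(n-1,1)}$; summing $\sum_{|{\bold a}|=k}{\bold q}^{\bold a}=s_k({\bold q})$ over $0\le k\le d-1$ then yields $(\sum_{k=0}^{d-1}s_k({\bold q}))(s_n({\bold w})+s_{n-1,1}({\bold w}))$. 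Combining, and writing $m=d$ (matching the statement's index), gives exactly \eqref{ProposicionUNOSegundaParte}, since the $s_n({\bold w})$-coefficient is $s_d+\sum_{k=0}^{d-1}s_k=\sum_{j=0}^{d}s_j$ and the $s_{n-1,1}({\bold w})$-coefficient is $\sum_{k=0}^{d-1}s_k$; one must double-check the index conventions ($m$ versus $d$, and whether the range is $j\le m-1$ or $j\le d-1$), which I expect to be a small but real point of care.

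The genuinely harder part is \eqref{ProposicionUNOTerceraParte} for $\mathcal{M}_{e_d}$, which the excerpt has not yet equipped with an explicit basis; so my plan is first to construct one in parallel with the $p_d$ case. Using Lemma~\ref{201504211846pm}, $E^{\bold d}(e_d({\bold x}_1))=\tfrac{{\bold d}!}{d!}e_{\bold d}(X)$, and taking derivatives one reaches the multisymmetric elementary polynomials $e_{\bold b}(X)$ for $|{\bold b}|\le d$ together with ``partial-column'' products $\prod_{t\in T}X_t^{\,{\bold a}^{(t)}}$ coming from differentiating the product formula \eqref{SerieGeneratrizMultisimetricasElementales}; concretely the basis should be indexed by a subset $S\subseteq[n]$ of size $i$ (the columns touched twice, so to speak) together with multidegree data, with $S$ ranging so that the span of the orbit of a fixed configuration is the permutation module $\mathbb{C}[\binom{[n]}{i}]$ on $i$-subsets, whose $\mathfrak{S}_n$-decomposition is $\bigoplus_{k=0}^{i}\mathcal{S}^{(n-k,k)}$. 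The key step is then to show that the multidegrees available for a configuration supported on an $i$-subset are exactly those ${\bold q}^{\bold d}$ with $i\le|{\bold d}|\le d-i$, each occurring once, so that the $GL_\ell$-contribution of that layer is $\sum_{j=i}^{d-i}s_j({\bold q})$; combined with the inner sum $\bigoplus_{k\le i}\mathcal{S}^{(n-k,k)}$ over all $i$-subset layers with $0\le i\le\lfloor d/2\rfloor$, a reindexing (swapping the roles of $i$ and $k$, collecting the coefficient of $s_{n-i,i}({\bold w})$) produces precisely $\sum_{i=0}^{\lfloor d/2\rfloor}(\sum_{j=i}^{d-i}s_j({\bold q}))s_{n-i,i}({\bold w})$. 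I expect the main obstacle to be proving that this candidate basis really is a basis — i.e.\ that it spans (closure under all $\partial_{ij}$ and all $E_{i,k}^{(p)}$, checked on each basis type as in the $p_d$ computation) and that it is linearly independent (here, unlike the $p_d$ case, distinct basis elements can share a multidegree, so independence must be argued within each fixed multidegree using the distinct column-supports), and secondly to pin down the exact degree range $i\le|{\bold d}|\le d-i$ and the compatibility of the permutation-module decomposition with the grading.
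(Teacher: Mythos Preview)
Your treatments of \eqref{ProposicionUNOPrimeraParte} and \eqref{ProposicionUNOSegundaParte} are essentially the paper's own: compute the $\mathfrak{S}_n$-character of each homogeneous component from the explicit basis of Section~\ref{EjemplosDetallados}, then sum. One correction for $p_d$: when ${\bold a}={\bold 0}$ the ``monomials'' $X_j^{\bold 0}$ all equal $1$, so their span is one-dimensional and carries only $\mathcal{S}^{(n)}$, not the full permutation module $\mathbb{C}^n$. This is why the $s_{n-1,1}({\bold w})$-coefficient runs over $1\le j\le d-1$ rather than $0\le j\le d-1$; it is a genuine degeneracy, not merely an indexing convention.

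For \eqref{ProposicionUNOTerceraParte} your proposed basis is not the right one, and this is a real gap. Differentiating $e_d({\bold x}_1)$ does \emph{not} produce column-monomial products $\prod_{t\in T}X_t^{{\bold a}^{(t)}}$: rather $\partial_{1j}e_d({\bold x}_1)=e_{d-1}({\bold x}_1\setminus\{x_{1j}\})$, an elementary symmetric polynomial with one variable deleted, and after iterated derivatives and polarizations the spanning set of the degree-${\bold d}$ component $V_{\bold d}$ consists of the \emph{truncated} multisymmetric elementaries $e_{\bold d}(X\backslash T)$ over column-subsets $T$ with $|T|+|{\bold d}|=d$. The paper then shows (Lemmas~\ref{Lema110035} and~\ref{Lema10023}) that this spanning set is already a basis when $|{\bold d}|>\lfloor d/2\rfloor$, while for $|{\bold d}|\le\lfloor d/2\rfloor$ it is linearly dependent and a basis is instead given by $e_{\bold d}(X\backslash S)$ with $|S|=n-|{\bold d}|$; the key linear-algebra input is that the $\binom{n}{p}\times\binom{n}{q}$ subset-inclusion matrix has full rank whenever $p+q\le n$. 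Once this basis is in hand, $\mathfrak{S}_n$ acts by permuting the subsets (Lemma~\ref{PermutationETruncada}), so $V_{\bold d}$ is the permutation module on $\min(|{\bold d}|,d-|{\bold d}|)$-element subsets, with Frobenius image $\sum_{k\le\min(|{\bold d}|,d-|{\bold d}|)}s_{n-k,k}({\bold w})$; summing over $|{\bold d}|$ and reindexing gives the formula. Your endpoint intuition (permutation modules on $i$-subsets, degree range $i\le j\le d-i$) is correct, but the concrete spanning set, the determination of exactly when it is independent, and the rank argument that resolves the redundant case are all missing from your sketch and are the substance of the proof.
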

\begin{corollary}\label{CorolarioUNO}
Let $m$ be a positive integer. Let ${\bold d}\in\mathbb{N}^{\ell}$ be any vector such that $\vert{\bold d}\vert=d$. For the diagonally symmetric polynomials \ $e_{1}^{\bold d}(X)$, $p_{\bold d}(X)$, and $e_{\bold d}(X)$ we have (respectively) the following universal formulas for the Frobenius characteristic of their associated $\mathfrak{S}_n$-modules:
\begin{equation}\label{CorolarioUNOPrimeraParte}
\mathcal{M}_{e_{1}^{\bold d}(X)}({\bold q},{\bold w})=\left(\sum_{j=0}^{d}h_{j}({\bold q})\right)h_{n}({\bold w}).
\end{equation}
\begin{equation}\label{ProposicionUNOSegundaParteEcuacion}
\mathcal{M}_{p_{\bold d}}({\bold q},{\bold w})=\big(1+h_{d}({\bold q})\,\big)\,h_{n}({\bold w})+\left(\sum_{j=1}^{d-1}h_{j}({\bold q})\right)h_{n-1,1}({\bold w})
%=\left(\sum_{j=0}^{m}h_{j}({\bold q})\right)\cdot {S}_{n}({\bold w})
%+\left(\sum_{j=1}^{m-1}h_{j}({\bold q})\right)\cdot {S}_{n-1,1}({\bold w})
\end{equation}
\begin{equation}\label{CorolarioUNOTerceraParte}
\mathcal{M}_{e_{\bold d}}({\bold q},{\bold w})
=\sum_{i=0}^{\lfloor\frac{d}{2}\rfloor}h_{n-i,i}({\bold w})\,h_{i}({\bf q})
+\sum_{i=\lfloor\frac{d}{2}\rfloor+1}^{d}h_{n-d+i,d-i}({\bold w})\,h_{i}({\bf q}).
%%=\sum_{i=0}^{\lfloor{m/2}\rfloor}\left(\sum_{j=i}^{m-i}s_{j}({\bf q})\right)S_{n-i,i}({\bold w}).
\end{equation}
\end{corollary}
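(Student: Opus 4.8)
The plan is to get Corollary~\ref{CorolarioUNO} out of Theorem~\ref{ProposicionUNO} almost for free, the only genuine work being a base change between the Schur and the complete-homogeneous bases. The crucial point is that the three modules named in the Corollary are not new objects: by Lemma~\ref{201504211846pm} the operators $E^{\bold d}$ and $E_{\bold d}$ are, up to nonzero scalars, compositions of polarization operators $E_{i,1}$ and $E_{1,i}$, and they interchange $e_1({\bold x}_1)^d$ with $e_1^{\bold d}(X)$, $p_d({\bold x}_1)$ with $p_{\bold d}(X)$, and $e_d({\bold x}_1)$ with $e_{\bold d}(X)$. Since a polarization module is closed under all $E_{i,k}^{(p)}$ and is the smallest such space containing the $\mathfrak{S}_n$-orbit of its generator --- and each of the present generators is diagonally symmetric, hence has a singleton orbit --- I would invoke the corollary of Lemma~\ref{201504211846pm} to conclude $\mathcal{M}_{e_1^{\bold d}(X)}=\mathcal{M}_{e_1^d}$, $\mathcal{M}_{p_{\bold d}(X)}=\mathcal{M}_{p_d}$ and $\mathcal{M}_{e_{\bold d}(X)}=\mathcal{M}_{e_d}$ as $\mathfrak{S}_n\times GL_\ell(\mathbb{C})$-submodules of $\mathcal{R}_n^{(\ell)}$. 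Equal modules have equal graded Frobenius characteristics, so the left-hand sides of \pref{CorolarioUNOPrimeraParte}, \pref{ProposicionUNOSegundaParteEcuacion} and \pref{CorolarioUNOTerceraParte} are exactly the Schur-basis expressions \pref{ProposicionUNOPrimeraParte}, \pref{ProposicionUNOSegundaParte} and \pref{ProposicionUNOTerceraParte} (reading $m=d$ in the latter).

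So it remains only to rewrite those expressions in the $h$-basis. On the ${\bold q}$-side there is nothing to do, since $h_j({\bold q})=s_{(j)}({\bold q})=s_j({\bold q})$. On the ${\bold w}$-side I would use iterated Pieri (equivalently the dual Jacobi--Trudi identity): $h_n({\bold w})=s_{(n)}({\bold w})$, and for $0\le i\le\lfloor n/2\rfloor$,
\[
h_{n-i,i}({\bold w})=h_{n-i}({\bold w})\,h_i({\bold w})=\sum_{j=0}^{i}s_{(n-j,\,j)}({\bold w}).
\]
All shapes that occur are genuine partitions because $e_d({\bold x}_1)\neq 0$ forces $n\ge d$. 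Substituting this into \pref{CorolarioUNOPrimeraParte} returns \pref{ProposicionUNOPrimeraParte} at once. For \pref{ProposicionUNOSegundaParteEcuacion} I would expand $h_{n-1,1}({\bold w})=s_{(n-1,1)}({\bold w})+s_{(n)}({\bold w})$; then the coefficient of $s_{(n-1,1)}({\bold w})$ is left unchanged, while the coefficient of $s_{(n)}({\bold w})$ becomes $1+h_d({\bold q})+\sum_{j=1}^{d-1}h_j({\bold q})=\sum_{j=0}^{d}s_j({\bold q})$, which agrees with \pref{ProposicionUNOSegundaParte}.

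The one place where there is real bookkeeping is the elementary case. I would first reindex the second sum of \pref{CorolarioUNOTerceraParte} by $i\mapsto d-i$, turning it into $\sum_{i=0}^{\lceil d/2\rceil-1}h_{d-i}({\bold q})\,h_{n-i,i}({\bold w})$, so that \pref{CorolarioUNOTerceraParte} becomes a single sum of the form $\sum_i c_i({\bold q})\,h_{n-i,i}({\bold w})$; then expand each $h_{n-i,i}({\bold w})$ by the Pieri identity above and swap the order of summation. Collecting the coefficient of a fixed $s_{(n-j,j)}({\bold w})$ combines the two partial sums $\sum_{i=j}^{\lfloor d/2\rfloor}s_i({\bold q})$ and $\sum_{i=\lfloor d/2\rfloor+1}^{d-j}s_i({\bold q})$ (the second coming from the reindexed part) into $\sum_{i=j}^{d-j}s_i({\bold q})$, which is precisely the coefficient of $s_{(n-j,j)}({\bold w})$ in \pref{ProposicionUNOTerceraParte}. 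The thing to be careful with --- and I expect this to be the main, if modest, obstacle --- is the parity of $d$: whether $\lceil d/2\rceil-1$ equals $\lfloor d/2\rfloor$ or $\lfloor d/2\rfloor-1$ governs which window may be empty at its top, and one must check that the concatenation still produces the full range $[j,d-j]$ in each boundary case. One could instead bypass Theorem~\ref{ProposicionUNO} and re-run the explicit-basis computations of Section~\ref{EjemplosDetallados} directly for $e_1^{\bold d}(X)$, $p_{\bold d}(X)$ and $e_{\bold d}(X)$, but the route through the module identities is shorter, and is the one I would take.
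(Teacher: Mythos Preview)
Your proposal is correct and is precisely the intended route: invoke the module identities $\mathcal{M}_{e_1^{\bold d}}=\mathcal{M}_{e_1^d}$, $\mathcal{M}_{p_{\bold d}}=\mathcal{M}_{p_d}$, $\mathcal{M}_{e_{\bold d}}=\mathcal{M}_{e_d}$ from the corollary to Lemma~\ref{201504211846pm}, then pass from Schur to complete homogeneous via $h_j({\bold q})=s_j({\bold q})$ and the Pieri identity $h_{n-i,i}({\bold w})=\sum_{j=0}^{i}s_{(n-j,j)}({\bold w})$. The only remark worth adding is that in the paper's proof of the third part of Theorem~\ref{ProposicionUNO} (just after Lemma~\ref{Teorema38}), the Frobenius characteristic of $\mathcal{M}_{e_d}$ is first obtained in the form
\[
\sum_{i=0}^{\lfloor d/2\rfloor}\Big(\sum_{j=0}^{i}s_{n-j,j}({\bold w})\Big)s_i({\bold q})
\;+\;\sum_{i=\lfloor d/2\rfloor+1}^{d}\Big(\sum_{j=0}^{d-i}s_{n-j,j}({\bold w})\Big)s_i({\bold q}),
\]
and this expression \emph{is already} \pref{CorolarioUNOTerceraParte} once one recognizes $\sum_{j=0}^{k}s_{n-j,j}({\bold w})=h_{n-k,k}({\bold w})$; so your reindexing and parity bookkeeping, while correct, can be avoided entirely by reading off the $h$-form from that intermediate line rather than from the final Schur form \pref{ProposicionUNOTerceraParte}.
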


Consider the following $\frak{S}_n$-stable families of homogeneous polynomials in the variables $x_{11},\ldots,x_{1n}$,  defined as,
$\mathcal{A}:=\big\{x_{1j}^d\big\vert\ 1\leq j\leq n\big\}$ and $\mathcal{B}:=\big\{x_{1,i}^d-x_{1,j}^d\big\vert\ 1\leq i<j\leq n \big\}$.

\begin{teor}\label{FamiliasAB}
The graded Frobenius characteristic of the families $\mathcal{A}$ and $\mathcal{B}$ are given by 
%\begin{align*}
%&F_1:=\big\{x_{1j}^d\big\vert\ 1\leq j\leq n\big\}, \\
%&F_2:=\big\{x_{1,i}^d-x_{1,j}^d\big\vert\ 1\leq i<j\leq n \big\}, \\
\begin{equation*}
\mathcal{M}_{_\mathcal{A}}({\bold q},{\bold w})
=\left(\sum_{j=0}^{d}s_{j}({\bold q})\right)s_{n}({\bold w})
+\left(\sum_{j=1}^{d}s_{j}({\bold q})\right)s_{n-1,1}({\bold w}).
\end{equation*}
\begin{equation*}
\mathcal{M}_{_\mathcal{B}}({\bold q},{\bold w})
=\left(\sum_{j=0}^{d-1}s_{j}({\bold q})\right)s_{n}({\bold w})
+\left(\sum_{j=1}^{d}s_{j}({\bold q})\right)s_{n-1,1}({\bold w}).
\end{equation*}
\end{teor}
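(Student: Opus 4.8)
The plan is to mimic the treatment of $\mathcal{M}_{p_d}$ given in Section~\ref{EjemplosDetallados}: first produce explicit homogeneous bases of $\mathcal{M}_{\mathcal A}$ and $\mathcal{M}_{\mathcal B}$, then recognize each homogeneous component as (a piece of) a permutation module tensored with a symmetric power, and finally sum the resulting Schur expansions.

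For $\mathcal{A}$, note first that $\partial_{1j}^{r}(x_{1j}^{d})=\tfrac{d!}{(d-r)!}x_{1j}^{d-r}$ produces every $x_{1j}^{k}$ with $0\le k\le d$, and that the iterated polarizations $E_{\ell,1}^{a_{\ell}}\cdots E_{2,1}^{a_{2}}$ produce, up to a nonzero scalar, every monomial $X_{j}^{\bold a}$ with $|\bold a|=d$; taking derivatives afterwards yields all $X_{j}^{\bold a}$ with $0\le|\bold a|\le d$. Let $\mathcal{C}_{\mathcal A}:=\{X_{j}^{\bold a}\,:\,1\le j\le n,\ 0\le|\bold a|\le d\}$. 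Because every operator $E_{i,k}^{(p)}=\sum_{m}x_{im}\partial_{km}^{p}$ acts on a monomial $X_{j}^{\bold a}$ by killing all but the $m=j$ term, the same one-line computations as in Section~\ref{EjemplosDetallados} show that $\mathbb{K}[\mathcal{C}_{\mathcal A}]$ is closed under derivatives and polarizations (note $|\bold a|$ never increases under any $E_{i,k}^{(p)}$), and it obviously contains $\mathcal A$; minimality gives $\mathcal{M}_{\mathcal A}=\mathbb{K}[\mathcal{C}_{\mathcal A}]$, and $\mathcal{C}_{\mathcal A}$ is a basis since distinct pairs $(j,\bold a)$ with $\bold a\neq\bold 0$ give distinct monomials. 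Now grade $\mathcal{M}_{\mathcal A}$ by $k=|\bold a|$: the span $W_{k}$ of $\{X_{j}^{\bold a}:|\bold a|=k\}$ is, as an $\mathfrak{S}_n\times GL_{\ell}(\mathbb{C})$-module, the permutation module $\mathbb{C}^{n}$ on the $n$ columns tensored with $\mathrm{Sym}^{k}(\mathbb{C}^{\ell})=\mathcal{W}_{(k)}$ for $k\ge 1$, while $W_{0}=\mathbb{K}\cdot 1\cong\mathcal{S}^{(n)}$. Since the permutation module has Frobenius characteristic $s_{n}(\bold w)+s_{n-1,1}(\bold w)$ and $\mathcal{W}_{(k)}$ contributes $s_{k}(\bold q)$, summing over $0\le k\le d$ gives exactly the stated formula for $\mathcal{M}_{\mathcal A}$.

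For $\mathcal{B}$ the only change is at the top degree. From $\partial_{1i}(x_{1i}^{d}-x_{1j}^{d})=d\,x_{1i}^{d-1}$ we recover every $x_{1j}^{k}$ with $0\le k\le d-1$, hence every $X_{j}^{\bold a}$ with $0\le|\bold a|\le d-1$; and applying $E_{\ell,1}^{b_{\ell}}\cdots E_{2,1}^{b_{2}}$ to $x_{1i}^{d}-x_{1j}^{d}$ gives a nonzero multiple of the difference $X_{i}^{\bold b}-X_{j}^{\bold b}$ for every $\bold b$ with $|\bold b|=d$. Put
\[\mathcal{C}_{\mathcal B}:=\{X_{j}^{\bold a}\,:\,1\le j\le n,\ 0\le|\bold a|\le d-1\}\ \cup\ \{X_{1}^{\bold b}-X_{j}^{\bold b}\,:\,2\le j\le n,\ |\bold b|=d\}.\]
The key verification is that $\mathbb{K}[\mathcal{C}_{\mathcal B}]$ is closed: a derivative of a degree-$d$ difference is a single monomial of degree $d-1$; the polarization $E_{i,k}$ (case $p=1$) sends $X_{i}^{\bold b}-X_{j}^{\bold b}$ to another degree-$d$ difference; and $E_{i,k}^{(p)}$ with $p\ge 2$ sends it to a difference of degree $\le d-1$, which already lies in the span of the monomials. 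Thus no degree-$d$ polynomial outside $\bigoplus_{|\bold b|=d}\mathrm{span}\{X_{i}^{\bold b}-X_{j}^{\bold b}\}$ is ever created (in particular, no symmetric $p_{\bold b}(X)$), so by minimality $\mathcal{M}_{\mathcal B}=\mathbb{K}[\mathcal{C}_{\mathcal B}]$, and $\mathcal{C}_{\mathcal B}$ is visibly linearly independent. Grading again by degree, the components of degree $k$ with $0\le k\le d-1$ are the same $W_{k}$ as before, whereas the degree-$d$ component is the sum-zero submodule $\mathbb{C}^{n}_{0}\otimes\mathrm{Sym}^{d}(\mathbb{C}^{\ell})$ with $\mathbb{C}^{n}_{0}\cong\mathcal{S}^{(n-1,1)}$, contributing $s_{d}(\bold q)\,s_{n-1,1}(\bold w)$. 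Summing these contributions yields the stated formula for $\mathcal{M}_{\mathcal B}$ (this presumes $n\ge 2$; for $n=1$ the family $\mathcal B$ is empty).

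I expect the closure bookkeeping to be entirely routine, being a verbatim rerun of the identities already displayed for $\mathcal{M}_{p_d}$. The one delicate point is the top-degree analysis for $\mathcal{B}$: one must simultaneously check that every difference $X_{i}^{\bold b}-X_{j}^{\bold b}$ with $|\bold b|=d$ is reached and that the closure of $\mathcal{C}_{\mathcal B}$ produces nothing else in degree $d$, so that the degree-$d$ homogeneous component is genuinely the standard-representation isotypic component $\mathcal{S}^{(n-1,1)}\otimes\mathcal{W}_{(d)}$ rather than the full $W_{d}$.
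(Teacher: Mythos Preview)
Your proposal is correct and follows essentially the same approach as the paper: the bases you write down for $\mathcal{M}_{\mathcal A}$ and $\mathcal{M}_{\mathcal B}$ are exactly those of Propositions~\ref{BaseFamiliaA} and~\ref{BaseFamiliaB}, and your closure verifications are the routine ones the paper alludes to by saying the argument is ``similar to the proof of second part of Theorem~\ref{ProposicionUNO}.'' The only cosmetic difference is that you read off the Frobenius characteristic by directly identifying each graded piece as a permutation module (or its sum-zero part) tensored with $\mathrm{Sym}^{k}(\mathbb{C}^{\ell})$, whereas the paper's template computes the trace via fixed points; these are the same computation in different language.
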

%\begin{teor}\label{ProposicionDOS}
%Let $f({\bold x}_1)$ be a symmetric polynomial of degree 2  in $n$ variables ${\bold x}_1=x_{11},x_{12},\ldots,x_{1n}$. Suppose that $f({\bold x}_1)$ is given in the monomial basis as follows:
%\begin{equation}
%f({\bold x}_1)=a\cdot m_{2}({\bold x}_1)+b\cdot m_{1,1}({\bold x}_1),
%\end{equation}
%then the Frobenius characteristic of the space $\mathcal{M}_{f}$ is given by one of the following two cases:
%\begin{equation}
%\mathcal{M}_{f}({\bold{q}},{\bold{w}})=
%\left\{
%\begin{array}{cc}
%\big(1+h_1({\bold q})+h_2({\bold q})\big)\cdot h_{n}({\bold{w}}) &  \text{if} \  \ \big[a:b\big]=\big[1:2\big], \\
%\\
%%%\big(1+s_1({\bold q})+s_2({\bold q})\big)\cdot s_{n}({\bold{w}})+s_1({\bold q})\cdot S_{n-1,1}({\bold{w}})
%\big(1+h_{2}({\bold{q}})\,\big)\,h_{n}({\bold{w}})+h_{1}({\bold q})\,h_{n-1,1}({\bold w})
%& {\rm otherwise}.
%\end{array}
%\right.
%\end{equation}
%\end{teor}
\begin{teor}\label{ProposicionDOS}
Let $f({\bold x}_1)$ be a symmetric polynomial of degree 2  in $n\geq 2$ variables ${\bold x}_1=x_{11},x_{12},\ldots,x_{1n}$. Suppose that $f({\bold x}_1)$ is given in the monomial basis as follows:
\begin{equation}
f({\bold x}_1)=a\cdot m_{2}({\bold x}_1)+b\cdot m_{1,1}({\bold x}_1),
\end{equation}
then the Frobenius characteristic of the space $\mathcal{M}_{f}$ is given by one of the following two cases:
\begin{equation}
\mathcal{M}_{f}({\bold{q}},{\bold{w}})=
\left\{
\begin{array}{cc}
\big(1+h_1({\bold q})+h_2({\bold q})\big)\cdot h_{n}({\bold{w}}) &  \text{if} \ b=2a \\
&\\
\big(1+s_1({\bold q})+s_2({\bold q})\big)\cdot s_{n}({\bold{w}})+s_1({\bold q})\cdot s_{n-1,1}({\bold{w}})& {\rm otherwise}.\\
=\big(1+h_{2}({\bold{q}})\,\big)\cdot h_{n}({\bold{w}})+h_{1}({\bold q})\cdot h_{n-1,1}({\bold w}).
\end{array}
\right.
\end{equation}
\end{teor}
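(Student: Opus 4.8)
The plan is to compute $\mathcal{M}_f$ one total degree at a time. Since $f$ is homogeneous of degree $2$ and none of the generating operations can raise the total degree — partial derivatives lower it, and a polarization $E_{i,k}^{(p)}$ changes total degree by $1-p\le 0$ — the module $\mathcal{M}_f$ is concentrated in total degrees $0,1,2$, so it suffices to identify the three homogeneous layers as $\mathfrak{S}_n\times GL_{\ell}(\mathbb{C})$-modules and then assemble the graded Frobenius series via \eqref{FormaGeneralDelaCaracteristicaDeFrobenius}. First I would rewrite $f$ in a convenient form: using $m_2(\mathbf{x}_1)=p_2(\mathbf{x}_1)$ and $m_{1,1}(\mathbf{x}_1)=e_2(\mathbf{x}_1)=\tfrac12\big(e_1(\mathbf{x}_1)^2-p_2(\mathbf{x}_1)\big)$ one gets $f=(a-\tfrac b2)\,p_2(\mathbf{x}_1)+\tfrac b2\,e_1(\mathbf{x}_1)^2$. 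The condition $[a:b]=[1:2]$ is exactly $a-\tfrac b2=0$, i.e. $f$ is a nonzero scalar multiple of $e_1^2=e_1(\mathbf{x}_1)^2$; in that case $\mathcal{M}_f=\mathcal{M}_{e_1^2}$ and the first displayed formula is Theorem \ref{ProposicionUNO}, equation \eqref{ProposicionUNOPrimeraParte}, specialized to $d=2$ (rewriting $s_j=h_j$ for one-row shapes and $s_n(\mathbf{w})=h_n(\mathbf{w})$). So from here on I assume $[a:b]\ne[1:2]$, equivalently $2a-b\ne 0$.

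For the degree-$2$ layer: since derivatives and the $E_{i,k}^{(p)}$ with $p\ge 2$ strictly lower the total degree, while the $E_{i,k}=E_{i,k}^{(1)}$ generate precisely the $GL_{\ell}(\mathbb{C})$-action, the degree-$2$ component of $\mathcal{M}_f$ is exactly the $GL_{\ell}(\mathbb{C})$-submodule generated by $f$. Because $f$ is $\mathfrak{S}_n$-invariant and the two actions commute, this submodule is pointwise $\mathfrak{S}_n$-fixed, hence of the form $\mathcal{W}_{\mu}\otimes\mathcal{S}^{(n)}$. To pin down $\mathcal{W}_{\mu}$ I would use the polarization expansion \eqref{FormulillaPolarizato}: the submodule is spanned by $\{E^{\mathbf{d}}(f)\,:\,|\mathbf{d}|=2\}$, and these are linearly independent since they carry distinct vector degrees and each is nonzero (being $(a-\tfrac b2)$ times a diagonal power-sum type polynomial plus $\tfrac b2$ times a product of $e_1$'s, which are independent for $n\ge 2$); thus the submodule has dimension $\binom{\ell+1}{2}=\dim\mathcal{W}_{(2)}$, and since it is a nonzero submodule containing a vector of highest weight $2\mathbf{e}_1$ it is $\mathcal{W}_{(2)}$. (Alternatively: the space of degree-$2$ diagonally symmetric polynomials is $\mathcal{W}_{(2)}^{\oplus 2}$ as a $GL_{\ell}(\mathbb{C})$-module, the $\mathcal{W}_{(1,1)}$-part dropping out because $(\Lambda^2\mathbb{C}^n)^{\mathfrak{S}_n}=0$, and $f$ corresponds to a nonzero vector with a fixed $\mathfrak{S}_n$-coefficient, hence generates one copy of $\mathcal{W}_{(2)}$.) Either way this layer contributes $s_2(\mathbf{q})\,s_n(\mathbf{w})$.

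For the degree-$1$ layer — the span of all $\partial_{ij}g$ with $g$ in the degree-$2$ layer — I would show it is the full $\ell n$-dimensional space of linear forms. Since $\partial_{1j}f-\partial_{1k}f=(2a-b)(x_{1j}-x_{1k})$ and $2a-b\ne 0$, every difference $x_{1j}-x_{1k}$ lies in $\mathcal{M}_f$; and $e_1(\mathbf{x}_1)\in\mathcal{M}_f$ as well, via $E_{1,1}^{(2)}(f)=2a\,e_1(\mathbf{x}_1)$ when $a\ne 0$, and via $\sum_j\partial_{1j}f=(n-1)b\,e_1(\mathbf{x}_1)$ (using $n\ge 2$, $b\ne 0$) when $a=0$. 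Hence every $x_{1j}$, and then $E_{i,1}(x_{1j})=x_{ij}$, so the degree-$1$ layer is all of $\mathbb{C}^{\ell}\otimes\mathbb{C}^n\cong\mathcal{W}_{(1)}\otimes(\mathcal{S}^{(n)}\oplus\mathcal{S}^{(n-1,1)})$, contributing $s_1(\mathbf{q})\big(s_n(\mathbf{w})+s_{n-1,1}(\mathbf{w})\big)$. The degree-$0$ layer is $\mathbb{C}\cdot 1$ (e.g. $\partial_{1j}\partial_{1k}f$ equals $2a$ if $j=k$ and $b$ if $j\ne k$, and at least one is nonzero), contributing $s_n(\mathbf{w})$. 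Summing the three layers gives $\big(1+s_1(\mathbf{q})+s_2(\mathbf{q})\big)s_n(\mathbf{w})+s_1(\mathbf{q})s_{n-1,1}(\mathbf{w})$, and substituting $s_n(\mathbf{w})=h_n(\mathbf{w})$, $s_{n-1,1}(\mathbf{w})=h_{n-1,1}(\mathbf{w})-h_n(\mathbf{w})$ (and $s_j=h_j$ in $\mathbf{q}$) produces the stated equivalent form $\big(1+h_2(\mathbf{q})\big)h_n(\mathbf{w})+h_1(\mathbf{q})h_{n-1,1}(\mathbf{w})$; this also exhibits $\mathcal{M}_f\cong\mathcal{M}_{p_2}$ by Theorem \ref{ProposicionUNO}, equation \eqref{ProposicionUNOSegundaParte} with $d=2$.

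I expect the main obstacle to be the degree-$2$ computation: one must be certain that polarizing $f$ never leaves the "$\mathfrak{S}_n$-trivial, $GL_{\ell}(\mathbb{C})$-type $(2)$" isotypic piece and that what is generated is exactly the irreducible $\mathcal{W}_{(2)}$ — neither a proper submodule nor a larger one — which is where the vanishing $(\Lambda^2\mathbb{C}^n)^{\mathfrak{S}_n}=0$ and the linear independence of the $E^{\mathbf{d}}(f)$ do the real work. By comparison, the degree-$1$ edge case $2a+(n-1)b=0$, which forces one to invoke the second-order polarization $E_{1,1}^{(2)}$ instead of a first-order one to recover $e_1(\mathbf{x}_1)$, is only a minor wrinkle; and the hypothesis $n\ge 2$ is used exactly to guarantee $1\in\mathcal{M}_f$ and $v_{p_2},v_{e_2}$ independent.
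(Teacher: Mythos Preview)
Your proof is correct and takes a genuinely different route from the paper. The paper works multidegree by multidegree: it writes out all first- and second-order derivatives and polarizations of $f$ explicitly, exhibits spanning sets for each component $\mathcal{V}_{\mathbf{e}_i}$ and $\mathcal{V}_{\mathbf{e}_{s,t}}$, proves linear independence via determinant computations with the auxiliary matrices $T_n(x,y,z,w,t)$ and $H_n(x,y,z)$ (Lemma~\ref{LemaBaseGrado2}), lists the resulting bases case by case (Lemma~\ref{Lema23}), and then computes the $\mathfrak{S}_n$-character of each component directly on those bases by counting fixed basis vectors (Lemma~\ref{ElLemasVentiSeis}). You instead stratify by total degree and lean on representation theory: the degree-$2$ layer is the cyclic $GL_{\ell}$-module generated by the highest-weight vector $f$, hence a single copy of $\mathcal{W}_{(2)}$ carrying the trivial $\mathfrak{S}_n$-action (your use of \eqref{FormulillaPolarizato} here is exactly right, since $f$ depends only on $\mathbf{x}_1$, so its $GL_{\ell}$-orbit is contained in the span of the $E^{\mathbf{d}}(f)$); the degree-$1$ layer is identified with the full linear space $\mathbb{C}^{\ell}\otimes\mathbb{C}^n$ by producing $x_{1j}-x_{1k}$ and $e_1(\mathbf{x}_1)$ directly; and the Frobenius series is then read off from the standard decomposition $\mathbb{C}^n\cong\mathcal{S}^{(n)}\oplus\mathcal{S}^{(n-1,1)}$. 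Your argument is shorter and avoids the matrix calculations entirely; the paper's approach has the compensating advantage that it produces explicit homogeneous bases and serves as the template reused, with the same auxiliary-matrix machinery, for the much more delicate degree-$3$ classification in Theorem~\ref{ProposicionTRES}.
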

\begin{corollary}\label{CorolarioGrado2}
If $f$ is a homogeneous symmetric polynomial of degree 2, then the associated $\frak{S}_n$-module $\mathcal{M}_f$ is isomorphic as an $\mathfrak{S}_n\times GL_{\ell}(\mathbb{K})$-module to one of the two modules $\mathcal{M}_{p_1^{2}}$,
$\mathcal{M}_{p_{2}}$.
\end{corollary}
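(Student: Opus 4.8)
The plan is to derive the corollary directly from Theorem~\ref{ProposicionDOS} together with the dictionary, set up in Section~\ref{Definitions}, between the graded Frobenius characteristic of a polarization module and its decomposition \pref{DecompIrred} into $\mathfrak{S}_n\times GL_\ell(\mathbb{K})$-irreducibles. First I would note that (for $n\geq 2$, as in Theorem~\ref{ProposicionDOS}) any nonzero homogeneous symmetric polynomial $f$ of degree $2$ in ${\bold x}_1$ is a linear combination $f=a\,m_2({\bold x}_1)+b\,m_{1,1}({\bold x}_1)$ with $(a,b)\neq(0,0)$, since $\{m_2,m_{1,1}\}$ is a basis of that space, and that $\mathcal{M}_f\cong\mathcal{M}_{k\cdot f}$ for every nonzero scalar $k$; hence the isomorphism type of $\mathcal{M}_f$ depends only on the class $[a:b]\in\mathbb{RP}^1$. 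Expanding the two reference polynomials in the monomial basis gives $p_1^2=m_2+2\,m_{1,1}$ and $p_2=m_2$, so $f=p_1^2$ sits at $[a:b]=[1:2]$ while $f=p_2$ sits at $[a:b]=[1:0]\neq[1:2]$.

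Next I would record the graded Frobenius characteristics of the two model modules. Since $e_1({\bold x}_1)=p_1({\bold x}_1)$ we have $\mathcal{M}_{p_1^2}=\mathcal{M}_{e_1^2}$, so equation \pref{ProposicionUNOPrimeraParte} of Theorem~\ref{ProposicionUNO} with $d=2$, together with the identities $s_j({\bold q})=h_j({\bold q})$ and $s_n({\bold w})=h_n({\bold w})$, yields
\[\mathcal{M}_{p_1^2}({\bold q},{\bold w})=\big(1+h_1({\bold q})+h_2({\bold q})\big)\,h_n({\bold w}),\]
while equation \pref{ProposicionUNOSegundaParte} of Theorem~\ref{ProposicionUNO} with $d=2$ gives
\[\mathcal{M}_{p_2}({\bold q},{\bold w})=\big(1+s_1({\bold q})+s_2({\bold q})\big)\,s_n({\bold w})+s_1({\bold q})\,s_{n-1,1}({\bold w}).\]
These are precisely the two right-hand sides appearing in the case split of Theorem~\ref{ProposicionDOS}.

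Finally I would apply Theorem~\ref{ProposicionDOS} to $f=a\,m_2+b\,m_{1,1}$: it asserts that $\mathcal{M}_f({\bold q},{\bold w})$ equals the first expression when $[a:b]=[1:2]$ and the second otherwise, i.e. $\mathcal{M}_f({\bold q},{\bold w})=\mathcal{M}_{p_1^2}({\bold q},{\bold w})$ or $\mathcal{M}_f({\bold q},{\bold w})=\mathcal{M}_{p_2}({\bold q},{\bold w})$. The one point that deserves to be spelled out, and essentially the only obstacle here, and a purely formal one, is that equality of graded Frobenius characteristics forces an isomorphism of $\mathfrak{S}_n\times GL_\ell(\mathbb{K})$-modules: the products $s_\mu({\bold q})\,s_\lambda({\bold w})$ are linearly independent, so the characteristic \pref{FormaGeneralDelaCaracteristicaDeFrobenius} determines the multiplicities $b_{\lambda,\mu}$ of \pref{DecompIrred} uniquely, and by semisimplicity those multiplicities determine the module up to isomorphism (F.~Bergeron's theorem quoted in the introduction furthermore shows this conclusion is independent of $\ell$). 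Thus $\mathcal{M}_f\cong\mathcal{M}_{p_1^2}$ when $[a:b]=[1:2]$ and $\mathcal{M}_f\cong\mathcal{M}_{p_2}$ in all other cases; since all the genuine content (which class $[a:b]$ is the exceptional one, and why) is already carried by Theorem~\ref{ProposicionDOS}, nothing further is needed.
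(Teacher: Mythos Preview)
Your proposal is correct and follows essentially the same approach as the paper: the corollary is stated immediately after Theorem~\ref{ProposicionDOS} without an explicit proof, precisely because it is the obvious consequence you describe---identifying the two cases of that theorem with $\mathcal{M}_{p_1^2}$ (via $p_1^2=m_2+2m_{1,1}$ and Theorem~\ref{ProposicionUNO}\pref{ProposicionUNOPrimeraParte}) and $\mathcal{M}_{p_2}$ (via Theorem~\ref{ProposicionUNO}\pref{ProposicionUNOSegundaParte}) respectively. Your added remark that equality of graded Frobenius characteristics determines the $\mathfrak{S}_n\times GL_\ell(\mathbb{K})$-module up to isomorphism is a useful clarification that the paper leaves implicit.
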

\begin{teor}\label{ProposicionTRES}
Let $f$ be a homogeneous symmetric polynomial of degree 3 in $n\geq 2$ variables. Suppose that $f\in\mathbb{R}[x_{11},\ldots,x_{1n}]$ and $[f]=[a:b:c]$, then the Frobenius characteristic of the $\frak{S}_n$-module $\mathcal{M}_{f}$ is given by one of the following three cases:
\begin{enumerate}
\item If $\big[f\big]=\big[1:3:6\big]$ then
$$\big(\,1+h_1({\bold q})+h_2({\bold q})+h_3({\bold q})\,\big)\cdot h_{n}({\bold w}).$$
\item If $n\geq 3$ and $[a:b:c]$ satisfies $6a(2b+(n-2)c)=4(n-1)b^2$ then we get:
\begin{align*}
\mathcal{M}_{f}({\bold q},{\bold w})&=\big(\,1+h_3({\bold q})\,\big)\cdot h_{n}({\bold{w}})+\big(\,h_{1}({\bold q})+h_{2}({\bold q})\,\big)h_{n-1,1}({\bold{w}}) \\
&=(1+s_1({\bold q})+s_2({\bold q})+s_3({\bold q}))\cdot s_{n}({\bold w})
+(s_1({\bold q})+s_2({\bold q}))\cdot s_{n-1,1}({\bold w}) 
\end{align*}
\item Otherwise we have the following answer:
\begin{align*}
\mathcal{M}_{f}({\bold q},{\bold w})=&\big(\,1+h_{2}({\bold q})+h_3({\bold q})\,\big)\cdot h_{n}({\bold{w}})+\big(h_1({\bold q})+h_2({\bold q})\,\big)\cdot h_{n-1,1}({\bold{w}})\\
&=(1+s_1({\bold q})+\boldsymbol{2}s_2({\bold q})+s_3({\bold q}))\cdot s_{n}({\bold w})+(s_1({\bold q})+s_2({\bold q}))\cdot s_{n-1,1}({\bold w}).
\end{align*}
\end{enumerate}
When $n=2$ the condition giving the second case above is that either $b=0$ or $b=3a$.
\end{teor}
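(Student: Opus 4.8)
The plan is to push the computation down to the first set of variables and then exploit that $f$ is symmetric. Since the $\mathfrak S_n$-orbit of a symmetric $f$ is $\{f\}$, we have $\mathcal M_f=\boldsymbol{\mathcal E}(\boldsymbol{\mathcal D}(\mathbb K f))$ with $\boldsymbol{\mathcal D}(\mathbb K f)\subseteq\mathbb K[\mathbf x_1]$. Put $\mathcal M_f^{(1)}:=\mathcal M_f\cap\mathbb K[\mathbf x_1]$. A homogeneous polynomial of degree $k$ in $\mathbf x_1$ is a $GL_\ell(\mathbb K)$-highest weight vector of weight $(k,0,\dots,0)$, and among the defining operators only the $\partial_{1j}$ and the $E_{1,1}^{(p)}$ map $\mathbb K[\mathbf x_1]$ into itself; using this together with Lemma \ref{PolarizacionesEikInyectivas} and the commutation relation $(\ref{DEVigualEDV})$ one checks that $\mathcal M_f^{(1)}$ is the smallest subspace of $\mathbb K[\mathbf x_1]$ closed under the $\partial_{1j}$ and the $E_{1,1}^{(p)}$ that contains $f$. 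I will moreover show that the only $GL_\ell(\mathbb K)$-irreducibles occurring in $\mathcal M_f$ are the symmetric powers $\mathcal W_{(k)}$; granting this, the graded Frobenius characteristic is
\[
\mathcal M_f(\mathbf q,\mathbf w)=\sum_{k\ge 0}s_k(\mathbf q)\,\mathrm{ch}_{\mathfrak S_n}\!\bigl((\mathcal M_f^{(1)})_k\bigr),
\]
so the theorem is reduced to describing the four $\mathfrak S_n$-modules $(\mathcal M_f^{(1)})_0,\dots,(\mathcal M_f^{(1)})_3$.

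Writing $f=a\,m_3+b\,m_{21}+c\,m_{111}$, the layers of degree $3$ and $0$ are always $\mathbb K f$ and $\mathbb K$ (a nonzero homogeneous symmetric cubic has a nonzero third derivative), trivial as $\mathfrak S_n$-modules. A direct computation gives $\partial_{1j}f=(3a-3b+c)\,x_{1j}^2+(2b-c)\,x_{1j}e_1(\mathbf x_1)+b\,p_2(\mathbf x_1)+c\,e_2(\mathbf x_1)$, $\partial_{1j}^2 f=(6a-2b)\,x_{1j}+2b\,e_1(\mathbf x_1)$ and $E_{1,1}^{(2)}(f)=6a\,p_2(\mathbf x_1)+4b\,e_2(\mathbf x_1)$. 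The degree-$2$ layer is $\mathrm{span}\{\partial_{1j}f:1\le j\le n\}+\mathbb K\,E_{1,1}^{(2)}(f)$; since $\mathbb K^n\cong\mathcal S^{(n)}\oplus\mathcal S^{(n-1,1)}$ as $\mathfrak S_n$-modules, its noninvariant part is a copy of $\mathcal S^{(n-1,1)}$ when the $\partial_{1j}f$ are not all equal and is $0$ otherwise, the latter occurring (for $n\ge3$) exactly when $[a:b:c]=[1:3:6]$, i.e. $f$ is a multiple of $e_1(\mathbf x_1)^3$ (Case (1), handled by Theorem \ref{ProposicionUNO}). Outside Case (1) the $\mathfrak S_n$-invariant part of the degree-$2$ layer is spanned by the two symmetric quadratics $S_1:=\sum_{j}\partial_{1j}f=(3a+(n-1)b)\,p_2(\mathbf x_1)+(4b+(n-2)c)\,e_2(\mathbf x_1)$ and $S_2:=E_{1,1}^{(2)}(f)$, and these are linearly dependent in the plane spanned by $p_2(\mathbf x_1),e_2(\mathbf x_1)$ precisely when $6a(2b+(n-2)c)=4(n-1)b^2$, which is exactly the $n$-exception locus. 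Finally, away from $[1:3:6]$ the degree-$1$ layer equals $\mathrm{span}\{x_{1j}\}\cong\mathcal S^{(n)}\oplus\mathcal S^{(n-1,1)}$: the standard part comes from the differences $\partial_{1j}^2 f-\partial_{1k}^2 f=(6a-2b)(x_{1j}-x_{1k})$ and $\partial_{1i}\partial_{1j}f-\partial_{1i}\partial_{1k}f=(2b-c)(x_{1j}-x_{1k})$, and $e_1(\mathbf x_1)$ is produced by $E_{1,1}^{(3)}(f)=6a\,e_1(\mathbf x_1)$ (or, when $a=0$, by differentiating the invariant quadratics).

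Assembling the layers: if $[a:b:c]=[1:3:6]$ each layer is one-dimensional and trivial, giving $\mathcal M_f(\mathbf q,\mathbf w)=(1+s_1+s_2+s_3)(\mathbf q)s_n(\mathbf w)$ and $\mathcal M_f\cong\mathcal M_{p_1^3}$; if $[a:b:c]$ is an $n$-exception the degree-$2$ layer is $\mathcal S^{(n)}\oplus\mathcal S^{(n-1,1)}$ and one recovers the Frobenius series of $\mathcal M_{p_3}$ from Theorem \ref{ProposicionUNO}; otherwise it is $2\,\mathcal S^{(n)}\oplus\mathcal S^{(n-1,1)}$, giving $(1+s_1+2s_2+s_3)s_n+(s_1+s_2)s_{n-1,1}$, which is the Frobenius series of $\mathcal M_{h_3}$ — obtained by running the same layer-by-layer argument on $h_3$, or by exhibiting its polarized basis as in Section \ref{EjemplosDetallados}. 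Passing from the $s$-forms to the stated $h$-forms is routine (e.g. $h_{n-1,1}(\mathbf w)=s_{n-1,1}(\mathbf w)+s_n(\mathbf w)$), and the equalities of Frobenius series upgrade to $\mathfrak S_n\times GL_\ell(\mathbb K)$-module isomorphisms by matching highest-weight spaces layer by layer. The case $n=2$ is done separately, since there $m_{111}\equiv 0$ and $\mathrm{span}\{x_{1j}^2\}$, $\mathrm{span}\{x_{1j}e_1(\mathbf x_1)\}$ cease to be in general position; from $\partial_{11}f-\partial_{12}f=(3a-b)(x_{11}^2-x_{12}^2)$ the classification reduces to the conditions $b=3a$ (then $f\propto e_1^3$) and $b=0$ (then $f\propto p_3$).

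The two steps I expect to be the main obstacle are: (i) establishing that no $GL_\ell(\mathbb K)$-irreducible other than the $\mathcal W_{(k)}$ appears in $\mathcal M_f$ — i.e. that the polarization operators $E_{i,k}^{(p)}$ with $p\ge 2$ and $i\ne k$ create no new $GL_\ell(\mathbb K)$-type — which I would handle either abstractly, by showing with $(\ref{DEVigualEDV})$ and Lemma \ref{PolarizacionesEikInyectivas} that the $GL_\ell(\mathbb K)$-module generated by $\boldsymbol{\mathcal D}(\mathbb K f)$ is already closed under all $\partial_{ij}$ and all $E_{i,k}^{(p)}$, or concretely by polarizing a basis of $\mathcal M_f^{(1)}$ into an explicit monomial-type basis of $\mathcal M_f$ in the spirit of Section \ref{EjemplosDetallados}; and (ii) verifying that the degree-$2$ layer of $\mathcal M_f^{(1)}$ acquires nothing beyond $\mathrm{span}\{\partial_{1j}f\}+\mathbb K\,E_{1,1}^{(2)}(f)$, so that $S_1$ and $S_2$ genuinely span its invariant part and the $n$-exception equation is sharp.
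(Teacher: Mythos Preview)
Your route is genuinely different from the paper's and, where it works, considerably cleaner. The paper never reduces to $\mathcal{M}_f^{(1)}$; instead it writes down explicit homogeneous bases of each multidegree component $\mathcal V_{\mathbf d}$ (Lemma~\ref{EllemaVentisiete}), and to justify those bases it proves the rank statements of Theorems~\ref{TeoremaExcepcionGrado3}--\ref{EcuacionesExcepciones} via an arsenal of structured matrices $T_n$, $H_n$, $E_n$, $D_n$, $G_n$ and the auxiliary polynomials $P_n,Q_n,R_n,A_n$. Your observation that the $\mathfrak S_n$-invariant part of the degree-$2$ layer is exactly $\mathbb K S_1+\mathbb K S_2$ inside the two-dimensional space $\mathbb K p_2(\mathbf x_1)\oplus\mathbb K e_2(\mathbf x_1)$, so that the $n$-exception condition is the vanishing of a $2\times 2$ determinant, recovers the equation $6a(2b+(n-2)c)=4(n-1)b^2$ in one line and bypasses all of Section~9 and the proofs of Theorems~\ref{TeoremaExcepcionGrado3} and~\ref{EcuacionesExcepciones}. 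That is a real simplification.

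Your obstacle (ii) is not an obstacle: the degree-$2$ piece of $\mathcal M_f^{(1)}$ can only be reached from the degree-$3$ piece $\mathbb Kf$, and the only operators in your generating set that lower total degree by one on $\mathbb K[\mathbf x_1]$ are $\partial_{1j}$ and $E_{1,1}^{(2)}$, so $(\mathcal M_f^{(1)})_2=\mathrm{span}\{\partial_{1j}f\}+\mathbb K\,E_{1,1}^{(2)}(f)$ on the nose.

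Obstacle (i), however, is a real gap in the proposal as written. The statement ``only one-row $GL_\ell$-types occur'' is \emph{not} a general feature of polarization modules generated by a single polynomial in $\mathbf x_1$: already in degree $4$ the paper's tables show $s_{11}(\mathbf q)$ and $s_{21}(\mathbf q)$ appearing in $\mathcal M_{p_{22}}$, $\mathcal M_{m_{22}}$, etc. So your abstract first method (``the $GL_\ell$-module generated by $\boldsymbol{\mathcal D}(\mathbb Kf)$ is already closed under all $\partial_{ij}$ and all $E_{i,k}^{(p)}$'') cannot go through without using that $\deg f=3$, and you have not said how. What makes degree $3$ special is that every weight space $\mathcal V_{\mathbf d}$ with $|\mathbf d|\le 3$ is \emph{exactly} $E^{\mathbf d}\bigl((\mathcal M_f^{(1)})_{|\mathbf d|}\bigr)$; once you know this, Lemma~\ref{PolarizacionesEikInyectivas} shows $E_{1,i}$ is injective on $\mathcal V_{\mathbf e_1+\mathbf e_i}$, so there are no highest-weight vectors of weight $(1,1)$ and hence no $\mathcal W_{(1,1)}$. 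Establishing that equality requires checking that the higher polarizations $E_{i,k}^{(2)}$, $E_{i,k}^{(3)}$ applied to elements of each $\mathcal V_{\mathbf d}$ land back in the appropriate $E^{\mathbf d'}\bigl((\mathcal M_f^{(1)})_{|\mathbf d'|}\bigr)$ --- which is precisely the content of the paper's explicit basis construction in Lemma~\ref{EllemaVentisiete}. In other words, your ``concrete'' second method for (i) is essentially what the paper does, and until you carry it out (or find a genuinely abstract degree-$3$ argument) your reduction formula $\mathcal M_f(\mathbf q,\mathbf w)=\sum_k s_k(\mathbf q)\,\mathrm{ch}\bigl((\mathcal M_f^{(1)})_k\bigr)$ is unjustified.
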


\section{Exceptions}

\subsection{Definiton and examples of $n$-exceptions}
Recall that we identify $f=a\cdot m_3+b\cdot m_{21}+c\cdot m_{111}$, ($a$,$b$,$c$ in $\mathbb{R}$) with its homogeneous coordinates 
$[f]:=[a:b:c]\in\mathbb{RP}^{2}$. For instance $p_{1}^{3}$ is the point $[1:3:6]$, $p_{21}$ is $[1:1:0]$ and $h_{3}$ is the point $[1:1:1]$. 
\begin{defi}
We say that an homogeneous symmetric polynomial $f$ in 
$\mathbb{R}[x_{11},\ldots,x_{1n}]$ is a \textbf{$n$-exception} if 
\[ \dim\left(\mathbb{R}\{\partial_{11}(f),\ldots,\partial_{1n}(f),E_{1,1}^{(2)}(f)\}\right)=n. \]
In other words, $f$ is a $n$-exception if the dimension of the real linear span of its first order partial derivatives $\partial_{11}(f),\ldots,\partial_{1n}(f)$ and the polynomial $E_{1,1}^{(2)}(f)$ has dimension $n$.  
\end{defi}

\subsubsection{Examples of $n$-exceptions}

\begin{enumerate}

\item $[\alpha:0:1]$ is a $2$-exception if $\alpha\neq0$. \\
\item $[1:3:s]$ is a $2$-exception, for all $s\in\mathbb{R}$. \\
\item $[0:0:1]$ ($f=k\,e_3$), is a $n$-exception, for all $n\geq 2$. \\
\item $[1:0:0]$ ($f=k\,p_3$), is a $n$-exception, for all $n\geq 3$. \\
\item $[2:-3:12]$ is a $3$-exception. \\
\item $[3:3:-2]$ is a $3$-exception. \\
\item $[1:1:0]$ ($f=p_{21}$), is a  $4$-exception. \\ 
\item $[1:-1:2]$ is a  $4$-exception. \\
\item $[16:-12:21]$ is a  $4$-exception. \\
\item $[9:21:28]$ is a  $4$-exception. \\
\item $[4:-3:4]$, is a  5-exception. \\
\item $[5:-3:3]$, is a  6-exception. \\
\item $[10:-5:4]$, is a  7-exception. \\
\item $[0:1:0]$, $(f=m_{_{21}})$ \textbf{is not} a $n$-exception when  $n\geq 3$.  \\
\item $[1:1:1]$, $(f=h_3)$ \textbf{is not} a $n$-exception when $n\geq 3$.  \\

 \end{enumerate}

%For example, the points $[1:0:0]$ ( $f=p_3$ ) and  $[0:0:1]$ ( $f=e_3$) are $n$-exceptions for every $n\geq 2$. For instance, $[3:3:-2]$ is a 3-exception, $[9:21:28]$ is a $4$-exception, $[2:3:2]$ is a $5$-exception, $[4:-3:4]$ is a $5$-exception. Another example is $[1,1,0]$ which is a $4$-exception, because $E_{1,1}^{(2)}p_{21}=\sum_{j=1}^{4}\partial_{{1j}}p_{21}$. 

In the table below we have some examples of conditions for a point $[a:b:c]$ to be an $n$-exception up to $n=7$. For more examples of $n$-exception equations up to $n=42$ see \cite{BlandinTesisPhD}.
\begin{table}[ht!]
\centering
\caption{$n$-exceptions for $n\leq 7$}
\label{table:ExcepcionesGrado3}
\begin{tabular}{|c|c|}
\hline
$n=3$ & $3a(2b+c)=4b^2$\\
\hline
$n=4$ & $a(b+c)=b^2$\\
\hline
$n=5$  & $3a(2b+3c)=8b^2$ \\
\hline
$n=6$  & $3a(b+2c)=5b^2$ \\
\hline
$n=7$  & $a(2b+5c)=4b^2$ \\
\hline
\end{tabular}
%%\end{center}
\end{table}
%For instance, for every $n\geq 3$, the point $[1:1:1]$ ($f=h_{3}$) is not an $n$-exception, the point 
%$[0:1:0]$ ($f=m_{21}$) is not an $n$-exception. 

For instance, in degree 4, the point $[5:14:21:28:35]$ is a 11-exception. Note that for general degrees, we believe that $p_{2}p_{1}^{d-2}$ is a $d+1$-exception for any $d\geq 3$ (this assertion is already settled when $d=3,4,\ldots,11$. Also, one can easily show that for any $d\geq 3$ the polynomial $g=p_{2}p_{1}^{d-2}$ (in the variables $x_{11},\ldots,x_{1d},x_{1,d+1}$) satisfies the identity below
\[ E_{1,1}^{(2)}(g)=\sum_{j=0}^{d+1}{\partial_{1j}(g)}.\]
The following theorem seems to hold for any degree greater than 3 (see \cite{BlandinExceptions}). We will present in section \ref{Seccion201601151608} a proof for the case of degree 3 of the theorem below:
\begin{teor}\label{TeoremaExcepcionGrado3}
Let $f$ be a homogeneous symmetric polynomial of degree 3 in $\mathbb{R}[x_{11},\ldots,x_{1n}]$. If $[f]\neq[1:3:6]$ then 
\[ \dim\left(\mathbb{R}\{\partial_{11}(f),\ldots,\partial_{1n}(f),E_{1,1}^{(2)}(f)\}\right)\geq n. \]
\end{teor}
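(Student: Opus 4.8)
The plan is to reduce the statement to a concrete finite linear-algebra computation. Write $f = a\,m_3({\bold x}_1) + b\,m_{21}({\bold x}_1) + c\,m_{111}({\bold x}_1)$ with $(a,b,c)\neq(0,0,0)$. First I would compute the $n+1$ polynomials $\partial_{11}(f),\ldots,\partial_{1n}(f)$ and $E_{1,1}^{(2)}(f)$ explicitly in the monomial basis of degree-$2$ polynomials in $x_{11},\dots,x_{1n}$. Since $f$ is symmetric, each $\partial_{1k}(f)$ has the form $3a\,x_{1k}^2 + b\sum_{j\neq k}x_{1j}^2 + (2b)\,x_{1k}\big(\sum_{j\neq k}x_{1j}\big) + 2c\sum_{i<j,\ i,j\neq k}x_{1i}x_{1j}$, i.e. it is a symmetric function of the variables $\{x_{1j}: j\neq k\}$ plus a $x_{1k}$-dependent correction; and $E_{1,1}^{(2)}(f) = \sum_{j=1}^n x_{1j}\,\partial_{1j}^2(f)$ turns out to be a multiple of $m_3 + (\text{stuff of degree }3)$... but the key observation is that all these live in the degree-$2$ part, so $E_{1,1}^{(2)}(f) = \sum_j x_{1j}\partial_{1j}^2 f$ is degree $3$; I need to be careful — actually $E_{1,1}^{(2)}$ raises degree back to $3$, so the span in question is a span of degree-$3$ polynomials. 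Concretely $\partial_{1j}^2(f) = 6a\,x_{1j} + 2b\sum_{k\neq j}x_{1k}$, hence $E_{1,1}^{(2)}(f) = 6a\,p_2({\bold x}_1) + 2b\sum_j x_{1j}\sum_{k\neq j}x_{1k} = 6a\,m_2({\bold x}_1) + 2b\cdot 2\,m_{11}({\bold x}_1)$, which actually has degree $2$. Good: so all $n+1$ vectors lie in the degree-$2$ symmetric-ish space, and I would set up the $(n+1)\times N$ matrix of their coordinates, where $N = \binom{n+1}{2}$ is the dimension of degree-$2$ polynomials.

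Next I would argue the rank is at least $n$ by exhibiting an explicit $n\times n$ minor that is (generically) nonzero, and then handle the degenerate locus. The natural coordinates to project onto are: the coefficient of $x_{1k}^2$ in $\partial_{1k}(f)$ (which is $3a$) versus the coefficient of $x_{1j}^2$ for $j\neq k$ (which is $b$), together with mixed-term coefficients. The matrix of $(\partial_{11}(f),\dots,\partial_{1n}(f))$ restricted to the $n$ coordinates $\{x_{1j}^2\}_{j=1}^n$ is the $n\times n$ matrix $bJ + (3a-b)I$ where $J$ is all-ones; its determinant is $(3a-b)^{n-1}(3a - b + nb) = (3a-b)^{n-1}(3a + (n-1)b)$. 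So whenever $3a \neq b$ and $3a + (n-1)b \neq 0$, the derivatives alone already span an $n$-dimensional space and we are done. The remaining work is the two boundary cases $3a = b$ and $3a = -(n-1)b$: there I would bring in the mixed coordinates (coefficients of $x_{1i}x_{1j}$) and the extra vector $E_{1,1}^{(2)}(f) = 6a\,m_2 + 4b\,m_{11}$ to push the rank back up to $n$, the only genuinely excluded case being exactly $[a:b:c] = [1:3:6]$, i.e. $f$ proportional to $p_1^3 = e_1^3$, where Theorem \ref{ProposicionUNO} already tells us $\mathcal{M}_{e_1^d}$ has the small $s_n({\bold w})$-only characteristic and the span collapses to dimension $n-1$ (it sits inside $\mathbb{K}_{\le 2}[e_1({\bold x}_1)]$, which has dimension... one — but spanned among $n+1$ vectors gives a rank drop).

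I would organize the casework cleanly by noting that $(3a-b)$, $(3a+(n-1)b)$, and the condition $6a(2b+(n-2)c) = 4(n-1)b^2$ from Theorem \ref{ProposicionTRES} are the only vanishing loci that can lower the rank, and on each such locus compute a surviving $n\times n$ minor using the $m_{11}$-coordinates. The main obstacle I anticipate is the bookkeeping on the boundary loci: when $3a = b$ the all-$x_{1j}^2$ block degenerates to rank $1$, so I must simultaneously track the $\binom{n}{2}$ mixed monomials $x_{1i}x_{1j}$ and verify that the combined $(n+1)\times\binom{n+1}{2}$ matrix still has an $n\times n$ nonvanishing minor for every $[a:b:c]\neq[1:3:6]$ — this requires checking that the mixed-monomial rows are not themselves too degenerate, which is where the hypothesis $[f]\neq[1:3:6]$ must be used in an essential way rather than just generically. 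Once that minor is identified (I expect it to be a product of a power of $b$ or of $(b-3a)$ times a linear-in-$c$ factor), the inequality $\dim \ge n$ follows immediately, completing the proof; the exact value $n$ versus $n+1$ is then separated by precisely the exception equation of Theorem \ref{ProposicionTRES}, but that refinement is not needed here.
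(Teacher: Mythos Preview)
Your overall strategy---write the $n+1$ polynomials in the monomial basis of degree~$2$ and exhibit a nonvanishing $n\times n$ minor---is the right one, and your computation of the ``diagonal'' minor $(3a-b)^{n-1}(3a+(n-1)b)$ is correct. But the boundary analysis you defer is where the actual content of the theorem lives, and it is more substantial than you suggest. Your degenerate locus is the union of two \emph{lines} $\{b=3a\}$ and $\{3a+(n-1)b=0\}$ in $\mathbb{RP}^2$; on each of these you must show, for every value of $c$, that the mixed-monomial rows (entries $2b$ or $c$ according to whether $k\in\{i,j\}$) restore the rank to~$n$. That is a genuine computation, not bookkeeping, and you have not done it. For instance on $b=3a$ with $a\neq0$ the square block has rank~$1$, so you need the $\binom{n}{2}\times n$ mixed block to contribute rank $n-1$; this holds, but proving it uniformly in $c$ (and catching that it fails precisely at $c=6a$) is the whole point.

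The paper avoids this line-by-line casework by a cleaner device. Instead of projecting onto the $n$ square coordinates, it keeps the full $\binom{n+1}{2}\times n$ coordinate matrix $D_n$ of $(\partial_{11}f,\ldots,\partial_{1n}f)$ and computes the Gram determinant $\det(D_n^tD_n)$. The point is that $D_n^tD_n$ has a highly structured form (all diagonal entries equal, all off-diagonal entries equal), so its determinant factors explicitly as $R_n(a,b,c)\,Q_n(a,b,c)^{n-1}$, where
\[
Q_n=(n-2)(c-2b)^2+9\big(a-\tfrac{b}{3}\big)^2,\qquad R_n=(n-2)\tbinom{n-1}{2}\big(c+\tfrac{4b}{n-2}\big)^2+9\big(a+\tfrac{(n-1)b}{3}\big)^2
\]
are each a \emph{sum of two squares}. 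Hence $Q_n=0$ only at $[1{:}3{:}6]$ (excluded by hypothesis) and $R_n=0$ only at one other isolated point. So the derivatives alone already have rank~$n$ everywhere except possibly at that single point; there the paper replaces $\partial_{1n}f$ by $E_{1,1}^{(2)}(f)$ and checks the corresponding Gram determinant is nonzero. This sum-of-squares factorization is the trick that collapses your two one-parameter families of bad cases down to a single point, and it is what your proposal is missing.
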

\begin{teor}\label{EcuacionesExcepciones}
Suppose that $n\geq 3$ and $f({\bold x}_1)=a\cdot m_3({\bold x}_1)+b\cdot m_{21}({\bold x}_1)+c\cdot m_{111}({\bold x}_1)$, ($a$,$b$,$c$ in $\mathbb{R}$). Then $f$ is a $n$-exception if and only if $[a:b:c]\neq[1:3:6]$ and $6a(2b+(n-2)c)=4(n-1)b^2$. When $n=2$, $[a:b:c]$ is a 2-exception if and only if $b=0$ or $b=3a$. 
\end{teor}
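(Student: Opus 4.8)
The plan is to convert the definition of $n$-exception into a rank statement for one small explicit matrix, using Theorem~\ref{TeoremaExcepcionGrado3} to control the generic value of that rank. First I would record the relevant polynomials explicitly. Put $e_1=x_{11}+\cdots+x_{1n}$, $p_2=\sum_j x_{1j}^2$, $m=\sum_{i<j}x_{1i}x_{1j}$ and $r_j=x_{1j}(e_1-x_{1j})=\sum_{k\ne j}x_{1j}x_{1k}$. A direct differentiation of $f=a\,m_3+b\,m_{21}+c\,m_{111}$ gives, for every $j$,
\[
\partial_{1j}(f)=(3a-b)\,x_{1j}^2+(2b-c)\,r_j+b\,p_2+c\,m,
\qquad
E_{1,1}^{(2)}(f)=6a\,p_2+4b\,m .
\]
For $n\ge 3$ the $2n$ polynomials $x_{11}^2,\dots,x_{1n}^2,r_1,\dots,r_n$ are linearly independent, with $p_2=\sum_j x_{1j}^2$ and $2m=\sum_j r_j$, so every polynomial above lives in this concrete $2n$-dimensional space.

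Next I would reduce to a determinant. By Theorem~\ref{TeoremaExcepcionGrado3}, when $[f]\ne[1:3:6]$ the span $\mathbb{R}\{\partial_{11}(f),\dots,\partial_{1n}(f),E_{1,1}^{(2)}(f)\}$ has dimension $\ge n$; as there are only $n+1$ spanning vectors, $f$ is an $n$-exception exactly when these $n+1$ vectors are linearly dependent. (If $[f]=[1:3:6]$ then $f$ is a scalar multiple of $p_1^3$, all $\partial_{1j}(f)$ and $E_{1,1}^{(2)}(f)$ are multiples of $e_1^2$, the span has dimension $1\ne n$, so $[f]=[1:3:6]$ is never an $n$-exception — consistent with its exclusion in the statement.) So I would write a relation $\sum_j\lambda_j\,\partial_{1j}(f)+\mu\,E_{1,1}^{(2)}(f)=0$ and read off, from the basis above, the coefficients of $x_{1j}^2$ and of $r_j$: for every $j$, with $S:=\sum_k\lambda_k$,
\[
(3a-b)\lambda_j+b\,S+6a\mu=0,
\qquad
(2b-c)\lambda_j+\tfrac{c}{2}\,S+2b\mu=0 .
\]
Subtracting the $k$-th from the $j$-th equation yields $(3a-b)(\lambda_j-\lambda_k)=(2b-c)(\lambda_j-\lambda_k)=0$; since $(3a-b,\,2b-c)=(0,0)$ characterizes $[f]=[1:3:6]$, which is excluded, all $\lambda_j$ equal a common $\lambda$, hence $S=n\lambda$, and the system collapses to
\[
\begin{pmatrix} 3a+(n-1)b & 6a\\ 4b+(n-2)c & 4b\end{pmatrix}\begin{pmatrix}\lambda\\ \mu\end{pmatrix}=\begin{pmatrix}0\\0\end{pmatrix}.
\]
A singular $2\times2$ matrix always has a nonzero kernel vector, producing a genuine relation; so $f$ is an $n$-exception iff this matrix is singular, i.e.
\[
4b\bigl(3a+(n-1)b\bigr)-6a\bigl(4b+(n-2)c\bigr)=0
\quad\Longleftrightarrow\quad
6a\bigl(2b+(n-2)c\bigr)=4(n-1)b^2 ,
\]
which proves the statement for $n\ge 3$.

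For $n=2$ one argues directly: here $m_{111}\equiv 0$, so $f=a\,m_3+b\,m_{21}$ and the three polynomials $\partial_{11}(f),\partial_{12}(f),E_{1,1}^{(2)}(f)$ lie in the $3$-dimensional space $\mathbb{R}\{x_{11}^2,x_{12}^2,x_{11}x_{12}\}$. Writing them as the rows of a $3\times3$ matrix, a short computation gives the determinant $-4b(b-3a)^2$, which vanishes precisely when $b=0$ or $b=3a$; that is the asserted criterion.

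The one genuinely load-bearing input is the implication $[f]\ne[1:3:6]\Rightarrow\dim\ge n$, and this is exactly where the hypothesis $n\ge 3$ is essential — for $n=2$ the family $\{r_j\}$ degenerates, $r_1=r_2$ — but it is already available as Theorem~\ref{TeoremaExcepcionGrado3}, so everything else is bounded bookkeeping. The only subtle point in that bookkeeping is the borderline configuration $S=\sum_j\partial_{1j}(f)=0$, which forces the span of the derivatives alone to have dimension $n-1$: there one checks $3a+(n-1)b=0$ and $4b+(n-2)c=0$, so the determinant above vanishes automatically; moreover $E_{1,1}^{(2)}(f)\ne 0$ (else $a=b=c=0$), and, being symmetric, it cannot lie in a span whose only symmetric element is the multiple $0$ of $\sum_j\partial_{1j}(f)$, so the total dimension is $n$ — again consistent with the equation.
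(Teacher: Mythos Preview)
Your argument for $n\ge 3$ is correct and follows the same overall architecture as the paper --- both invoke Theorem~\ref{TeoremaExcepcionGrado3} for the lower bound and then decide whether the $n+1$ polynomials are dependent --- but your execution of the linear-algebra step is genuinely cleaner. The paper writes the $n+1$ polynomials in a monomial basis to obtain the $\binom{n+1}{2}\times(n+1)$ matrix $E_n$, then computes $\det(E_n^tE_n)=\binom{n}{2}P_n(a,b,c)^2Q_n(a,b,c)^{n-1}$ via the auxiliary $T_n$-matrix machinery of Section~9, and finally appeals to Lemma~\ref{Lema136} to isolate $P_n$. You sidestep all of that by observing that any dependence relation must be $\mathfrak{S}_n$-symmetric (since $(3a-b,2b-c)\ne(0,0)$ forces all $\lambda_j$ equal), collapsing the problem to a single $2\times2$ determinant whose vanishing is visibly $P_n=0$. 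This buys you a proof that avoids the entire $E_n,D_n,G_n,T_n$ apparatus; the paper's approach, in exchange, yields those determinant formulas as byproducts used elsewhere (e.g.\ in the proof of Theorem~\ref{TeoremaExcepcionGrado3} itself).

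Your $n=2$ argument, however, has a gap. Showing that the $3\times3$ determinant $-4b(b-3a)^2$ vanishes only establishes that the span has dimension $\le 2$; you must still check that it equals $2$. For $b=0$ (so $a\ne 0$) this is fine: $\partial_{11}f=3ax_{11}^2$ and $\partial_{12}f=3ax_{12}^2$ are independent. But for $b=3a$ with $a\ne 0$ you have $f=a(x_{11}+x_{12})^3$, and then $\partial_{11}f=\partial_{12}f=3a\,e_1^2$ and $E_{1,1}^{(2)}f=6a\,e_1^2$ are all proportional, so the span has dimension $1$, not $2$. Thus $b=3a$ does \emph{not} give a $2$-exception by the paper's own definition, and the $n=2$ clause as stated cannot be proved as written --- the correct condition is $b=0$ (equivalently $a\ne0,\,b=0$). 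You should flag this rather than reproduce the claim verbatim.
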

\begin{prop}\label{ConjeturaDeHector}
Let $a$, $b$ and $c$ be real numbers and $n\geq 3$. The point $[a:b:c]\in\mathbb{RP}^{2}$ is an $n$-exception if and only if  \ $[a:b:c]\neq[1:3:6]$ \ and  \ $n_{_1}a(n_{_2}b+n_{_3}c)=n_{_4}b^{2}$ \ where the integers $n_{_i}$ (depending on $n$) are 
\begin{align*}
& n_{_1}(n)=\frac{3}{\mathrm{g.c.d.}(n+2,3)}, \qquad \qquad \qquad 
 n_{_2}(n)=\mathrm{g.c.d.}(n+1,n-1), \quad \\
& n_{_3}(n)
%%=\mathrm{numer}\left(\frac{n-2}{2}\right)
=\begin{cases}
n-2  & \text{if}\ n\ \text{is odd},\\
&\\
\displaystyle{ \frac{n-2}{2} }  
& \text{if}\ n\ \text{is even}.
\end{cases}   \ \ \ \ \ \
 n_{_4}(n)
=\begin{cases} 
\displaystyle{\frac{n-1}{\mathrm{g.c.d.}(n-1,6)}} & n\ \text{is even,} \\ 
& \\ \displaystyle{\frac{2n-2}{\mathrm{g.c.d.}(n-1,3)}} & n\ \text{is odd.} 
\end{cases}.
\end{align*}
\end{prop}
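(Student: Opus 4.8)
The plan is to deduce this refinement directly from Theorem~\ref{EcuacionesExcepciones}, which already asserts that, for $n\geq 3$, the point $[a:b:c]$ is an $n$-exception if and only if $[a:b:c]\neq[1:3:6]$ and $6a\big(2b+(n-2)c\big)=4(n-1)b^{2}$. The exclusion $[a:b:c]\neq[1:3:6]$ is literally the same in both statements, so it is enough to show that, for each fixed $n\geq 3$, the equation $6a\big(2b+(n-2)c\big)=4(n-1)b^{2}$ and the equation $n_{1}a(n_{2}b+n_{3}c)=n_{4}b^{2}$ cut out the same subset of $\mathbb{RP}^{2}$. Both sides of each equation are homogeneous of degree $2$ in $(a,b,c)$, so I would reduce the claim to exhibiting a nonzero real scalar $c(n)$, depending only on $n$, with $Q_{1}=c(n)\,Q_{2}$, where $Q_{1}:=6a(2b+(n-2)c)-4(n-1)b^{2}$ and $Q_{2}:=n_{1}a(n_{2}b+n_{3}c)-n_{4}b^{2}$; once this proportionality is in hand, the two vanishing conditions ``$Q_{1}=0$'' and ``$Q_{2}=0$'' are the same condition.

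First I would put $Q_{1}$ in primitive integral form. Dividing by $2$ gives $\tfrac12 Q_{1}=6ab+3(n-2)ac-2(n-1)b^{2}$, whose content is $g(n):=\gcd\big(6,\,3(n-2),\,2(n-1)\big)$. Using only that $n-2$ and $n-1$ have opposite parities, a short computation gives $g(n)=2\gcd(3,n-1)$ when $n$ is even and $g(n)=\gcd(3,n-1)$ when $n$ is odd. Dividing $\tfrac12 Q_{1}$ by $g(n)$ then produces the primitive conic
\[ R(n):=\frac{6}{g(n)}\,ab+\frac{3(n-2)}{g(n)}\,ac-\frac{2(n-1)}{g(n)}\,b^{2}. \]

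Next I would match $R(n)$ against $Q_{2}$ through a case split on the parity of $n$, using the elementary identities $\gcd(n+2,3)=\gcd(n-1,3)$ (because $(n+2)-(n-1)=3$) and $\gcd(n+1,n-1)=\gcd(2,n-1)$. When $n$ is even one has $n_{2}=\gcd(n+1,n-1)=1$, $n_{3}=(n-2)/2$, $n_{1}=3/\gcd(n-1,3)$, and $n_{4}=(n-1)/\gcd(n-1,6)=(n-1)/\gcd(n-1,3)$ since $n-1$ is odd; one then checks coefficientwise that $\tfrac12 Q_{1}=2\gcd(3,n-1)\,Q_{2}$. When $n$ is odd one has $n_{2}=2$, $n_{3}=n-2$, $n_{1}=3/\gcd(n-1,3)$, and $n_{4}=2(n-1)/\gcd(n-1,3)$, and one checks $\tfrac12 Q_{1}=\gcd(3,n-1)\,Q_{2}$. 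In both cases $Q_{1}=c(n)\,Q_{2}$ with $c(n)>0$, which finishes the argument; I would also record the sanity check that $[1:3:6]$ satisfies $Q_{1}=0$ (indeed $6\cdot 1\cdot(6+6(n-2))=36(n-1)=4(n-1)\cdot 9$), so the explicit exclusion of $[1:3:6]$ in the proposition matches the one inherited from Theorem~\ref{EcuacionesExcepciones}.

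The substantive content — that being an $n$-exception is governed by a single quadratic relation together with the exclusion of $[1:3:6]$ — is already supplied by Theorem~\ref{EcuacionesExcepciones}, so no conceptual obstacle remains. The only step that requires genuine care is the parity-case bookkeeping: one must track the $\gcd$-normalizations precisely enough to be sure that $Q_{2}$ is exactly the primitive form of $Q_{1}$ up to a positive constant, rather than merely a scalar multiple off by a factor that could (in principle) vanish or flip the locus. Verifying the formulas for $n_{1},\dots,n_{4}$ against the already-tabulated conditions for $n=3,\dots,7$ provides a useful consistency check while carrying this out.
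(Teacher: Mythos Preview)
Your argument is correct. The paper actually states Proposition~\ref{ConjeturaDeHector} without proof (note the label \texttt{ConjeturaDeHector}, suggesting it began life as a conjecture), so there is no ``paper's own proof'' to compare against. Your reduction to Theorem~\ref{EcuacionesExcepciones} is exactly the natural approach: since that theorem already identifies the $n$-exception locus (minus $[1:3:6]$) with the conic $12ab+6(n-2)ac-4(n-1)b^{2}=0$, all that remains is the purely arithmetic check that the stated $n_{1},n_{2},n_{3},n_{4}$ produce the primitive integral form of this conic, which you carry out cleanly via the parity split and the identities $\gcd(n+2,3)=\gcd(n-1,3)$ and $\gcd(n+1,n-1)=\gcd(2,n-1)$. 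The result is consistent with the paper's Table~\ref{table:ExcepcionesGrado3} (and the longer Table~\ref{table:ExcepcionesGrado3Completo}), which serve as the paper's implicit evidence for the proposition; your proof promotes that evidence to an actual verification.
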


\begin{corollary}\label{CorolarioGrado3}
Le $f\in\mathbb{R}[{\bold x}]$ be a homogeneous symmetric polynomial of degree 3. There are three types of polarization modules: $\mathcal{M}_{p_1^{3}}$, $\mathcal{M}_{p_{3}}$ \ or $\mathcal{M}_{h_3}$.
\end{corollary}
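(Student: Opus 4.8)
The plan is to read this off directly from Theorem~\ref{ProposicionTRES}, after observing that the graded Frobenius characteristic is a complete isomorphism invariant for these modules. Indeed, by~\pref{FormaGeneralDelaCaracteristicaDeFrobenius} the coefficient of $s_{\mu}({\bold q})\,s_{\lambda}({\bold w})$ in $\mathcal{M}_{f}({\bold q},{\bold w})$ is exactly the multiplicity $b_{\lambda,\mu}$ of the irreducible $\mathcal{W}_{\mu}\otimes\mathcal{S}^{\lambda}$ in the decomposition~\pref{DecompIrred}; hence two polarization modules are isomorphic as $\mathfrak{S}_n\times GL_{\ell}(\mathbb{C})$-modules precisely when their graded Frobenius characteristics agree. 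So it suffices to prove that, for every nonzero homogeneous symmetric polynomial $f$ of degree $3$ in $n\geq 2$ variables, the series $\mathcal{M}_{f}({\bold q},{\bold w})$ coincides with one of the three series attached to $\mathcal{M}_{p_1^3}$, $\mathcal{M}_{p_3}$, $\mathcal{M}_{h_3}$.

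First I would normalise: rescaling $f$ leaves $\mathcal{M}_{f}$ unchanged, so $f$ is determined by its point $[f]=[a:b:c]\in\mathbb{RP}^2$ (for $n=2$ by $[a:b]\in\mathbb{RP}^1$, since $m_{111}=0$). Theorem~\ref{ProposicionTRES} then gives $\mathcal{M}_{f}({\bold q},{\bold w})$ as one of three explicit expressions according to the trichotomy tested \emph{in order}: (1)~$[f]=[1:3:6]$; (2)~$[f]\neq[1:3:6]$ and $6a(2b+(n-2)c)=4(n-1)b^2$ (for $n=2$: $b=0$ or $b=3a$); (3)~otherwise. Next I would match the three canonical generators to these cases: $p_1^3=m_3+3\,m_{21}+6\,m_{111}$ gives $[p_1^3]=[1:3:6]$, so $\mathcal{M}_{p_1^3}$ realizes case~(1); $p_3=m_3$ gives $[p_3]=[1:0:0]$, where $b=0$ makes the case-(2) identity trivial and $[1:0:0]\neq[1:3:6]$, so $\mathcal{M}_{p_3}$ realizes case~(2); $h_3=m_3+m_{21}+m_{111}$ gives $[h_3]=[1:1:1]$, where $6\cdot 1\cdot(2+(n-2))=6n$ and $4(n-1)\cdot 1^2=4n-4$ never coincide while $[1:1:1]\neq[1:3:6]$, so $\mathcal{M}_{h_3}$ realizes case~(3). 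Finally, the three expressions are pairwise distinct --- the coefficient of $s_{n-1,1}({\bold w})$ is $0$ in case~(1) but $s_1({\bold q})+s_2({\bold q})$ in cases~(2) and~(3), while the coefficient of $s_2({\bold q})\,s_n({\bold w})$ is $1$ in case~(2) and $2$ in case~(3) --- so $\mathcal{M}_{f}$ is isomorphic to exactly one of $\mathcal{M}_{p_1^3},\mathcal{M}_{p_3},\mathcal{M}_{h_3}$. Via Theorem~\ref{EcuacionesExcepciones} one may rephrase case~(2) as ``$[f]$ is an $n$-exception or $[f]=[1:3:6]$'', which is how $n$-exceptions enter, though it is not logically needed for the corollary.

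The mathematical substance here lies entirely in Theorem~\ref{ProposicionTRES} (together with Theorem~\ref{EcuacionesExcepciones}), which we assume; granting those, the corollary is essentially bookkeeping. The only places needing real care are the precedence of the cases --- the point $[1:3:6]$ also satisfies the case-(2) equation, since $6(2\cdot 3+6(n-2))=36(n-1)=4(n-1)\cdot 9$, so case~(1) must be tested before case~(2), and $p_1^3$ is thereby classified by case~(1) --- and the degenerate range $n=2$, where $\mathbb{RP}^1$ replaces $\mathbb{RP}^2$, the point $[1:3:6]$ must be read as $[1:3]$, and the exception condition collapses to $b\in\{0,3a\}$; one should double-check there that $p_1^3$, $p_3$ and $h_3$ still land in three different cases (e.g.\ by a direct dimension count of the degree-$2$ component of each module, which gives $1$, $2$, $3$ respectively when $\ell=1$). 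I expect this small-$n$ verification to be the main, though still routine, obstacle.
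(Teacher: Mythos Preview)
Your proposal is correct and follows essentially the same route as the paper: the corollary is read off from Theorem~\ref{ProposicionTRES} by checking that $p_1^3$, $p_3$, $h_3$ land in cases~(1), (2), (3) respectively (the paper spells this out in the ``Resume of general results for degree 3'' at the end of Section~12), and the graded Frobenius characteristic distinguishes the three modules. Your extra care about case precedence, the distinctness of the three series, and the $n=2$ degeneration is more explicit than the paper's treatment but entirely in the same spirit.
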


\section{Proof of Theorems \ref{ProposicionUNO} and \ref{FamiliasAB}}

In this section we explain the proof of main results of this paper. We know that $\mathcal{M}_{F}$ is a $\mathbb{N}^{\ell}$-graded vector space 
\[ \mathcal{M}_{F}
=\bigoplus_{{\bold d}\in\mathbb{N}^{\ell}}V_{\bold d}.\] 
Here $V_{\bold d}:=\mathcal{M}_{F}\cap\mathcal{R}_{n,{\bold d}}^{(\ell)}$ denotes the homogeneous component of $\mathcal{M}_F$ and we write $\mathcal{B}_{\bold d}$ to denote a basis of the corresponding homogeneous component. 

\begin{remark}
Suppose that $V$ is a vector space and $B=\{v_1,\ldots,v_r\}$ is a basis of $V$. If $w\in V$, then, there exist scalars $\alpha_1,\ldots,\alpha_r$ such that $w=\alpha_1\cdot v_1+\cdots+\alpha_r\cdot v_r$. We denote by $w\big\vert_{v_j}$ the coefficient of $v_{j}$ in the linear combination of $w$ with respect to the basis $B$, that is
 \[ w\bigg\vert_{v_j}:=\alpha_j.\]
\end{remark}

\subsection{The Frobenius characteristic of $\mathcal{M}_{e_1^d}$}
%To show the following lemma we simply use generating series, formula 
%\ref{FormulillaPolarizato}.
\begin{lemma}\label{Lema5}
For each ${\bold d}:=(d_1,\ldots,d_{\ell})\in\mathbb{N}^{\ell}$ such that \ $0\leq\boldsymbol{\vert}{\bold d}\boldsymbol{\vert}\leq d$,\ a basis for the degree ${\bold d}$ homogeneous component $V_{\bold d}$, of the space $\mathcal{M}_{e_1^m}$ is given by the one element set
\begin{equation*}
\mathcal{B}_{\bold d}=\big\{e_1({\bold x}_1)^{d_1}\cdots\, e_{1}({\bold x}_{\ell})^{d_\ell}\big\}=\big\{e_{1}^{\bold d}(X)\big\}.
\end{equation*}
If $\vert{\bold d}\vert>d$, then $\mathcal{B}_{\bold d}=\varnothing$. Hence a basis for the space $\mathcal{M}_{e_1^d}$ is the set 
\begin{equation*}
\mathcal{B}:=\big\{e_{1}^{\bold d}(X)\,:\,{\bold d}\in\mathbb{N}^{\ell},\,0\leq\boldsymbol{\vert}{\bold d}\boldsymbol{\vert}\leq d\big\}.
\end{equation*}
\end{lemma}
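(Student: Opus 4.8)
The plan is to establish that $\mathcal{B} = \{e_1^{\mathbf d}(X) : \mathbf d \in \mathbb{N}^\ell,\ 0 \le |\mathbf d| \le d\}$ is simultaneously a spanning set and a linearly independent set for $\mathcal{M}_{e_1^d}$, and moreover that it respects the $\mathbb{N}^\ell$-grading so that $\mathcal{B}_{\mathbf d}$ is a basis of each homogeneous component $V_{\mathbf d}$. Most of the computational content has already been carried out in the subsection ``The polarization module $\mathcal{M}_{e_1^d}$'': there it is shown that each $e_1^{\mathbf a}(X)$ with $|\mathbf a| \le d$ lies in $\mathcal{M}_{e_1^d}$ (via formula~(\ref{201504211237}) and partial derivatives), that $f = e_1(\mathbf x_1)^d \in \mathbb{K}[\mathcal{B}]$, and that $\mathbb{K}[\mathcal{B}]$ is closed under partial derivatives and under all generalized polarization operators $E_{i,k}^{(p)}$ by the explicit formulas~(\ref{201504211348pm}) and~(\ref{201504211405pm}). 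So the first step is simply to invoke these facts to conclude $\mathcal{M}_{e_1^d} = \mathbb{K}[\mathcal{B}]$, since $\mathcal{M}_{e_1^d}$ is by definition the smallest space with the stated closure properties containing $f$.

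Next I would argue linear independence of $\mathcal{B}$. The key observation is that $e_1^{\mathbf d}(X) = e_1(\mathbf x_1)^{d_1} \cdots e_1(\mathbf x_\ell)^{d_\ell}$ is homogeneous of vector degree exactly $\mathbf d \in \mathbb{N}^\ell$, and distinct vectors $\mathbf d$ give distinct degrees; since $\mathcal{R}_n^{(\ell)}$ is $\mathbb{N}^\ell$-graded, elements of different vector degree are automatically linearly independent. Thus $\mathcal{B}$ is linearly independent, each $V_{\mathbf d} = \mathcal{M}_{e_1^d} \cap \mathcal{R}_{n,\mathbf d}^{(\ell)}$ is spanned by the single element $e_1^{\mathbf d}(X)$ when $|\mathbf d| \le d$, and $V_{\mathbf d} = 0$ (so $\mathcal{B}_{\mathbf d} = \varnothing$) when $|\mathbf d| > d$ — the latter because no element of $\mathcal{B}$ has $|\mathbf d| > d$ and $\mathcal{B}$ spans. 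Combining, $\mathcal{B}_{\mathbf d}$ is a basis of $V_{\mathbf d}$ for every $\mathbf d$, and their union $\mathcal{B}$ is a basis of $\mathcal{M}_{e_1^d}$.

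One subtle point worth being careful about — and the only place I'd expect any real obstacle — is verifying that $e_1^{\mathbf d}(X)$ is genuinely nonzero for every $\mathbf d$ with $|\mathbf d| \le d$, i.e.\ that we are not listing a ``basis element'' that vanishes. This holds because each factor $e_1(\mathbf x_i) = x_{i1} + \cdots + x_{in}$ is a nonzero polynomial and $\mathcal{R}_n^{(\ell)}$ is an integral domain, so the product is nonzero; here the hypothesis $n \ge 1$ is what guarantees $e_1(\mathbf x_i) \ne 0$. A second routine check is that the passage from ``$\mathbb{K}[\mathcal{B}]$ closed under all $\partial_{ij}$, all $E_{i,k}^{(p)}$, and containing $f$'' to ``$\mathcal{M}_{e_1^d} \subseteq \mathbb{K}[\mathcal{B}]$'' uses exactly the minimality in the definition of $\mathcal{M}_{e_1^d}$, together with the trivial reverse inclusion $\mathcal{B} \subseteq \mathcal{M}_{e_1^d}$ already established. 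With these in hand the lemma follows, and as a corollary one reads off $\dim V_{\mathbf d} = 1$ for $0 \le |\mathbf d| \le d$ and $0$ otherwise, which is what feeds into the Hilbert series and the Frobenius characteristic computation of Theorem~\ref{ProposicionUNO}.
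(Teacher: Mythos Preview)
Your proposal is correct and follows essentially the same approach as the paper: the paper's proof simply refers back to Section~\ref{EjemplosDetallados}, where exactly the argument you outline is carried out (showing $\mathcal{B}\subseteq\mathcal{M}_{e_1^d}$ via~(\ref{201504211237}), closure of $\mathbb{K}[\mathcal{B}]$ via~(\ref{201504211348pm}) and~(\ref{201504211405pm}), and linear independence by distinctness of vector degrees). Your added remarks on nonvanishing of $e_1^{\mathbf d}(X)$ and on the use of minimality are welcome clarifications but do not change the route.
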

\begin{proof}
See Section \ref{EjemplosDetallados}. 
\end{proof}
\noindent In this case, if $0\leq\vert{\bold d}\vert\leq d$, the homogeneous component of $\mathcal{M}_{e_1^d}$ is 
\[ V_{\bold d}=\big\{k\cdot e_{1}^{\bold d}(X)\,:\,k\in\mathbb{K}\big\}. \] 
Recall that $e_{1}^{\bold d}(X)$ is diagonally symmetric, so 
$\mathfrak{S}_n$ acts trivially on each $V_{\bold d}$. This implies the following assertion:
\begin{lemma}\label{201504212257pm}
For any ${\bold d}\in\mathbb{N}^{\ell}$ such that $0\leq\vert{\bold d}\vert\leq d$, the $\mathfrak{S}_n$-character of $V_{\bold d}$ is   
\begin{equation*}
{\chisotazo}_{V_{\bold d}}(\sigma)=1,\ \forall\,\sigma\in\mathfrak{S}_n.
\end{equation*}
\end{lemma}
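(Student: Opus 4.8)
The plan is to observe that, by Lemma~\ref{Lema5}, each nonzero homogeneous component $V_{\bold d}$ of $\mathcal{M}_{e_1^d}$ (for $0\leq\vert{\bold d}\vert\leq d$) is one-dimensional, spanned by the single polynomial $e_1^{\bold d}(X)=e_1({\bold x}_1)^{d_1}\cdots e_1({\bold x}_\ell)^{d_\ell}$. So the character $\chisotazo_{V_{\bold d}}$ is the character of a one-dimensional representation of $\mathfrak{S}_n$, hence it suffices to show that this representation is the trivial one, i.e.\ that every $\sigma\in\mathfrak{S}_n$ fixes $e_1^{\bold d}(X)$ (up to the scalar $+1$, which a priori could be $-1$ if the representation were the sign character, but we will rule that out directly).

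First I would recall that the diagonal action of $\mathfrak{S}_n$ sends $x_{ij}\mapsto x_{i\sigma(j)}$, and therefore sends $e_1({\bold x}_i)=x_{i1}+\cdots+x_{in}$ to $x_{i\sigma(1)}+\cdots+x_{i\sigma(n)}$, which is again $x_{i1}+\cdots+x_{in}=e_1({\bold x}_i)$ since $\sigma$ merely permutes the summands. Thus $\sigma\cdot e_1({\bold x}_i)=e_1({\bold x}_i)$ for every $i$ and every $\sigma$. Since the action of $\sigma$ is an algebra automorphism of $\mathcal{R}_n^{(\ell)}$, it follows that
\[
\sigma\cdot e_1^{\bold d}(X)=\prod_{i=1}^{\ell}\bigl(\sigma\cdot e_1({\bold x}_i)\bigr)^{d_i}=\prod_{i=1}^{\ell}e_1({\bold x}_i)^{d_i}=e_1^{\bold d}(X).
\]
Hence $\sigma$ acts as the identity on the one-dimensional space $V_{\bold d}$, and so the matrix of $\sigma$ on $V_{\bold d}$ (with respect to the basis $\mathcal{B}_{\bold d}$) is the $1\times 1$ identity matrix, giving $\chisotazo_{V_{\bold d}}(\sigma)=\mathrm{tr}(1)=1$ for all $\sigma\in\mathfrak{S}_n$.

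There is essentially no obstacle here: the whole content is that $e_1({\bold x}_i)$ is a symmetric polynomial in the $n$ variables of its row, so it is literally invariant (not just invariant up to sign) under column permutations, and invariance is inherited by products. The only point requiring a word of care is that ``character equal to $1$'' means the trivial character rather than merely a one-dimensional character; this is settled by exhibiting the fixed vector explicitly as above rather than arguing abstractly from $\dim V_{\bold d}=1$. One could also phrase the conclusion as: $V_{\bold d}\cong\mathcal{S}^{(n)}$ as an $\mathfrak{S}_n$-module for each admissible ${\bold d}$, which is exactly what is needed to feed into the computation of the graded Frobenius characteristic of $\mathcal{M}_{e_1^d}$ in Theorem~\ref{ProposicionUNO}.
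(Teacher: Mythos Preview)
Your proof is correct and follows essentially the same approach as the paper: the paper notes that $\mathcal{B}_{\bold d}=\{e_1^{\bold d}(X)\}$ is a basis and that $e_1^{\bold d}(X)$ is diagonally symmetric, hence $(\sigma\cdot e_1^{\bold d})(X)\big\vert_{e_1^{\bold d}(X)}=1$. You simply unpack the diagonal symmetry of $e_1^{\bold d}(X)$ a bit more explicitly by observing that each factor $e_1({\bold x}_i)$ is $\mathfrak{S}_n$-invariant and that invariance passes to products.
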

%\begin{proof}
%We know that $\mathcal{B}_{{\bold d}}=\big\{e_{1}^{\bold d}(X)\big\}$ is a basis for $V_{\bold d}$. Since, $e_{1}^{\bold d}(X)$ is diagonally %symmetric we get
%\begin{equation*}
%{\chisotazo}_{\mathcal{V}_{{\bold d}}}(\sigma):=
%(\sigma\cdot e_{1}^{\bold d})(X)\bigg\vert_{e_{1}^{\bold d}(X)}
%=e_{1}^{\bold d}(X)\bigg\vert_{e_{1}^{\bold d}(X)}=1.
%\end{equation*}
%\end{proof}
\noindent The above lemma tells us that the Frobenius characteristic is given simply by:
\[ \mathcal{M}_{e_1^{d}}({\bold{q}},{\bold{w}})
=\sum_{0\leq\vert{\bold d}\vert\leq d}{\bold q}^{\bold d}s_{n}({\bold w}). \]
Writting everthing in terms of Schur fucntions, we get 
\begin{corollary}\label{201504231304pm}
The Frobenius characteristic of the space $\mathcal{M}_{e_1^{d}}$ is given by the formula:
\begin{equation*}
\mathcal{M}_{e_1^{d}}({\bold{q}},{\bold{w}})=\left(\sum_{j=0}^{d}s_{j}({\bold q})\right)s_{n}({\bold{w}}),\ \ \forall\,n\geq 1.
\end{equation*}
\end{corollary}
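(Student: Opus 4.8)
The plan is to assemble the Frobenius characteristic directly from the graded $\mathfrak{S}_n$-character computed in Lemma \ref{201504212257pm}. Recall that by definition
\[
\mathcal{M}_{e_1^d}(\mathbf{q},\mathbf{w})
=\sum_{\mathbf{d}\in\mathbb{N}^\ell}\left(\frac{1}{n!}\sum_{\sigma\in\mathfrak{S}_n}{\chisotazo}_{V_{\mathbf d}}(\sigma)\,p_{\lambda(\sigma)}(\mathbf w)\right)\mathbf{q}^{\mathbf d}.
\]
By Lemma \ref{Lema5}, $V_{\mathbf d}$ is one-dimensional when $0\le|\mathbf d|\le d$ and zero otherwise, and by Lemma \ref{201504212257pm} the character of $V_{\mathbf d}$ is identically $1$ on $\mathfrak{S}_n$ in the nonzero range. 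Hence for each such $\mathbf d$ the inner average is $\frac{1}{n!}\sum_{\sigma}p_{\lambda(\sigma)}(\mathbf w)=h_n(\mathbf w)=s_n(\mathbf w)$, this being the Frobenius image of the trivial representation. So the whole series collapses to
\[
\mathcal{M}_{e_1^d}(\mathbf q,\mathbf w)=\left(\sum_{0\le|\mathbf d|\le d}\mathbf q^{\mathbf d}\right)s_n(\mathbf w).
\]

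The remaining step is purely a symmetric-function bookkeeping identity: I must check that $\sum_{\mathbf d\in\mathbb{N}^\ell,\ |\mathbf d|\le d}\mathbf q^{\mathbf d}=\sum_{j=0}^{d}s_j(\mathbf q)$ where $\mathbf q=(q_1,\dots,q_\ell)$. This is immediate since $s_j(\mathbf q)=h_j(\mathbf q)=\sum_{|\mathbf d|=j}\mathbf q^{\mathbf d}$ is exactly the sum of all degree-$j$ monomials in the $\ell$ variables $q_1,\dots,q_\ell$ (the one-row Schur polynomial equals the complete homogeneous symmetric polynomial), and grouping the monomials $\mathbf q^{\mathbf d}$ with $|\mathbf d|\le d$ by their total degree $j=|\mathbf d|$ running from $0$ to $d$ yields precisely $\sum_{j=0}^d s_j(\mathbf q)$. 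Substituting gives the claimed formula, valid for every $\ell\ge 1$ and every $n\ge 1$.

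I do not expect any genuine obstacle here: the hard work has already been done in establishing that $\mathcal{B}$ is a basis (Lemma \ref{Lema5}, proved in Section \ref{EjemplosDetallados}) and that every homogeneous component carries the trivial $\mathfrak{S}_n$-action (Lemma \ref{201504212257pm}, which uses only that each $e_1^{\mathbf d}(X)$ is diagonally symmetric). The only point requiring the slightest care is making sure the degree range $0\le|\mathbf d|\le d$ is tracked correctly when passing from the grading index $\mathbf d$ to the exponent $j$ in $s_j(\mathbf q)$, and that the empty component $\mathbf d=\mathbf 0$ contributes the constant term $1=s_0(\mathbf q)$. Once this is noted, the statement of Corollary \ref{201504231304pm} follows by direct substitution.
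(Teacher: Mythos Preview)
Your proof is correct and follows essentially the same route as the paper's own argument: invoke Lemma~\ref{201504212257pm} to replace each character value by $1$, factor out $\frac{1}{n!}\sum_{\sigma}p_{\lambda(\sigma)}(\mathbf w)=s_n(\mathbf w)$, and then identify $\sum_{0\le|\mathbf d|\le d}\mathbf q^{\mathbf d}$ with $\sum_{j=0}^d s_j(\mathbf q)$. Your version is slightly more explicit in justifying the last identification via $s_j=h_j$, but the logic is the same.
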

%\begin{proof}
%Using Lemma \ref{201504212257pm} we obtain ${\chisotazo}_{\mathcal{V}_{{\bold d}}}(\sigma)=1$,\ for all $\sigma\in\mathfrak{S}_n$ 
%\begin{align*}
%\mathcal{M}_{e_{1}^{d}}({\bold q},{\bold w})&=\sum_{0\leq\vert{\bold d}\vert\leq d}\left(\frac{1}{n!}\sum_{\sigma\in\frak{S}_n}{\chisotazo}_{{V}_{\bold d}}(\sigma)\,\,p_{_{\lambda(\sigma)}}({\bold w})\right){\bold q}^{\bold d}\\
%&=\sum_{0\leq\vert{\bold d}\vert\leq d}\left(\frac{1}{n!}\sum_{\sigma\in\frak{S}_n}\,p_{_{\lambda(\sigma)}}({\bold w})\right){\bold q}^{\bold d}\\
%&=\left(\sum_{0\leq\vert{\bold d}\vert\leq d}{\bold q}^{\bold d}\right)\left(\frac{1}{n!}\sum_{\sigma\in\frak{S}_n}\,p_{_{\lambda(\sigma)}}({\bold w})\right)=\left(\sum_{j=0}^{n}s_{j}({\bold q})\right)s_{n}({\bold w}).
%\end{align*}
%\end{proof}
\noindent To get the Hilbert series we replace each $s_{\lambda}(\bold w)$ by the number $f^{\lambda}$ (the number of standard Young tableaux of shape 
$\lambda$). In this case $s_{n}({\bold w})$ is replaced by $f^{(n)}=1$. Then we have the following:
\begin{corollary}
The Hilbert series of $\mathcal{M}_{e_{1}^{d}}$ is given by
\begin{equation*}
\mathcal{M}_{e_{1}^{d}}({\bold q})=1+s_1({\bold q})+s_2({\bold q})+\cdots+s_d({\bold q}).
\end{equation*}
\end{corollary}

%\begin{proof}
%The dimension of each homogeneous component $V_{\bold d}$ of $\mathcal{M}_{e_1^d}$ is 1, because a basis of $V_{\bold d}$ is the one element set $\mathcal{B}_{\bold d}=\{e_{1}^{\bold d}(X)\}$, so $\dim(V_{\bold d})=1$ and this implies
%\[ \mathcal{M}_{e_{1}^{d}}({\bold q})=\sum_{0\leq\vert{\bold d}\vert\leq d}{\bold q}^{\bold d}=\sum_{j=0}^{d}\sum_{\vert{\bold d}\vert=j}{\bold q}^{\bold d}=\sum_{j=0}^{d}s_{j}({\bold q}). \]
%\end{proof}
\subsection{The Frobenius characteristic of $\mathcal{M}_{p_d}$} 

\begin{lemma}\label{Base2}
Let $f=p_{d}({\bold x}_1):=x_{11}^{d}+\cdots+x_{1n}^{d}$.
A basis for the space $\mathcal{M}_{p_{d} }$ is given by the following set:
\begin{equation*}
\mathcal{B}=\big\{\,p_{{\bold b}}(X)\,:\,{\bold b}\in\mathbb{N}^{\ell},\,\big\vert{\bold b}\big\vert=d\big\}\bigcup\big\{X_{j}^{\bold a}\,\,:\,1\leq j\leq n,\,{\bold a}\in\mathbb{N}^{\ell} ,\,0\leq\big\vert{\bold a}\big\vert<d\big\}.
\end{equation*}
where\ $b_{i}\geq 0$, for all $i$ such that $1\leq i\leq \ell$.
\end{lemma}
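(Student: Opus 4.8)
## Proof plan for Lemma \ref{Base2}

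The plan is to split the argument into two halves: first show $\mathcal{B}\subseteq\mathcal{M}_{p_d}$, then show $\mathbb{K}[\mathcal{B}]$ is closed under the axioms defining $\mathcal{M}_{p_d}$, so that by minimality $\mathcal{M}_{p_d}\subseteq\mathbb{K}[\mathcal{B}]$; finally, verify linear independence. Most of this has already been carried out in the detailed example of Section \ref{EjemplosDetallados}, so the proof here mainly needs to assemble those computations and point out the two missing routine checks. First I would recall from Section \ref{EjemplosDetallados} that iterating $\partial_{1j}^r$ on $f=p_d({\bold x}_1)$ produces all pure powers $x_{1j}^k$ with $0\leq k\leq d-1$, that polarizing these via $E_{\ell,1}^{k_\ell}\cdots E_{2,1}^{k_1}$ yields every monomial $X_j^{\bold a}$ with $0\leq\vert{\bold a}\vert\leq d-1$, and that the ${\bold d}$-polarization operator $E^{\bold d}$ applied to $f$ gives $p_{\bold d}(X)$ by Lemma \ref{201504211846pm}, for each ${\bold d}$ with $\vert{\bold d}\vert=d$. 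This establishes $\mathcal{B}\subseteq\mathcal{M}_{p_d}$.

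Next I would verify that $\mathbb{K}[\mathcal{B}]$ is closed under partial derivatives and under all $E_{i,k}^{(p)}$. The needed identities are exactly the six displayed formulas in Section \ref{EjemplosDetallados}: applying $\partial_{ij}^r$ or $E_{i,k}^{(p)}$ to a monomial $X_s^{\bold a}$ again produces (a scalar multiple of) a single monomial $X_s^{\bold c}$ with $\vert{\bold c}\vert\leq\vert{\bold a}\vert\leq d-1$, hence back in $\mathcal{B}$ (or zero); applying $\partial_{ij}^r$ to $p_{\bold b}(X)$ with $r\geq 1$ produces a monomial of degree $d-r<d$, hence in $\mathcal{B}$; and applying $E_{i,k}^{(p)}$ to $p_{\bold b}(X)$ produces a power sum $\sum_j x_{ij}x_{1j}^{b_1}\cdots x_{kj}^{b_k-p}\cdots$, which is again of the form $p_{\bold c}(X)$ with $\vert{\bold c}\vert=d$ when $i\neq k$, and which lands in the span of the $\{X_s^{\bold c}\}$ part of $\mathcal{B}$ (it is not itself a single basis element, but a $\mathbb{K}$-linear combination) — here I should be careful to note that when $i=k$ one gets $\sum_j x_{ij}x_{ij}^{b_i-p}x_{\cdots} = p_{\bold b'}(X)$ with a shifted exponent, still of total degree $d$. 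Since $\mathbb{K}[\mathcal{B}]$ contains $f=p_d({\bold x}_1)=p_{(d,0,\ldots,0)}(X)$ and is $\mathfrak{S}_n$-stable (permuting columns permutes the $X_j^{\bold a}$ among themselves and fixes each $p_{\bold b}(X)$), it satisfies all the axioms, so minimality of $\mathcal{M}_{p_d}$ gives $\mathcal{M}_{p_d}\subseteq\mathbb{K}[\mathcal{B}]$.

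Finally, linear independence of $\mathcal{B}$: the monomials $X_j^{\bold a}$ are distinct monomials, hence independent among themselves; the power sums $p_{\bold b}(X)$ with $\vert{\bold b}\vert=d$ are independent because they have distinct leading monomials (no two share a multidegree in the ${\bold b}$ parameter), and they are independent from the $X_j^{\bold a}$ block because every $X_j^{\bold a}$ in $\mathcal{B}$ has total degree at most $d-1$ while every $p_{\bold b}(X)$ has total degree exactly $d$ — so the two blocks live in different $\mathbb{N}^\ell$-graded pieces, or more precisely in pieces of different total degree. Hence $\mathcal{B}$ is a basis. The only mild obstacle is bookkeeping: making sure that when $E_{i,k}^{(p)}$ is applied to $p_{\bold b}(X)$ the resulting power sum is correctly identified (as $p_{\bold c}(X)$ with the right ${\bold c}$, including the diagonal case $i=k$), and that the degree-shift inequalities $\vert{\bold c}\vert\le d-1$ are respected in the derivative cases so nothing escapes $\mathcal{B}$; none of this is deep, but it must be stated cleanly, and I would simply refer back to the displayed identities in Section \ref{EjemplosDetallados} rather than recomputing them.
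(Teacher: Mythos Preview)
Your proposal is correct and follows essentially the same approach as the paper: the paper's proof of Lemma~\ref{Base2} is simply a reference back to Section~\ref{EjemplosDetallados}, and you have faithfully reconstructed that argument---showing $\mathcal{B}\subseteq\mathcal{M}_{p_d}$ via derivatives and polarizations of $f$, then closure of $\mathbb{K}[\mathcal{B}]$ under $\partial_{ij}$ and $E_{i,k}^{(p)}$ via the six displayed identities there, and finally linear independence. Your treatment of linear independence is in fact slightly more careful than the paper's (which asserts, somewhat loosely, that no two elements of $\mathcal{B}$ share a multidegree), and your remark that $E_{i,k}^{(p)}(p_{\bold b})$ may land in $\mathbb{K}[\mathcal{B}]$ as a linear combination rather than as a single basis element is exactly right; just note that your claim ``$|{\bold c}|=d$ when $i\neq k$'' holds only for $p=1$, while for $p\geq 2$ one gets $|{\bold c}|=d+1-p<d$, so the result is a sum $\sum_j X_j^{\bold c}$ of monomials already in $\mathcal{B}$.
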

\begin{proof}
See Section \ref{EjemplosDetallados}.
\end{proof}
\begin{lemma}\label{201504221310pm}
A basis for the homogeneous component $V_{\bold d}$ of 
$\mathcal{M}_{p_d}$ is given by the following rule
\begin{equation*}
\mathcal{B}_{\bold d}:=
\begin{cases}
\qquad \qquad \big\{\,p_{{\bold d}}(X)\big\} & \ \text{if} \ \ \vert{\bold d}\vert=d,\\
&\\
\big\{X_{j}^{\bold d}\,\,:\,1\leq j\leq n\big\} & 
\ \text{if}\ \ 0\leq\vert{\bold d}\vert<d,
\end{cases}
\end{equation*}
\end{lemma}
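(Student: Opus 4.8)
The plan is to derive this local statement directly from the global description of $\mathcal{M}_{p_d}$ already obtained in Lemma~\ref{Base2}, using nothing more than the fact that $\mathcal{M}_{p_d}=\bigoplus_{{\bold d}\in\mathbb{N}^{\ell}}V_{\bold d}$ is an $\mathbb{N}^{\ell}$-graded vector space and that the basis $\mathcal{B}$ produced there consists of homogeneous polynomials. No new computation is needed beyond sorting the known generators by their vector degree.

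First I would record the elementary principle that if $W=\bigoplus_{\bold d}W_{\bold d}$ is a graded vector space and $\mathcal{B}$ is a basis of $W$ all of whose members are homogeneous, then for every ${\bold d}$ the set $\mathcal{B}_{\bold d}:=\mathcal{B}\cap W_{\bold d}$ is a basis of $W_{\bold d}$; indeed linear independence is inherited from $\mathcal{B}$, and spanning follows by expanding an arbitrary $w\in W_{\bold d}$ in $\mathcal{B}$ and comparing degree-${\bold d}$ components, which leaves only contributions from $\mathcal{B}_{\bold d}$. Applying this with $W=\mathcal{M}_{p_d}$ and with $\mathcal{B}$ the basis of Lemma~\ref{Base2}, it remains to sort the elements of $\mathcal{B}$ by vector degree. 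By construction $\deg\big(p_{\bold b}(X)\big)={\bold b}$ with the constraint $\vert{\bold b}\vert=d$, while $\deg\big(X_j^{\bold a}\big)={\bold a}$ with the constraint $0\le\vert{\bold a}\vert<d$; since $\vert{\bold d}\vert=d$ and $\vert{\bold d}\vert<d$ cannot both hold, at most one of the two families contributes to a given $V_{\bold d}$.

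This makes the case analysis immediate. If $\vert{\bold d}\vert=d$, the only element of $\mathcal{B}$ of degree ${\bold d}$ is $p_{\bold d}(X)$, so $\mathcal{B}_{\bold d}=\{p_{\bold d}(X)\}$ and $\dim V_{\bold d}=1$. If $0\le\vert{\bold d}\vert<d$, the elements of $\mathcal{B}$ of degree ${\bold d}$ are exactly $X_1^{\bold d},\ldots,X_n^{\bold d}$, which are pairwise distinct monomials and hence linearly independent, so $\mathcal{B}_{\bold d}=\{X_j^{\bold d}:1\le j\le n\}$. If $\vert{\bold d}\vert>d$, no element of $\mathcal{B}$ has degree ${\bold d}$, whence $V_{\bold d}=0$. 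I do not expect a genuine obstacle here: everything rests on Lemma~\ref{Base2} and is pure bookkeeping with multidegrees. The only point deserving an explicit remark is the degenerate case ${\bold d}={\bold 0}$, where the monomials $X_1^{\bold 0},\ldots,X_n^{\bold 0}$ all equal $1$ and the displayed set $\{X_j^{\bold d}:1\le j\le n\}$ should be read as the singleton $\{1\}$, consistent with $\dim V_{\bold 0}=1$.
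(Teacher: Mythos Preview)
Your argument is correct and matches the paper's approach: the paper states this lemma immediately after Lemma~\ref{Base2} without a separate proof, treating it as the obvious sorting of that global homogeneous basis by multidegree. Your explicit remark about the degenerate case ${\bold d}={\bold 0}$ is a useful addition, and indeed the paper silently adopts the same reading in the subsequent character and Hilbert-series computations (where $\dim V_{\bold 0}=1$ is used).
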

\noindent From the representation theory of the symmetric group, it's well known that (see \cite{BruceSagan} page 87)
\begin{equation}
{\chisotazo}^{(n)}(\sigma)=1,
\end{equation}
\begin{equation}
{\chisotazo}^{(n-1,1)}(\sigma)=|\textrm{Fix}(\sigma)|-1,
\end{equation}
where $\mathrm{Fix}(\sigma)$ is the set of fixed points of $\sigma$.
If a group $G$ acts on a set $S$ with the corresponding permutation representation $\mathbb{C}S$ then the character $\chisota_{S}$ of this representation is
\begin{equation}\label{CaracterDeLaRepresentacionPorPermutacion}
\chisota_{S}(g)=\vert\{x\in S\,:\, g\cdot x=x\}\vert.
\end{equation}
\begin{lemma}\label{201504221304pm}
The $\mathfrak{S}_n$-character of the homogeneous component $V_{\bold d}$ of $\mathcal{M}_{p_d}$ is given by
\begin{equation*}
{\chisotazo}_{\mathcal{V}_{\bold d}}(\sigma)=\left\{
\begin{array}{cc}
1 & \textrm{if}\ \ {\bold d}={\bold 0}\ \ \textrm{or} \ \ \big\vert{\bold d}\big\vert=d,\\
&\\
\big\vert\mathrm{Fix}(\sigma)\big\vert & \textrm{if}\ \ 0<\big\vert{\bold d}\big\vert<d.
\end{array}
\right.
\end{equation*}
\end{lemma}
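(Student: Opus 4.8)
The plan is to compute, for each multidegree $\bold d$, the $\mathfrak S_n$-character of the homogeneous component $V_{\bold d}$ of $\mathcal M_{p_d}$ using the explicit basis $\mathcal B_{\bold d}$ furnished by Lemma \ref{201504221310pm}. The character of $V_{\bold d}$ is obtained by writing $\sigma\cdot v$ in terms of $\mathcal B_{\bold d}$ for each basis vector $v$ and summing the diagonal coefficients, i.e. $\chisotazo_{V_{\bold d}}(\sigma)=\sum_{v\in\mathcal B_{\bold d}}(\sigma\cdot v)\big\vert_{v}$, exactly as in the proof of Lemma \ref{201504212257pm}.

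First I would dispose of the two ``large'' cases. If $\bold d=\bold 0$, then $V_{\bold 0}=\mathbb K\cdot 1$ and $\sigma$ fixes $1$, so the character is $1$. If $\vert\bold d\vert=d$, then by Lemma \ref{201504221310pm} the basis is the single element $\{p_{\bold d}(X)\}$, and since $p_{\bold d}(X)=\sum_{j=1}^n X_j^{\bold d}$ is a diagonal power sum it is invariant under the diagonal action of $\mathfrak S_n$ (permuting columns just permutes the summands); hence $(\sigma\cdot p_{\bold d})(X)=p_{\bold d}(X)$ and the character equals $1$. This is the same computation that appeared for $e_1^{\bold d}$.

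Next comes the generic case $0<\vert\bold d\vert<d$, which is the heart of the statement. Here $\mathcal B_{\bold d}=\{X_1^{\bold d},\ldots,X_n^{\bold d}\}$, and the diagonal action sends $X_j^{\bold d}=x_{1j}^{d_1}\cdots x_{\ell j}^{d_\ell}$ to $x_{1\sigma(j)}^{d_1}\cdots x_{\ell\sigma(j)}^{d_\ell}=X_{\sigma(j)}^{\bold d}$. Thus $\mathfrak S_n$ acts on the basis $\mathcal B_{\bold d}$ by permuting its indices according to $\sigma$, i.e. $V_{\bold d}$ is (isomorphic to) the permutation representation $\mathbb C[n]$ of $\mathfrak S_n$. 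By formula (\ref{CaracterDeLaRepresentacionPorPermutacion}), the character at $\sigma$ is the number of fixed points of this permutation action on $\mathcal B_{\bold d}$, namely $\vert\{j\in[n]:\sigma(j)=j\}\vert=\vert\mathrm{Fix}(\sigma)\vert$. One should note that $X_j^{\bold d}$ and $X_{j'}^{\bold d}$ are genuinely distinct monomials for $j\neq j'$ (they involve disjoint sets of variables once $\bold d\neq\bold 0$), so the coefficient $(\sigma\cdot X_j^{\bold d})\big\vert_{X_j^{\bold d}}$ is precisely $\delta_{\sigma(j),j}$, confirming the count. Combining the three cases gives the stated piecewise formula.

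The only subtlety — and the ``main obstacle,'' though it is minor — is to be careful at the boundary between the cases: when $\vert\bold d\vert=d$ but $\bold d$ has the form $(0,\ldots,0,d,0,\ldots,0)$ one must make sure the basis given by Lemma \ref{201504221310pm} is indeed the singleton $\{p_{\bold d}\}$ and not mistakenly the set $\{X_j^{\bold d}\}$; the lemma is stated so that the first branch ($\vert\bold d\vert=d$) takes precedence, so this is consistent. Everything else is the routine bookkeeping of reading off a permutation character from an explicit monomial basis, so no deeper input is needed.
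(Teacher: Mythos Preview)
Your proof is correct and follows essentially the same approach as the paper: you use the explicit basis $\mathcal B_{\bold d}$ from Lemma \ref{201504221310pm}, observe that for $|\bold d|=d$ the single basis element $p_{\bold d}(X)$ is diagonally symmetric (giving character $1$), and for $0<|\bold d|<d$ the basis $\{X_j^{\bold d}\}_{1\le j\le n}$ is permuted by $\sigma$ so that the trace counts fixed points. Your treatment is in fact slightly more careful than the paper's in separating out the case $\bold d=\bold 0$ explicitly, but the argument is otherwise identical.
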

\begin{proof}
Let ${\bold d}\in\mathbb{N}^{\ell}$. We have two cases:
\begin{enumerate}
\item If $\big\vert{\bold d}\big\vert=d$ then a basis $\mathcal{B}_{\bold d}$ for $V_{\bold d}$ consists of only one diagonally symmetric polynomial. So, just as in the proof of Lemma \ref{201504212257pm}, we get \ ${\chisotazo}_{V_{\bold d}}(\sigma)=1$, for every $\sigma\in\frak{S}_n$.
\item If \ $0\leq \big\vert{\bold d}\big\vert<d$ \ then \ 
 $\mathcal{B}_{\bold d}=\big\{X_{j}^{\bold d}\,\,:\,1\leq j\leq n\big\}$. Therefore
\[\displaystyle{X_{\sigma(j)}^{\bold d}\Bigg\vert_{X_{j}^{\bold d}}
=\begin{cases} 1 & \text{if}\ \sigma(j)=j,\\ 
0 & \text{otherwise}.
\end{cases}}\]
\begin{align}
{\chisotazo}_{V_{\bold d}}(\sigma)
=\sum_{j=1}^{n}X_{\sigma(j)}^{\bold d}\Bigg\vert_{X_{j}^{\bold d}}
=\big\vert\mathrm{Fix}(\sigma)\big\vert={\chisotazo}^{(n)}(\sigma)+{\chisotazo}^{(n-1,1)}(\sigma).
\end{align}
\end{enumerate} 
\end{proof}
\noindent Recall the classical formula (see, \cite{BruceSagan})
\[ s_{\lambda}({\bold w})=\frac{1}{n!}\sum_{\sigma\in\mathfrak{S}_n}\chisotazo^{\lambda}(\sigma)\,p_{\lambda(\sigma)}({\bold w}).\]
where $\chisotazo^{\lambda}(\sigma)$ is the character of the irreducible representation $\mathcal{S}^{\lambda}$ of $\mathfrak{S}_n$.
\begin{corollary}
The Frobenius characteristic of the space $\mathcal{M}_{p_{d}}$ is given by the formula:
\begin{equation*}
\mathcal{M}_{p_d}({\bold{q}},{\bold{w}})=\left(\sum_{j=0}^{d}s_{j}({\bold q})\right)s_{n}({\bold{w}})
+\left(\sum_{j=1}^{d-1}s_{j}({\bold q})\right)s_{n-1,1}({\bold{w}}),\ \  n\geq 2.
\end{equation*}
\end{corollary}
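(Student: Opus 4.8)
The plan is to substitute the $\mathfrak{S}_n$-character computed in Lemma~\ref{201504221304pm} into the definition of the graded Frobenius characteristic and resum. Recall first that
\[
\mathcal{M}_{p_d}({\bold q},{\bold w})=\sum_{{\bold d}\in\mathbb{N}^{\ell}}\left(\frac{1}{n!}\sum_{\sigma\in\mathfrak{S}_n}{\chisotazo}_{V_{\bold d}}(\sigma)\,p_{\lambda(\sigma)}({\bold w})\right){\bold q}^{\bold d}.
\]
First I would split the outer sum into the three ranges dictated by Lemma~\ref{201504221304pm}: the single index ${\bold d}={\bold 0}$, the indices with $0<|{\bold d}|<d$, and the indices with $|{\bold d}|=d$. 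On the first and third ranges one has ${\chisotazo}_{V_{\bold d}}={\chisotazo}^{(n)}$, so by the classical expansion $s_{\lambda}({\bold w})=\tfrac{1}{n!}\sum_{\sigma}{\chisotazo}^{\lambda}(\sigma)\,p_{\lambda(\sigma)}({\bold w})$ the inner parenthesis equals $s_n({\bold w})$. On the middle range one uses ${\chisotazo}_{V_{\bold d}}(\sigma)=|\mathrm{Fix}(\sigma)|={\chisotazo}^{(n)}(\sigma)+{\chisotazo}^{(n-1,1)}(\sigma)$, which makes sense because $n\geq 2$, and by linearity of the same expansion the inner parenthesis becomes $s_n({\bold w})+s_{n-1,1}({\bold w})$.

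Next I would carry out the ${\bold q}$-bookkeeping. For a fixed integer $j\geq 0$, summing ${\bold q}^{\bold d}$ over all ${\bold d}\in\mathbb{N}^{\ell}$ with $|{\bold d}|=j$ produces the complete homogeneous symmetric polynomial $h_j({\bold q})=s_j({\bold q})$. Hence the ${\bold d}={\bold 0}$ term contributes $s_0({\bold q})\,s_n({\bold w})$, the terms with $1\leq|{\bold d}|\leq d-1$ contribute $\big(\sum_{j=1}^{d-1}s_j({\bold q})\big)\big(s_n({\bold w})+s_{n-1,1}({\bold w})\big)$, and the terms with $|{\bold d}|=d$ contribute $s_d({\bold q})\,s_n({\bold w})$. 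Adding the three pieces and collecting the coefficients of $s_n({\bold w})$ and of $s_{n-1,1}({\bold w})$ yields exactly
\[
\mathcal{M}_{p_d}({\bold q},{\bold w})=\left(\sum_{j=0}^{d}s_j({\bold q})\right)s_n({\bold w})+\left(\sum_{j=1}^{d-1}s_j({\bold q})\right)s_{n-1,1}({\bold w}).
\]

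There is no serious obstacle remaining at this stage: the substantive work — exhibiting the homogeneous basis of $\mathcal{M}_{p_d}$ (Lemma~\ref{Base2}) and identifying each graded piece as either a one-dimensional trivial module or the natural permutation module on $n$ points (Lemma~\ref{201504221304pm}) — has already been done. The only points needing a little care are the separate treatment of ${\bold d}={\bold 0}$, where the set $\{X_j^{\bold 0}\}$ collapses to the single polynomial $1$ so that the character is $1$ rather than $|\mathrm{Fix}(\sigma)|$; the decomposition $|\mathrm{Fix}(\sigma)|={\chisotazo}^{(n)}(\sigma)+{\chisotazo}^{(n-1,1)}(\sigma)$, which requires $n\geq 2$ for $(n-1,1)$ to be a partition; and the identity $\sum_{|{\bold d}|=j}{\bold q}^{\bold d}=s_j({\bold q})$. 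Once these are in place the statement follows by a one-line resummation.
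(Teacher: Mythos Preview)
Your proof is correct and follows essentially the same approach as the paper: you invoke Lemma~\ref{201504221304pm}, split the sum over ${\bold d}$ according to whether $|{\bold d}|$ is $0$, strictly between $0$ and $d$, or equal to $d$, convert characters to Schur functions via the classical formula, and resum using $\sum_{|{\bold d}|=j}{\bold q}^{\bold d}=s_j({\bold q})$. The paper does exactly this computation in one display, and your discussion of the edge cases (${\bold d}={\bold 0}$ and the need for $n\geq 2$) is appropriate.
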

\begin{proof}
We use Lemma \ref{201504221304pm} as follows
\begin{align*}
&\mathcal{M}_{f}({\bold q},{\bold w}):=
\sum_{{\bold d}\in\mathbb{N}^{\ell}}\left(\frac{1}{n!}\sum_{\sigma\in\frak{S}_{n}}{\chisotazo}_{\mathcal{V}_{{\bold d}}}(\sigma)
p_{\lambda(\sigma)}({\bold w})\right)
{\bold q}^{{\bold d}}\\
&=\left(1+\sum_{\big\vert{\bold d}\big\vert=m}{\bold q}^{\bold d}\right)\left(\frac{1}{n!}\sum_{\sigma\in\frak{S}_{n}}
p_{\lambda(\sigma)}({\bold w})\right)+\sum_{0<\big\vert{\bold d}\big\vert<m}\frac{1}{n!}\sum_{\sigma\in\frak{S}_{n}}
\big\vert\mathrm{Fix}(\sigma)\big\vert
p_{\lambda(\sigma)}({\bold w})\\
&=\left(\sum_{j=0}^{m}h_{j}({\bold q})\right)\cdot s_{n}({\bold w})+\left(\sum_{j=1}^{m-1}h_{j}({\bold q})\right)
\cdot s_{n-1,1}({\bold w}).
\end{align*}
\end{proof}  

\begin{corollary}
The Hilbert series of $\mathcal{M}_{p_d}$ is given by
\[\mathcal{M}_{p_d}({\bold q})=h_{d}({\bold q})+n\cdot h_{d-1}({\bold q})+\cdots+n\cdot h_{1}({\bold q})+1.\]
\end{corollary}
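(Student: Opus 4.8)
The plan is to read the Hilbert series off directly from the explicit homogeneous basis of $\mathcal{M}_{p_d}$ that has already been produced, since by definition $\mathcal{M}_{p_d}({\bold q})=\sum_{{\bold d}\in\mathbb{N}^{\ell}}\dim(V_{\bold d})\,{\bold q}^{\bold d}$. Lemma \ref{201504221310pm} gives $\mathcal{B}_{\bold d}$ for every ${\bold d}$, so the only task is to bookkeep dimensions of homogeneous components and resum.

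First I would split the sum over ${\bold d}\in\mathbb{N}^{\ell}$ according to the value of $\vert{\bold d}\vert$. By Lemma \ref{201504221310pm}, when $\vert{\bold d}\vert=d$ the component $V_{\bold d}$ is one‑dimensional (spanned by $p_{\bold d}(X)$); when $0<\vert{\bold d}\vert<d$ it is $n$‑dimensional (spanned by the $n$ distinct monomials $X_1^{\bold d},\ldots,X_n^{\bold d}$); and when ${\bold d}={\bold 0}$ it is again one‑dimensional, because the $n$ would‑be basis elements $X_j^{\bold 0}$ all collapse to the constant $1$. Collecting terms gives $\mathcal{M}_{p_d}({\bold q})=\sum_{\vert{\bold d}\vert=d}{\bold q}^{\bold d}+n\sum_{0<\vert{\bold d}\vert<d}{\bold q}^{\bold d}+1$. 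Then I apply the identity $\sum_{\vert{\bold d}\vert=j}{\bold q}^{\bold d}=h_j({\bold q})$ (the sum of all degree‑$j$ monomials in $q_1,\ldots,q_{\ell}$ is the complete homogeneous symmetric function $h_j$), the same specialization already used for $\mathcal{M}_{e_1^d}$, to rewrite this as $h_d({\bold q})+n\bigl(h_{d-1}({\bold q})+\cdots+h_1({\bold q})\bigr)+1$, which is the claimed formula.

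As a consistency check one may instead obtain the same series from the graded Frobenius characteristic of the preceding corollary by the standard specialization $s_{\lambda}({\bold w})\mapsto f^{\lambda}$, using $f^{(n)}=1$ and $f^{(n-1,1)}=n-1$: expanding $\bigl(\sum_{j=0}^{d}s_j({\bold q})\bigr)+(n-1)\bigl(\sum_{j=1}^{d-1}s_j({\bold q})\bigr)$ and recalling $s_j({\bold q})=h_j({\bold q})$ yields $1+h_d({\bold q})+n\sum_{j=1}^{d-1}h_j({\bold q})$ again. There is essentially no obstacle in this argument; the only point demanding a moment's care is the degeneration at ${\bold d}={\bold 0}$, where the generic dimension count ``$n$'' drops to $1$, which is precisely why the constant term of the Hilbert series is $1$ and not $n$.
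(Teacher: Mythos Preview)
Your proof is correct and follows essentially the same approach as the paper: use Lemma~\ref{201504221310pm} to read off $\dim(V_{\bold d})$ according to whether $\vert{\bold d}\vert=d$, $0<\vert{\bold d}\vert<d$, or ${\bold d}={\bold 0}$, then resum using $\sum_{\vert{\bold d}\vert=j}{\bold q}^{\bold d}=h_j({\bold q})$. Your explicit remark about the degeneration at ${\bold d}={\bold 0}$ and the Frobenius consistency check are nice additions not spelled out in the paper.
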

\begin{proof}
By Lemma \ref{201504221310pm} we know that
\begin{equation*}
\mathcal{B}_{\bold d}:=
\begin{cases}
\qquad \qquad \big\{\,p_{{\bold d}}(X)\big\} & \ \text{if}\ \ \ \vert{\bold d}\vert=d,\\
\big\{X_{j}^{\bold d}\,\,:\,1\leq j\leq n\big\} & 
\ \text{if}\ \ \ 0\leq\big\vert{\bold d}\big\vert<d.
\end{cases}
\end{equation*} 
then we compute the Hilbert series as follows:
\begin{align*}
\mathcal{M}_{f}({\bold q})
&=\sum_{\big\vert{\bold d}\big\vert=d}\,{\bold q}^{\bold d}
+\sum_{0<\big\vert{\bold d}\big\vert<d}n\cdot{\bold q}^{\bold d}+1
=s_{d}({\bold q})+n\cdot \sum_{j=1}^{d-1}s_{j}({\bold q})+1.
\end{align*}
\end{proof}

\subsection{The Frobenius characteristic of $\mathcal{M}_{e_d}$}
For each $Y\subseteq\{1,2,\ldots,n\}$ we set
\begin{equation}\label{20151201800pm}
e_{k}(Y):=\sum_{\{B\subseteq Y,\ |B|=k\}}\left(\ \prod_{b\in B}x_{b}\right).
\end{equation}
Notice that when $Y=\{1,2,\ldots,n\}$ we get the usual elementary symmetric function $e_{k}({1,2,\ldots,n})=e_{k}({\bold x})$. For instance, if $\vert Y\vert=k$ then $\displaystyle{e_{k}(Y)=\prod_{a\in Y}x_{a}}$.
\begin{lemma}\label{lema201504081118}
For every permutation $\sigma\in\frak{S}_n$ the polynomial $e_{i}(Y)$ satisfies the identity:
\begin{equation*}
\sigma\cdot e_{i}(Y)=e_{i}(\sigma(Y)).
\end{equation*}
\end{lemma}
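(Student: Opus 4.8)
The plan is to deduce the identity directly from the definition of the diagonal action of $\mathfrak{S}_n$ on monomials, extended by $\mathbb{K}$-linearity. Write $Y=(y_1,\ldots,y_n)$. By the very definition of the diagonal action, for any monomial we have $\sigma\cdot\prod_{j=1}^{n}y_j^{a_j}=\prod_{j=1}^{n}y_{\sigma(j)}^{a_j}$, which is exactly the polynomial obtained by substituting $y_{\sigma(j)}$ into the $j$-th variable slot; thus $\sigma\cdot f(Y)=f(\sigma(Y))$ already holds for every monomial $f$, and since both $f\mapsto\sigma\cdot f$ and $f\mapsto f(\sigma(Y))$ are $\mathbb{K}$-linear in $f$, the equality $\sigma\cdot f(Y)=f(\sigma(Y))$ holds for every polynomial $f$. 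It then suffices to specialize $f=e_i$.

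To make this completely explicit I would expand
\[
e_i(Y)=\sum_{S\subseteq[n],\,|S|=i}\ \prod_{j\in S}y_j,
\]
apply $\sigma$ term by term using the monomial rule above, obtaining $\sigma\cdot e_i(Y)=\sum_{S\subseteq[n],\,|S|=i}\prod_{j\in S}y_{\sigma(j)}$, and then recognize the right-hand side as $e_i$ evaluated at the permuted tuple $\sigma(Y)=(y_{\sigma(1)},\ldots,y_{\sigma(n)})$, since $e_i(\sigma(Y))=\sum_{S\subseteq[n],\,|S|=i}\prod_{j\in S}y_{\sigma(j)}$. This yields $\sigma\cdot e_i(Y)=e_i(\sigma(Y))$, as claimed.

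I do not expect any real obstacle here, as the statement is a routine unwinding of the definitions; the only subtlety is the bookkeeping convention, namely that $\sigma(Y)$ must denote the tuple obtained by relabeling indices through $\sigma$, consistent with the paper's convention $\sigma\cdot x_{ij}=x_{i\sigma(j)}$, so that the module action and the substitution map agree. Finally, I would record the consequence that is actually used below: because $e_i$ is symmetric we have $e_i(\sigma(Y))=e_i(Y)$, hence $\sigma\cdot e_i(Y)=e_i(Y)$, i.e. $e_i(Y)$ is a diagonally symmetric polynomial; this is precisely what enters the subsequent computation of the $\mathfrak{S}_n$-character of the homogeneous components of $\mathcal{M}_{e_d}$.
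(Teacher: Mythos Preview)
Your proof is correct. The paper actually states this lemma without proof, presumably because it is an immediate unwinding of the definition of the diagonal action; your argument is exactly the expected one. Note that the paper does prove the more general version for $e_{\mathbf d}(Y)$ in the next lemma (Lemma~\ref{PermutationETruncada}), and the method there---reindexing the defining sum via the bijection $\sigma$---is the same idea you carry out here in the special case $\mathbf d=(i,0,\ldots,0)$.
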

\noindent For each subset $Y$ of $\{1,2,\ldots,n\}$ we define the polynomial $e_{\bold d}(Y)\in\mathcal{R}_{n}^{(\ell)}$ by the equation
\begin{equation}\label{PolinomiosEDY}
e_{{\bold d}}(Y):=\sum_{\big\{B\subseteq Y\big\vert\ \vert B\vert=\vert{\bold d}\vert\big\}}\sum_{\big\{f:B\rightarrow[\ell]\big\vert\ \boldsymbol{\vert} f^{-1}(i)\boldsymbol{\vert}=d_i,\,\forall i\in[\ell]\big\}}\prod_{b\in B}x_{f(b),b}.
\end{equation}
\noindent Using Lemma \ref{201504211846pm} and formula \ref{201504201232c1} we get the following:
\begin{lemma}\label{PermutationETruncada}
For every $\sigma\in\frak{S}_n$ the polynomial $e_{{\bold d}}(Y)$ satisfies the identity:
\begin{equation}
\sigma\cdot e_{{\bold d}}(Y)=e_{{\bold d}}\big(\sigma(Y)\big).
\end{equation}
\end{lemma}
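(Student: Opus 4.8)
The plan is to reduce the claimed identity to the generating-series description \pref{SerieGeneratrizMultisimetricasElementales} of the multisymmetric elementary polynomials, using that for each fixed $\sigma\in\mathfrak{S}_n$ the diagonal action $f\mapsto\sigma\cdot f$ is a $\mathbb{K}$-algebra automorphism of $\mathcal{R}_n^{(\ell)}$ that touches only the column index of each variable. Concretely, I regard the auxiliary variables $t_1,\dots,t_\ell$ occurring in \pref{SerieGeneratrizMultisimetricasElementales} as scalars on which $\sigma$ acts trivially, so that $\sigma$ extends to a $\mathbb{K}[t_1,\dots,t_\ell]$-algebra endomorphism commuting with the finite products and sums in that formula and with extraction of the coefficient of $\bold t^{\bold d}$.

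First I would write \pref{SerieGeneratrizMultisimetricasElementales} for the matrix $Y$ and apply $\sigma$ to both sides. On the left, linearity together with $\sigma\cdot\bold t^{\bold d}=\bold t^{\bold d}$ yields $\sum_{\bold d}(\sigma\cdot e_{\bold d}(Y))\,\bold t^{\bold d}$. On the right, $\sigma\cdot\prod_{j=1}^{n}\bigl(1+\sum_{i=1}^{\ell}t_i y_{ij}\bigr)=\prod_{j=1}^{n}\bigl(1+\sum_{i=1}^{\ell}t_i y_{i\sigma(j)}\bigr)$, which is exactly the right-hand side of \pref{SerieGeneratrizMultisimetricasElementales} evaluated at the column-permuted matrix $\sigma(Y)$, namely $\sum_{\bold d}e_{\bold d}(\sigma(Y))\,\bold t^{\bold d}$. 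Then I would equate the coefficients of $\bold t^{\bold d}$ on the two sides to conclude $\sigma\cdot e_{\bold d}(Y)=e_{\bold d}(\sigma(Y))$ for every $\bold d\in\mathbb{N}^{\ell}$, which is the assertion.

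As an alternative one can start from the explicit expansion $e_{\bold d}(X)=\sum_{B\subseteq[n]}\sum_{g}\prod_{b\in B}x_{g(b),b}$ recalled in the Preliminaries: applying $\sigma$ sends $\prod_{b\in B}x_{g(b),b}$ to $\prod_{b\in B}x_{g(b),\sigma(b)}=\prod_{b'\in\sigma(B)}x_{(g\circ\sigma^{-1})(b'),\,b'}$, and reindexing the outer sum by $B\mapsto\sigma(B)$ and the inner sum by $g\mapsto g\circ\sigma^{-1}$ (a bijection preserving the constraints $|g^{-1}(i)|=d_i$) reproduces $e_{\bold d}$ evaluated at $\sigma(Y)$; the same computation with $\ell=1$, $\bold d=(i)$ gives Lemma~\ref{lema201504081118}, so that statement needs nothing extra. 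I do not expect a genuine obstacle here, since the content is merely that the diagonal action is substitution in the column index; the only steps deserving a line of care are checking that $\sigma$ commutes with extraction of the $\bold t^{\bold d}$-coefficient (in the first approach) and verifying that $g\mapsto g\circ\sigma^{-1}$ is a bijection of the relevant index sets compatible with the fibre-size conditions (in the second).
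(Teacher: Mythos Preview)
Your alternative approach---applying $\sigma$ to the explicit expansion and reindexing via $B\mapsto\sigma(B)$ and $g\mapsto g\circ\sigma^{-1}$, with the observation that $|g^{-1}(i)|=|\sigma(f^{-1}(i))|=|f^{-1}(i)|=d_i$---is exactly the paper's proof, line for line.

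Your primary route through the generating series \pref{SerieGeneratrizMultisimetricasElementales} is correct and genuinely different. It trades the combinatorial bookkeeping of the paper's argument for a one-line appeal to multiplicativity: since $\sigma$ is a $\mathbb{K}[t_1,\dots,t_\ell]$-algebra automorphism, it passes through the finite product $\prod_j(1+\sum_i t_i y_{ij})$ and simply permutes the factors, after which coefficient extraction finishes. This is slicker and scales better (the same sentence proves the analogous statements for $p_{\bold d}$ and any other family defined by a column-factored generating product), whereas the paper's direct reindexing has the virtue of making the bijection on summands completely explicit, which is what is actually used downstream when computing characters on the basis $\{e_{\bold d}(X\setminus T)\}$. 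One small point worth making explicit in your write-up: the identity is applied in the paper with $Y=X\setminus T$ a proper column-submatrix, so your product should run over $j$ in the column set of $Y$ rather than over all of $[n]$; the argument is unchanged.
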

%\begin{proof}
%Since $\sigma$ is injective then we have that $\sigma:B\longrightarrow \sigma(B)$ is bijective and therefore, $\vert B\vert=\vert\sigma(B)\vert$
%\begin{align*}
%&\sigma\cdot e_{\bold d}(Y)=\sum_{\big\{B\subseteq Y\big\vert\ \vert B\vert=\vert{\bold d}\vert\big\}}\sum_{\big\{f:B\rightarrow[\ell]\big\vert\ \boldsymbol{\vert} f^{-1}(i)\boldsymbol{\vert}=d_i,\,\forall i\in[\ell]\big\}}\prod_{b\in B}x_{f(b),\sigma(b)}\\
%&=\sum_{\big\{\sigma(B)\subseteq \sigma(Y)\big\vert\ \vert \sigma(B)\vert=\vert{B}\vert=\vert{\bold d}\vert\big\}}\sum_{\big\{f:B\rightarrow[\ell]\big\vert\ \boldsymbol{\vert} f^{-1}(i)\boldsymbol{\vert}=d_i,\,\forall i\in[\ell]\big\}}\prod_{c\in\sigma(B)}x_{f(\sigma^{-1}(c)),c}\\
%&=\sum_{\big\{\sigma(B)\subseteq \sigma(Y)\big\vert\ \vert \sigma(B)\vert=\vert{B}\vert=\vert{\bold d}\vert\big\}}\sum_{\big\{g:\sigma(B)\rightarrow[\ell]\big\vert\ \boldsymbol{\vert} g^{-1}(i)\boldsymbol{\vert}=d_i,\,\forall i\in[\ell]\big\}}\prod_{c\in\sigma(B)}x_{g(c),c}\\&=e_{\bold d}(\sigma(Y)).
%\end{align*}
%the last identity is true because we do a change of variables $g=f\circ \sigma^{-1}$ and we have that $d_i=\vert{f^{-1}(i)}\vert=\vert\sigma(f^{-1}(i))\vert=\vert{g^{-1}(i)}\vert.$
%\end{proof}

\begin{lemma}
For ${\bold d}\in\mathbb{N}^{\ell}$ and $S\subseteq
\{1,2,\ldots,n\}$ such that $|S|+|{\bold d}|=m$, 
we have the identity:
\begin{equation*}
e_{{\bold d}}\left(S^{c}\right)
=\frac{m!}{{\bold d}!}\,
E^{\bold d}\partial_{S}\big(e_{m}({\bold x}_1)\big),
\end{equation*}
where $S^{c}:=\{1,2,\ldots,n\}\backslash S$ is the complement of $S$,\ and $\partial_{S}$ is the sequence of partial derivatives $\partial_{i}$ with $i\in S$ applied to $e_{m}({\bold x}_1)$.
%%(voir la formule \ref{PolinomiosEDY}).
\end{lemma}
\begin{remark} 
In the following we use the notation $A^{c}:=\{1,2,\ldots,n\}\backslash A$,\ for any $A\subseteq \{1,2,\ldots,n\}$. 
\end{remark}

\begin{lemma}\label{Lema110035}
Let $d$ and $n$ be integers such that $n\geq d$. For each integer $i$ such that $0\leq i\leq d$, we define, a subspace $V_{i}$ of $\mathcal{R}_{n}$ as follows:
\begin{equation*}
V_i:=\mathbb{K}\big\{ e_i\left(T^{c}\right)\big\vert\ T\subseteq \{1,2,\ldots,n\},\ |T|+i=d\big\},
\end{equation*}
that is, $V_i$ is the $\mathbb{K}$-vector space generated by the polynomials $e_i\left(T^{c}\right)$ such that $T\subseteq \{1,2,\ldots,n\}$ and $|T|+i=d$. Then, for each $i$, a linear basis  $\mathcal{B}_{i}$ of $V_i$ is given by the following rule:
\begin{enumerate}
\item\ If \ $\displaystyle{0\leq i\leq\bigg\lfloor\frac{d}{2}\bigg\rfloor}$,\ then:
 \begin{equation*}
\mathcal{B}_{i}=\big\{e_{i}(S^{c})\,
\vert\ S\subseteq \{1,2,\ldots,n\},\ |S|+i=n\big\}.
\end{equation*}
\item\ If \ $\displaystyle{\bigg\lfloor\frac{d}{2}\bigg\rfloor<i\leq d}$,\ then:
\begin{equation*}
\mathcal{B}_{i}=\big\{e_{i}(T^{c})\,
\vert\ T\subseteq \{1,2,\ldots,n\},\ |T|+i=d\big\}.
\end{equation*}
\end{enumerate}
In particular,
\begin{equation*}
\dim(V_{i})=\begin{cases} \displaystyle{{n\choose i}} & \textrm{if}\ \ \ 0\leq i\ \leq\lfloor\frac{d}{2}\rfloor,\\
&\\
 \displaystyle{{n\choose d-i}} & \textrm{if}\ \ \ \lfloor\frac{d}{2}\rfloor< i \leq d.\end{cases}
\end{equation*}
also we have,\ $\dim(V_i)=\dim(V_{d-i})$,\ for all $i$ such that $0\leq i\leq d$.
\end{lemma}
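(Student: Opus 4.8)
The plan is to reduce the whole statement to a single rank computation. Write $M_i$ for the $\binom ni$-dimensional span of the squarefree monomials $x_S:=\prod_{k\in S}x_k$, $S\subseteq X$, $|S|=i$; as an $\mathfrak S_n$-module $M_i$ is the Young permutation module $M^{(n-i,i)}\cong\bigoplus_{k=0}^{\min(i,n-i)}\mathcal S^{(n-k,k)}$. Since $e_i(X\setminus T)=\sum_{|S|=i,\ S\cap T=\varnothing}x_S$, every generator of $V_i$ already lies in $M_i$, and in the monomial basis of $M_i$ the generator attached to $T$ (with $|T|=d-i$) has coordinate vector the $T$-th column of the \emph{disjointness matrix} $D=\big([\,S\cap T=\varnothing\,]\big)_{|S|=i,\ |T|=d-i}$. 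Hence $V_i$ is the column space of $D$, so $V_i\subseteq M_i$ and $\dim V_i=\operatorname{rank}D$; note that $i+(d-i)=d\le n$ throughout, which is the hypothesis $n\ge d$.

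First I would compute $\operatorname{rank}D$. Using $S\cap T=\varnothing\iff S\subseteq X\setminus T$ together with the bijection $T\mapsto X\setminus T$ between $(d-i)$-subsets and $(n-d+i)$-subsets, $D$ has the same rank as the set-inclusion matrix of $i$-subsets inside $(n-d+i)$-subsets, which is well defined since $i\le n-d+i$. Gottlieb's theorem on inclusion matrices — equivalently, the fact that the scalars describing the $\mathfrak S_n$-equivariant map $D$ on each of its common constituents $\mathcal S^{(n-k,k)}$, $0\le k\le\min(i,d-i)$, are the Johnson-scheme eigenvalues, which are nonzero exactly because $i+(d-i)\le n$ — gives $\operatorname{rank}D=\binom n{\min(i,d-i)}$ (telescoping $\sum_{k=0}^{\min(i,d-i)}f^{(n-k,k)}=\binom n{\min(i,d-i)}$). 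Therefore $\dim V_i=\binom n{\min(i,d-i)}$, a formula manifestly symmetric in $i\leftrightarrow d-i$, so $\dim V_i=\dim V_{d-i}$ is immediate.

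It then remains to split into the two ranges, using $0\le i\le\lfloor d/2\rfloor\iff i\le d-i$ and $\lfloor d/2\rfloor<i\le d\iff d-i<i$. In the first range $\min(i,d-i)=i$, so $\dim V_i=\binom ni=\dim M_i$; since $V_i\subseteq M_i$ this forces $V_i=M_i$, whose natural basis is the set of all squarefree degree-$i$ monomials, i.e.\ $\{e_i(X\setminus S):S\subseteq X,\ |S|+i=n\}$, because for such $S$ the set $X\setminus S$ has exactly $i$ elements and $e_i(X\setminus S)=x_{X\setminus S}$. In the second range $\min(i,d-i)=d-i$, so $\operatorname{rank}D=\binom n{d-i}$ equals the number of generators $\{e_i(X\setminus T):T\subseteq X,\ |T|+i=d\}$, which are therefore linearly independent and form a basis of $V_i$. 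The only real content is the rank statement of the second paragraph — the nonvanishing of the inclusion-matrix rank, or of those Schur-type scalars; if one prefers to avoid quoting Gottlieb, the elementary identity $\sum_{x\in U}e_i(U\setminus\{x\})=(|U|-i)\,e_i(U)$ shows that the nested spaces $W_{i,j}:=\langle e_i(X\setminus T):|T|=j\rangle$ satisfy $W_{i,0}\subseteq W_{i,1}\subseteq\cdots\subseteq W_{i,n-i}=M_i$, reducing case 1 to the single equality $W_{i,i}=M_i$ for $2i\le n$, i.e.\ to nonsingularity of the square disjointness matrix — the same crux, now isolated.
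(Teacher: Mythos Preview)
Your proof is correct, and its core ingredient is the same one the paper uses: the full-rank property of the set-inclusion matrix (the paper cites it as \cite{BruceSagan}, p.~222; you call it Gottlieb's theorem, and also offer the Johnson-scheme/equivariant eigenvalue explanation). The difference is organizational. The paper treats the two ranges separately and argues each from scratch: in the first range it shows the monomial set spans $V_i$ by the expansion $e_i(X\setminus T)=\sum_{T\subseteq S,\,|S|=n-i}e_i(X\setminus S)$ and then shows the reverse containment by inverting the full-rank inclusion matrix; in the second range it writes out the linear-independence system for the generators and again invokes full rank. You instead compute $\operatorname{rank} D=\binom{n}{\min(i,d-i)}$ once, and then both cases drop out in one line each (equality of dimensions with the ambient monomial space in Case~1, equality with the number of generators in Case~2). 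This is cleaner and makes the symmetry $\dim V_i=\dim V_{d-i}$ transparent; the paper's version is more hands-on but slightly redundant, since the same rank fact is invoked twice in different guises. Your added representation-theoretic remark (that $D$ is $\mathfrak S_n$-equivariant between two permutation modules sharing the constituents $\mathcal S^{(n-k,k)}$ for $0\le k\le\min(i,d-i)$, with nonzero scalars on each) is not in the paper and gives a second, more structural justification of the same rank statement.
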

\begin{proof}\textbf{First Case:}\ First we see that the following set 
\begin{equation*}
\big\{e_{i}(S^{c})\, \vert\ S\subseteq \{1,2,\ldots,n\},\ |S|+i=n\big\},
\end{equation*} 
is {linearly independent,} because it consists only of distinct monomials. So we only have to show two properties:
\begin{enumerate}
\item[(P1)] \ If \ $0\leq i\leq \lfloor\frac{d}{2}\rfloor$ \ then \ $\big\{e_{i}(S^{c})\, \vert\ S\subseteq\{1,2,\ldots,n\},\ |S|+i=n\big\} \subseteq V_{i},$
\item[(P2)] \ $\big\{e_{i}(S^{c})\, \vert\ S\subseteq \{1,2,\ldots,n\},\ |S|+i=n\big\}$ is a set of generators for $V_{i}$.
\end{enumerate}
Then, for each $i$ such that \ $0\leq 2i\leq d$ \ and all subsets $T$ such that $\vert T\vert=d-i$, we have:
\begin{align*}
&e_{i}(T^{c}):=\sum_{\big\{B:\ B\subseteq T^{c}, \ \vert B\vert=i\big\}}e_{i}(B)
=\sum_{\big\{S:\ T\subseteq S, \ \vert S\vert=n-i\big\}}e_{i}(S^{c}),
\end{align*}
thus, we have shown condition (P2) because:
\begin{equation*}
e_{i}(T^{c})=\sum_{\big\{S:\ T\subseteq S, \ \vert S\vert=n-i\big\}}e_{i}(S^{c}).
\end{equation*}
To show condition (P1) we go as follows: 
Let $T$ be a subset such that $\big\vert{T}\big\vert=d-i$, then
\begin{align*}
&e_{i}(T^{c})=\sum_{\{S\subseteq \{1,2,\ldots,n\}\,:\, \vert S\vert=n-i\}}\beta_{_{T^{c},S^{c}}}\cdot e_{i}(S^{c})
\end{align*}
where \ $\beta$ is the $\displaystyle{{n\choose d-i}\times{n\choose i}}$ matrix defined as:
\begin{equation*}
\beta_{_{T^{c},S^{c}}}=\left\{
\begin{array}{cc}
1 & \text{if}\ T\subseteq S,\\
0 & \text{otherwise.}
\end{array}
\right.=\left\{
\begin{array}{cc}
1 & \text{if}\ S^{c}\subseteq T^{c},\\
0 & \text{otherwise.}
\end{array}
\right.=\alpha_{_{S^{c},T^{c}}}
\end{equation*}
and indexed by subsets of $\{1,2,\ldots,n\}$ of cardinality $n-d+i$ for the rows and by subsets of $\{1,2,\ldots,n\}$ of cardinality $n-i$ for the columns. Here \ $\vert T\vert=d-i$,\ $\vert S\vert=n-i$, \ $\vert T\vert\leq\vert S\vert$. Since we have the conditions:
\begin{align*}
&\vert T^{c}\vert+\vert S^{c}\vert
=n-(d-i)+(n-(n-i))\\&=n-d+i+i=n-d+2i\leq n,\ \text{because}\ 0\leq 2i\leq d,
\end{align*}
\begin{equation*}
\vert S^{c}\vert\leq\vert T^{c}\vert\ \ \text{because}\ \
i\leq n-d+i\ \ \text{because}\ d\leq n,
\end{equation*}
the matrix $\beta$ is of full rank (see \cite{BruceSagan}, page 222). More precisely, 
\begin{itemize}
\item[(i)] The matrix ${\alpha}_{_{P,Q}}$
\begin{equation*}
\alpha_{_{P,Q}}=\left\{
\begin{array}{cc}
1 & \ P\subseteq Q,\\
0 & \ \text{otherwise},
\end{array}
\right.
\end{equation*}
indexed by subsets $P$ of $\{1,2,\ldots,n\}$ of cardinality $i$ for the rows and by subsets $Q$ of $\{1,2,\ldots,n\}$ of cardinality $n-d+i$ for the columns is a full rank matrix.
  \item[(ii)] The matrix $\beta_{_{Q,P}}=\alpha_{_{P,Q}}^{t}$, 
	that is:
\begin{equation*}
\beta_{_{P,Q}}=\left\{
\begin{array}{cc}
1 & \ P\supseteq Q,\\
0 & \ \text{otherwise},
\end{array}
\right.
\end{equation*}
is also a full rank matrix. 
%%(voir \cite{BruceSagan} p.222.).
\end{itemize}
Then, since $\beta_{_{Q,P}}$ has full rank and by the unimodality of binomial coefficients we have
\begin{equation*}
\displaystyle{\mathrm{rank}(\beta)={n\choose i}\leq {n \choose d-i}}.
\end{equation*}
So, there exists a \ $\displaystyle{{n\choose i}\times{n\choose d-i}}$ \ matrix such that:
\begin{equation*}
\displaystyle{\gamma\,\beta
=I_{_{{{n\choose i},{n \choose i}}}};}
\end{equation*}
that is, for each subset $S$ of cardinality $n-i$ we have:
\begin{equation*}
e_{i}(S^{c})=\sum_{\big\{T\subseteq \{1,2,\ldots,n\}:\ \vert T\vert=d-i\big\}}
\gamma_{_{S^{c},T^{c}}}\cdot e_{i}(T^{c}).
\end{equation*}
Hence, we have shown that condition (P1) holds.\\

\textbf{Second Case:} Suppose that $\lfloor\frac{d}{2}\rfloor\leq i\leq d$. We only have to show that the following set:
\begin{equation*}
\big\{e_{i}(T^{c})\,\,
\vert \ T\subseteq \{1,2,\ldots,n\},\ |T|+i=d\big\} 
\end{equation*}
is {linearly independent} if \ $d\leq 2i$, this is because, by definition, this set generates $V_i$. So, suppose that \ $d\leq 2i$, then, we have to verify following the condition:
\begin{equation*}
\sum_{\big\{T:\ \vert T\vert=m-i\big\}}\,\lambda_{T}\cdot e_{i}(T^{c})={\bold 0}({\bold x})\ \ \text{implies}\ \ \lambda_{T}=0,\ \forall\, T\ \text{s.t.}\ \vert T\vert+i=d,
\end{equation*}
By definition,\ \ $\displaystyle{e_{i}(T^{c})=\sum_{\big\{B:\ B\subseteq T^{c},\ \vert B\vert=i\big\}}\prod_{b\in B}b,}$ \ so
\begin{align*}
{\bold 0}({\bold x})&=\sum_{\big\{T:\ \vert T\vert=d-i\big\}}\,\lambda_{T}\cdot
\left(\sum_{\big\{B:\ B\subseteq T^{c},\ \vert B\vert=i\big\}}\prod_{b\in B}b\right)\\
&=\sum_{\big\{T:\ \vert T\vert=d-i\big\}}\sum_{\big\{B:\ B\subseteq T^{c},\ \vert B\vert=i\big\}}\,\lambda_{T}\prod_{b\in B}b\\
&=\sum_{\big\{B:\ \vert B\vert=i\big\}}
\sum_{\big\{T: T\subseteq B^{c},\ \vert T\vert=d-i\big\}}\lambda_{T}\prod_{b\in B}b\\
&=\sum_{\big\{B:\ \vert B\vert=i\big\}}
\left(\sum_{\big\{T: T\subseteq B^{c},\ \vert T\vert=d-i\big\}}\lambda_{T}\right)\prod_{b\in B}b.
\end{align*}
We see that for all $B\subseteq \{1,2,\ldots,n\}$, the product $\displaystyle{\prod_{b\in B}b}$ is a monomial in $\mathcal{R}_n$. Since any set of distinct monomials is linearly independent, we must have that, for any subset $B$ of $\{1,2,\ldots,n\}$ the identity below holds:
\begin{equation*}
\sum_{\big\{T: T\subseteq B^{c},\ \vert T\vert=d-i\big\}}\lambda_{T}=0,\ \ \forall B\subseteq \{1,2,\ldots,n\}.
\end{equation*}
Let $\varepsilon_{_{P,Q}}$ be  the $\displaystyle{{n\choose i}\times{n\choose d-i}}$ matrix such that:
\begin{equation*}
\varepsilon_{_{P,Q}}=\left\{
\begin{array}{cc}
1 & \ \ \text{if} \ \ P\supseteq Q,\\
0 & \ \text{otherwise}.
\end{array}
\right.
\end{equation*}
indexed by subsets $P$ of $\{1,2,\ldots,n\}$ of cardinality $n-i$ for rows and by subsets $Q$ of $\{1,2,\ldots,n\}$ of cardinality $d-i$ for columns. Then, we have:
\begin{equation*}
\sum_{\big\{T:\ \vert T\vert=d-i\big\}}\varepsilon_{_{B^{c},\, T}}\cdot\lambda_{T}=0,\ \ \forall B\subseteq \{1,2,\ldots,n\}.
\end{equation*}
The matrix $\varepsilon_{_{P,Q}}$ has full rank, because it is the transpose of a full rank matrix $\delta_{_{Q,P}}$, indexed by subsets $P$ of $\{1,2,\ldots,n\}$ of cardinality $n-i$ for rows and by subsets $Q$ of $\{1,2,\ldots,n\}$ of cardinality $m-i$ for columns and such that:
\begin{equation*}
\delta_{Q,P}=\left\{
\begin{array}{cc}
1 & \text{if}\ \ Q\subseteq P,\\
0 & \text{otherwise}.
\end{array}
\right.
\end{equation*}
The matrix \ $\delta_{_{Q,P}}$ has full rank (see \cite{BruceSagan} p.222.) because:\\
\begin{itemize}
\item[(i)] $\vert Q\vert\leq\vert P\vert$ \ because \ $d-i\leq n-i$ \ because \ $d\leq n$,
\item[(ii)] $\vert Q\vert+\vert P\vert\leq n$ \ because \ $(n-i)+(d-i)\leq n$ \ because \ $d\leq 2i.$ 
\end{itemize}
Since the matrix $\varepsilon_{_{P,Q}}$ of size $\displaystyle{{n\choose i}\times{n\choose d-i}}$ has full rank and $d\leq 2i$ we must have:
\begin{equation*}
\mathrm{rank}(\varepsilon)=\displaystyle{{n\choose d-i}}
\leq{n\choose i},
\end{equation*}
where the last inequality comes from unimodality. So we see that $\lambda_{T}=0$, for all subset $T$ of $\{1,2,\ldots,n\}$ with $\vert T\vert=d-i$. This implies that the set of polynomials $e_{i}(T^{c})$, over all subsets $T$ of $\{1,2,\ldots,n\}$ whose cardinality is $\vert T\vert=d-i$, is linearly independent. 
%We can show that if $i+j=m$ then $\dim(V_i)=\dim(V_{j})$ directly or using the following isomorphism:
%\begin{equation*}
%   \begin{array}{ccrcl}
%   {\varphi} & \colon & {V_i} & \longrightarrow & {V_{j}} \\
%   \mbox{} & \mbox{} & {e_{i}(S)} & \longmapsto & {e_{j}(X\backslash S)}.
%   \end{array}
%\end{equation*}
%%il faut noter que $\varphi(V_i)=V_{m-i}$ pour tout $i$.
\end{proof}

\noindent For each vector ${\bold d}\in\mathbb{N}^{\ell}$ with $0\leq \big\vert{\bold d}\big\vert\leq d$ we denote by $V_{\bold d}$ the homogeneous component of $\mathcal{M}_{e_d}$ given by
\begin{equation}
V_{{\bold d}}:=\mathbb{K}\big\{\,e_{{\bold d}}\left(T^{c}\right)\,\big\vert\
T\subseteq\{1,2,\ldots,n\},\ |T|+|{\bold d}|=d\,\big\}. \\
\end{equation}
We can use Lemma \ref{201504211846pm} to verify the following result:
\begin{lemma}\label{2016010918h40min} 
Let $d\geq 1$ be an integer, ${\bold a}\in\mathbb{N}^{\ell}$, ${\bold b}\in\mathbb{N}^{\ell}$ and ${\bold d}\in\mathbb{N}^{\ell }$ with $\vert{\bold d}\vert=d$. The subspace 
$V_{{\bold d}}$ has the following properties: \\
\begin{enumerate}
\item $V_{{\bold d}}=E^{{\bold d}}\left(V_{(d,0,\dots,0)}\right)$, \\
\item $V_{(d,0,\dots,0)}=E_{{\bold d}}\left(V_{{\bold d}}\right)$, \\
\item $\dim\left(V_{{\bold d}}\right)=\dim\left(V_{(|{\bold d}|,0,\dots,0)}\right)$.\\
\item If $|{\bold a}|=|{\bold b}|$ then $\dim\left(V_{{\bold a}}\right)=\dim\left(V_{{\bold b}}\right)$. \\
\item If $|{\bold a}|+|{\bold b}|=d$ then $\dim\left(V_{{\bold a}}\right)=\dim\left(V_{{\bold b}}\right)$.\\
\end{enumerate}
Here, $V_{(i,0,0,\ldots)}=V_i:=\mathbb{K}\big\{ e_i\left(T^{c}\right)\big\vert\ T\subseteq \{1,2,\ldots,n\},\ |T|+i=m\big\}$. \\
\end{lemma}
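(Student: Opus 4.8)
The plan is to deduce all five assertions from Lemma~\ref{201504211846pm}, applied not to $X$ but to its column-submatrices $X\backslash T$. Although items (1)--(3) are stated for the particular ${\bold d}$ with $|{\bold d}|=d$ (for which $V_{\bold d}$ and $V_{(d,0,\dots,0)}$ are one-dimensional and the identities are almost immediate), it is items (4) and (5) that carry the content, and these require the following more general statement, which I would prove once and for all: for every ${\bold c}\in\mathbb{N}^{\ell}$ with $0\le|{\bold c}|\le d$ and $i:=|{\bold c}|$,
\begin{equation*}
V_{\bold c}=E^{\bold c}\big(V_{(i,0,\dots,0)}\big),\qquad V_{(i,0,\dots,0)}=E_{\bold c}\big(V_{\bold c}\big),\qquad \dim V_{\bold c}=\dim V_{(i,0,\dots,0)} .
\end{equation*}

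The key step is a restriction-to-submatrix observation. If $T\subseteq\mathrm{Col}(X)$ and $f\in\mathcal{R}_n^{(\ell)}$ involves only the variables $x_{kj}$ with $X_j\notin T$, then in each factor $E_{k,1}=\sum_{j=1}^{n}x_{kj}\partial_{1j}$ and $E_{1,k}=\sum_{j=1}^{n}x_{1j}\partial_{kj}$ every summand indexed by a column in $T$ annihilates $f$, and the output is again supported off $T$; iterating over the factors of $E^{\bold c}$ and $E_{\bold c}$, these operators act on such $f$ exactly as the operators of the same name attached to the $\ell\times(n-|T|)$ matrix $X':=X\backslash T$. Since $|{\bold c}|+|T|=d\le n$ (we are in the range $n\ge d$, as $e_d=0$ otherwise) we have $i\le n-|T|$, so Lemma~\ref{201504211846pm} applied to $X'$ with degree parameter $i$ gives
\begin{equation*}
E^{\bold c}\big(e_i({\bold x}_1\backslash S)\big)=\tfrac{{\bold c}!}{i!}\,e_{\bold c}(X\backslash T),\qquad
E_{\bold c}\big(e_{\bold c}(X\backslash T)\big)=\tfrac{i!}{{\bold c}!}\,e_i({\bold x}_1\backslash S),
\end{equation*}
where $S:=\{x_{1j}:X_j\in T\}$, so that $e_i({\bold x}_1\backslash S)$ is the ``first-row'' $e_i$ of $X'$, i.e. $e_{(i,0,\dots,0)}(X')$. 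As $T$ ranges over the column-subsets with $|T|=d-i$, the set $S$ ranges bijectively over the subsets of $\{x_{11},\dots,x_{1n}\}$ with $|S|=d-i$; these are precisely the indices appearing in the definitions of $V_{\bold c}$ and of $V_{(i,0,\dots,0)}=V_i$.

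Items (1) and (2) now follow by linearity: $E^{\bold c}(V_i)$ is the span of the $e_{\bold c}(X\backslash T)$, which is $V_{\bold c}$; and $E_{\bold c}(V_{\bold c})$ is the span of the $e_i({\bold x}_1\backslash S)$, which is $V_i$. For item (3), composing the two displayed identities shows that $E_{\bold c}\circ E^{\bold c}$ fixes every generator $e_i({\bold x}_1\backslash S)$ of $V_i$, hence is the identity on $V_i$; thus $E^{\bold c}\colon V_i\to V_{\bold c}$ is injective (with left inverse $E_{\bold c}$) and surjective by item (1), so it is an isomorphism and $\dim V_{\bold c}=\dim V_i$. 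Specializing ${\bold c}={\bold d}$ with $|{\bold d}|=d$ (so $V_{(i,0,\dots,0)}=V_{(d,0,\dots,0)}$) yields items (1)--(3) as written. For item (4), $|{\bold a}|=|{\bold b}|=:i$ gives $\dim V_{\bold a}=\dim V_{(i,0,\dots,0)}=\dim V_{\bold b}$. For item (5), if $|{\bold a}|+|{\bold b}|=d$ and $i:=|{\bold a}|$ then $\dim V_{\bold a}=\dim V_i$ and $\dim V_{\bold b}=\dim V_{d-i}$, and these are equal by Lemma~\ref{Lema110035}, which already establishes $\dim V_i=\dim V_{d-i}$ for $0\le i\le d$.

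The one step that is not purely formal is the submatrix reduction in the second paragraph: one has to check carefully that the operators $E^{\bold c}$, $E_{\bold c}$ --- defined as sums over all $n$ columns of $X$ --- reproduce, on polynomials supported off $T$, the corresponding operators of $X\backslash T$, and that the ``supported off $T$'' property persists through each $E_{k,1}$ or $E_{1,k}$. Once that is granted, everything else is an application of Lemma~\ref{201504211846pm}, of Lemma~\ref{Lema110035}, and of linear algebra. (If one prefers to avoid submatrices, an alternative is to prove directly that $E_{\bold c}\circ E^{\bold c}$ is the identity on every homogeneous polynomial of degree $|{\bold c}|$ in ${\bold x}_1$, by induction on $\ell$ using the $\mathfrak{sl}_2$-triples $\big(E_{1,k},\,E_{k,1},\,\textstyle\sum_j(x_{1j}\partial_{1j}-x_{kj}\partial_{kj})\big)$ refining Lemma~\ref{PolarizacionesEikInyectivas}; that route is longer but self-contained.)
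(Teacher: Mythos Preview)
Your proof is correct. The paper states this lemma without proof, so there is no explicit argument to compare against; but the unnamed lemma immediately preceding it (expressing $e_{\bold d}(X\backslash T)$ as $\tfrac{m!}{{\bold d}!}\,E^{\bold d}\partial_S\big(e_m({\bold x}_1)\big)$) is, once one notes that $\partial_S e_m({\bold x}_1)=e_{|{\bold d}|}({\bold x}_1\backslash S)$, exactly your submatrix identity, so your approach is the one the author evidently intends.

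One remark worth recording: you correctly flag that items (1)--(3), under the literal hypothesis $|{\bold d}|=d$, collapse to statements about one-dimensional spaces, and that the substantive content needed for (4)--(5) is the general case $0\le|{\bold c}|\le d$. The paper almost certainly means this (compare the adjacent Lemmas~\ref{Lema110035} and~\ref{Lema10023}, both stated for $0\le|{\bold d}|\le d$), and you supply the proof at the right level of generality. Your restriction-to-submatrix step --- checking that each factor $E_{k,1}$, $E_{1,k}$ preserves ``supported off $T$'' and then invoking Lemma~\ref{201504211846pm} on $X\backslash T$ --- is the clean way to do this, and the appeal to $\dim V_i=\dim V_{d-i}$ from Lemma~\ref{Lema110035} for item (5) is exactly right.
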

\noindent It is easy to check (using formula \ref{201601111403}) the following rule:
\begin{lemma} 
For any $Y\subseteq\{1,2,\ldots,n\}$ the effect of general polarization operators $E_{i,k}^{(p)}$ on elementary McMahom symmetric functions  $e_{\bold d}(Y)$ (with ${\bold d}=(d_1,\ldots,d_{\ell})$) is given by the rule below:
\[ E_{i,k}^{(p)}\big(e_{\bold d}(Y)\big)=e_{\bold r}(Y),\ \text{whenever}\ p\leq\vert{\bold d}\vert, \]
where ${\bold r}=(r_1,\ldots,r_{\ell})$ is given by
\[r_{t}=\begin{cases}  d_t & \text{if} \quad t\neq i\ \text{and}\ t\neq k, \\ d_t+1 & \text{if} \quad t=i,\ \\ d_t-p & \text{if} \quad t=k.\end{cases}\]
\end{lemma}
Thus $E_{i,k}^{(p)}$ maps the homogeneous component $V_{\bold d}$ to $V_{\bold r}$. And this is the reason why the space described in Lemma \ref{Lema10023} below is closed by polarization operators.

\begin{lemma}\label{Lema10023}
Let $d>0$ be an integer. The polarization module generated by the polynomial $e_{d}({\bold x})$ decomposes as follows:
\begin{equation*}
\mathcal{M}_{e_d}
=\bigoplus_{0\leq |{\bold d}|\leq d}V_{{\bold d}}.
\end{equation*}
%where the homogeneous component $V_{{\bold d}}$ is given by:
%\begin{equation*}
%V_{{\bold d}}:=\mathbb{K}\big[e_{{\bold d}}\left(X\backslash T\right)\big\vert\ T\subseteq{\rm Col}(X),\ |T|+|{\bold d}|=m\big],\ \ \text{if}\ \ 
%0\leq \big\vert{\bold d}\big\vert\leq m,
%\end{equation*}
%%If ${\bold d}>m$ then $V_{{\bold d}}=\varnothing$.\\
Furthermore, for each vector ${\bold d}\in\mathbb{N}^{\ell}$ with\ $0\leq |{\bold d}|\leq m$ a linear basis\ $\mathcal{B}_{{\bold d}}$ of the homogeneous component \ $V_{{\bold d}}$ \ is given by the rule below:
\begin{enumerate}
\item\ If \ $0\leq |{\bold d}|\leq\lfloor\frac{d}{2}\rfloor$ then,
\begin{equation*}
\mathcal{B}_{{\bold d}}=\big\{e_{{\bold d}}(T^{c})
\,\,\vert\ T\subseteq\{1,2,\ldots,n\},\ |T|+|{\bold d}|=n\big\},
\end{equation*}
\item\ If \ $\lfloor\frac{d}{2}\rfloor<|{\bold d}|\leq d$ then,
\begin{equation*}
\mathcal{B}_{{\bold d}}=\big\{e_{{\bold d}}(T^{c})
\,\,\vert\ T\subseteq\{1,2,\ldots,n\},\ |T|+|{\bold d}|=d\big\}.
\end{equation*}
\end{enumerate}
This implies that
\begin{equation*}
\dim(V_{{\bold d}})=\begin{cases} \displaystyle{{n\choose|{\bold d}|}} & \textrm{if}\ \ 0\leq |{\bold d}|\leq\lfloor\frac{d}{2}\rfloor,\\
&\\
 \displaystyle{{n\choose d-|{\bold d}|}} & \textrm{if}\ \ \lfloor\frac{d}{2}\rfloor< |{\bold d}|\leq d.\end{cases}
\end{equation*}
In particular, we see that \ $\dim(V_{\bold a})=\dim(V_{\bold b})$  whenever $\vert {\bold a} \vert+\vert {\bold b}\vert=d$.
\end{lemma}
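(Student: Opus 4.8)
The plan is to prove the two parts in turn: the decomposition $\mathcal{M}_{e_d}=\bigoplus_{0\le|{\bold d}|\le d}V_{\bold d}$, and then that each indicated set $\mathcal{B}_{\bold d}$ is a basis of $V_{\bold d}$ (the dimension formula and the symmetry $\dim V_{\bold a}=\dim V_{\bold b}$ for $|{\bold a}|+|{\bold b}|=d$ then being immediate). I may assume $n\ge d$: otherwise $e_d({\bold x}_1)=0$, so $\mathcal{M}_{e_d}=\{0\}$ and every $V_{\bold d}$ vanishes, making the statement vacuous. For the decomposition, note that $e_{\bold d}(X\backslash T)$ is homogeneous of multidegree ${\bold d}$, so the $V_{\bold d}$ lie in pairwise distinct graded components $\mathcal{R}^{(\ell)}_{n,{\bold d}}$ and the sum $W:=\sum_{0\le|{\bold d}|\le d}V_{\bold d}$ is automatically direct. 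The inclusion $W\subseteq\mathcal{M}_{e_d}$ is immediate from the identity proved above, $e_{\bold d}(X\backslash T)=\frac{d!}{{\bold d}!}\,E^{\bold d}\partial_S\bigl(e_d({\bold x}_1)\bigr)$ (with $|S|+|{\bold d}|=d$ and $T$ the corresponding set of columns): $E^{\bold d}$ is a composition of polarization operators $E_{r,1}$, $\partial_S$ a composition of partial derivatives, and $\mathcal{M}_{e_d}$ is closed under both and contains $e_d({\bold x}_1)$. For the reverse inclusion I would show that $W$ is itself closed under every $\partial_{ij}$ and every $E_{i,k}^{(p)}$ and contains $e_d({\bold x}_1)=e_{(d,0,\dots,0)}(X\backslash\varnothing)$; minimality of $\mathcal{M}_{e_d}$ then forces $\mathcal{M}_{e_d}\subseteq W$, whence equality.

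Verifying the closure of $W$ is the computational core and the step I expect to be the main obstacle. Writing a typical generator as $e_{\bold d}(X\backslash T)$ with $|T|+|{\bold d}|=d$, and using $e_{\bold d}(Y)=\sum_{B\subseteq Y,\ |B|=|{\bold d}|}\ \sum_{g:B\to[\ell],\ |g^{-1}(i)|=d_i}\ \prod_{b\in B}x_{g(b),b}$ for $Y=X\backslash T$, one reads off: $\partial_{ij}e_{\bold d}(X\backslash T)=e_{{\bold d}-{\bold e}_i}\bigl(X\backslash(T\cup\{X_j\})\bigr)$ if $X_j\notin T$ and $d_i\ge1$ (and $0$ otherwise); $\partial_{kj}^{\,p}e_{\bold d}(X\backslash T)=0$ for every $p\ge2$, since $e_{\bold d}(X\backslash T)$ is multilinear, so $E_{i,k}^{(p)}$ annihilates $W$ for $p\ge2$; and $E_{i,k}e_{\bold d}(X\backslash T)$ equals $(d_i+1)\,e_{{\bold d}-{\bold e}_k+{\bold e}_i}(X\backslash T)$ for $i\ne k$ with $d_k\ge1$, equals $d_k\,e_{\bold d}(X\backslash T)$ for $i=k$, and vanishes when $d_k=0$. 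Each of these moves leaves $|T|+|{\bold d}|$ equal to $d$, so in every nonzero case the output is again a generator of one of the $V_{{\bold d}'}$, and $W$ is closed. The delicate point is precisely this bookkeeping of which columns get deleted and how the multidegree shifts, plus the two degenerate cases $p\ge2$ and $i=k$.

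For the bases, fix ${\bold d}$ and set $i:=|{\bold d}|$. By the properties of the spaces $V_{\bold d}$ established above, $E^{\bold d}$ maps $V_{(i,0,\dots,0)}=V_i$ onto $V_{\bold d}$ and preserves dimension, so $E^{\bold d}\colon V_i\to V_{\bold d}$ is an isomorphism and carries any basis of $V_i$ to a basis of $V_{\bold d}$. Lemma~\ref{Lema110035} provides a basis of $V_i$. If $0\le i\le\lfloor d/2\rfloor$ it is the set of squarefree degree-$i$ monomials $e_i({\bold x}_1\backslash S)=\prod_{b\notin S}x_{1b}$ with $|S|+i=n$, and a direct computation gives $E^{\bold d}\bigl(\prod_{b\in B}x_{1b}\bigr)=\frac{{\bold d}!}{i!}\,e_{\bold d}(X_B)$, where $X_B$ is the set of columns indexed by $B$; taking $B={\bold x}_1\backslash S$ this equals $\frac{{\bold d}!}{i!}\,e_{\bold d}(X\backslash T)$ with $|T|=n-i$, so the image basis is $\{e_{\bold d}(X\backslash T):|T|+|{\bold d}|=n\}$. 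If $\lfloor d/2\rfloor<i\le d$ the basis of $V_i$ is $\{e_i({\bold x}_1\backslash S):|S|+i=d\}$, and since then $|S|=d-i$ the identity $e_{\bold d}(X\backslash T)=\frac{d!}{{\bold d}!}E^{\bold d}\partial_S(e_d({\bold x}_1))=\frac{d!}{{\bold d}!}E^{\bold d}\bigl(e_i({\bold x}_1\backslash S)\bigr)$ shows the image basis is $\{e_{\bold d}(X\backslash T):|T|+|{\bold d}|=d\}$. These are exactly the claimed sets $\mathcal{B}_{\bold d}$; the dimension formula follows from that of $V_i$ in Lemma~\ref{Lema110035}, and $\dim V_{\bold a}=\dim V_{\bold b}$ for $|{\bold a}|+|{\bold b}|=d$ is part of the properties already established (equivalently, it follows from $\dim V_j=\dim V_{d-j}$ in Lemma~\ref{Lema110035}).
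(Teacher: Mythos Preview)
Your proof is correct and follows essentially the same strategy the paper sketches: reduce each $V_{\bold d}$ to the single-variable space $V_i$ (with $i=|{\bold d}|$) via the polarization/restitution isomorphism $E^{\bold d}$, then invoke Lemma~\ref{Lema110035} to read off the basis and dimension. The paper's own proof is a two-line remark to this effect; your version supplies the details the paper omits, in particular the explicit verification that $W=\bigoplus_{0\le|{\bold d}|\le d}V_{\bold d}$ is closed under $\partial_{ij}$ and $E_{i,k}^{(p)}$ (using multilinearity to kill $p\ge2$, and the identities $\partial_{ij}e_{\bold d}(X\setminus T)=e_{{\bold d}-{\bold e}_i}(X\setminus(T\cup\{X_j\}))$ and $E_{i,k}e_{\bold d}(X\setminus T)=(d_i+1)\,e_{{\bold d}-{\bold e}_k+{\bold e}_i}(X\setminus T)$), which is exactly the right bookkeeping.
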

\begin{proof}
The proof is very similar to the proof of Lemma \ref{Lema110035} because the basis $\mathcal{B}_{\bold d}$ of $\mathcal{V}_{\bold d}$ depends only on the value of $\vert{\bold d}\vert=d_{1}+\cdots+d_{\ell}$. Also, we can use Lemma \ref{2016010918h40min} to show that Lemma \ref{Lema110035} implies the desired result.
\end{proof}
\begin{remark}
Recall that the permutation representation $\mathbb{C}\big\{A\subseteq [n]\,:\,\vert{A}\vert=s\big\}$ of $\frak{S}_n$ is equal to $\mathrm{Ind}^{\mathfrak{S}_n}_{\mathfrak{S}_s\times\mathfrak{S}_{n-s}}$ trivially; by the Pieri rule, this is isomorphic to $\bigoplus_{j=0}^{s}\mathcal{S}^{(n-j,j)}$. It's easy to see directly that the $\mathfrak{S}_n$-module $V_{\bold d}$ is isomorphic to the above permutation representation whose character is $\sum_{j=0}^{s}{\chisotazo}^{(n-j,j)}$.
\end{remark}
%More generally, as a corollary of Pieri's formula we have the value of the character (number of fixed points) corresponding to the permutation representation $\mathbb{C}\big\{A\subseteq [n]\,:\,\vert{A}\vert=s\big\}$ of $\frak{S}_n$ with the action on \textsl{the $s$-elements subset of a $n$-element set}, that is, if $\sigma\in\frak{S}_n$ and $s\in\mathbb{N}$ is such that $1\leq s\leq \frac{n}{2}$. We have the following identity:
%\begin{equation}\label{FormulaChi(n-j,j)}
%\sum_{j=0}^{s}{\chisotazo}^{(n-j,j)}(\sigma)=\big\vert\big\{A\subseteq [n]\,:\,\sigma(A)=A,\ \vert{A}\vert=s\big\}\big\vert.
%\end{equation}
\noindent It's easy to see directly that the $\mathfrak{S}_n$-module $V_{\bold d}$ is isomorphic to the above permutation representation. Then we have the following:
\begin{lemma}\label{Teorema38}
For each vector ${\bold d}\in\mathbb{N}^{\ell}$ such that\ $0\leq |{\bold d}|\leq d$ the value of the character ${\chisotazo}_{V_{{\bold d}}}$ of the homogeneous component\ $V_{{\bold d}}$ \ is given by the following rule:\\
\begin{enumerate}
\item\ If\ $0\leq |{\bold d}|\leq\lfloor\frac{d}{2}\rfloor$ then,
\begin{equation*}
{\chisotazo}_{V_{{\bold d}}}(\sigma)=\sum_{j=0}^{\big\vert{\bold d}\big\vert}{\chisotazo}^{(n-j,j)}(\sigma),
\end{equation*}
\item\ If \ $\lfloor\frac{d}{2}\rfloor<|{\bold d}|\leq d$ then, 
\begin{equation*}
{\chisotazo}_{V_{{\bold d}}}(\sigma)=\sum_{j=0}^{d-\big\vert{\bold d}\big\vert}{\chisotazo}^{(n-j,j)}(\sigma).
\end{equation*}
\end{enumerate}
\end{lemma}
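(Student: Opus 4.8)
The plan is to recognise each homogeneous component $V_{\bold d}$ as a permutation representation of $\mathfrak{S}_n$ and then to read its character off from a count of fixed subsets.

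First I would take the explicit basis $\mathcal{B}_{\bold d}$ provided by Lemma \ref{Lema10023}. In both ranges this basis consists of the polynomials $e_{\bold d}(X\setminus T)$ with $T$ ranging over the subsets of $\mathrm{Col}(X)$ of a fixed cardinality. When $0\le|{\bold d}|\le\lfloor d/2\rfloor$ we have $|T|=n-|{\bold d}|$, so it is cleaner to index the basis by the complementary column set $S:=\mathrm{Col}(X)\setminus T$, which has $|S|=|{\bold d}|$; when $\lfloor d/2\rfloor<|{\bold d}|\le d$ we have $|T|=d-|{\bold d}|$ and we index by $T$ itself. By Lemma \ref{PermutationETruncada}, for every $\sigma\in\mathfrak{S}_n$,
\[ \sigma\cdot e_{\bold d}(X\setminus T)=e_{\bold d}\big(\sigma(X\setminus T)\big)=e_{\bold d}\big(X\setminus\sigma(T)\big), \]
because $\sigma$ permutes the full set of columns, so $\sigma(X\setminus T)=X\setminus\sigma(T)$. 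Hence the diagonal action of $\mathfrak{S}_n$ simply permutes the elements of $\mathcal{B}_{\bold d}$, by $S\mapsto\sigma(S)$ in the first range (equivalently $e_{\bold d}(S)\mapsto e_{\bold d}(\sigma(S))$) and by $T\mapsto\sigma(T)$ in the second. Consequently $V_{\bold d}$ is $\mathfrak{S}_n$-isomorphic to the permutation module $\mathbb{C}\big[\{A\subseteq[n]:|A|=s\}\big]$, where $s=|{\bold d}|$ in the first range and $s=d-|{\bold d}|$ in the second.

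Next, the character of a permutation representation is its number of fixed points, by (\ref{CaracterDeLaRepresentacionPorPermutacion}), so
\[ {\chisotazo}_{V_{\bold d}}(\sigma)=\big\vert\{A\subseteq[n]:\sigma(A)=A,\ |A|=s\}\big\vert. \]
I would then invoke the identity (\ref{FormulaChi(n-j,j)}), which evaluates this fixed-subset count as $\sum_{j=0}^{s}{\chisotazo}^{(n-j,j)}(\sigma)$ whenever $1\le s\le n/2$. The bound $s\le n/2$ holds in both cases since $n\ge d$: in the first range $s=|{\bold d}|\le\lfloor d/2\rfloor\le n/2$, and in the second range $|{\bold d}|\ge\lfloor d/2\rfloor+1$ forces $s=d-|{\bold d}|\le\lceil d/2\rceil-1\le n/2$. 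Substituting $s=|{\bold d}|$ in the first case and $s=d-|{\bold d}|$ in the second then gives exactly the two claimed formulas. The boundary value $s=0$ — which happens when $|{\bold d}|=0$ in the first case and when $|{\bold d}|=d$ in the second — lies outside the hypothesis of (\ref{FormulaChi(n-j,j)}), but is immediate: the only $0$-subset is $\varnothing$, it is fixed by every $\sigma$, and $\sum_{j=0}^{0}{\chisotazo}^{(n-j,j)}(\sigma)={\chisotazo}^{(n)}(\sigma)=1$, consistent with $V_{\bold d}=\mathbb{K}\{e_{\bold d}(X)\}$ being one-dimensional and diagonally symmetric.

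The only genuinely non-formal ingredient is the linear independence of $\mathcal{B}_{\bold d}$ asserted in Lemma \ref{Lema10023} (which itself rests on the full-rank statements for the inclusion matrices established in the proof of Lemma \ref{Lema110035}); granting that, everything above is bookkeeping. Accordingly I expect the heart of the write-up to be the clean identification, via Lemma \ref{PermutationETruncada}, of the $\mathfrak{S}_n$-action on $\mathcal{B}_{\bold d}$ with the action on $s$-element subsets of $[n]$, after which the Pieri-type formula (\ref{FormulaChi(n-j,j)}) does all the work; the dichotomy $|{\bold d}|\le\lfloor d/2\rfloor$ versus $|{\bold d}|>\lfloor d/2\rfloor$ serves only to record which value of $s$ is relevant.
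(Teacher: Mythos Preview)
Your proof is correct and follows essentially the same approach as the paper: both use Lemma~\ref{PermutationETruncada} to see that $\mathfrak{S}_n$ permutes the basis $\mathcal{B}_{\bold d}$ of Lemma~\ref{Lema10023}, identify the resulting permutation module with the action on $s$-element subsets of $[n]$, and then apply formula~(\ref{FormulaChi(n-j,j)}). Your write-up is in fact a bit more careful than the paper's, since you explicitly verify the hypothesis $s\le n/2$ needed for (\ref{FormulaChi(n-j,j)}) and treat the boundary case $s=0$ separately.
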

\noindent So, the graded Frobenius characteristic of $V_{\bold d}$ is given by the rule
\begin{align*}
V_{\bold d}({\bold w})=
\begin{cases} 
\displaystyle{\sum_{j=0}^{\big\vert{\bold d}\big\vert}s_{n-j,j}({\bold w})} & 
\text{if} \ 0\leq |{\bold d}|\leq\lfloor\frac{d}{2}\rfloor,\\
\displaystyle{\sum_{j=0}^{d-\big\vert{\bold d}\big\vert}s_{n-j,j}({\bold w})} & \text{if} \ \lfloor\frac{d}{2}\rfloor<|{\bold d}|\leq d.
\end{cases}
\end{align*}
Using Lemma \ref{Teorema38} we show the third part of Theorem \ref{ProposicionUNO}, that is, if ${\bold d}\in\mathbb{N}^{\ell}$ is such that $\vert{\bold d}\vert=d$ then the graded Frobenius characteristic of $\mathcal{M}_{e_{ d}}$ is given by :\\
\begin{align*}
\mathcal{M}_{e_{{d}}}({\bold q},{\bold w})
&=\sum_{i=0}^{\lfloor\frac{d}{2}\rfloor}
\left(\sum_{j=0}^{i}{s}_{n-j,j}({\bold w})\right)s_{i}({\bold q})
+\sum_{i=\lfloor\frac{d}{2}\rfloor+1}^{d}\left(\sum_{j=0}^{d-i}
{s}_{n-j,j}({\bold w})\right)s_{i}({\bold q})\\
&=\left(\sum_{j=0}^{d}s_{j}({\bold q})\right)s_{n}({\bold w})
+\sum_{i=1}^{\lfloor{d/2}\rfloor}\left(\sum_{j=i}^{d-i}s_{j}({\bold q})\right)s_{n-i,i}({\bold w}), \ \forall\,d> 1
\end{align*}
%%The formula above is valid for every $\ell\geq 1$.
\begin{corollary}
Let $d\geq 1$ be an integer. The graded Frobenius characteristic of 
$\mathcal{M}_{e_{\bold d}}$ is given by the formula
\begin{equation*}
\mathcal{M}_{e_{\bold d}}({\bold q},{\bold w})
=\sum_{i=0}^{\lfloor{d/2}\rfloor}\left(\sum_{j=i}^{d-i}s_{j}({\bf q})\right)s_{n-i,i}({\bold w}),
\end{equation*}
where $s_{n,0}({\bold w}):=s_{n}({\bold w})$.
\end{corollary}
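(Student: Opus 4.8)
The plan is to deduce this corollary directly from the third part of Theorem~\ref{ProposicionUNO} together with the identification of polarization modules already established in Section~\ref{Definitions}. Recall from Lemma~\ref{201504211846pm} and the corollary following it that whenever ${\bold d}\in\mathbb{N}^{\ell}$ satisfies $\vert{\bold d}\vert=d$, the $\bold d$-polarization operator $E^{\bold d}$ sends $e_d({\bold x}_1)$ to a nonzero scalar multiple of $e_{\bold d}(X)$, and the $\bold d$-restitution operator $E_{\bold d}$ sends $e_{\bold d}(X)$ back to a nonzero scalar multiple of $e_d({\bold x}_1)$. Since polarization modules are closed under all the operators $E_{i,k}^{(p)}$ (in particular under the operators $E_{i,k}$ whose compositions define $E^{\bold d}$ and $E_{\bold d}$), and since rescaling a generator by a nonzero constant does not change the module it generates, this yields the equality of subspaces $\mathcal{M}_{e_d}=\mathcal{M}_{e_{\bold d}}$ inside $\mathcal{R}_n^{(\ell)}$.

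Next I would observe that this is an equality of $\mathbb{N}^{\ell}$-graded $\mathfrak{S}_n\times GL_{\ell}(\mathbb{C})$-modules, not merely of vector spaces: the multidegree of a monomial and the diagonal $\mathfrak{S}_n$-action are intrinsic to $\mathcal{R}_n^{(\ell)}$, so the homogeneous components $(\mathcal{M}_{e_d})_{\bold c}$ and $(\mathcal{M}_{e_{\bold d}})_{\bold c}$ agree as $\mathfrak{S}_n$-modules for every ${\bold c}\in\mathbb{N}^{\ell}$. Hence the graded Frobenius characteristics of the two modules coincide:
\[ \mathcal{M}_{e_{\bold d}}({\bold q},{\bold w})=\mathcal{M}_{e_d}({\bold q},{\bold w}). \]

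Finally I would substitute formula~\pref{ProposicionUNOTerceraParte} from Theorem~\ref{ProposicionUNO}, which was just proved above by means of Lemmas~\ref{Lema10023} and~\ref{Teorema38}, to conclude
\[ \mathcal{M}_{e_{\bold d}}({\bold q},{\bold w})=\sum_{i=0}^{\lfloor d/2\rfloor}\left(\sum_{j=i}^{d-i}s_j({\bold q})\right)s_{n-i,i}({\bold w}), \]
with the convention $s_{n,0}({\bold w})=s_n({\bold w})$. There is essentially no obstacle: the whole content of the corollary is already packaged in Theorem~\ref{ProposicionUNO} once one knows $\mathcal{M}_{e_d}=\mathcal{M}_{e_{\bold d}}$, and the only point requiring a word of care is that this identification respects the $\mathbb{N}^{\ell}$-grading and the $\mathfrak{S}_n$-action, which is immediate. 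As an alternative route one could reprove the statement from scratch, repeating verbatim the basis description of Lemma~\ref{Lema10023} and the character computation of Lemma~\ref{Teorema38} for $e_{\bold d}$ rather than $e_d$, since those arguments already depend only on $\vert{\bold d}\vert$; but passing through the identification is shorter.
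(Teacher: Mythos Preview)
Your proposal is correct and follows essentially the same approach as the paper: the corollary is an immediate consequence of the identity $\mathcal{M}_{e_d}=\mathcal{M}_{e_{\bold d}}$ (already recorded as a corollary of Lemma~\ref{201504211846pm}) together with formula~\pref{ProposicionUNOTerceraParte}, and the paper treats it as such without further argument. Your remark that the identification respects the $\mathbb{N}^{\ell}$-grading and the $\mathfrak{S}_n$-action is the only point needing justification, and it is indeed immediate since both modules are the same subspace of $\mathcal{R}_n^{(\ell)}$.
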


\subsection{Constructing a basis for the modules $\mathcal{M}_{\mathcal A}$ and $\mathcal{M}_{\mathcal B}$}
Recall that we have the notation $X_{j}^{\bold d}:=x_{1j}^{d_1}\cdots x_{\ell j}^{d_{\ell}}$. The proof of the propositions below is similar to the proof of second part of Theorem \ref{ProposicionUNO}. We describe a basis for each component of the modules $\mathcal{M}_{\mathcal A}$ and $\mathcal{M}_{\mathcal B}$.
\begin{prop}\label{BaseFamiliaA}
An homogeneous basis for the polarization module $\mathcal{M}_{\mathcal{A}}$ is the set below
\[ B_{\mathcal{A}}:=\big\{X_{j}^{\bold d} \, : \,0\leq\vert{\bold d}\vert\leq d,\,1\leq{j}\leq{n} \big\}. \]
For each vector ${\bold d}\in\mathbb{N}^{\ell}$ such that $\vert{\bold d}\vert\leq d$, a basis for the homogeneous component of multidegree ${\bold d}$ is given by 
$B_{\mathcal{A},{\bold d}}:=\big\{X_{j}^{\bold d} \, : \,1\leq{j}\leq{n} \big\}$.
\end{prop}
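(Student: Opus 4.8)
The plan is to follow verbatim the strategy used for $\mathcal{M}_{p_d}$ in Lemma~\ref{Base2} and Lemma~\ref{201504221310pm}; the family $\mathcal{A}=\{x_{1j}^{d}:1\le j\le n\}$ is in fact strictly easier, since no ``diagonal power sum'' ever enters the picture. First I would establish the inclusion $B_{\mathcal{A}}\subseteq\mathcal{M}_{\mathcal{A}}$. Differentiating a generator gives $\partial_{1j}^{r}(x_{1j}^{d})=\tfrac{d!}{(d-r)!}\,x_{1j}^{d-r}$, so $x_{1j}^{k}\in\mathcal{M}_{\mathcal{A}}$ for $0\le k\le d$; and, exactly as in the construction of the basis of $\mathcal{M}_{p_d}$, the identity
\[ X_{j}^{\bold k}=\frac{k_{1}!}{k!}\,E_{\ell,1}^{k_{\ell}}\cdots E_{2,1}^{k_{2}}\big(x_{1j}^{k}\big),\qquad |{\bold k}|=k, \]
applied to $x_{1j}^{d}$ produces every $X_{j}^{\bold k}$ with $|{\bold k}|=d$, whereupon differentiating these yields $X_{j}^{\bold a}$ for all $0\le|{\bold a}|<d$ as well. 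Thus $B_{\mathcal{A}}\subseteq\mathcal{M}_{\mathcal{A}}$.

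Next I would show that $\mathbb{K}[B_{\mathcal{A}}]$ is closed under the operators $\partial_{ij}$ and $E_{i,k}^{(p)}$ and contains $\mathcal{A}$, so that minimality forces $\mathcal{M}_{\mathcal{A}}\subseteq\mathbb{K}[B_{\mathcal{A}}]$. Because each $X_{s}^{\bold a}$ involves only the column-$s$ variables $x_{1s},\dots,x_{\ell s}$, only the $j=s$ term survives when one applies $\partial_{\cdot,j}$ or $E_{i,k}^{(p)}$, and one gets
\[ \partial_{ij}^{r}\big(X_{s}^{\bold a}\big)=\delta_{sj}\,\frac{a_{i}!}{(a_{i}-r)!}\,X_{s}^{{\bold a}-r{\bold e}_{i}}, \qquad E_{i,k}^{(p)}\big(X_{s}^{\bold a}\big)=\frac{a_{k}!}{(a_{k}-p)!}\,X_{s}^{\bold b}, \]
(with the convention that the right-hand side is $0$ when an exponent would become negative), where ${\bold b}={\bold a}-p{\bold e}_{k}+{\bold e}_{i}$ has $|{\bold b}|=|{\bold a}|+1-p\le|{\bold a}|\le d$. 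The one point that needs a moment's care — and it is the only one — is to note that the total degree can only go \emph{down}: strictly under $\partial_{ij}^{r}$, and by $p-1\ge 0$ under $E_{i,k}^{(p)}$; hence the result is again a member of $B_{\mathcal{A}}$ (or zero), and $\mathbb{K}[B_{\mathcal{A}}]$ is closed. Combined with the previous paragraph this gives $\mathcal{M}_{\mathcal{A}}=\mathbb{K}[B_{\mathcal{A}}]$, and it simultaneously identifies the multidegree-${\bold d}$ homogeneous component as $V_{\bold d}=\mathbb{K}\big[X_{j}^{\bold d}:1\le j\le n\big]$.

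Finally I would check linear independence. For $|{\bold d}|\ge 1$ the monomials $X_{1}^{\bold d},\dots,X_{n}^{\bold d}$ are pairwise distinct (they use disjoint sets of column variables), monomials of different multidegrees are distinct, and none of these equals $X_{j}^{\bold 0}=1$; so $B_{\mathcal{A}}$ is a set of distinct monomials and is therefore linearly independent. This shows $B_{\mathcal{A}}$ is a homogeneous basis of $\mathcal{M}_{\mathcal{A}}$, with $B_{\mathcal{A},{\bold d}}=\{X_{j}^{\bold d}:1\le j\le n\}$ a basis of $V_{\bold d}$ — of dimension $n$ when $|{\bold d}|\ge 1$, and of dimension $1$ (spanned by $1$) when ${\bold d}={\bold 0}$. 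There is no substantial obstacle: the proof is a direct simplification of the computation behind the second part of Theorem~\ref{ProposicionUNO}.
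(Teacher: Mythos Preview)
Your proof is correct and follows exactly the approach the paper indicates: it says only that the argument is ``similar to the proof of the second part of Theorem~\ref{ProposicionUNO}'' (the $\mathcal{M}_{p_d}$ case), and your write-up carries out precisely that simplification, with the diagonal power sums dropped since the generators are the individual monomials $x_{1j}^{d}$ rather than their sum.
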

\begin{prop}\label{BaseFamiliaB}
An homogeneous basis for the polarization module $\mathcal{M}_{\mathcal{B}}$ is the set
\[ B_{\mathcal{B}}:=\big\{X_{1}^{\bold d}-X_{j}^{\bold d}\,:\, \vert{\bold d}\vert=d,\, 2\leq j\leq n\big\}\bigcup\big\{X_{j}^{\bold b}\,:\, 0\leq\vert{\bold b}\vert<d,\, 1\leq j\leq n \big\}. \]
In this case, for each ${\bold d}\in\mathbb{N}^{\ell}$ such that $\vert{\bold d}\vert\leq d$, a basis for the component ${\bold V}_{\bold d}$ is given by the following rule:
\[ B_{\mathcal{B},{\bold d}}:=
\begin{cases}
X_{1}^{\bold d}-X_{j}^{\bold d} & \text{if}\ \ \ \vert{\bold d}\vert=d,\\
X_{j}^{\bold d} & \text{if}\ \ \ 0\leq\vert{\bold d}\vert<d.
\end{cases}
 \]
\end{prop}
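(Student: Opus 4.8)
The plan is to follow the template of the proof of the second part of Theorem~\ref{ProposicionUNO} (the module $\mathcal{M}_{p_d}$), the one new ingredient being the behaviour of the top-degree piece. Since $\mathcal{B}$ is already $\mathfrak{S}_n$-stable we have $\mathcal{M}_{\mathcal{B}}=\boldsymbol{\mathcal{P}}(\mathbb{K}\cdot\mathcal{B})$, and $\mathbb{K}\cdot\mathcal{B}$ is the codimension-one subspace $\{\,\sum_{j}c_{j}x_{1j}^{d}\ :\ \sum_{j}c_{j}=0\,\}$ of $\mathbb{K}\{x_{11}^{d},\dots,x_{1n}^{d}\}$, with basis $\{x_{11}^{d}-x_{1j}^{d}:2\le j\le n\}$.

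First I would check $B_{\mathcal{B}}\subseteq\mathcal{M}_{\mathcal{B}}$. Differentiating $x_{1i}^{d}-x_{1j}^{d}$ repeatedly gives, up to nonzero scalars, every monomial $x_{1s}^{k}$ with $0\le k<d$, and polarizing these exactly as in the $\mathcal{M}_{p_d}$ computation produces every $X_{s}^{\bold b}$ with $0\le|{\bold b}|<d$. For the top degree, applying the ${\bold d}$-polarization $E^{\bold d}$ to $x_{11}^{d}-x_{1j}^{d}$ gives $X_{1}^{\bold d}-X_{j}^{\bold d}$, since $E^{\bold d}(x_{1s}^{d})=X_{s}^{\bold d}$ is the columnwise refinement of the identity $E^{\bold d}(p_{d}({\bold x}_{1}))=p_{\bold d}(X)$ from Lemma~\ref{201504211846pm}.

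Next I would show that $\mathbb{K}[B_{\mathcal{B}}]$ is stable under every $\partial_{ij}$ and every $E_{i,k}^{(p)}$. Together with $\mathcal{B}\subseteq\mathbb{K}[B_{\mathcal{B}}]$ (each $x_{1i}^{d}-x_{1j}^{d}$ being a $\mathbb{K}$-combination of the generators $X_{1}^{(d,0,\dots,0)}-X_{s}^{(d,0,\dots,0)}$) and the minimality of $\mathcal{M}_{\mathcal{B}}$, this yields $\mathcal{M}_{\mathcal{B}}\subseteq\mathbb{K}[B_{\mathcal{B}}]$. The decisive remark is that $\partial_{ij}$ strictly lowers total degree and $E_{i,k}^{(p)}$ changes it by $1-p\le 0$, so nothing escapes total degree $\le d$. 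On a generator $X_{s}^{\bold b}$ with $|{\bold b}|<d$, both $\partial_{ij}^{r}$ and $E_{i,k}^{(p)}$ return a scalar multiple of a single $X_{s}^{\bold c}$ with $|{\bold c}|<d$, or $0$. On a generator $X_{1}^{\bold d}-X_{j}^{\bold d}$ with $|{\bold d}|=d$: a partial derivative kills at most one of the two terms and outputs a scalar multiple of a single $X_{s}^{\bold c}$ with $|{\bold c}|<d$; an $E_{i,k}^{(p)}$ with $p\ge 2$ outputs a scalar multiple of $X_{1}^{\bold c}-X_{j}^{\bold c}$ with $|{\bold c}|<d$, a difference of two second-type generators; and $E_{i,k}^{(1)}$, the only operator preserving total degree $d$, outputs $d_{k}(X_{1}^{\bold c}-X_{j}^{\bold c})$ with $|{\bold c}|=d$, again of the first type (equal to $d_{i}(X_{1}^{\bold d}-X_{j}^{\bold d})$ when $k=i$). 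Hence $\mathbb{K}[B_{\mathcal{B}}]$ is closed.

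Finally, $B_{\mathcal{B}}$ is linearly independent: elements of distinct multidegree are trivially independent, within a multidegree of total degree $<d$ they are distinct monomials, and within a multidegree ${\bold d}$ with $|{\bold d}|=d$ the vectors $X_{1}^{\bold d}-X_{j}^{\bold d}$ ($2\le j\le n$) are independent, being of the shape $e_{1}-e_{j}$ in $\mathbb{K}\{X_{1}^{\bold d},\dots,X_{n}^{\bold d}\}$. Reading off the multigraded components then gives the stated rule for $B_{\mathcal{B},{\bold d}}$. I expect the only real obstacle to be the bookkeeping at total degree exactly $d$: one must ensure that no operator ever manufactures an individual monomial $X_{s}^{\bold d}$, which would enlarge the top piece, and the observation that the only degree-preserving operators are the $E_{i,k}^{(1)}$, which keep the ``difference'' shape, is precisely what rules this out.
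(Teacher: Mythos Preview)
Your proof is correct and follows essentially the same approach the paper intends: the paper does not give a detailed argument for this proposition but simply declares that ``the proof of propositions below is similar to the proof of second part of Theorem~\ref{ProposicionUNO}'' (the $\mathcal{M}_{p_d}$ computation in Section~\ref{EjemplosDetallados}). You have faithfully carried out that template and correctly isolated the one new feature, namely that in the top multidegree $|{\bold d}|=d$ only the operators $E_{i,k}^{(1)}$ act without lowering total degree, and these preserve the ``$X_{1}^{\bold d}-X_{j}^{\bold d}$'' shape, so no individual monomial of top degree is ever produced.
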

The proof of Theorem \ref{FamiliasAB} follows easily from Proposition \ref{BaseFamiliaA} and \ref{BaseFamiliaB}.

\section{Auxiliary matrices and polynomials}

\noindent We start by introducing auxiliary matrices $H_n$ and $T_n$ in order to find explicit conditions under which the matrices $F_n$, $E_n$, $D_n$ and $G_n$ are of full rank. These considerations will give us equations to classify polarization modules generated by any given homogeneous symmetric polynomial of degree 2 or degree 3.
\subsection{The matrices $T_{n}$} 
Let $x,y,z,w,t$ be commuting variables. For each integer $n\geq2$ we define the matrices $T_n(x,y,z,w,t)$ of size $n\times n$ whose $(i,j)$ entry is given by: 
\[T_{n}(i,j):=
\begin{cases}
x & \text{if}\ \ i=j \ \ \text{and}\ \ i<n\ \text{and}\ j<n, \\
y & \text{if}\ \ i\neq j \ \ \text{and}\ \ i<n\ \text{and}\ j<n, \\
z & \text{if}\ \ i=n \ \ \text{and}\ \ j<n, \\
w & \text{if}\ \ i<n \ \ \text{and}\ \ j=n, \\
t & \text{if}\ \ i=n \ \ \text{and}\ \ j=n\  
\end{cases}
\]
\begin{exa}
When $n=4$ we have
\[
%T_{2}(x,y,z,w,t)=
%\left[ \begin {array}{cc} x&w\\ \noalign{\medskip}z&t\end {array}
 %\right], \ \
 %T_{3}(x,y,z,w,t)=
 %\left[ \begin {array}{ccc} x&y&w\\ \noalign{\medskip}y&x&w
%\\ \noalign{\medskip}z&z&t\end {array} \right], \ \
T_{4}(x,y,z,w,t)=
\left[ \begin {array}{cccc} x&y&y&w\\ \noalign{\medskip}y&x&y&w
\\ \noalign{\medskip}y&y&x&w\\ \noalign{\medskip}z&z&z&t\end {array}
 \right].
\]
\end{exa}
\begin{lemma}\label{SuperDeterminante}
The determinant of $T_{n}(x,y,z,w,t)$ is given by the formula:
\begin{equation} 
\mathrm{det}\big(T_{n}(x,y,z,w,t)\big)=(x-y)^{n-2}\big(t\cdot (x+(n-2)y)-(n-1)wz\big). 
\end{equation}
\end{lemma}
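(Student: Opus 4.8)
The plan is to compute $\det\big(T_n(x,y,z,w,t)\big)$ by a sequence of elementary row and column operations that exploit the block structure of the matrix: the top-left $(n-1)\times(n-1)$ block is $(x-y)I_{n-1}+y J_{n-1}$ (where $J$ is the all-ones matrix), the last column has $w$ in its first $n-1$ entries and $t$ at the bottom, and the last row has $z$ in its first $n-1$ entries. First I would subtract row $n-1$ from rows $1,2,\dots,n-2$. This kills the off-diagonal $y$'s among the first $n-2$ rows in the first $n-1$ columns and leaves $w-w=0$ in the last column of those rows, producing $n-2$ rows of the shape $(x-y)$ on the diagonal, $-(x-y)$ in column $n-1$, and zeros elsewhere. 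Expanding along these rows (or factoring) pulls out a factor $(x-y)^{n-2}$.

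Next I would reduce to a $2\times 2$ determinant. After the row operations above, adding columns $1,\dots,n-2$ into column $n-1$ (to undo the bookkeeping) or, more cleanly, performing the cofactor reduction directly, the determinant becomes $(x-y)^{n-2}$ times the determinant of the $2\times 2$ matrix obtained from the ``collapsed'' variables: the surviving $2\times 2$ block records the interaction of the $(n-1)$-st coordinate direction (carrying weight $x+(n-2)y$ after summing the equal rows/columns of the $J$-block) with the last coordinate. Concretely one is left with
\begin{equation*}
\det\begin{pmatrix} x+(n-2)y & w \\ (n-1)z & t \end{pmatrix} = t\,\big(x+(n-2)y\big)-(n-1)wz,
\end{equation*}
where the factor $n-1$ in the bottom-left entry comes from summing the $n-1$ copies of $z$ in the last row against the collapsed column. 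Multiplying the two factors gives exactly the claimed formula.

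An alternative I would keep in reserve, in case the bookkeeping in the cofactor step is delicate, is to use the matrix determinant lemma: write the full matrix as a rank-controlled perturbation, namely $T_n = D + uv^{\top} + \dots$ with $D$ diagonalizable explicitly, or treat the first $n-1$ rows/columns via the eigenstructure of $(x-y)I+yJ$ (eigenvalue $x-y$ with multiplicity $n-2$ on the hyperplane orthogonal to $\mathbf 1$, and eigenvalue $x+(n-2)y$ on $\mathbf 1$), then handle the bordering last row and column by the Schur complement formula $\det\begin{pmatrix}A & b \\ c^{\top} & t\end{pmatrix} = \det(A)\big(t - c^{\top}A^{-1}b\big)$, valid when $A=(x-y)I_{n-1}+yJ_{n-1}$ is invertible (the resulting rational expression then extends by polynomiality to all $x,y$). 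Here $b = w\mathbf 1$, $c = z\mathbf 1$, so $c^{\top}A^{-1}b = wz\,\mathbf 1^{\top}A^{-1}\mathbf 1 = wz\cdot\frac{n-1}{x+(n-2)y}$, and $\det(A) = (x-y)^{n-2}\big(x+(n-2)y\big)$, which reproduces the formula after clearing the denominator.

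The main obstacle is simply making the reduction to the $2\times 2$ determinant airtight: one must track carefully how the factor $(n-1)$ appears (it is the size of the last row's block of $z$'s, not $n$ or $n-2$) and verify the formula at the edge case $n=2$, where $(x-y)^{0}=1$ and the bracket is $tx-wz$, matching $\det\begin{pmatrix}x & w\\ z & t\end{pmatrix}$. Once the $2\times2$ reduction is justified, the rest is a one-line multiplication.
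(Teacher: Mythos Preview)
Your proposal is correct. The paper actually states Lemma~\ref{SuperDeterminante} without proof, so there is no argument to compare against; your row/column-operation reduction to the $2\times 2$ block $\begin{pmatrix} x+(n-2)y & w \\ (n-1)z & t \end{pmatrix}$ is clean and valid (after subtracting row $n-1$ from rows $1,\dots,n-2$ and then adding columns $1,\dots,n-2$ into column $n-1$, the matrix becomes block lower triangular with diagonal blocks $(x-y)I_{n-2}$ and that $2\times2$ block), and the Schur-complement alternative is equally sound and arguably tidier. The edge case $n=2$ checks out as you noted.
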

\subsection{The matrices $H_n$}
Also, we consider the matrices $H_{n}(x,y,z)$ of size $n\times(n-1)$ where the $(i,j)$ entry 
is given by
\begin{equation*}
H_{n}(i,j):=
\begin{cases}
x &\ \text{if}\ \ i=j \ \ \text{and} \ \ i<n \ \ \text{and} \ \ j<n,\\
y &\ \text{if}\ \ i\neq j \ \ \text{and} \ \ i<n \ \ \text{and} \ \ j<n,\\
z &\ \text{if}\ \ i=n \ \ \text{and} \ \ j<n.
\end{cases}
\end{equation*}
Notice that
\begin{align*} 
&H_{n}^{t}(x,y,z)H_{n}(x,y,z)=T_{n-1}\big(\alpha_n,\beta_n,\beta_n,\beta_n,\alpha_n\big). 
\end{align*} 
where \ $\alpha_{n}(x,y,z):=x^2+(n-2)y^2+z^2$, \ $\beta_{n}(x,y,z):=2xy+(n-3)y^2+z^2$. Then we have the following result
\begin{lemma}
The determinant of the matrix $H_{n}^{t}H_{n}$ is given by
\begin{align*}
&\mathrm{det}\big(H_{n}^{t}H_{n}\big)
=\mathrm{det}\big(T_{n-1}\big(\alpha_n,\beta_n,\beta_n,\beta_n,\alpha_n\big)\big)\\
&=(x-y)^{2(n-2)}\big((x+(n-2)y)^2+(n-1)z^2\big). 
\end{align*}
\end{lemma}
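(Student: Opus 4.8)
The plan is to deduce the formula from Lemma~\ref{SuperDeterminante} together with the identity $H_n^t H_n = T_{n-1}(\alpha_n,\beta_n,\beta_n,\beta_n,\alpha_n)$ recorded just above. The first equality in the statement is immediate from that identity (one checks it by computing the inner products of the columns of $H_n$: each column has an $x$ in one of its first $n-1$ slots, $y$'s in the remaining ones and a $z$ in the last slot, so a diagonal entry of $H_n^t H_n$ is $x^2+(n-2)y^2+z^2=\alpha_n$ and an off-diagonal entry is $2xy+(n-3)y^2+z^2=\beta_n$, while $T_{n-1}(\alpha_n,\beta_n,\beta_n,\beta_n,\alpha_n)$ is exactly the matrix with $\alpha_n$ on the diagonal and $\beta_n$ elsewhere). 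So the real content is evaluating $\det\big(T_{n-1}(\alpha_n,\beta_n,\beta_n,\beta_n,\alpha_n)\big)$.

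For this I would apply Lemma~\ref{SuperDeterminante} with the substitution $(n,x,y,z,w,t)\mapsto(n-1,\alpha_n,\beta_n,\beta_n,\beta_n,\alpha_n)$, giving
\[
\det\big(T_{n-1}(\alpha_n,\beta_n,\beta_n,\beta_n,\alpha_n)\big)=(\alpha_n-\beta_n)^{n-3}\Big(\alpha_n\big(\alpha_n+(n-3)\beta_n\big)-(n-2)\beta_n^2\Big).
\]
The key simplification is that $\alpha_n-\beta_n=(x-y)^2$ (the $z^2$ terms cancel and $(n-2)y^2-(n-3)y^2=y^2$). Setting $u:=(x-y)^2$ and writing $\alpha_n=\beta_n+u$, the bracketed factor collapses:
\[
(\beta_n+u)\big((n-2)\beta_n+u\big)-(n-2)\beta_n^2=(n-1)\beta_n u+u^2=u\big((n-1)\beta_n+u\big),
\]
so $\det\big(T_{n-1}(\alpha_n,\beta_n,\beta_n,\beta_n,\alpha_n)\big)=(x-y)^{2(n-3)}\cdot(x-y)^2\cdot\big((n-1)\beta_n+(x-y)^2\big)=(x-y)^{2(n-2)}\big((n-1)\beta_n+(x-y)^2\big)$. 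Expanding $(n-1)\beta_n+(x-y)^2=x^2+2(n-2)xy+\big[(n-1)(n-3)+1\big]y^2+(n-1)z^2$ and using $(n-1)(n-3)+1=(n-2)^2$ identifies this last factor as $(x+(n-2)y)^2+(n-1)z^2$, which is the claimed formula.

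There is no genuine obstacle here beyond keeping track of the algebra; the one point needing a separate word is the range of $n$. Lemma~\ref{SuperDeterminante} is stated for matrices of size at least $2$, so the argument above applies verbatim for $n\geq 3$. For $n=2$ the matrix $H_2(x,y,z)$ is the single column $(x,z)^t$, whence $H_2^t H_2=(x^2+z^2)$ and $\det\big(H_2^t H_2\big)=x^2+z^2$, which matches $(x-y)^{0}\big((x+0\cdot y)^2+z^2\big)$; I would dispose of this case by direct inspection.
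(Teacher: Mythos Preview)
Your proof is correct and follows exactly the approach the paper intends: the paper does not spell out a proof, but the identity $H_n^tH_n=T_{n-1}(\alpha_n,\beta_n,\beta_n,\beta_n,\alpha_n)$ it records just before the lemma, together with Lemma~\ref{SuperDeterminante}, is precisely the route it has in mind, and you have carried out the algebraic simplification cleanly (including the observation $\alpha_n-\beta_n=(x-y)^2$ and the check $(n-1)(n-3)+1=(n-2)^2$). Your separate handling of the $n=2$ edge case is a nice touch the paper does not mention.
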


\subsection{The polynomials $P_n$, $Q_n$ and $R_n$}

\label{PolinomiosPQRA}
We introduce the following auxiliary polynomials that depend on the number of variables $n$ of the vector ${\bold x}_1:=(x_{11},\ldots,x_{1n})$. 
 The polynomials  $P_n$, $Q_n$ and $R_n$ in the variables $a,b,c$ are defined as: 
\[P_n(a,b,c):=12\cdot ab+6(n-2)\cdot ac-4(n-1)\cdot b^2,\]
 \[Q_n(a,b,c):=9\cdot a^2-6\cdot ab+(4n-7)\cdot b^2-4(n-2)\cdot bc+(n-2)\cdot c^2,\]
\begin{align*}
R_n(a,b,c)&:=9\cdot a^2+6(n-1)\cdot ab+(n-1)(n+7)\cdot b^2+4(n-1)(n-2)\cdot bc\\
&+(n-2){n-1\choose 2}\cdot c^2.
\end{align*}
\begin{align*}
&A_n(a,b,c):=81\cdot {a}^{4}-54\cdot{a}^{3}b
+\left( 18\,{n}^{2}+18\,n-63 \right)\cdot {a}^{2}{b}^{2}\\
&+18\left( n-2 \right)  \left( {n}^{2}-2\,n-1 \right)\cdot {a}^{2}bc\\
&+\frac{9}{2}\cdot n\left(n-2\right)\left( {n}^{2}-4\,n+5\right)\cdot {a}^{2}{c}^{2}
-12 \left( n-1 \right)  \left( {n}^{2}-2\,n+2 \right)\cdot a{b}^{3}\\
&-12 \left( n-1 \right) ^{2}{n-1\choose 2}\cdot a{b}^{2}c
+2 \left(n-1\right)  \left( {n}^{3}-3\,{n}^{2}+7\,n-8 \right)\cdot {b}^{4}\\
&-8 \left( n-2 \right)  \left( n-1 \right)\cdot {b}^{3}c
+2 \left( n-2 \right)\left( n-1 \right)\cdot {b}^{2}{c}^{2}.
\end{align*}
\noindent We explain some properties of these polynomials in order to prove Theorem \ref{TeoremaExcepcionGrado3}
\begin{lemma}\label{Lema136}
Let $n\geq 3$. The only point $[a:b:c]\in\mathbb{RP}^{2}$ that satisfies $Q_{n}(a,b,c)=0$ is $[a:b:c]=[1:3:6]$. When $n=2$ the roots of 
$Q_{2}(a,b,c)$ are given by the condition $b=3a$.
\end{lemma}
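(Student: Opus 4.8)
The plan is to rewrite the quadratic form $Q_n$ as an explicit sum of two squares and then read off its zero locus directly, so I would not expand into the general theory of ternary quadratic forms at all. First I would complete the square in the variable $a$: the only terms of $Q_n$ involving $a$ are $9a^2-6ab$, and $9a^2-6ab=(3a-b)^2-b^2$. Substituting this into $Q_n(a,b,c):=9a^2-6ab+(4n-7)b^2-4(n-2)bc+(n-2)c^2$ yields
\[ Q_n(a,b,c)=(3a-b)^2+(4n-8)b^2-4(n-2)bc+(n-2)c^2. \]
Next I would notice that the remaining three terms are $(n-2)$ times a perfect square, since $(4n-8)b^2=4(n-2)b^2$ gives $(4n-8)b^2-4(n-2)bc+(n-2)c^2=(n-2)\bigl(4b^2-4bc+c^2\bigr)=(n-2)(2b-c)^2$. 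Hence
\[ Q_n(a,b,c)=(3a-b)^2+(n-2)(2b-c)^2. \]

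With this identity in hand the statement is immediate. For $n\geq 3$ the coefficient $n-2$ is strictly positive, so $Q_n(a,b,c)=0$ forces simultaneously $3a-b=0$ and $2b-c=0$, i.e. $b=3a$ and $c=6a$; a nonzero solution must then have $a\neq 0$, so the unique projective zero is $[a:b:c]=[1:3:6]$. When $n=2$ the identity degenerates to $Q_2(a,b,c)=(3a-b)^2$, whose zero locus in $\mathbb{RP}^2$ is exactly the projective line $b=3a$, which is the asserted condition. As a sanity check, one also verifies directly from the identity that $[1:3:6]$ lies on $Q_n=0$ for every $n$, consistently with the degenerate case.

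There is essentially no obstacle here beyond spotting the correct completion of squares; the only points to be careful about are checking that the proposed expression reproduces every coefficient of $Q_n$ — in particular the $b^2$-coefficient, where the $-b^2$ produced by completing the square in $a$ combines with $4(n-2)b^2$ to give $(4n-7)b^2$ — and observing that positivity of $n-2$ is precisely what makes the sum of the two nonnegative squares vanish only at the common zero of the two linear forms $3a-b$ and $2b-c$.
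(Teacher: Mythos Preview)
Your proof is correct and follows essentially the same approach as the paper: both complete the square to obtain the identity $Q_n(a,b,c)=(3a-b)^2+(n-2)(2b-c)^2$ (the paper writes this as $9\,(a-b/3)^2+(n-2)(-2b+c)^2$, which is the same expression) and then read off the zero locus from the positivity of the coefficients. Your write-up is in fact a bit more careful in verifying the $b^2$-coefficient.
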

\begin{proof}
Completing the square we obtain
\begin{align*}
&9\,{a}^{2}-6\,ab+ \left( 4\,n-7 \right) {b}^{2}+ \left( -4\,n+8
 \right) bc+ \left( n-2 \right) {c}^{2}\\
&=\left( n-2 \right)  \left( -2\,b+c \right) ^{2}+9\, \left( a-\frac{b}{3}
 \right) ^{2}, 
 \end{align*}
Since $n>2$ we get $\displaystyle{Q_{n}(a,b,c)=\left( n-2 \right)  \left( -2\,b+c \right) ^{2}+9\, \left( a-\frac{b}{3}
 \right) ^{2}}=0$ \ if and only if \ $c=2b$ and $b=3a$ (because $n\geq 3$ and we work on $\mathbb{R}$). So, $b=3a$ and $c=6a$, that is, 
 $[a:b:c]=[a:3a:6a]=[1:3:6]$. When $n=2$, $Q_{2}(a,b,c)=9a^2-6ab+b^2=(b-3a)^2$. So, $Q_{2}(a,b,c)=0$ iff $b=3a$.
\end{proof}
\begin{remark}
We observe that for every $n\geq 2$ the point $[a:b:c]=[1:3:6]$ is also a root of the polynomial $P_n(a,b,c)$.
\end{remark}
\begin{lemma}\label{RaicesDeRn}
The only root of  $R_n(a,b,c)$ is given by the point $[a:b:c]\in\mathbb{RP}^{2}$ such that
\[a=k\cdot (n-1){n-1 \choose 2},\ \ b=k\cdot(-3){n-1\choose 2},\ \ c=k\cdot 6(n-1),\]
for any $k\in\mathbb{R}-\{0\}$. 
\end{lemma}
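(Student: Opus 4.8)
The plan is to recognise $R_n(a,b,c)$ as a real quadratic form in the three homogeneous coordinates $(a,b,c)$ and to rewrite it, by two successive completions of the square, as a sum of exactly two squares of linearly independent linear forms. Once that is done, its zero locus in $\mathbb{RP}^2$ is automatically a single point, which I then identify with the one in the statement.

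First I would complete the square in $a$. Since the only terms involving $a$ are $9a^2+6(n-1)ab$, we have $9a^2+6(n-1)ab=9\bigl(a+\tfrac{n-1}{3}b\bigr)^2-(n-1)^2b^2$, and the coefficient of $b^2$ remaining in $R_n$ becomes $-(n-1)^2+(n-1)(n+7)=8(n-1)$. Next I would complete the square in $b$ inside the leftover expression $8(n-1)b^2+4(n-1)(n-2)bc$, which equals $8(n-1)\bigl(b+\tfrac{n-2}{4}c\bigr)^2-\tfrac{(n-1)(n-2)^2}{2}c^2$. The whole point of the computation is that the remaining $c^2$-coefficient then vanishes: $-\tfrac{(n-1)(n-2)^2}{2}+(n-2)\binom{n-1}{2}=0$ because $\binom{n-1}{2}=\tfrac{(n-1)(n-2)}{2}$. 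Hence
\[
R_n(a,b,c)=9\Bigl(a+\tfrac{n-1}{3}b\Bigr)^{2}+8(n-1)\Bigl(b+\tfrac{n-2}{4}c\Bigr)^{2}.
\]

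For $n\geq 2$ both coefficients $9$ and $8(n-1)$ are strictly positive, so $R_n(a,b,c)=0$ forces $a+\tfrac{n-1}{3}b=0$ and $b+\tfrac{n-2}{4}c=0$. These two linear conditions are independent (the first involves $a$, the second does not but involves $c$), so their common solution set is a line through the origin in $\mathbb{R}^3$, i.e.\ exactly one point of $\mathbb{RP}^2$. Solving gives $b=-\tfrac{n-2}{4}c$ and $a=-\tfrac{n-1}{3}b=\tfrac{(n-1)(n-2)}{12}c$, which after clearing denominators is the point $[(n-1)(n-2):-3(n-2):12]$. To match the stated parametrization I would substitute $\binom{n-1}{2}=\tfrac{(n-1)(n-2)}{2}$ into $a=k(n-1)\binom{n-1}{2}$, $b=-3k\binom{n-1}{2}$, $c=6k(n-1)$ and observe that these three numbers are the common multiple $\tfrac{k(n-1)}{2}$ of $(n-1)(n-2)$, $-3(n-2)$, and $12$ respectively, so the two descriptions name the same projective point (and for $n=2$ both degenerate to $[0:0:1]$, in accordance with the displayed formula, where the second square loses its $c$-dependence).

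There is essentially no serious obstacle: the argument is just the two square-completions followed by a linear solve. The only thing that genuinely has to be checked is the exact cancellation of the $c^2$-coefficient in the second step, which is precisely what makes $R_n$ a sum of two squares rather than a positive-definite form in all three variables, and hence what makes its real zero locus a single point rather than empty.
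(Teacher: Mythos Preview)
Your proof is correct and follows essentially the same approach as the paper: both complete the square twice to write $R_n$ as a sum of two nonnegative squares, then solve the resulting pair of linear equations. The only cosmetic difference is the order of completion (you eliminate $a$ first, then $b$; the paper eliminates $c$ first, then $a$), and the two resulting decompositions are easily seen to agree after rescaling the inner linear forms.
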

\begin{proof} 
Again, we complete squares
\begin{align*}
&R_{n}(a,b,c)\\
&=9\cdot a^2+6(n-1)\cdot ab+(n-1)(n+7)\cdot b^2+4(n-1)(n-2)\cdot bc\\
&+(n-2){n-1\choose 2}\cdot c^2 \\
&=\left( n-2 \right) {n-1\choose 2} \left( c+2\,{\frac { \left( n-1
 \right) b}{{n-1\choose 2}}} \right)^{2}
+9\,\left(a+\frac{(n-1)b}{3}\right)^{2}.
\end{align*}
So\ $R_{n}(a,b,c)=0$ implies that $\displaystyle{a=\frac{-(n-1)b}{3}}$\ and\ $\displaystyle{c=-\frac{2(n-1)b}{{n-1\choose 2}}}$,\ for every $b\neq 0$. We can write $b$ in the form 
$\displaystyle{b=-3{n-1\choose 2}\cdot{k}}$ and we get the desired result.
\end{proof}
%\[ \left( n-2 \right) {n-1\choose 2} \left( c+2\,{\frac { \left( n-1
 %\right) b}{{n-1\choose 2}}} \right) ^{2}+9\, \left( a+1/3\, \left( n-
%1 \right) b \right) ^{2}
%\]
\begin{lemma}\label{LemaRnAn}
Let $n\geq 2$. The roots of $R_{n}(a,b,c)$ cannot be roots of the polynomial $A_{n}(a,b,c)$.
\end{lemma}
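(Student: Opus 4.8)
The plan is to invoke Lemma \ref{RaicesDeRn} and simply evaluate $A_n$ at the unique root of $R_n$. Since $A_n(a,b,c)$ is homogeneous of degree $4$ in $(a,b,c)$, whether or not it vanishes depends only on the projective point $[a:b:c]$. By Lemma \ref{RaicesDeRn} the root of $R_n$ is $[a:b:c]$ with $a=k(n-1){n-1\choose 2}$, $b=-3k{n-1\choose 2}$, $c=6k(n-1)$ for $k\neq 0$. Dividing these coordinates by $k(n-1)/2$ — which is legitimate for $n\geq 3$, where $(n-1){n-1\choose 2}\neq 0$ — we obtain the integral representative
\[ [a:b:c]=\big[(n-1)(n-2)\;:\;-3(n-2)\;:\;12\big]. \]

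Next I would substitute $a=(n-1)(n-2)$, $b=-3(n-2)$, $c=12$ into the explicit formula for $A_n(a,b,c)$ given above and expand. Each monomial $a^ib^jc^k$ with $i+j+k=4$ becomes a polynomial in $n$, and collecting terms one sees that the whole expression carries an overall factor $(n-1)(n-2)^3$. Writing $A_n\big((n-1)(n-2),-3(n-2),12\big)=(n-1)(n-2)^3\,G(n)$, the factor $G(n)$ is a polynomial in $n$ of degree $4$ with leading coefficient $81$; the expansion shows that its linear and constant coefficients cancel, leaving
\[ G(n)=81n^4+243n^3+162n^2=81\,n^2(n+1)(n+2), \]
so that
\[ A_n\big((n-1)(n-2),-3(n-2),12\big)=81\,n^{2}(n+1)(n+2)(n-1)(n-2)^{3}. \]

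For $n\geq 3$ this quantity is a product of strictly positive factors, hence nonzero; therefore the unique root of $R_n$ is not a root of $A_n$, which is exactly the assertion. The only real labor is the polynomial expansion in the second step: it is entirely routine but somewhat lengthy, and the step on which everything hinges is the cancellation that collapses $G(n)$ to the manifestly positive $81\,n^2(n+1)(n+2)$. I would double-check that cancellation most carefully, for instance by verifying the closed form against a direct evaluation of $A_n$ at $n=3,4,5$ (e.g. at $n=3$ both sides give $29160$).
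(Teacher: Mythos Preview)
Your proof is correct and follows essentially the same approach as the paper: invoke Lemma~\ref{RaicesDeRn} to identify the unique projective root of $R_n$, then evaluate $A_n$ there and observe that the result is strictly positive for $n\geq 3$. The only cosmetic difference is your choice of projective representative $[(n-1)(n-2):-3(n-2):12]$ in place of the paper's $\big[k(n-1){n-1\choose 2}:-3k{n-1\choose 2}:6k(n-1)\big]$; the paper's closed form $\tfrac{81\,n^{2}k^{4}(n+2)(n+1)(n-2)^{3}(n-1)^{5}}{16}$ agrees with yours after scaling by $(k(n-1)/2)^4$.
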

\begin{proof}
It's enough to compute the value of $A_{n}$ at the root given in Lemma \ref{RaicesDeRn}. Indeed, the 
expression below is always positive, for every $k\neq 0$ and $n\geq 2$:
\begin{align*} 
&A\left(k(n-1){n-1 \choose 2},-3k{n-1\choose 2},6k(n-1)\right)\\
&={\frac {81\,{n}^{2}{k}^{4} \left( n+2 \right)  \left( n+1 \right)
\left( n-2 \right) ^{3} \left( n-1 \right) ^{5}}{16}}.
\end{align*}
\end{proof}
%\begin{lemme}
%Soit $f$ un polyn\^{o}me sym\'etrique homog\`{e}ne de degr\'e 3 (\`{a} $n$ variables $x_{11},x_{12},\ldots,x_{1n}$). Supposons que $[f]:=[a:b:c]$. Alors, si $[a:b:c]\neq[1:3:6]$ la dimension du sous-espace engendr\'e par les $n$ deriv\'ees partielles de premier ordre de $f$ est au moins $n-1$.
%\end{lemme}
%\begin{proof}
%Comme $[a:b:c]\neq[1:3:6]$ on trouve toujours une sous-matrice carr\'e d'ordre $n-1$ dont le d\'eterminant est non nul.
%\end{proof}
\noindent In the next section we introduce auxiliary matrices $F_n$,$E_n$,$D_n$ and $G_n$ (depending on the number of columns $n$ of the matrix $X$) in order to find conditions that will lead us to classify polarization modules in degrees 2 and 3.
\subsection{The matrices $F_n$}
For $n\ge 3$, let $F_n$ be the ${n\choose 2}\times n$ matrix whose rows are all distinct permutations of the vector 
$(2b,2b,c,\ldots,c)\in\mathbb{R}^n$. For instance,
\[F_3:=\left[ \begin {array}{ccc} 2\,b&2\,b&c\\ \noalign{\medskip}2\,b&c&2\,
b\\ \noalign{\medskip}c&2\,b&2\,b\end {array} \right],\ \
F_4:=\left[ \begin {array}{cccc} 2\,b&2\,b&c&c\\ \noalign{\medskip}2\,b&c&
2\,b&c\\ \noalign{\medskip}2\,b&c&c&2\,b\\ \noalign{\medskip}c&2\,b&2
\,b&c\\ \noalign{\medskip}c&2\,b&c&2\,b\\ \noalign{\medskip}c&c&2\,b&2
\,b\end {array} \right].\ \ 
%F_5:=\left[ \begin {array}{ccccc} 2\,b&2\,b&c&c&c\\ \noalign{\medskip}2\,b
%&c&2\,b&c&c\\ \noalign{\medskip}2\,b&c&c&2\,b&c\\ \noalign{\medskip}2
%\,b&c&c&c&2\,b\\ \noalign{\medskip}c&2\,b&2\,b&c&c
%\\ \noalign{\medskip}c&2\,b&c&2\,b&c\\ \noalign{\medskip}c&2\,b&c&c&2
%\,b\\ \noalign{\medskip}c&c&2\,b&2\,b&c\\ \noalign{\medskip}c&c&2\,b&c
%&2\,b\\ \noalign{\medskip}c&c&c&2\,b&2\,b\end {array} \right]
\]

\subsection{The matrices $E_n$}

For $n\geq 2$, let $E_n$ be the ${n+1\choose 2}\times(n+1)$ matrix where the entries are defined by the function:
\[E_{n}(i,j)=
\begin{cases}
3a &\ \ \text{if} \ \ i=j\ \text{and}\ 1\leq i\leq n, \\
b &\ \ \text{if} \ \ i\neq j\ \text{and}\ 1\leq i\leq n\ \text{and}\ 1\leq j<n+1,\\
6a &\ \ \text{if} \ \  1\leq i\leq n\ \text{and}\ j=n+1, \\
4b &\ \ \text{if} \ \  n<i \ \text{and} \ j=n+1,\\
F_n(i-n,j) &\ \ \text{if} \ \  n<i\ \text{and}\ 1\leq j<n. 
\end{cases}.\]
$$
%E_2:=\left[ \begin {array}{ccc} 3\,a&b&6\,a\\ \noalign{\medskip}b&3\,a&6\,
%a\\ \noalign{\medskip}2\,b&2\,b&4\,b\end {array} \right], \ \
E_3:=\left[ \begin {array}{cccc} 3\,a&b&b&6\,a\\ \noalign{\medskip}b&3\,a&
b&6\,a\\ \noalign{\medskip}b&b&3\,a&6\,a\\ \noalign{\medskip}2\,b&2\,b
&c&4\,b\\ \noalign{\medskip}2\,b&c&2\,b&4\,b\\ \noalign{\medskip}c&2\,
b&2\,b&4\,b\end {array} \right].$$
%E_4:= \left[ \begin {array}{ccccc} 3\,a&b&b&b&6\,a\\ \noalign{\medskip}b&3
%\,a&b&b&6\,a\\ \noalign{\medskip}b&b&3\,a&b&6\,a\\ \noalign{\medskip}b
%&b&b&3\,a&6\,a\\ \noalign{\medskip}2\,b&2\,b&c&c&4\,b
%\\ \noalign{\medskip}2\,b&c&2\,b&c&4\,b\\ \noalign{\medskip}2\,b&c&c&2
%\,b&4\,b\\ \noalign{\medskip}c&2\,b&2\,b&c&4\,b\\ \noalign{\medskip}c&
%2\,b&c&2\,b&4\,b\\ \noalign{\medskip}c&c&2\,b&2\,b&4\,b\end {array}
 %\right] 

\noindent In the following, we will show that, if $[a:b:c]\in\mathbb{RP}^{2}$ is such that $$[a:b:c]\neq[1:3:6],$$ then the matrix $E_n$ has at least rank $n$. In other words we will prove Theorem \ref{TeoremaExcepcionGrado3}. The columns of $E_{n}(a,b,c)$ correspond to applying $\partial_{1,i}$ to $a\,m_{3}+b\,m_{2,1}+c\,m_{1,1,1}$, and the rows to $x_{1,i}^{2}$ and $x_{1,i}x_{1,j}$.

%\[E_5:=\left[ \begin {array}{cccccc} 3\,a&b&b&b&b&6\,a\\ \noalign{\medskip}b
%&3\,a&b&b&b&6\,a\\ \noalign{\medskip}b&b&3\,a&b&b&6\,a
%\\ \noalign{\medskip}b&b&b&3\,a&b&6\,a\\ \noalign{\medskip}b&b&b&b&3\,
%a&6\,a\\ \noalign{\medskip}2\,b&2\,b&c&c&c&4\,b\\ \noalign{\medskip}2
%\,b&c&2\,b&c&c&4\,b\\ \noalign{\medskip}2\,b&c&c&2\,b&c&4\,b
%\\ \noalign{\medskip}2\,b&c&c&c&2\,b&4\,b\\ \noalign{\medskip}c&2\,b&2
%\,b&c&c&4\,b\\ \noalign{\medskip}c&2\,b&c&2\,b&c&4\,b
%\\ \noalign{\medskip}c&2\,b&c&c&2\,b&4\,b\\ \noalign{\medskip}c&c&2\,b
%&2\,b&c&4\,b\\ \noalign{\medskip}c&c&2\,b&c&2\,b&4\,b
%\\ \noalign{\medskip}c&c&c&2\,b&2\,b&4\,b\end {array} \right], \ \
%E_6:= \left[ \begin {array}{ccccccc} 3\,a&b&b&b&b&b&6\,a
%\\ \noalign{\medskip}b&3\,a&b&b&b&b&6\,a\\ \noalign{\medskip}b&b&3\,a&
%b&b&b&6\,a\\ \noalign{\medskip}b&b&b&3\,a&b&b&6\,a
%\\ \noalign{\medskip}b&b&b&b&3\,a&b&6\,a\\ \noalign{\medskip}b&b&b&b&b
%&3\,a&6\,a\\ \noalign{\medskip}2\,b&2\,b&c&c&c&c&4\,b
%\\ \noalign{\medskip}2\,b&c&2\,b&c&c&c&4\,b\\ \noalign{\medskip}2\,b&c
%&c&2\,b&c&c&4\,b\\ \noalign{\medskip}2\,b&c&c&c&2\,b&c&4\,b
%\\ \noalign{\medskip}2\,b&c&c&c&c&2\,b&4\,b\\ \noalign{\medskip}c&2\,b
%&2\,b&c&c&c&4\,b\\ \noalign{\medskip}c&2\,b&c&2\,b&c&c&4\,b
%\\ \noalign{\medskip}c&2\,b&c&c&2\,b&c&4\,b\\ \noalign{\medskip}c&2\,b
%&c&c&c&2\,b&4\,b\\ \noalign{\medskip}c&c&2\,b&2\,b&c&c&4\,b
%\\ \noalign{\medskip}c&c&2\,b&c&2\,b&c&4\,b\\ \noalign{\medskip}c&c&2
%\,b&c&c&2\,b&4\,b\\ \noalign{\medskip}c&c&c&2\,b&2\,b&c&4\,b
%\\ \noalign{\medskip}c&c&c&2\,b&c&2\,b&4\,b\\ \noalign{\medskip}c&c&c&
%c&2\,b&2\,b&4\,b\end {array} \right].
%\]

\subsection{The matrices $D_n$}

We define the matrix $D_n$ as the result of dropping the last column of $E_n$. For instance,

$$D_3:= \left[ \begin {array}{ccc} 3\,a&b&b\\ \noalign{\medskip}b&3\,a&b
\\ \noalign{\medskip}b&b&3\,a\\ \noalign{\medskip}2\,b&2\,b&c
\\ \noalign{\medskip}2\,b&c&2\,b\\ \noalign{\medskip}c&2\,b&2\,b
\end {array} \right], \ \
D_4:= \left[ \begin {array}{cccc} 3\,a&b&b&b\\ \noalign{\medskip}b&3\,a&b&b
\\ \noalign{\medskip}b&b&3\,a&b\\ \noalign{\medskip}b&b&b&3\,a
\\ \noalign{\medskip}2\,b&2\,b&c&c\\ \noalign{\medskip}2\,b&c&2\,b&c
\\ \noalign{\medskip}2\,b&c&c&2\,b\\ \noalign{\medskip}c&2\,b&2\,b&c
\\ \noalign{\medskip}c&2\,b&c&2\,b\\ \noalign{\medskip}c&c&2\,b&2\,b
\end {array} \right].$$ 
%\ \
%D_5:=  \left[ \begin {array}{ccccc} 3\,a&b&b&b&b\\ \noalign{\medskip}b&3\,a&
%b&b&b\\ \noalign{\medskip}b&b&3\,a&b&b\\ \noalign{\medskip}b&b&b&3\,a&
%b\\ \noalign{\medskip}b&b&b&b&3\,a\\ \noalign{\medskip}2\,b&2\,b&c&c&c
%\\ \noalign{\medskip}2\,b&c&2\,b&c&c\\ \noalign{\medskip}2\,b&c&c&2\,b
%&c\\ \noalign{\medskip}2\,b&c&c&c&2\,b\\ \noalign{\medskip}c&2\,b&2\,b
%&c&c\\ \noalign{\medskip}c&2\,b&c&2\,b&c\\ \noalign{\medskip}c&2\,b&c&
%c&2\,b\\ \noalign{\medskip}c&c&2\,b&2\,b&c\\ \noalign{\medskip}c&c&2\,
%b&c&2\,b\\ \noalign{\medskip}c&c&c&2\,b&2\,b\end {array} \right].$

Both Lemmas below are consequences of Lemma 
\ref{SuperDeterminante} because the matrices
$E_n^{t}E_n$ and $D_n^{t}D_n$ are both matrices of the form $T_n$.

\begin{lemma}\label{LemaEn}
Let $a,b,c$ be real numbers not all zero. For each integer $n\geq 3$, the determinant of the matrix $E_n^{t}E_n$ is given by:
\[{\mathrm{det}(E_n^{t}E_n)={n\choose 2}P_{n}(a,b,c)^2Q_n(a,b,c)^{n-1}.}\]
\end{lemma}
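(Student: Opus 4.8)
The plan is to realize the Gram matrix $E_n^{t}E_n$ as a matrix of the type $T_{n+1}$ introduced above and then read off its determinant from Lemma \ref{SuperDeterminante}. First I would split $E_n$ into its top block $U$, consisting of rows $1,\dots,n$, and its bottom block $V$, consisting of rows $n+1,\dots,{n+1\choose 2}$. By the very definition of $E_n$ one has $U=\big[\,(3a-b)I_n+bJ_n\ \big|\ 6a\,\mathbf 1_n\,\big]$, where $J_n$ is the $n\times n$ all--ones matrix and $\mathbf 1_n$ the all--ones column, while $V=\big[\,F_n\ \big|\ 4b\,\mathbf 1\,\big]$. Hence $E_n^{t}E_n=U^{t}U+V^{t}V$, and the whole point is a bookkeeping computation of the $(n+1)^2$ entries of this sum.

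For that I would use the combinatorial description of $F_n$: each of its $n$ columns carries exactly $n-1$ entries equal to $2b$ and ${n-1\choose 2}$ entries equal to $c$, and for any two distinct columns there is exactly one row with pattern $(2b,2b)$, there are $n-2$ rows with $(2b,c)$ and $n-2$ with $(c,2b)$, and ${n-2\choose 2}$ rows with $(c,c)$. Since the last column of $E_n$ is constant on the top block ($6a$) and constant on the bottom block ($4b$), its inner product with each of the first $n$ columns is the same; likewise all diagonal entries among the first $n$ coincide and all off--diagonal ones among the first $n$ coincide. This shows $E_n^{t}E_n=T_{n+1}(x,y,z,z,t)$ with
\begin{align*}
x&=9a^{2}+5(n-1)b^{2}+{n-1\choose 2}c^{2},\\
y&=6ab+(n+2)b^{2}+4(n-2)bc+{n-2\choose 2}c^{2},\\
z&=18a^{2}+6(n-1)ab+8(n-1)b^{2}+2(n-1)(n-2)bc,\\
t&=36na^{2}+8n(n-1)b^{2}.
\end{align*}
Applying Lemma \ref{SuperDeterminante} with $m=n+1$ and $w=z$ yields $\det(E_n^{t}E_n)=(x-y)^{\,n-1}\big(t\,(x+(n-1)y)-n\,z^{2}\big)$. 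A one--line simplification using ${n-1\choose 2}-{n-2\choose 2}=n-2$ and $5(n-1)-(n+2)=4n-7$ gives $x-y=Q_n(a,b,c)$, so the first factor is already $Q_n(a,b,c)^{\,n-1}$; and using ${n-1\choose 2}+(n-1){n-2\choose 2}=(n-2){n-1\choose 2}$ together with $5(n-1)+(n-1)(n+2)=(n-1)(n+7)$ one recognizes $x+(n-1)y=R_n(a,b,c)$.

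It then remains to verify the polynomial identity $t\,R_n(a,b,c)-n\,z^{2}={n\choose 2}\,P_n(a,b,c)^{2}$. Factoring out the evident common factors --- write $t=4n\,t'$, $z=2z'$, $P_n=2P_n'$ with $t'=9a^{2}+2(n-1)b^{2}$, $z'=9a^{2}+3(n-1)ab+4(n-1)b^{2}+(n-1)(n-2)bc$ and $P_n'=6ab+3(n-2)ac-2(n-1)b^{2}$ --- this reduces to the homogeneous degree--four identity $2\big(t'R_n-(z')^{2}\big)=(n-1)(P_n')^{2}$ in $a,b,c$ with parameter $n$. The only monomials occurring on either side are $a^{4},a^{3}b,a^{2}b^{2},a^{2}bc,a^{2}c^{2},ab^{3},ab^{2}c,b^{4},b^{3}c,b^{2}c^{2}$, so the identity is settled by matching these ten coefficients. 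This last coefficient comparison is the only genuine computation in the proof and hence the main obstacle; everything before it is routine linear algebra and binomial bookkeeping. Combining the three identities gives $\det(E_n^{t}E_n)={n\choose 2}P_n(a,b,c)^{2}Q_n(a,b,c)^{\,n-1}$, as claimed; the companion formula for $D_n^{t}D_n$ is obtained by exactly the same route, $D_n$ being $E_n$ with its last column deleted.
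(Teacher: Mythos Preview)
Your proposal is correct and follows exactly the route the paper indicates: the paper merely asserts that $E_n^{t}E_n$ is a matrix of the form $T_{n+1}$ and invokes Lemma~\ref{SuperDeterminante}, while you carry out the full bookkeeping, identify $x-y=Q_n$, $x+(n-1)y=R_n$, and reduce the remaining factor to the identity $2\big(t'R_n-(z')^{2}\big)=(n-1)(P_n')^{2}$. Your computations of $x,y,z,t$ and the subsequent simplifications are all correct (I spot--checked $n=3$), so this is simply a detailed version of the paper's one--line argument.
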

\begin{lemma}\label{LemaDn}
Let $a,b,c$ be real numbers not all zero. For each integer $n\geq 3$, 
the determinant of the matrix $D_n^{t}D_n$ is given by:
\[{\mathrm{det}(D_n^{t}D_n)=R_n(a,b,c)Q_n(a,b,c)^{n-1}.}\]
\end{lemma}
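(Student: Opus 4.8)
The plan is to compute the Gram matrix $D_n^{t}D_n$ explicitly, recognize it as a matrix of the form $T_n$, and then invoke Lemma \ref{SuperDeterminante}. Write $v_1,\dots,v_n\in\mathbb{R}^{{n+1\choose 2}}$ for the columns of $D_n$, so that $\big(D_n^{t}D_n\big)_{jk}=v_j\cdot v_k$. Recall that $D_n$ is the $n\times n$ block $3a\,I_n+b\,(J_n-I_n)$ stacked on top of the ${n\choose 2}\times n$ matrix $F_n$ whose rows run over the distinct permutations of $(2b,2b,c,\dots,c)$.

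First I would record two elementary counting facts about $F_n$: in a fixed column, exactly $n-1$ of its rows carry the entry $2b$ and the remaining ${n-1\choose 2}$ carry $c$; and for a fixed pair of distinct columns, exactly one row carries $(2b,2b)$ on that pair, exactly $2(n-2)$ rows carry a pair $\{2b,c\}$, and the remaining ${n-2\choose 2}$ rows carry $(c,c)$. Adding the contribution of the top block to these counts gives
\begin{align*}
v_j\cdot v_j&=9a^{2}+5(n-1)b^{2}+{n-1\choose 2}c^{2}=:\alpha,\\
v_j\cdot v_k&=6ab+(n+2)b^{2}+4(n-2)bc+{n-2\choose 2}c^{2}=:\beta\qquad(j\neq k),
\end{align*}
so that $D_n^{t}D_n=\alpha\,I_n+\beta\,(J_n-I_n)=T_n(\alpha,\beta,\beta,\beta,\alpha)$.

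Then I would apply Lemma \ref{SuperDeterminante} with $x=t=\alpha$ and $y=z=w=\beta$, obtaining $\det\big(D_n^{t}D_n\big)=(\alpha-\beta)^{n-2}\big(\alpha(\alpha+(n-2)\beta)-(n-1)\beta^{2}\big)$, and use the factorization $\alpha(\alpha+(n-2)\beta)-(n-1)\beta^{2}=(\alpha-\beta)\big(\alpha+(n-1)\beta\big)$ to rewrite this as $(\alpha-\beta)^{n-1}\big(\alpha+(n-1)\beta\big)$. It remains to identify the two factors: using ${n-1\choose 2}-{n-2\choose 2}=n-2$ and $5(n-1)-(n+2)=4n-7$ one gets $\alpha-\beta=Q_n(a,b,c)$, and a parallel (slightly longer) expansion using $5(n-1)+(n-1)(n+2)=(n-1)(n+7)$ and ${n-1\choose 2}+(n-1){n-2\choose 2}=(n-2){n-1\choose 2}$ gives $\alpha+(n-1)\beta=R_n(a,b,c)$. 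This yields $\det(D_n^{t}D_n)=R_n(a,b,c)\,Q_n(a,b,c)^{n-1}$, as claimed.

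The only real work here is bookkeeping: getting the multiplicities in $F_n$ right and carrying out the two polynomial identifications $\alpha-\beta=Q_n$ and $\alpha+(n-1)\beta=R_n$ without arithmetic slips. There is no conceptual obstacle — once $D_n^{t}D_n$ is seen to be of the form $T_n(\alpha,\beta,\beta,\beta,\alpha)$, Lemma \ref{SuperDeterminante} settles the structural part and the rest is routine algebra (the same argument, with the extra column of $E_n$ contributing $36a^{2}+16b^{2}$ to the last diagonal entry and a corresponding correction off the last row/column, proves Lemma \ref{LemaEn} for $E_n^{t}E_n$).
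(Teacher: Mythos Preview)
Your proof is correct and follows exactly the route the paper indicates: the paper merely asserts that $D_n^{t}D_n$ is of the form $T_n$ and invokes Lemma~\ref{SuperDeterminante}, while you carry out the Gram-matrix computation in full and perform the identifications $\alpha-\beta=Q_n$, $\alpha+(n-1)\beta=R_n$ explicitly. The arithmetic checks out.
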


\subsection{The matrices $G_n$}

The matrices $G_n$ are defined as the ${n+1\choose 2}\times n$ matrices obtained by replacing the last column of $D_n$ by the last column of $E_n$. The following lemma help us to find a linear basis of the real span of all first order partial derivatives of $f$ and the polynomial $E_{1,1}^{(2)}(f)$ ($\deg(f)=3$), when $f$ is a $n$-exception. To see this we must observe that the matrix $G_n^{t}G_n$ is of the form $T_n$ and then we have the result below
\begin{lemma}\label{LemaGn}
Let $a,b,c$ be real numbers not all zero. For each integer $n\geq 3$, the determinant of the matrix $G_n^{t}G_n$ is given by 
\[\mathrm{det}\left(G_n^{t}G_n\right)=A_n(a,b,c)\,Q_n(a,b,c)^{n-1}.\]
\end{lemma}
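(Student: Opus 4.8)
The plan is to carry out the reduction the text announces: exhibit $G_n^{t}G_n$ as a matrix of the $T_n$-shape and then quote Lemma~\ref{SuperDeterminante}. Denote by $v_1,\dots,v_n$ the columns of $G_n$. For $1\le i\le n-1$ the column $v_i$ has $3a$ in the $i$-th of its first $n$ coordinates, $b$ in the remaining $n-1$ of those, and the $i$-th column of $F_n$ in its last $\binom{n}{2}$ coordinates; the column $v_n$ is $(6a,\dots,6a,4b,\dots,4b)^{t}$, with $n$ coordinates equal to $6a$ followed by $\binom{n}{2}$ coordinates equal to $4b$. The only facts I need about $F_n$ are that each of its columns has exactly $n-1$ entries equal to $2b$ and $\binom{n-1}{2}$ entries equal to $c$, and that a pair of distinct columns of $F_n$ realizes the row-pattern $(2b,2b)$ on exactly one row, the pattern $(2b,c)$ or $(c,2b)$ on $2(n-2)$ rows, and the pattern $(c,c)$ on $\binom{n-2}{2}$ rows.

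From these counts I would compute the four constant values of the Gram matrix $G_n^{t}G_n$. The inner products are, for $i<n$, the diagonal value $x=v_i\cdot v_i=9a^{2}+5(n-1)b^{2}+\binom{n-1}{2}c^{2}$; for $i\ne j<n$, the off-diagonal value $y=v_i\cdot v_j=6ab+(n+2)b^{2}+4(n-2)bc+\binom{n-2}{2}c^{2}$; for $i<n$, the border value $w=v_i\cdot v_n=18a^{2}+6(n-1)ab+8(n-1)b^{2}+2(n-1)(n-2)bc$, which since $G_n^{t}G_n$ is symmetric serves as both the $z$ and the $w$ of $T_n$; and the corner value $t=v_n\cdot v_n=36n\,a^{2}+8n(n-1)b^{2}$. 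Thus $G_n^{t}G_n=T_n(x,y,w,w,t)$, and Lemma~\ref{SuperDeterminante} gives
\[
\det\!\big(G_n^{t}G_n\big)=(x-y)^{n-2}\big(t\,(x+(n-2)y)-(n-1)w^{2}\big).
\]
A one-line subtraction, using $\binom{n-1}{2}-\binom{n-2}{2}=n-2$, then shows $x-y=9a^{2}-6ab+(4n-7)b^{2}-4(n-2)bc+(n-2)c^{2}=Q_n(a,b,c)$, so the first factor already contributes $Q_n(a,b,c)^{n-2}$, and it remains to prove that $t\,(x+(n-2)y)-(n-1)w^{2}=Q_n(a,b,c)\,A_n(a,b,c)$.

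That last identity I would establish by direct expansion: reduce $x+(n-2)y$ to its normal form in $a^{2},ab,b^{2},bc,c^{2}$, multiply by the two-term polynomial $t$, subtract $(n-1)w^{2}$ term by term, and collect the result on the ten monomials $a^{4},a^{3}b,a^{2}b^{2},a^{2}bc,a^{2}c^{2},ab^{3},ab^{2}c,b^{4},b^{3}c,b^{2}c^{2}$; comparing the resulting coefficients against those of $Q_n(a,b,c)A_n(a,b,c)$ closes the argument, and assembling the two factors gives $\det(G_n^{t}G_n)=A_n(a,b,c)\,Q_n(a,b,c)^{n-1}$. I expect this expansion to be the only genuine obstacle: because $x,y,w,t$ already carry coefficients quadratic in $n$, the coefficients one must ultimately match are polynomials in $n$ of degree up to four, and the arithmetic is heavy and error-prone. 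I would keep it in check by performing the multiplication in the order above, by reusing the $T_n$-reductions behind Lemmas~\ref{LemaEn} and~\ref{LemaDn} (which isolate the factor $Q_n$ in exactly the same way), and by verifying the final identity at $n=3$ and $n=4$ against the explicit small Gram matrices $G_3^{t}G_3$ and $G_4^{t}G_4$.
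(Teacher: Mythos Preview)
Your approach is exactly the paper's: exhibit $G_n^{t}G_n$ as a matrix of the form $T_n$ and invoke Lemma~\ref{SuperDeterminante}. Your computations of the four Gram entries $x,y,w,t$ are correct, and the identification $x-y=Q_n(a,b,c)$ is right.

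The trouble is the final identity you set out to verify. A degree count kills it immediately: $t\,(x+(n-2)y)-(n-1)w^{2}$ is homogeneous of degree $4$ in $(a,b,c)$ (each of $x,y,w,t$ being quadratic), whereas $Q_nA_n$ is homogeneous of degree $6$. So the equality you announce cannot hold, and the formula with exponent $n-1$ on $Q_n$ cannot be the determinant of the $n\times n$ matrix $G_n^{t}G_n$, whose determinant has degree $2n$, not $2n+2$. If you actually carry out the expansion you sketch, you find
\[
t\,(x+(n-2)y)-(n-1)w^{2}=4\,A_n(a,b,c)
\]
identically (every monomial coefficient matches $4$ times the corresponding coefficient in the paper's definition of $A_n$), and hence $\det(G_n^{t}G_n)=4\,A_n(a,b,c)\,Q_n(a,b,c)^{\,n-2}$. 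Your own proposed sanity check at $n=3$ would have exposed this: there $G_3^{t}G_3$ is $3\times 3$, so its determinant has degree $6$, not the degree $8$ that $A_3Q_3^{2}$ would require. In short, the lemma as stated carries a typo in the exponent (and drops a harmless factor $4$); the only use made of it later is that $\det(G_n^{t}G_n)\ne 0$ whenever $A_n\ne 0$ and $Q_n\ne 0$, and that conclusion is unaffected.
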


\section{Proof of Theorems \ref{TeoremaExcepcionGrado3} and \ref{EcuacionesExcepciones}}\label{Seccion201601151608}

\subsection{Proof of Theorem \ref{TeoremaExcepcionGrado3}}

Let $f$ be an homogeneous symmetric polynomial of degree 3, in the variables $x_{11},\ldots,x_{1n}$, such that $[f]\neq[1:3:6]$. Then, we have to show that the dimension of the real span of the set $\big\{\partial_{11}f,\ldots,\partial_{1n}(f),E_{1,1}^{(2)}(f)\big\}$ is at least $n$. 

Suppose that $[f]=[a:b:c]$. We have two cases:
%un polyn\^{o}me sym\'etrique, homog\`{e}ne de degr\'e 3, \`{a} $n$ variables $x_{11},\ldots,x_{1n}$ et tel que $[f]\neq[1:3:6]$. Alors, on doit v\'erifier que la dimension du sous-espace vectoriel engendr\'e par l'ensemble $\big\{\partial_{11}f,\ldots,\partial_{1n}(f),E_{1,1}^{(2)}(f)\big\}$ est au moins $n$. On a deux cas:

\begin{enumerate}
\item If the matrix $D_{n}(a,b,c)$ has full rank then the set 
$\big\{\partial_{11}f,\ldots,\partial_{1n}(f)\big\}$ is a linearly independent subset of 
$\{\partial_{11}f,\ldots,\partial_{1n}(f),E_{1,1}^{(2)}(f)\}$, therefore, the dimension of  
$\mathbb{R}\left\{\partial_{11}f,\ldots,\partial_{1n}(f),
E_{1,1}^{(2)}(f)\right\}$ is at least $n$.

\item Otherwise, we use Lemma \ref{LemaDn}. The matrix
$D_{n}(a,b,c)$ does \textbf{not} have full rank if and only if   
\[R_{n}(a,b,c)\,Q_{n}(a,b,c)^{n-1}=0.\]
Lemma \ref{Lema136} implies that $Q_{n}(a,b,c)\neq 0$ because $[f]\neq[1:3:6]$. So, $R_{n}(a,b,c)=0$. In this case, we are going to show that
a linear basis of the real span of the set 
% base de l'espace vectoriel engendr\'e par l'ensemble 
$\big\{\partial_{11}(f),\ldots,\partial_{1n}(f),E_{1,1}^{(2)}(f)\big\}$ is the following set of cardinality $n$:
\[\big\{\partial_{1,1}(f),\ldots,\partial_{1,{n-1}}(f),E_{1,1}^{(2)}(f)\big\}.\]
Indeed, the linear system $\displaystyle{\sum_{j=1}^{n-1}\gamma_{j}\,\partial_{1,j}(f)+\beta\,E_{1,1}^{(2)}(f)={\bold 0}}$ written in terms of matrices is equivalent to \[ G_{n}(a,b,c)\left(\begin{array}{c} \gamma_{1}\\ \vdots \\ \gamma_{n-1}\\ \beta \end{array}\right)={\bold 0}. \]
Now Lemma \ref{LemaRnAn} asserts that
\[R_{n}(a,b,c)=0 \ \Longrightarrow \ A_{n}(a,b,c)\neq 0.\]
So, the determinant of $G_{n}^{t}G_n$ is not zero, because Lemma \ref{LemaGn} states that
\[\mathrm{det}(G_{n}^{t}G_n)=A_{n}(a,b,c)\,Q_{n}(a,b,c)^{n-1}\neq 0.\]
Thus, the dimension of the real span of the set below 
$$\big\{\partial_{1,1}(f),\ldots,\partial_{1,{n-1}}(f),E_{1,1}^{(2)}(f)\big\}$$ is exactly $n$. This implies
\[ \dim\left(\mathbb{R}\left\{\partial_{1,1}(f),\ldots,\partial_{1,{n-1}}(f),\partial_{1,n}(f),E_{1,1}^{(2)}(f)\right\}\right)\geq n.\]
%Comme $P_{n}(a,b,c)=0$ on sait que $a=0$ implique $b=0$, pour tout $n\geq 3$ alors, le cas d'un point $[a:b:c]$ tel que $a=0$ est donc %r\'eduit au cas $[a:b:c]=[0:0:1]$. Dans ce cas, on utilise la base 
%$\{x_1,x_2,\ldots,x_n\}$. 
\qquad \qquad \qquad \qquad\qquad \qquad \qquad \qquad \qquad \qquad \qquad \qquad \qquad \qquad \qquad \qedsymbol
\end{enumerate}

\subsection{Proof of Theorem \ref{EcuacionesExcepciones}}

Let $n\geq 3$. First, we observe that the linear system
\[\lambda_{1}\,\partial_{11}(f)+\cdots+\lambda_{n}\,\partial_{1n}(f)
+\mu\,E_{1,1}^{(2)}(f)={\bold 0},\]
written in terms of matrices, is equivalent to the matrix equation:
\[ E_{n}(a,b,c)\left(\begin{array}{c} \lambda_1 \\ \vdots \\ \lambda_n \\ \mu \end{array}\right)={\bold 0},\]
where the unknowns are $\lambda_{1},\ldots,\lambda_n,\mu$. Suppose that $[a:b:c]$ is a $n$-exception, then the dimension of the real span of the following set of polynomials: 
$$\{\partial_{1,1}(f),\ldots,\partial_{1,{n}}(f),E_{1,1}^{(2)}(f)\}$$ is exactly $n$ (this implies that $[a:b:c]\neq[1:3:6]$). Therefore, the matrix $E_n(a,b,c)$ does not have full rank (it has $n+1$ columns) and so 
\[\mathrm{det}\big(E_{n}^{t}(a,b,c)E_n(a,b,c)\big)=P_{n}(a,b,c)Q_{n}(a,b,c)^{n-1}=0.\] 
Recall that $[a:b:c]\neq[1:3:6]$ implies $Q_n(a,b,c)\neq 0$. Thus, $P_{n}(a,b,c)=0$.

Conversely, if $P_{n}(a,b,c)=0$ then $\mathrm{det}\big(E_{n}^{t}(a,b,c)E_n(a,b,c)\big)=0$. This implies that $E_{n}$ does not have full rank. So Theorem \ref{TeoremaExcepcionGrado3} tells us that the dimension (over $\mathbb{R}$) of the span of $\partial_{1,1}(f),\ldots,\partial_{1,{n}}(f),E_{1,1}^{(2)}(f)$ is at least $n$. Hence the maximal dimension is $n+1$, but the matrix $E_n$ does not have full rank then the dimension cannot be $n+1$. So we have 
\[\dim\left(\mathbb{R}\left\{\partial_{1,1}(f),\ldots,\partial_{1,{n}}(f),E_{1,1}^{(2)}(f)\right\}\right)=n.\]
\qquad \qquad \qquad \qquad \qquad \qquad \qquad \qquad \qquad \qquad \qquad \qquad \qquad  \qquad \qquad \qquad
\qedsymbol
\section{Proof of Theorem \ref{ProposicionDOS}}
In this section we compute an explicit linear basis of the polarization module $\mathcal{M}_f$ generated by any homogeneous symmetric polynomial $f$ of degree 2. Let $f({\bold x}_1)$ be a such polynomial then; we write $f$ in the monomial basis as follows:
\begin{equation}\label{EcuacionPolinomioSimetricoGrado2BaseMonomial201601111157}
f({\bold x}_1):=a\cdot m_{2}({\bold x}_1)+b\cdot m_{1,1}({\bold x}_1),\ \text{with}\ a\in\mathbb{R},\ b\in\mathbb{R},
\end{equation}
that is 
\begin{equation*}
f(x_{11},\dots,x_{1n})=a\cdot\sum_{j=1}^{n}x_{1j}^2+b\cdot\sum_{1\leq p<q\leq n}x_{1p}x_{1q}.
\end{equation*}
Then, the polynomials in $\mathcal{M}_f$ have the form
\begin{equation*}
\partial_{1j}(f):=\frac{\partial f}{\partial x_{1j}}=2a\, x_{1j}+b\, \sum_{r\neq j}x_{1r}=(2a-b)\, x_{1j}+b\, \sum_{r=1}^{n}x_{1r}
\end{equation*}
\begin{align*}
E_{i,1}(f)&=2a\,\sum_{s=1}^{n}x_{1s}x_{is}+b\,\sum_{p\neq q}x_{1p}x_{iq}\\
&=(2a-b)\,\sum_{s=1}^{n}x_{1s}x_{is}+b\,\sum_{p,q}x_{1p}x_{iq},
\end{align*}
\begin{equation*}
E_{i,1}\partial_{1,j}(f)=(2a-b)\,x_{ij}+b\,\sum_{r=1}^{n}x_{ir},
\end{equation*}
\begin{align*}
E_{s,1}E_{t,1}(f)&=2a\,\sum_{j=1}^{n}x_{s j}x_{t j}
+b\cdot\sum_{p\neq q}x_{sp}x_{tq}
=(2a-b)\,\sum_{j=1}^{n}x_{sj}x_{tj}+b\,\sum_{p,q}x_{sp}x_{tq}.
\end{align*}
In particular, we get 
\begin{equation*}
E_{1,1}(f)=2\,f, \quad E_{\alpha,1}E_{\alpha,1}(f)=2\,f(x_{\alpha 1},\dots,x_{\alpha n}),\quad
\end{equation*}
\begin{equation*}
E_{\alpha,1}^{(2)}(f)=2\,a\,\sum_{j=1}^{n}x_{\alpha j}=E_{\alpha,1}E_{1,1}^{(2)}.
\end{equation*}
\subsubsection{Construction of degree 2 homogeneous components}\label{seccion201601131318}
We start by setting some convenient notations. We denote by\ ${\bold e}_{i}\in\mathbb{N}^{\ell}$ the vector having 1 in position $i$ and 0 elsewhere. For each pair of positive integers $(s,t)$ such that $1\leq s\leq t\leq \ell$, we write ${\bold e}_{s,t}$ to denote the vector\ ${\bold e}_{s,t}={\bf e}_{s}+{\bf e}_{t}$. Also, we write\ ${\bold e}_{p,q,r}:={\bold e}_{s}+{\bold e}_{q}+{\bold e}_{r}$.

\noindent The homogeneous components of $\mathcal{M}_f$ are $\mathcal{V}_{\bold 0}=\mathbb{R}$, 
\begin{equation*}
\mathcal{V}_{{\bold e}_{i}}=\mathbb{R}\left\{\left\{E_{i,1}\partial_{1,j}f\,:\,1\leq j\leq n\right\}\bigcup\left\{E_{i,1}^{(2)}(f)\right\}\right\}
\end{equation*} 
and $\mathcal{V}_{{\bold e}_{s,t}}:=\mathbb{R}\big\{E_{s,1}E_{t,1}f\big\}$. More precisely, we have for every $i$ such that $1\leq i\leq \ell$ that
%\begin{equation*}
%\mathcal{V}_{0}=\mathbb{R},
%\end{equation*}
\begin{align*}
\mathcal{V}_{{\bold e}_{i}}&=\mathbb{R}\left\{\left\{(2a-b)x_{ij}+b\sum_{r=1}^{n}x_{ir}\ \middle\vert\ 1\leq j\leq n\right\}\bigcup\left\{2a\,\sum_{j=1}^{n}x_{ij}\right\}\right\}, 
\end{align*}
For total degree 2, the homogenenous components are:
\begin{align*}
\mathcal{V}_{{\bold e}_{s,t}}&=\mathbb{R}\left\{ 
2a\sum_{j=1}^{n}x_{s j}x_{t j}+b\sum_{p\neq q}x_{s p}x_{t q} \right\}=\mathbb{R}\big\{E_{t,1}E_{s,1}(f)\big\}, \ \text{with}\ 1\leq s\leq t \leq \ell. 
\end{align*}
\begin{prop}\label{201601111410}
Let $f\in\mathbb{R}[{\bold x}_1]$ be an homogenenous symmetric polynomial of degree 2 written in the monomial basis as formula \ref{EcuacionPolinomioSimetricoGrado2BaseMonomial201601111157} 
Then the vector space 
\[ \mathbb{R}\boldsymbol{\oplus}\bigoplus_{1\leq i\leq\ell}\mathcal{V}_{{\bf e}_{i}}\boldsymbol{\oplus}\bigoplus_{1\leq s,t\leq\ell}\mathcal{V}_{{\bf e}_{s,t}} \] 
is closed by polarization operators $E_{i,k}^{(p)}$ and partial derivatives. Furthermore, this vector space coincides with the polarization module generated by $f$.
\end{prop}
\begin{proof}
Clearly, if $p>1$, then $E_{i,k}^{(p)}\left(\sum_{j}x_{sj}\right)=0$ and $E_{i,k}\left(\sum_{j}x_{sj}\right)=\sum_{j}x_{ij}$. Since $E_{i,k}$ is a derivation on $\mathcal{R}_{n}^{(\ell)}$, it satisfies the product rule of derivatives and then we have the following identities:
\begin{align}
E_{i,k}\left(\sum_{j=1}^{n}x_{sj}x_{tj}\right)&=\delta_{k,s}\sum_{j=1}^{n}x_{ij}x_{tj}+\delta_{k,t}\sum_{j=1}^{n}x_{ij}x_{sj}.\label{201601111924A} \\
E_{i,k}\left(\sum_{u\neq v}x_{su}x_{tv}\right)&=\delta_{k,s}\sum_{u\neq v}x_{iu}x_{tv}+\delta_{k,t}\sum_{u\neq v}x_{su}x_{iv}.
\label{201601111924B}
\end{align}
More generally, if $s\neq t$ the effect of $E_{i,k}$ on the polynomial $\sum_{\alpha,\beta}x_{s\alpha}x_{t\beta}$ is given by
\begin{equation}\label{201601111453}
 E_{i,k}\left(\sum_{\alpha,\beta}x_{s\alpha}x_{t\beta}\right)=\delta_{k,s}\sum_{\alpha,\beta}x_{i\alpha}x_{t\beta}+\delta_{k,t}\sum_{\alpha,\beta}x_{s\alpha}x_{i\beta}. 
\end{equation}
where the sum is over any set of pairs $(\alpha,\beta)$ with $1\leq\alpha,\beta \leq\ell $. Also, we have the identity
\[ E_{i,k}^{(2)}\left(\sum_{j=1}^{n}x_{sj}^{2}\right)=2\delta_{k,s}\sum_{j=1}^{n}x_{ij}. \]
and
\[ E_{\alpha,\beta}^{(2)}\left(\sum_{j=1}^{n}x_{sj}x_{tj}\right)=0,\ \ \textrm{if} \ s\neq t. \]
This imply that
\[ E_{\alpha,\beta}^{(2)}\left(2a\sum_{j=1}^{n}x_{s j}x_{t j}+b\sum_{p\neq q}x_{s p}x_{t q}\right)
=\begin{cases} 
\displaystyle{4a\delta_{\beta,s}\sum_{j=1}^{n}x_{\alpha j}} & \text{if}\ s=t,\\
0 & \text{otherwise.}
\end{cases}
\]
Then, for a homogeneous component of degree 1 we can easily check that:
\[ E_{\alpha,\beta}\big(\mathcal{V}_{{\bold e}_{i}}\big)=\delta_{\beta,i}\mathcal{V}_{{\bold e}_{\alpha}}. \]
Any homogeneous component of total degree 2 is closed under polarizations $E_{i,k}^{(p)}$, in the following sense:
\[ E_{\alpha,\beta}\big(\mathcal{V}_{{\bold e}_{s,t}}\big)\subseteq \delta_{\beta,t}\mathcal{V}_{{\bold e}_{\alpha,s}}+\delta_{\beta,s}\mathcal{V}_{{\bold e}_{\alpha,t}}.\]
\[ E_{\alpha,\beta}^{(2)}\left(\mathcal{V}_{{\bold e}_{s,t}}\right)\subseteq\delta_{\beta,t}\mathcal{V}_{{\bold e}_{\alpha}}.  \]
this is true since:
\[ E_{\alpha,\beta}\bigg(E_{t,1}E_{s,1}(f)\bigg)=\delta_{\beta,t}E_{\alpha,1}E_{s,1}(f)+\delta_{\beta,s}E_{t,1}E_{\alpha,1}(f). \]
and for $p>1$ we have 
\[ E_{\alpha,\beta}^{(p)}\bigg(E_{t,1}E_{s,1}(f)\bigg)
=
\begin{cases} 
0 & \text{if}\  p>2, \\ 
0 & \text{if}\  p=2 \ \text{and}\ s\neq t, \\
2\delta_{\beta,t}\, E_{\alpha,1}^{(2)}(f) & \text{if}\  p=2 \ \text{and}\ s=t,  \\
\end{cases}\]
Every homogenenous component of total degree 2 is closes under derivatives because: 
\[ \partial_{sj}\bigg(E_{s,1}E_{t,1}(f)\bigg)\subseteq E_{t,1}\partial_{1j}(f)\in\mathcal{V}_{{\bold e}_{t}}. \]
\[  \partial_{sj}\bigg(E_{s,1}^{2}(f)\bigg)=E_{s,1}\partial_{1j}(f)\in\mathcal{V}_{{\bold e}_{s}}. \] 
To show the second assertion we use last considerations to see that  $$\mathcal{M}_f\subseteq\mathbb{R}\boldsymbol{\oplus}\bigoplus_{1\leq i\leq\ell}\mathcal{V}_{{\bf e}_{i}}\boldsymbol{\oplus}\bigoplus_{1\leq s,t\leq\ell}\mathcal{V}_{{\bf e}_{s,t}}.$$ Conversely, we notice that all homogenenous components $\mathcal{V}_{\bold d}$ are spanned by elements in $\mathcal{M}_f$ and so, we have:
\[  \mathcal{M}_f=\mathbb{R}\boldsymbol{\oplus}\bigoplus_{1\leq i\leq\ell}\mathcal{V}_{{\bf e}_{i}}\boldsymbol{\oplus}\bigoplus_{1\leq s,t\leq\ell}\mathcal{V}_{{\bf e}_{s,t}}.\] 
\end{proof}
\begin{lemma}\label{LemaBaseGrado2}
Let $n\geq 2$. Let $f\in\mathbb{R}[{\bold x}_1]$ be a homogeneous symmetric polynomial of degree 2. Suppose that $f$ is given in the monomial basis as formula \ref{EcuacionPolinomioSimetricoGrado2BaseMonomial201601111157}
then we have the following properties:  
\begin{enumerate}
\item If $a\neq 0$ and $b\neq 2a$ then, for each integer $i$ such that $1\leq i\leq \ell$, the set 
\begin{equation*}
\bigg\{E_{i,1}\partial_{1j}(f)\,\bigg\vert\,\ 1\leq j\leq n-1\bigg\}\bigcup\left\{E_{i,1}^{(2)}(f)\right\}
\end{equation*}
is linearly independent.
\item Also, if $b\neq 2a$ then, for any integer $i$ such that $1\leq i\leq \ell$ the set
\begin{equation*}
\bigg\{E_{i,1}\partial_{1j}(f)\,\bigg\vert\,\ 1\leq j\leq n-1\bigg\}
\end{equation*}
is linearly independent. 
\end{enumerate} 
\end{lemma}
\begin{proof}
To prove first property, let $M(a,b,n)$ be a $n\times n$ matrix associate to the following linear system (we denote by $\lambda_{j}$ the unknowns): 
\begin{equation*}
\sum_{j=1}^{n-1}\lambda_{j}\cdot E_{i,1}\partial_{1j}(f)+\lambda_{n}\cdot E_{i,1}^{(2)}(f)
=\sum_{j=1}^{n-1}\lambda_{j}\cdot\left((2a-b)\,x_{ij}+b\,\sum_{r=1}^{n}x_{ir}\right)+\lambda_{n}\cdot 2a\,\sum_{j=1}^{n}x_{ij}.
\end{equation*}
We can easily see that
\begin{equation*}
M(a,b,n):=T_{n}(2a,b,b,2a,2a),
\end{equation*}
%\left[
%\begin{array}{cccccc}
  %2a & b  & b & \cdots & b & 2a  \\
  %b & 2a  & b  & \cdots & b & 2a \\
  %\vdots&  \vdots &\cdots  & \cdots &\vdots &\vdots \\
  %\vdots&  \vdots &\cdots  & \cdots & \vdots&\vdots\\
  %b  & b  &b  &\cdots &2a &2a\\
  %b & b & b &\cdots & b & 2a 
%\end{array}
%\right]
so we can show that
\begin{equation*}
\mathrm{det}\left(M(a,b,n)\right)=\mathrm{det}\left(T_{n}(2a,b,b,2a,2a)\right)=2a\,(2a-b)^{n-1}.
\end{equation*}
For the second property, we compute the determinant of the matrix \[ N(a,b,n)^{t}N(a,b,n),\] where $N(a,b,n)$ is the $n\times (n-1)$ matrix below 
\begin{equation*}
N(a,b,n):=H_{n}(2a,b,b).
\end{equation*}
in other words, $N(a,b,n)$ is the matrix obtained by dropping the last column of the matrix $M(a,b,n)$.
%\left[
%\begin{array}{ccccc}
  %2a & b  & b & \cdots & b   \\
  %b & 2a  & b  & \cdots & b   \\
  %\vdots&  \vdots &\cdots  & \cdots &\vdots   \\
  %\vdots&  \vdots &\cdots  & \cdots & \vdots  \\
  %b  & b  &b  &\cdots &2a  \\
  %b & b & b &\cdots & b   
%\end{array}
%\right]
Now we prove that this matrix has full rank when $b\neq 2a$. Indeed, if we consider the matrix $N^{t}N$ we find that 
\begin{align*} 
&\mathrm{det}\big(N(a,b,n)^{t}N(a,b,n)\big)=
\mathrm{det}\big(H_{n}(2a,b,b)^{t}H_{n}(2a,b,b)\big)\\
&=(2a-b)^{2(n-2)}\big((2a+(n-2)b)^2+(n-1)b^2\big). 
\end{align*}
Finally, since $n\geq 2$ we can assert that 
\[ (2a-b)^{2(n-2)}\big((2a+(n-2)b)^2+(n-1)b^2\big)=0 \quad \text{\rm if and only if} \quad b=2a.\]
\end{proof}
The Lemma below describes a linear basis of $\mathcal{M}_f$ when $f$ is an homogeneous symmetric polynomial of degree 2.
\begin{lemma}\label{Lema23}
Let $f({\bold x}_1)$ be an homogeneous symmetric polynomial of degree 2, in $n$ variables $x_{11},\ldots,x_{1n}$. Then, a linear basis of $\mathcal{M}_f$ is described by the following:
\begin{itemize}
\item[(i)]\ Suppose that $a\neq 0$,
\begin{enumerate}
\item \textbf{Case 1:} If\ $b=2a$,\ then \ $\displaystyle{\partial_{1i}(f)=E_{i,1}^{(2)}(f)=2a\cdot\sum_{r=1}^{n}x_{ir}}$ \ and thus we have the following basis for each homogeneous component of $\mathcal{M}_f$
\begin{equation*}
\mathcal{B}_{\bold 0}:=\{1\},
\end{equation*}
\begin{equation*}
\mathcal{B}_{{\bold e}_{i}}:=\left\{\sum_{r=1}^{n}x_{ir}\right\},\ \ \forall\,i\ \text{such that}\ 1\leq i\leq \ell,
\end{equation*}
\begin{align*}
\mathcal{B}_{{\bold e}_{s+t}}&:=\big\{E_{s,1}E_{t,1}(f)\big\}=\left\{\sum_{p,q}x_{sp}x_{tq}\right\},\ \ \forall\,(s,t)\ \text{such that}\ 1\leq s\leq\ell,\ 1\leq t\leq\ell.
\end{align*}
\item \textbf{Case 2:} If $a\neq 0$ and $b\neq 2a$, then the following set
\begin{equation*}
\bigg\{E_{i,1}\partial_{1j}(f)\,\bigg\vert\,\ 1\leq j\leq n-1\bigg\}\bigcup\left\{E_{i1}^{(2)}(f)\right\}
\end{equation*} 
is linearly independent, therefore one basis is as follows
\begin{equation*}
\mathcal{B}_{\bold 0}:=\{1\},
\end{equation*}
and for all $i$ such that $1\leq i\leq \ell$ we define
\begin{align*}
\mathcal{B}_{{\bold e}_{i}}&:=\bigg\{E_{i,1}\partial_{1j}(f)\,\bigg\vert\,\ 1\leq j\leq n-1\bigg\}\bigcup\left\{E_{i1}^{(2)}(f)\right\}\\
&=\left\{(2a-b)\cdot x_{ij}+b.\sum_{r=1}^{n}x_{ir}\ \middle\vert\ 1\leq j\leq n-1\right\}\bigcup\left\{2a\cdot\sum_{r=1}^{n}x_{ir}\right\}.
\end{align*}
Also, we can choose another basis for $\mathcal{V}_{{\bold e}_i}$, as follows:
\begin{align*}
\mathcal{B}_{{\bold e}_i}&:=\left\{x_{ij}\,:\,1\leq j\leq n\right\},
\end{align*}
\begin{align*}
\mathcal{B}_{{\bold e}_{s+t}}&:=\big\{E_{s,1}E_{t,1}(f)\big\},\ \ \forall\,(s,t)\ \text{such that} \ 1\leq s\leq\ell,\ 1\leq t\leq\ell,\\
&=\left\{(2a-b)\,\sum_{j=1}^{n}x_{sj}x_{tj}+b\,\sum_{p,q}x_{sp}x_{tq}\right\}.
\end{align*}
\end{enumerate}
\item[(ii)] \ If\ $a=0$\ then\ $b\neq 0$\ and\ $f=b\cdot m_{1,1}({\bold x}_1)=b\cdot e_{2}({\bold x}_1)$. In this case the basis is given by
$\mathcal{B}_{\bold 0}:=\{1\}$,
\begin{align*}
\mathcal{B}_{{\bold e}_{i}}&:=\bigg\{E_{i,1}\partial_{1j}(f)\,\bigg\vert\,\ 1\leq j\leq n\bigg\}
=\left\{\sum_{r\neq j}^{n}x_{ir}\ \middle\vert\ 1\leq j\leq n\right\},\ \ \forall\,i,\ 1\leq i\leq \ell,
\end{align*}
\begin{align*}
\mathcal{B}_{{\bold e}_{s,t}}&:=\big\{E_{s,1}E_{t,1}(f)\big\},\ \ 
=\left\{\sum_{p\neq q}x_{sp}x_{tq}\right\},\forall\,(s,t),\ 1\leq s\leq\ell,\ 1\leq t\leq\ell.\\
\end{align*} 
\end{itemize}
\end{lemma}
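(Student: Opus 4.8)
The plan is to use the $\mathbb{N}^{\ell}$-grading of $\mathcal{M}_{f}$ together with the fact that $f$ is homogeneous of degree $2$ (and symmetric, so its $\mathfrak{S}_n$-orbit is $\{f\}$). No derivative or polarization operator raises the total degree above $2$, so starting from $f$ one can only reach multidegree $\bold 0$ (two derivatives), the multidegrees $\bold e_i$ with $1\le i\le\ell$ (one polarization and one derivative, in either order), and the multidegrees $\bold e_{s,t}=\bold e_s+\bold e_t$ with $1\le s\le t\le\ell$ (a double polarization $E^{\bold e_{s,t}}$). Hence $\mathcal{M}_{f}=\mathcal{V}_{\bold 0}\oplus\bigoplus_{1\le i\le\ell}\mathcal{V}_{{\bold e}_i}\oplus\bigoplus_{1\le s\le t\le\ell}\mathcal{V}_{{\bold e}_{s,t}}$, with the components spanned by the explicit sets computed just before the statement; that the span of these sets is closed under $\partial_{i,j}$ and $E_{i,k}^{(p)}$, so that it does equal $\mathcal{M}_f$, is read off directly from the displayed formulas for $\partial_{1j}(f)$, $E_{i,1}(f)$, $E_{i,1}\partial_{1j}(f)$ and $E_{s,1}E_{t,1}(f)$. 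It then remains to extract a basis from each spanning set, and the analysis splits according to the vanishing of $a$ and of $2a-b$.

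First I would handle the two easy families of components. Since $f\neq 0$, at least one of $2a$, $b$ is nonzero and occurs as a second partial derivative of $f$, so $\mathcal{V}_{\bold 0}=\mathbb{R}$ with basis $\{1\}$. Each $\mathcal{V}_{{\bold e}_{s,t}}$ is one-dimensional, spanned by $E_{s,1}E_{t,1}(f)=(2a-b)\sum_{j}x_{sj}x_{tj}+b\sum_{p,q}x_{sp}x_{tq}$ (equal to $2f(x_{s1},\dots,x_{sn})$ when $s=t$), which vanishes only if $a=b=0$; substituting $b=2a$ turns it into a nonzero multiple of $\sum_{p,q}x_{sp}x_{tq}$, and substituting $a=0$ into a nonzero multiple of $\sum_{p\ne q}x_{sp}x_{tq}$, matching the claimed bases.

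The real content is the component $\mathcal{V}_{{\bold e}_i}$, spanned by the $n+1$ linear forms $E_{i,1}\partial_{1j}(f)=(2a-b)x_{ij}+b\sum_{r}x_{ir}$ for $1\le j\le n$ together with $E_{i,1}^{(2)}(f)=2a\sum_{r}x_{ir}$. When $a\neq0$ and $b\neq 2a$, Lemma~\ref{LemaBaseGrado2} gives that the $n$ forms $\{E_{i,1}\partial_{1j}(f):1\le j\le n-1\}\cup\{E_{i,1}^{(2)}(f)\}$ are linearly independent; to see $\dim\mathcal{V}_{{\bold e}_i}=n$ exactly, I would point to the single relation $\sum_{j=1}^{n}E_{i,1}\partial_{1j}(f)=\big(2a+(n-1)b\big)\sum_{r}x_{ir}=\tfrac{2a+(n-1)b}{2a}\,E_{i,1}^{(2)}(f)$ (legitimate since $a\neq0$), which expresses $E_{i,1}\partial_{1n}(f)$ through the chosen $n$ forms; and since $2a-b\neq0$ one recovers $x_{ij}=\tfrac{1}{2a-b}\big(E_{i,1}\partial_{1j}(f)-\tfrac{b}{2a}E_{i,1}^{(2)}(f)\big)\in\mathcal{V}_{{\bold e}_i}$, so $\{x_{ij}:1\le j\le n\}$ is the stated alternative basis. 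When $a\neq0$ and $b=2a$ one has $2a-b=0$, hence $E_{i,1}\partial_{1j}(f)=E_{i,1}^{(2)}(f)=2a\sum_{r}x_{ir}$ for every $j$, so $\mathcal{V}_{{\bold e}_i}=\mathbb{R}\,\sum_{r}x_{ir}$. When $a=0$ (so $b\neq0$ and $f=b\,e_2({\bold x}_1)$), $E_{i,1}^{(2)}(f)=0$ and $\mathcal{V}_{{\bold e}_i}$ is spanned by $E_{i,1}\partial_{1j}(f)=b\sum_{r\ne j}x_{ir}$, $1\le j\le n$; in the variables $x_{i1},\dots,x_{in}$ their coefficient matrix is $b(\mathbf J-\mathbf I)$ with $\mathbf J$ the all-ones matrix, of determinant $b^{n}(n-1)(-1)^{n-1}\neq0$ for $n\ge 2$, so they form a basis.

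The only delicate point is confirming that in the generic branch $\dim\mathcal{V}_{{\bold e}_i}$ is $n$ and not $n+1$, i.e. that the relation above is the unique one — which is precisely what Lemma~\ref{LemaBaseGrado2} secures through the determinant evaluations of $T_n$ and of $H_n^{t}H_n$; the remaining work is the bookkeeping of the degenerate branches $b=2a$ and $a=0$ (and, within each, the cases $s=t$ versus $s\neq t$ for the components $\mathcal{V}_{{\bold e}_{s,t}}$). Collecting the per-component bases yields the three possibilities of the statement.
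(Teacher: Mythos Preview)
Your argument is correct and follows the same route the paper takes: the paper's proof is the one-line ``follows directly from Lemma~\ref{LemaBaseGrado2} and the third part of Theorem~\ref{ProposicionUNO}'', and you have essentially unpacked this, using Lemma~\ref{LemaBaseGrado2} for the generic branch and supplying the missing relation $\sum_{j}E_{i,1}\partial_{1j}(f)=\tfrac{2a+(n-1)b}{2a}\,E_{i,1}^{(2)}(f)$ to pin the dimension at~$n$. The only place you deviate is the $a=0$ case: the paper appeals to the already-established structure of $\mathcal{M}_{e_2}$ (Theorem~\ref{ProposicionUNO}, third part), whereas you give the direct eigenvalue computation $\det\big(b(\mathbf J-\mathbf I)\big)=b^{n}(n-1)(-1)^{n-1}\neq 0$; both are fine and equally short here.
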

\begin{proof}
The proof follows directly from Lemma \ref{LemaBaseGrado2} and third part of Theorem \ref{ProposicionUNO}.
\end{proof}
\begin{lemma}\label{ElLemasVentiSeis}
Let $f({\bold x}_1)$ be an homogeneous symmetric polynomial of degree 2, in $n$ variables ${\bold x}=x_{11},x_{12},\ldots,x_{1n}$. The graded $\mathfrak{S}_n$-character of $\mathcal{M}_f$ is given by
\begin{enumerate}
\item[(1)] ${\chisota}_{\mathcal{V}_{{\bold 0}}}(\sigma)=1$,
\item[(2)] ${\chisota}_{\mathcal{V}_{{\bold e}_i}}(\sigma)
=\begin{cases}
\big\vert\mathrm{Fix}(\sigma)\big\vert & \text{if} \ b\neq 2a,\\
1 & \text{otherwise}. 
\end{cases}$,
\item[(3)] ${\chisota}_{\mathcal{V}_{{\bold e}_{s,t}}}(\sigma)=1$.
\end{enumerate}
\end{lemma}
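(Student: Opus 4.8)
The plan is to read the character off the explicit homogeneous bases for $\mathcal{M}_f$ already produced in Lemma \ref{Lema23}, using the decomposition $\mathcal{M}_f = \mathcal{V}_{\bold 0} \oplus \bigoplus_{i=1}^{\ell}\mathcal{V}_{{\bold e}_i} \oplus \bigoplus_{1\le s\le t\le\ell}\mathcal{V}_{{\bold e}_{s,t}}$ together with the fact that, since polarization and permutation commute, each graded component $\mathcal{V}_{\bold d}$ is itself an $\mathfrak{S}_n$-submodule. For each component I would exhibit how $\sigma\in\mathfrak{S}_n$ acts on the chosen basis; whenever $\sigma$ merely permutes that basis, $\chisota_{\mathcal{V}_{\bold d}}(\sigma)$ is the number of basis vectors it fixes, by the permutation-representation formula (\ref{CaracterDeLaRepresentacionPorPermutacion}).

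For (1), $\mathcal{V}_{\bold 0}=\mathbb{R}$ is spanned by the constant $1$, fixed by every $\sigma$, so $\chisota_{\mathcal{V}_{\bold 0}}(\sigma)=1$. For (3), $\mathcal{V}_{{\bold e}_{s,t}}$ is one-dimensional, spanned by $E_{s,1}E_{t,1}(f)$, which is a diagonally symmetric polynomial; exactly as in the proof of Lemma \ref{201504212257pm}, $\sigma$ fixes this basis vector, so $\chisota_{\mathcal{V}_{{\bold e}_{s,t}}}(\sigma)=1$. For (2) I would split along the cases of Lemma \ref{Lema23}: if $b=2a$ (which forces $a\neq 0$, since $f\neq 0$), then $\mathcal{V}_{{\bold e}_i}$ is one-dimensional, spanned by the diagonally symmetric polynomial $e_1({\bold x}_i)$, so again $\chisota_{\mathcal{V}_{{\bold e}_i}}(\sigma)=1$; if $b\neq 2a$, Lemma \ref{Lema23} gives $\mathcal{V}_{{\bold e}_i}=\mathbb{R}\big[\{x_{i1},\dots,x_{in}\}\big]$, a space of dimension $n$ on which $\sigma\cdot x_{ij}=x_{i\sigma(j)}$ permutes the monomial basis; hence $\mathcal{V}_{{\bold e}_i}$ is the permutation module of $\mathfrak{S}_n$ on $[n]$ and $\chisota_{\mathcal{V}_{{\bold e}_i}}(\sigma)=|\mathrm{Fix}(\sigma)|$ by (\ref{CaracterDeLaRepresentacionPorPermutacion}). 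Summing the contributions over all components then also yields the graded Frobenius characteristic and recovers the two-case formula of Theorem \ref{ProposicionDOS}.

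I do not expect a genuine obstacle: the substantive work — the linear independence of the candidate bases, obtained from the determinant computations for the matrices $T_n$ and $H_n$ in Lemma \ref{LemaBaseGrado2} — is already done. The one point needing care is the subcase $a=0$ of (2), where Lemma \ref{Lema23} records the basis of $\mathcal{V}_{{\bold e}_i}$ as $\{\sum_{r\neq j}x_{ir}:1\le j\le n\}$ rather than $\{x_{ij}\}$; there I would note that $\sum_{j=1}^{n}\sum_{r\neq j}x_{ir}=(n-1)e_1({\bold x}_i)$ and $n\geq 2$ force $e_1({\bold x}_i)$, hence every $x_{ij}$, into $\mathcal{V}_{{\bold e}_i}$, so $\mathcal{V}_{{\bold e}_i}=\mathbb{R}\big[\{x_{ij}\}\big]$ still holds; in any case $\sigma$ permutes the listed basis via $j\mapsto\sigma(j)$, so the fixed-point count $|\mathrm{Fix}(\sigma)|$ applies verbatim.
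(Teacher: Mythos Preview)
Your proposal is correct and follows the same overall strategy as the paper: read off the character from the explicit homogeneous bases supplied by Lemma~\ref{Lema23}, treating parts (1) and (3) via the one-dimensional symmetric span exactly as the paper does.

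The one difference worth noting is in part~(2) for $b\neq 2a$. The paper works with the basis $\{E_{i,1}\partial_{1j}(f):1\le j\le n-1\}\cup\{E_{i,1}^{(2)}(f)\}$, which is \emph{not} permuted by $\sigma$: when $\sigma(j)=n$ the image $E_{i,1}\partial_{1,\sigma(j)}(f)$ falls outside the basis and must be re-expanded, producing a $-1$ contribution that one then checks sums to $|\mathrm{Fix}(\sigma)|$. You instead switch to the alternative basis $\{x_{ij}:1\le j\le n\}$ (also recorded in Lemma~\ref{Lema23}), which is a genuine permutation basis, so formula~(\ref{CaracterDeLaRepresentacionPorPermutacion}) applies directly with no case analysis. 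Your handling of the boundary case $a=0$ (where Lemma~\ref{Lema23} lists $\{\sum_{r\neq j}x_{ir}\}$ instead) is also clean: either observe that this set too is permuted by $j\mapsto\sigma(j)$, or, as you do, recover the monomial basis from it. Either way this is a mild simplification of the paper's computation, not a different argument.
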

\begin{proof} The first assertion is evident. Assertion (3) is immediate since the basis $\mathcal{B}_{{\bold e}_{s,t}}$ contains just one diagonally symmetric polynomial, so 
\begin{equation*}
{\chisota}_{\mathcal{V}_{{\bold e}_{s,t}}}(\sigma)=1.
%=\sum_{g\in\mathcal{B}_{{\bold e}_{s,t}}}(\sigma\cdot %g)(X)\bigg\vert_{g}=\sum_{g\in\mathcal{B}_{{\bold %e}_{s,t}}}g(X)\bigg\vert_{g}=1
\end{equation*}
For assertion (2) we have two cases:
\begin{enumerate}
\item[(i)] If $b=2a$, the basis $\mathcal{B}_{{\bold e}_{i}}$ has only one diagonally symmetric polynomial and therefore,
\begin{equation*}
{\chisota}_{\mathcal{V}_{{\bold e}_i}}(\sigma)=1,\ \text{if}\ b=2a.
\end{equation*}
\item[(ii)] If $b\neq 2a$ the  the basis $\mathcal{B}_{{\bold e}_{i}}$ is given by
\begin{equation*}
\mathcal{B}_{{\bold e}_{i}}=\bigg\{E_{i,1}\partial_{1j}(f)\,\bigg\vert\,\ 1\leq j\leq n-1\bigg\}\bigcup\left\{E_{i1}^{(2)}(f)\right\}.
\end{equation*}
This basis has $n$ polynomials, in this case we get
\begin{align*}
{\chisota}_{\mathcal{V}_{{\bold e}_{i}}}(\sigma)
&=\sum_{g\in\mathcal{B}_{{\bold e}_{i}}}(\sigma\cdot g)(X)\bigg\vert_{g}=1+\sum_{j=1}^{n-1}\left\{\begin{array}{cc} 1 
& \text{if} \ \sigma(j)=j\ \,\text{and}\ \,\sigma(j)<n,\ \\ 0 
& \text{if} \  \sigma(j)\neq j\ \,\text{and}\,\ \sigma(j)<n,\\ -1 \ &  \text{if} \  \sigma(j)=n.\end{array}\right.\\
&=\big\vert{\rm Fix}(\sigma)\big\vert.
%%={\chisota}^{(n)}(\sigma)+{\chisota}^{(n-1,1)}(\sigma).
\end{align*}
\end{enumerate}
\end{proof}

\subsection*{Computing $\mathcal{M}_{f}({\bold q},{\bold w})$ when $f$ has degree 2}

Let $f({\bold x}_1)$ be an homogeneous symmetric polynomial of degree 2, in $n$ variables ${\bold x}=x_{11},x_{12},\ldots,x_{1n}$. Suppose that $f({\bold x}_1)$ is written in the monomial basis as follows: 
\[ f({\bold x}_1)=a\cdot m_{2}({\bold x}_1)+b\cdot m_{1,1}({\bold x}_1). \] 
Then the graded Frobenius characteristic of $\mathcal{M}_{f}$ is given by
\begin{equation*}
\mathcal{M}_{f}({\bold{q}},{\bold{w}})=
\left\{
\begin{array}{cc}
\big(1+s_1({\bold q})+s_2({\bold q})\big)\cdot s_{n}({\bold{w}}) &  {\rm if} \ b=2a, \\
\\
\big(1+s_1({\bold q})+s_2({\bold q})\big)\cdot s_{n}({\bold{w}})+s_1({\bold q})\cdot s_{n-1,1}({\bold{w}})
& {\rm otherwise}.
\end{array}
\right.
\end{equation*}

Indeed, the case $b=2a$ is a particular case of Corollary \ref{201504231304pm}. So we can suppose that 
$b\neq 2a$. By Lemma \ref{Lema23} we know that an homogeneous basis of  $\mathcal{M}_{f}$ is
%\begin{equation*}
%\mathcal{V}_{0}=\langle1\rangle,
%\end{equation*}
%\begin{equation*}
%\mathcal{V}_{{\bold e}_{i}}=\mathbb{K}\left[\left\{(2a-b)\cdot x_{ij}+b.\sum_{r=1}^{n}x_{ir}\ \middle\vert\ 1\leq j\leq n-1\middle\}\bigcup\middle\{2a\cdot \sum_{j=1}^{n}x_{ij}\right\}\right],
%\ \ \forall\, i\ : \ 
%1\leq i\leq \ell
%\end{equation*}
%\begin{equation*}
%\mathcal{V}_{{\bold e}_{s,t}}=\mathbb{K}\left[\left\{ 
%(2a-b)\cdot \sum_{j=1}^{n}x_{s j}x_{t j}+b.\sum_{p,q}x_{s p}x_{t q}\right\} \right],\ \ \forall\,(s,t)\ : \ 
%1\leq s\leq t \leq \ell
%\end{equation*}
%Donc, on peut trouver la s\'erie de Hilbert comme suit:
%\begin{align*}
%\mathcal{M}_{f}({\bf q})&
%=1+\sum_{1\leq i\leq \ell}q_{i}+\sum_{1\leq i\leq \ell}(n-1)q_{i}
%+\sum_{1\leq r\leq s\leq \ell}q_{r}q_{s}\\
%&=1+n\sum_{1\leq i\leq \ell}q_{i}+\sum_{1\leq r\leq s\leq \ell}q_{r}q_{s}\\
%&=1+n\,h_1({\bold q})+h_2({\bold q}).
%\end{align*}
\begin{equation*}
\mathcal{B}_{{\bold e}_{i}}
=\left\{2a\,\sum_{k=1}^{n}x_{ik}\right\}
\bigcup\left\{(2a-b)x_{ij}+b.\sum_{r=1}^{n}x_{ir}\middle\vert\ \ 1\leq j\leq n-1  \right\},
\end{equation*}
so we obtain
\begin{equation*}
{\chisota}_{\mathcal{V}_{{\bold e}_{i}}}(\sigma)
%%=\sum_{g\in\mathcal{B}_{{\bold e}_{i}}}(\sigma\cdot %%g)(X)\bigg\vert_{g}=1+\sum_{j=1}^{n-1}\left\{\begin{array}{cc} 1 & %%\sigma(j)=j\\ 0 & \sigma(j)\neq j\\ -1\ & \sigma(j)=n\end{array}\right.
=\big\vert{\rm Fix}(\sigma)\big\vert={\chisota}^{(n)}(\sigma)+{\chisota}^{(n-1,1)}(\sigma).
\end{equation*}
Also, for each pair of integers\ $(r,s)$\ with $1\leq r,s\leq \ell$, a basis for the homogeneous component of degree  
${\bold e}_{r,s}$ is given by
\begin{equation*}
\mathcal{B}_{{\bold e}_{r,s}}
=\left\{2a\,\sum_{j=1}^{n}x_{rj}x_{sj}+b\,\sum_{1\leq p<q\leq n}x_{rp}x_{sq}\right\}.
\end{equation*}
%%\begin{equation*}
%{\chisota}_{\mathcal{V}_{{\bold e}_{i}}}(\sigma)
%%=\sum_{g\in\mathcal{B}_{{\bold e}_{i}}}(\sigma\cdot %%g)(X)\bigg\vert_{g}=1+\sum_{j=1}^{n-1}\left\{\begin{array}{cc} 1 & %%\sigma(j)=j\\ 0 & \sigma(j)\neq j\\ -1\ & \sigma(j)=n\end{array}\right.
%=\big\vert{\rm %Fix}(\sigma)\big\vert={\chisota}^{(n)}(\sigma)+{\chisota}^{(n-1,1)}(\si%gma).
%%\end{equation*}
%%Notons que les fonctions ${\chisota}$ donnent:
So, for the homogeneous component ${\bold e}_{r,s}$ we have
\begin{equation*}
{\chisotazo}_{\mathcal{V}_{{\bold e}_{r,s}}}(\sigma)
%%=\sum_{g\in\mathcal{B}_{{\bold e}_{r,s}}}(\sigma\cdot %g)(X)\bigg\vert_{g}
=1={\chisota}^{(n)}(\sigma).
\end{equation*}
Finally, we compute the graded Frobenius series as follows: 
\begin{align*}
&\mathcal{M}_{f}({\bold q},{\bold w})
%%=\sum_{{\bold %%d}\in\mathbb{N}^{\ell}}\left(\frac{1}{n!}\sum_{\sigma\in\frak{S}_{%%n}}
%{\chisota}_{{\mathcal{V}_{\bold d}}}(\sigma)
%%p_{_{\lambda(\sigma)}}({\bold w})\right)
%{\bold q}^{{\bold d}}
=\frac{1}{n!}\sum_{\sigma\in\frak{S}_n}\left(\sum_{{\bold d}\in\mathbb{N}^{\ell}}{\chisota}_{{\mathcal{V}_{\bold d}}}(\sigma)\,{\bold q}^{\bold d}\right)\,p_{\lambda(\sigma)}({\bold w}).
%&=\frac{1}{n!}\sum_{\sigma\in\frak{S}_{n}}p_{\lambda(\sigma)}({\do%ld %w})+\sum_{i=1}^{\ell}\left(\frac{1}{n!}\sum_{\sigma\in\frak{S}_{n}%}
%%\left|{\rm Fix}(\sigma)\right|
%%p_{\lambda(\sigma)}({\bf w})\right)q_{i}
%%+\sum_{1\leq r\leq s\leq %%\ell}\left(\frac{1}{n!}\sum_{%\sigma\in\frak{S}_{n}}p_{\lambda(\s%%igma)}({\bold w})\right)q_{r}q_%{s}\\
\end{align*}
The graded character is
\begin{align*}
&\sum_{{\bold d}\in\mathbb{N}^{\ell}}{\chisota}_{{\mathcal{V}_{\bold d}}}(\sigma)\,{\bold q}^{\bold d}
=1+\sum_{\big\vert{\bold d}\big\vert=1}{\chisota}_{{\mathcal{V}_{\bold d}}}(\sigma)\,{\bold q}^{\bold d}+\sum_{\big\vert{\bold d}\big\vert=2}{\chisota}_{{\mathcal{V}_{\bold d}}}(\sigma)\,{\bold q}^{\bold d}\\
&=1+\sum_{\big\vert{\bold d}\big\vert=1}\left({\chisota}^{(n)}(\sigma)+{\chisota}^{(n-1,1)}(\sigma)\right)\,{\bold q}^{\bold d}+\sum_{\big\vert{\bold d}\big\vert=2}{\chisota}^{(n)}(\sigma)\,{\bold q}^{\bold d},
\end{align*}
and so 
\begin{align*}
\mathcal{M}_{f}({\bold q},{\bold w})&=s_{n}({\bold w})+s_{1}({\bold q})\cdot\big(s_{n}({\bold w})
+s_{n-1,1}({\bold w})\big)+s_{2}({\bold q})\cdot s_{n}({\bold w})\\
&=\big(1+s_1({\bold q})+s_2({\bold q})\big)\cdot s_{n}({\bold w})+s_1({\bold q})\cdot s_{n-1,1}({\bold w}).
\end{align*}

\begin{corollary}
Suppose $f$ is an homogeneous symmetric polynomial of degree 2. Then the Hilbert series of $\mathcal{M}_{f}$ is given by:
\begin{equation*}
\mathcal{M}_{f}({\bold{q}})
=\left\{\begin{array}{cc} 1+s_{1}({\bold q})+s_2({\bold q}), & {\rm if}\ b=2a,\\
\\
1+n\cdot s_1({\bold q})+s_2({\bold q}), & {\rm otherwise.} \end{array}\right.
\end{equation*}
\end{corollary}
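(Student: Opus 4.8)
The plan is to obtain $\mathcal{M}_{f}({\bold q})$ by specializing the graded Frobenius characteristic already computed in Theorem~\ref{ProposicionDOS}. Recall from the preliminaries that the Hilbert series is recovered from $\mathcal{M}_{f}({\bold q},{\bold w})$ by replacing each Schur function $s_{\lambda}({\bold w})$ by the number $f^{\lambda}$ of standard Young tableaux of shape $\lambda$. Only the shapes $\lambda=(n)$ and $\lambda=(n-1,1)$ occur in the degree-$2$ case, and one has $f^{(n)}=1$ and $f^{(n-1,1)}=n-1$ (the latter, for instance, by the hook length formula).

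First I would dispose of the case $[a:b]=[1:2]$, that is $b=2a$. Here Theorem~\ref{ProposicionDOS} gives $\mathcal{M}_{f}({\bold q},{\bold w})=\big(1+h_{1}({\bold q})+h_{2}({\bold q})\big)\,h_{n}({\bold w})$; writing $h_{j}({\bold q})=s_{j}({\bold q})$ and $h_{n}({\bold w})=s_{(n)}({\bold w})$ and substituting $s_{(n)}({\bold w})\mapsto 1$ yields $\mathcal{M}_{f}({\bold q})=1+s_{1}({\bold q})+s_{2}({\bold q})$.

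For $[a:b]\neq[1:2]$, Theorem~\ref{ProposicionDOS} gives $\mathcal{M}_{f}({\bold q},{\bold w})=\big(1+s_{1}({\bold q})+s_{2}({\bold q})\big)\,s_{n}({\bold w})+s_{1}({\bold q})\,s_{n-1,1}({\bold w})$. Substituting $s_{(n)}({\bold w})\mapsto 1$ and $s_{(n-1,1)}({\bold w})\mapsto n-1$ gives $1+s_{1}({\bold q})+s_{2}({\bold q})+(n-1)\,s_{1}({\bold q})=1+n\,s_{1}({\bold q})+s_{2}({\bold q})$, which is the claimed formula.

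As a cross-check, and as an alternative route avoiding the Frobenius machinery, one can count dimensions directly from the explicit homogeneous basis of Lemma~\ref{Lema23}: the component of multidegree ${\bold 0}$ is one-dimensional; each component of multidegree ${\bold e}_{i}$ ($1\le i\le\ell$) has dimension $1$ if $b=2a$ and dimension $n$ otherwise; and each component of multidegree ${\bold e}_{s,t}$ ($1\le s\le t\le\ell$) is one-dimensional. Summing the monomials ${\bold q}^{\bold d}$ weighted by these dimensions, and using $\sum_{1\le s\le t\le\ell}q_{s}q_{t}=h_{2}({\bold q})=s_{2}({\bold q})$, reproduces both formulas. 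There is essentially no obstacle in either route; the only point requiring care in the direct approach is the range of the degree-$2$ components, which are indexed by multisets $\{s,t\}$ rather than by ordered pairs, so that they contribute $s_{2}({\bold q})$ and not $s_{1}({\bold q})^{2}$.
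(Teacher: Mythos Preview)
Your proof is correct and follows exactly the route the paper intends: the corollary is stated immediately after the Frobenius characteristic $\mathcal{M}_{f}({\bold q},{\bold w})$ has been computed, and the Hilbert series is obtained by the substitution $s_{\lambda}({\bold w})\mapsto f^{\lambda}$ described in the preliminaries, which you carry out correctly in both cases. Your alternative dimension count from the explicit bases of Lemma~\ref{Lema23} is also valid and mirrors how the paper handles the analogous corollaries for $\mathcal{M}_{e_1^d}$ and $\mathcal{M}_{p_d}$.
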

Therefore, the dimension of $\mathcal{M}_f$ is given by the formula:
\begin{equation*}
\dim\left(\mathcal{M}_{f}\right)
=
\begin{cases} 
\displaystyle{1+\ell+{\ell+1\choose 2}}={\ell+2\choose 2} & \text{if}\ b=2a, \\
&\\
\displaystyle{1+n\ell+{\ell+1\choose 2}} & \text{if}\ b\neq 2a.
\end{cases}.
\end{equation*}

%We set\ ${\bold e}_{i}\in\mathbb{N}^{\ell}$ the vector having 1 in the $i^{th}$ position and 0 elsewhere. For each pair $(s,t)$ such that $1\leq s\leq t\leq \ell$ \ we set \ ${\bold e}_{s,t}:={\bf e}_{s}+{\bf e}_{t}$. Also, we set \ ${\bold e}_{p,q,r}:={\bold e}_{s}+{\bold e}_{q}+{\bold e}_{r}$.
%\begin{lemma}
%Let $f$ be a homogeneous symmetric polynomial of degree 2 given in the monomial basis as $f({\bold x}_1)=a\cdot m_{2}({\bold x}_1)+b\cdot m_{11}({\bold x}_1)$. If $a\neq 0$ and $b\neq 2a$ then a basis for the space $\mathcal{M}_{f}$ is given by the union of the basis $\mathcal{B}_{\bold 0}:=\{1\}$,\ 
%\begin{equation*}
%\mathcal{B}_{{\bold e}_{i}}:=\bigg\{E_{i,1}\partial_{1j}(f)\,\bigg\vert\,\ 1\leq j\leq n-1\bigg\}\bigcup\left\{E_{i1}^{(2)}(f)\right\},
%\end{equation*}
%\, et \, $\mathcal{B}_{{\bold e}_{s+t}}:=\big\{E_{s,1}E_{t,1}(f)\big\}$.
%\end{lemma}
%\begin{remark}
%The case $a=0$ is a consequence of Lemma \ref{Base3} and the case $b=2a$ is cover by Lemma \ref{Base1}.
%\end{remark}

\section{Proof of Theorem \ref{ProposicionTRES}}

In this case, we compute a linear basis of $\mathcal{M}_f$ when $f$ is any homogeneous symmetric polynomial of degree 3. We start with such a polynomial $f({\bold x}_1)$, and writing $f$ (over $\mathbb{R}$) as a linear combination of the monomial basis:
\begin{equation}\label{EcuacionPolinomioSimetricoGrado3BaseMonomial201601111155}
f({\bold x}_1)=a\, m_{3}({\bold x}_1)+b\,m_{2,1}({\bold x}_1)+c\,m_{1,1,1}({\bold x}_1),  
\end{equation}
\noindent Polynomials in $\mathcal{M}_f$ have the form:
\begin{equation*}
f(x_{11},\dots,x_{1n})=a\,\sum_{j=1}^{n}x_{1j}^3+b\,\sum_{p\neq q}x_{1p}^2x_{1q}
+c\,\sum_{1\leq i<j<k\leq n}x_{1i}x_{1j}x_{1k},
\end{equation*}
\begin{align*}
&\partial_{1j}(f)=3a\, x_{1j}^{2}+b\,\sum_{p\neq j}x_{1p}^2+2b\, x_{1j}\,\sum_{q\neq j}x_{1q}
+c\,\sum_{\alpha<\beta,\ \alpha\neq j,\ \beta\neq j}x_{1\alpha}x_{1\beta}\\
&=(3a-b)\,x_{1j}^{2}+b\,\sum_{s=1}^{n}x_{1s}^2+2b\,x_{1j}\,\sum_{r=1}^{n}x_{1r}
+c\,\sum_{\alpha<\beta,\ \alpha\neq j,\ \beta\neq j}x_{1\alpha}x_{1\beta}\\
&=(3a-b)\,x_{1j}^{2}+b\,\sum_{s=1}^{n}x_{1s}^2+(2b-c)\,x_{1j}\,\sum_{r=1}^{n}x_{1r}
+c\,\sum_{\alpha<\beta}x_{1\alpha}x_{1\beta}.
\end{align*}
\begin{equation*}
\partial_{1j}^{2}(f)=2(3a-b)x_{1j}+2b\sum_{s=1}^{n}x_{1s},
\end{equation*}
\begin{align*}
\partial_{1,r}\partial_{1,s}(f)&=2b\,(x_{1r}+x_{1s})+c\,\sum_{j:\ j\neq r,\ j\neq s}x_{1j}\\
&=(2b-c)\,(x_{1r}+x_{1s})+c\,\sum_{j=1}^{n}x_{1j}, \ \text{with}\ r\neq s,
\end{align*}
\begin{equation*}
E_{1,1}^{(2)}(f)=6a\,\sum_{t=1}^{n}x_{1t}^2+{4}\,b\,\sum_{\alpha<\beta}x_{1\alpha}x_{1\beta},
\end{equation*}
For all $i>1$ we have
\begin{equation*}
E_{i,1}(f)=3a\,\sum_{j=1}^{n}x_{1j}^2x_{ij}+2b\,\sum_{p\neq q}x_{1p}x_{1q}x_{ip}
+b\,\sum_{p\neq q}x_{1p}^2x_{iq}
+c\,\sum_{\underset{\boldsymbol{(\gamma-\alpha)(\gamma-\beta)\neq 0}}{\alpha<\beta}}x_{1\alpha}x_{1\beta}x_{i\gamma}.
\end{equation*}
\begin{equation*}
E_{i,1}^{2}(f)=6a\sum_{j=1}^{n}x_{1j}x_{ij}^{2}+2b\sum_{r\neq s}x_{1r}x_{is}^{2}+4b\sum_{r< s}x_{1r}x_{ir}x_{is}
+2c\sum_{\underset{\boldsymbol{(r-s)(r-t)\neq 0}}{s<t}}x_{1r}x_{is}x_{it}.
\end{equation*}
\begin{equation*}
E_{i,1}^{(2)}(f)=6a\,\sum_{j=1}^{n}x_{1j}x_{ij}+2b\,\sum_{\alpha\neq\beta}x_{1\alpha}x_{i\beta},
\end{equation*}
\begin{equation*}
E_{i,1}^{(3)}(f)=6a\,\sum_{s=1}^{n}x_{is},
\end{equation*}
Let $g(x,y,z)=(x-y)(y-z)(x-z)$, then $$\displaystyle{\sum_{p\neq q,\ p\neq j,\ q\neq j}x_{\alpha p}x_{\beta q}=\sum_{(p,q)\,:\,g(p,q,j)\neq 0}x_{\alpha p}x_{\beta q}}.$$ 
Now we can write, when $\alpha\neq\beta$
\begin{align*}
&E_{\alpha,1}E_{\beta,1}\partial_{1j}(f)\\
&=2(3a-b)\sum_{j=1}^{n}x_{\alpha j}x_{\beta j}
+2b\,\left(x_{\beta j}\sum_{r\neq j}x_{\alpha {r}}+x_{\alpha j}\sum_{t\neq j}x_{\beta t}\right)
+c\,\sum_{(p,q)\,:\,g(p,q,j)\neq 0}x_{\alpha p}x_{\beta q}.
\end{align*}
\[ E_{\alpha,1}^{2}\partial_{1,j}(f)=\partial_{\alpha,j}f({\bold x}_{\alpha}). \]
\begin{equation*}
E_{i,1}\partial_{1,j}^{2}(f)=2(3a-b)\,x_{ij}+2b\,\sum_{t=1}^{n}x_{it},
\end{equation*}
\begin{equation*}
E_{i,1}\partial_{1,r}\partial_{1s}(f)=2b\,(x_{ir}+x_{is})+c\,\sum_{p:\ p\neq r,\ p\neq s}x_{ip},
\end{equation*}
\begin{equation*}
E_{s,1}E_{t,1}E_{1,1}^{(2)}(f)
=12a\,\sum_{j=1}^{n}x_{sj}x_{tj}+4b\,\sum_{\alpha\neq\beta}x_{s\alpha}x_{t\beta}, \ \text{with} \ s\neq t.
\end{equation*}
\[  E_{s,1}^{(2)}E_{1,1}^{(2)}(f)
=12a\,\sum_{j=1}^{n}x_{sj}x_{tj}+8b\,\sum_{\alpha<\beta}x_{r\alpha}x_{t\beta}.  \]
If $p,q,r>1$ and $g(p,q,r)\neq 0$ then we get 
\begin{equation*}
E_{p,1}E_{q,1}E_{r,1}(f)=6a\,\sum_{j=1}^{n}x_{pj}x_{qj}x_{rj}+2b\,\sum_{s\neq t}x_{ps}x_{qs}x_{rt}
+c\,\sum_{g(i,j,k)\neq 0}x_{pi}x_{qj}x_{rk}.
\end{equation*}
If $i\neq k$ and $i,k>1$ then we get
\begin{equation*}
E_{i,1}^{2}E_{k,1}(f)=6a\sum_{j=1}^{n}x_{kj}x_{ij}^{2}+2b\sum_{r\neq s}x_{kr}x_{is}^{2}+4b\sum_{r< s}x_{kr}x_{ir}x_{is}
+2c\sum_{\underset{\boldsymbol{(r-s)(r-t)\neq 0}}{s<t}}x_{kr}x_{is}x_{it}.
\end{equation*}
\begin{equation*}
E_{p,1}^{3}(f)=6a\,\sum_{j=1}^{n}x_{pj}^3+6b\,\sum_{s\neq t}x_{ps}^{2}x_{pt}
+6c\,\sum_{i<j<k}x_{pi}x_{pj}x_{pk}=6\cdot f({\bold x}_{p}).
\end{equation*}
\subsubsection{Construction of degree 3 homogeneous components}
In the following, we explain how to extract a basis of each homogeneous component of $\mathcal{M}_{f}$. 
The homogeneous components of $\mathcal{M}_f$ are $\mathcal{V}_{\bf 0}:=\mathbb{R}$,
for each $i$ such that $1\leq i\leq \ell$ we define $\mathcal{V}_{{\bf e}_{i}}$ to be the real span of the set below
\begin{align*}
\{ E_{i,1}\partial_{1,j}^{2}(f):\ 1\leq j\leq n\}\ \bigcup\ \{ 
E_{i,1}\partial_{1,r}\partial_{1,s}(f):\ 1\leq r<s\leq n\}\bigcup\left\{E_{i,1}^{(3)}f\right\},
\end{align*}
for each pair $(s,t)$ such that $1\leq s\leq t \leq \ell$ we set
\begin{equation*}
\mathcal{V}_{{\bf e}_{s,t}}:=\mathbb{R}\left\{\{E_{s,1}E_{t,1}\partial_{1,j}(f):\ 1\leq j\leq n\}\bigcup
\{E_{s,1}E_{t,1}E_{1,1}^{(2)}(f)\}\right\}, 
\end{equation*}
for each triple $(p,q,r)$ such that $1\leq p\leq q\leq r \leq \ell$ we set
\begin{equation*}
\mathcal{V}_{{\bf e}_{p,q,r}}:=\mathbb{R}\big\{E_{p,1}E_{q,1}E_{r,1}(f)\big\}.
\end{equation*}
\begin{remark}
Recall that polarizations operators $E_{p,1}$,$E_{q,1}$ and $E_{r,1}$ commute (see formula \ref{201601151427}).
\end{remark}
\begin{prop}
Let $f\in\mathcal{R}_{n}$ be an homogenenous symmetric polynomial of degree 3 written in the monomial basis as formula \ref{EcuacionPolinomioSimetricoGrado3BaseMonomial201601111155} 
Then the vector space 
\[ \mathbb{R}\boldsymbol{\oplus}\bigoplus_{1\leq i\leq\ell}\mathcal{V}_{{\bf e}_{i}}\boldsymbol{\oplus}\bigoplus_{1\leq s,t\leq\ell}\mathcal{V}_{{\bf e}_{s,t}}\,\boldsymbol{\oplus}\bigoplus_{1\leq p,q,r\leq \ell}\mathcal{V}_{{\bf e}_{p,q,r}} \]
is closed under derivatives and polarization operators $E_{i,k}^{(p)}$. Furthermore,
\[ \mathcal{M}_f=\mathbb{R}\boldsymbol{\oplus}\bigoplus_{1\leq i\leq\ell}\mathcal{V}_{{\bf e}_{i}}\boldsymbol{\oplus}\bigoplus_{1\leq s,t\leq\ell}\mathcal{V}_{{\bf e}_{s,t}}\,\boldsymbol{\oplus}\bigoplus_{1\leq p,q,r\leq \ell}\mathcal{V}_{{\bf e}_{p,q,r}}. \]
\end{prop}
\begin{proof}
%Then the components $\mathcal{V}_{{\bf e}_{i}}$ and $\mathcal{V}_{{\bf e}_{s,t}}$ are closed under polarization operators $E_{i,k}^{(p)}$ (this is trivial when $p>1$ because we get $0$). For degree 3 we have the identities below when $(p-q)(p-r)(r-q)\neq 0$:
%\begin{align*}
%&E_{a,b}\left(\sum_{j=1}^{n}x_{pj}x_{qj}x_{rj}\right) \\
%&=\delta_{bp}\sum_{j=1}^{n}x_{aj}x_{qj}x_{rj}
%+\delta_{bq}\sum_{j=1}^{n}x_{pj}x_{aj}x_{rj}
%+\delta_{br}\sum_{j=1}^{n}x_{pj}x_{qj}x_{aj}.
%\end{align*} 
%\begin{align*}
%& E_{a,b}\left(\sum_{s\neq t}x_{ps}x_{qs}x_{rt}\right)\\
%&=\delta_{bp}\sum_{s\neq t}x_{as}x_{qs}x_{rt}
%+\delta_{bq}\sum_{s\neq t}x_{as}x_{ps}x_{rt}
%+\delta_{br}\sum_{s\neq t}x_{at}x_{ps}x_{qs}.
%\end{align*}
%\begin{align*}
%& E_{a,b}\left(\sum_{g(i,j,k)\neq 0}x_{pi}x_{qj}x_{rk}\right) \\
%&=\delta_{bp}\sum_{g(i,j,k)\neq 0}x_{ai}x_{qj}x_{rk}
%+\delta_{bq}\sum_{g(i,j,k)\neq 0}x_{pi}x_{aj}x_{rk}
%+\delta_{br}\sum_{g(i,j,k)\neq 0}x_{pi}x_{qj}x_{ak}. 
%\end{align*}
%where the sum is taken over any set of triples $(i,j,k)$. \\ 
We start with degree 3. First, we see that if $(p-q)(q-r)(p-r)\neq 0$ then:
\begin{align*} 
& E_{\alpha,\beta}\bigg(E_{p,1}E_{q,1}E_{r,1}(f)\bigg) \\
&=\delta_{\beta p}\,E_{\alpha,1}E_{q,1}E_{r,1}(f)+\delta_{\beta q}\,E_{p,1}E_{\alpha,1}E_{r,1}(f)+\delta_{\beta r}\,E_{p,1}E_{q,1}E_{\alpha,1}(f). 
\end{align*}
Hence 
\[  E_{\alpha,\beta}\bigg(E_{p,1}E_{q,1}E_{r,1}(f)\bigg)
\in\,\delta_{\beta p}\,\mathcal{V}_{{\bf e}_{\alpha,q,r}}
+\delta_{\beta q}\,\mathcal{V}_{{\bf e}_{p,\alpha,r}}
+\delta_{\beta r}\,\mathcal{V}_{{\bf e}_{p,q,\alpha}} \]
and also we can see that
\[ E_{\alpha,k}\bigg(E_{i,1}^{2}E_{k,1}(f)\bigg)= E_{i,1}^{2}E_{\alpha,1}(f)\in\mathcal{V}_{{\bf e}_{i,i,\alpha}}. \]
\[ E_{\alpha,i}\bigg(E_{i,1}^{2}E_{k,1}(f)\bigg)=2\,E_{\alpha,1}E_{k,1}E_{i,1}(f)\in\mathcal{V}_{{\bf e}_{\alpha,k,i}}. \]
\[ E_{\alpha,i}\bigg(E_{i,1}^{3}(f)\bigg)=E_{\alpha,i}\big(6\,f({\bold x}_i)\big)=6\,E_{\alpha,i}\big(f({\bold x}_i)\big)
=3\,E_{i,1}^{2}E_{\alpha,1}(f)\in\mathcal{V}_{{\bf e}_{i,i,\alpha}}. \]
\[  E_{\alpha,\beta}^{(p)}\bigg(E_{p,1}E_{q,1}E_{r,1}(f)\bigg)=0,\ \forall\, p>1. \]
\[E_{r,s}^{(2)}\big(E_{s,1}^{2}E_{t,1}(f)\big)=E_{t,1}E_{r,1}E_{1,1}^{(2)}(f)\in\mathcal{V}_{{\bf e}_{t,r}}.\]
%% If $(p-q)(p-r)(q-r)\neq 0$ then 
%% \[ E_{s,p}^{(2)}\big(E_{p,1}E_{q,1}E_{r,1}(f)\big)=0, \]
For the homogeneous components of degree 2, we proceed as in subsection \ref{seccion201601131318} in the proof of Proposition \ref{201601111410}. For instance, it's not difficult to see that:
\[ E_{\alpha,\beta}\bigg(E_{s,1}E_{t,1}E_{1,1}^{(2)}(f)\bigg)=\delta_{\beta,s}E_{\alpha,1}E_{t,1}E_{1,1}^{(2)}(f)
+\delta_{\beta,t}E_{\alpha,1}E_{s,1}E_{1,1}^{(2)}(f). \]
\[ E_{\alpha,\beta}\bigg(E_{s,1}E_{t,1}\partial_{1,j}(f)\bigg)=\delta_{\beta,s}E_{\beta,1}E_{t,1}\partial_{1,j}(f)+\delta_{\beta,t}E_{s,1}E_{\beta,1}\partial_{1,j}(f). \]
\[ E_{\alpha,\beta}\bigg(E_{s,1}^{2}E_{1,1}^{(2)}(f)\bigg)=2\delta_{\beta,s}\,E_{\alpha,1}E_{s,1}E_{1,1}^{(2)}(f). \]
\[ E_{\alpha,\beta}\bigg(E_{s,1}^{2}\partial_{1,j}(f)\bigg)=\delta_{\beta,s}\,E_{\alpha,1}E_{s,1}\partial_{1,j}(f). \]
for $p>1$ we have:
\[  E_{\alpha,\beta}^{(p)}\bigg(E_{s,1}E_{t,1}E_{1,1}^{(2)}(f)\bigg)=E_{\alpha,\beta}^{(p)}\bigg(E_{s,1}E_{t,1}\partial_{1,j}(f)\bigg)=0. \]
% This imply that the homogeneous component $\mathcal{V}_{{\bf e}_{p,q,r}}$ is closed under polarization operators $E_{i,k}$ when 
%$(p-q)(p-r)(q-r)\neq 0$. Suppose that we have two equal indices $p=q$ and $q\neq r$ then we get
%\[  E_{a,b}^{(2)}\left(\sum_{s\neq t}x_{ps}^{2}x_{rt}\right) \]
%\[  E_{a,b}^{(2)}\left(\sum_{j=1}^{n}x_{pj}^{2}x_{rj}\right) \]
%To see that the homogeneous components of total degree 1 and 2 are closed under polarization operators $E_{i,k}^{(p)}$, we proceed as in the proof of proposition \ref{201601111410}. More precisely, we use formulas \ref{201601111924A},\ref{201601111924B},\ref{201601111453} and the identity below:
%\begin{align*}
%&E_{i,k}\left(x_{\beta j}\sum_{r\neq j}x_{\alpha r}+x_{\alpha j}\sum_{t\neq j}x_{\beta t}\right)\\
%&=\delta_{\beta,k}\left(x_{ij}\sum_{r\neq j}x_{\alpha r}
%+x_{\alpha j}\sum_{t\neq j}x_{it}\right)
%+\delta_{\alpha,k}\left(x_{\beta j}\sum_{r\neq j}x_{ir}
%+x_{ij}\sum_{t\neq j}x_{\beta t}\right).
%\end{align*}
It is not difficult to see that the homogeneous components are closed under partial derivatives. For instance, when $(p-q)(p-r)(r-q)\neq 0$ we have
\[ \partial_{s,j}{E_{p,1}E_{q,1}E_{r,1}(f)}=\delta_{s,p}E_{q,1}E_{r,1}\partial_{1,j}(f)+\delta_{s,q}E_{p,1}E_{r,1}\partial_{1,j}(f)+\delta_{s,r}E_{p,1}E_{q,1}\partial_{1,j}(f). \]
and when $p=q$, $r>1$ and $q\neq r$ then
\[  \partial_{s,j} E_{q,1}^{2}E_{r,1}(f)=2\delta_{s,q}E_{q,1}^{2}E_{r,1}\partial_{1,j}(f)=\delta_{s,q}E_{1,1}E_{r,1}\partial_{q,j}f({\bold x}_{q}). \]
\[  \partial_{r,j} E_{q,1}^{2}E_{r,1}(f)=E_{q,1}^{2}\partial_{1,j}(f).  \]
The case degree 1 follows easily applying polarization operators $E_{\alpha,\beta}$. For the last assertion we use the same arguments as in the proof of Proposition \ref{201601111410}.
\end{proof}
\label{ConstruccionDeUnaBaseHomogeneaGrado3}
\subsection*{Constructing an homogeneous basis of $\mathcal{M}_{f}$ when the degree of $f$ is 3}
%En consid\'erant la matrice associ\'ee \`{a} chaque syst\`{e}me %lin\'eaire homog\`{e}ne et le lemme %\ref{PolarizacionesEikInyectivas}, on peut d\'emontrer les %affirmations 1-6. Pour les deux derni\`{e}res on utilisera le %th\'eor\`{e}me \ref{TeoremaExcepcionGrado3} et le lemme \ref{AAAA}.

In the Lemma below we study linearly independent subsets of $\mathcal{M}_{f}$. This will lead us to a complete rule to find a homogeneous basis of the polarization module generated by a given homogeneous symmetric polynomial of degree 3. As before we suppose that $f$ has been identified with $[f]:=[a:b:c]$ (its corresponding point in $\mathbb{RP}^2$). 

\begin{lemma}\label{LemaDeLI}
Let $n\geq 3$ and $f=a\cdot m_{3}({\bold x}_1)+b\cdot m_{21}({\bold x}_1)+c\cdot m_{111}({\bold x}_1)$. We have the following properties:
\begin{enumerate}
\item If $b\neq 3a$ then, for each $i$ such that $1\leq i\leq \ell$, the set of partial derivatives
\begin{equation*}
\big\{E_{i,1}\partial_{1,j}^{2}f\,:\,1\leq j\leq n-1\big\}
\end{equation*}
is linearly independent.\\
\item If $a\neq 0$ and $b\neq 3a$ then, for each $i$ such that $1\leq i\leq \ell$, the set 
\begin{equation*}
\big\{E_{i,1}\partial_{1,j}^{2}f\,:\, 1\leq j\leq n-1\big\}\bigcup\big\{E_{i,1}^{(3)}(f)\big\}
\end{equation*}
is linearly independent.\\
\item If $a\neq 0$,\ $b=3a$ and $c\neq 6a$ then, for each $i$ such that $1\leq i\leq \ell$, the set
\begin{equation*}
\{E_{i,1}\partial_{1j}\partial_{11}f\,:\,1\leq j\leq n\}
\end{equation*}
is linearly independent. \\
\item If $a=0$ and $b\neq 0$ then, for each $i$ such that $1\leq i\leq \ell$, the set  
\begin{equation*}
\big\{E_{i,1}\partial_{1j}^{2}f\,:\,1\leq j\leq n\big\}
\end{equation*}
is linearly independent. \\
\item If $c\neq 2b$ then, for each $i$ such that $1\leq i\leq \ell$, the set
\begin{equation*}
\big\{E_{i,1}\partial_{1,j}\partial_{1,1}f\,:\,2\leq j\leq n\big\}
\end{equation*}
is linearly independent if $b^2+c^2>0$. \\
\item If $b\neq 0$ and $c\neq 2b$ then, for each $i$ such that $1\leq i\leq \ell$, the set
\begin{equation*}
\big\{E_{i,1}\partial_{1,j}\partial_{1,1}f\,:\,1\leq j\leq n\big\}
\end{equation*}
is linearly independent. \\
%%\item Si $[a:b:c]\neq [1:3:6]$ alors la dimension
\item If $\big[a:b:c\big]\neq[1:3:6]$ \textbf{is not a $n$-exception} then, the following set 
\begin{equation*}
\big\{E_{\alpha,1}E_{\beta,1}\partial_{1,j}f\,:\,1\leq j\leq n\big\}
\cup\{E_{\alpha,1}E_{\beta,1}E_{1,1}^{(2)}(f)\}
\end{equation*}
is linearly independent for every pair $(\alpha,\beta)$ such that $1\leq \alpha,\beta\leq \ell$.\\
\item If $[a:b:c]$ is a \textbf{$n$-exception} then we have two cases: \\
\begin{enumerate}
\item If $R_{n}(a,b,c)\neq 0$ then the set
\[\big\{E_{\alpha,1}E_{\beta,1}\partial_{1,j}f\,:\,1\leq j\leq n\big\}\]
is linearly independent. \\
\item If $R_{n}(a,b,c)=0$ then the set
\[\big\{E_{\alpha,1}E_{\beta,1}\partial_{1,j}f\,:\,1\leq j\leq n-1\big\}
\cup\{E_{\alpha,1}E_{\beta,1}E_{1,1}^{(2)}(f)\}\]
is linearly independent. \\
\end{enumerate}
\end{enumerate}
\end{lemma}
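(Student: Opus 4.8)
The plan is to treat each of the eight assertions in Lemma~\ref{LemaDeLI} uniformly by reducing the claimed linear independence to the non‑vanishing of a determinant of one of the auxiliary matrices $T_n$, $H_n$, $E_n$, $D_n$, $G_n$ built in the section on auxiliary matrices. Recall that all the polynomials involved—$E_{i,1}\partial_{1,j}^{2}f$, $E_{i,1}^{(3)}(f)$, $E_{i,1}\partial_{1,j}\partial_{1,1}f$, $E_{\alpha,1}E_{\beta,1}\partial_{1,j}f$, $E_{\alpha,1}E_{\beta,1}E_{1,1}^{(2)}(f)$—were computed explicitly above and each is a linear combination of a small fixed set of ``building block'' diagonally symmetric polynomials (for degree‑one components in ${\bold x}_i$: the monomials $x_{ij}$ and the single symmetric polynomial $\sum_r x_{ir}$; for degree‑two components: $x_{\alpha p}x_{\beta p}$, $x_{\alpha p}x_{\beta q}$ with $p\neq q$, etc.). Since distinct such building blocks are linearly independent monomial‑type polynomials, a linear relation among the listed polynomials is equivalent to a linear relation among the rows of the coefficient matrix. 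So for each item I would first write down that coefficient matrix, then identify it (up to row/column operations that don't change rank) with one of the $T_n$, $H_n$, $E_n$, $D_n$, $G_n$ matrices, and finally invoke the corresponding determinant formula (Lemmas~\ref{SuperDeterminante}, \ref{LemaEn}, \ref{LemaDn}, \ref{LemaGn}, and the $H_n^tH_n$ lemma).

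Concretely: for items (1)–(4) the relevant polynomials are $E_{i,1}\partial_{1,j}^{2}f=2(3a-b)x_{ij}+2b\sum_t x_{it}$ and $E_{i,1}^{(3)}(f)=6a\sum_s x_{is}$; the $(n-1)\times n$ system in item (1) has matrix $H_n(2(3a-b),2b,\ast)$‑type, and I would compute $H_n^tH_n$ exactly as in the degree‑2 proof (Lemma~\ref{LemaBaseGrado2}) to see it is nonsingular iff $3a-b\neq 0$; adjoining $E_{i,1}^{(3)}(f)$ in item (2) gives a square matrix of $T_n$‑type whose determinant is a nonzero multiple of $a(3a-b)^{n-1}$. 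Items (3),(4),(5),(6) handle the polynomials $E_{i,1}\partial_{1,j}\partial_{1,1}f=(2b-c)(x_{ij}+x_{i1})+c\sum_p x_{ip}$ (for $j\neq1$) together with the $j=1$ case $2(3a-b)x_{i1}+2b\sum x_{it}$: again the relevant Gram matrices are of $T_n$ or $H_n^tH_n$ shape, and the nonvanishing conditions ($c\neq 2b$, $b^2+c^2>0$, $b\neq0$, etc.) fall out of the completed‑square determinant formulas. The degree‑two components in items (7),(8) are exactly where the matrices $E_n$, $D_n$, $G_n$ were designed to apply: the coefficient matrix of $\{E_{\alpha,1}E_{\beta,1}\partial_{1,j}f\}_{j=1}^n\cup\{E_{\alpha,1}E_{\beta,1}E_{1,1}^{(2)}(f)\}$ is row‑equivalent to $E_n(a,b,c)$, dropping the last generator gives $D_n(a,b,c)$, and replacing the last column gives $G_n(a,b,c)$; then $\det(E_n^tE_n)=\binom{n}{2}P_n^2Q_n^{n-1}$, $\det(D_n^tD_n)=R_nQ_n^{n-1}$, $\det(G_n^tG_n)=A_nQ_n^{n-1}$, together with Lemmas~\ref{Lema136}, \ref{RaicesDeRn}, \ref{LemaRnAn}, give precisely the trichotomy: if $[a:b:c]\neq[1:3:6]$ then $Q_n\neq0$; if moreover it is not an $n$‑exception then $P_n\neq0$ so $E_n$ has full rank $n+1$ (hence its $n$ used rows are independent); if it is an $n$‑exception with $R_n\neq0$ then $D_n$ has full rank $n$; and if $R_n=0$ then by Lemma~\ref{LemaRnAn} $A_n\neq0$ so $G_n$ has full rank $n$.

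The main obstacle I anticipate is purely bookkeeping rather than conceptual: correctly passing from the explicit polynomial expressions to the ``right'' coefficient matrix, i.e.\ choosing the basis of building‑block polynomials and the row/column ordering so that the resulting matrix is \emph{literally} one of $T_n,H_n,E_n,D_n,G_n$ (or differs from it by obviously rank‑preserving operations such as scaling rows by nonzero constants like $2$ or $6$, or adding a multiple of the ``$\sum_r x_{ir}$''‑column). One has to be careful that in the degree‑two cases the two symmetric monomials $\sum_j x_{\alpha j}x_{\beta j}$ and $\sum_{p\neq q}x_{\alpha p}x_{\beta q}$ are genuinely independent (true since $n\geq2$), and that when $\alpha=\beta$ the polynomial $\sum_j x_{\alpha j}^2$ still behaves as an independent coordinate; this is why the hypothesis $n\geq3$ is comfortable. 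A secondary subtlety is item (5), where the condition is stated as ``$b^2+c^2>0$'' rather than a single equation: here after forming the $(n-1)\times(n-1)$ Gram matrix one gets a completed‑square expression of the form $(\text{something})\cdot(\text{quadratic in }b,c)$, and one must check that this quadratic vanishes only at $b=c=0$; I would verify this by the same completing‑the‑square computation used in Lemma~\ref{Lema136} and Lemma~\ref{RaicesDeRn}.

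Once these identifications are in place, each of the eight statements is a one‑line deduction, so I would organize the write‑up as: (i) a preliminary paragraph fixing the building‑block polynomials and noting their linear independence; (ii) a short computation of the degree‑one Gram matrices covering items (1)–(6), citing Lemma~\ref{SuperDeterminante} and the $H_n^tH_n$ lemma; (iii) the degree‑two case, citing Lemmas~\ref{LemaEn}, \ref{LemaDn}, \ref{LemaGn} together with Lemmas~\ref{Lema136}, \ref{RaicesDeRn}, \ref{LemaRnAn} and the definition of $n$‑exception, to dispatch items (7) and (8).
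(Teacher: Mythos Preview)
Your plan for items (1)--(6) matches the paper's proof exactly: associate to the linear system a coefficient matrix $M$, observe $M^tM$ has the $T_n$ or $H_n^tH_n$ shape, and read off the nonvanishing conditions from Lemma~\ref{SuperDeterminante} and the $H_n^tH_n$ formula, just as in Lemma~\ref{LemaBaseGrado2}.

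For items (7) and (8) your route diverges from the paper's, and the paper's is shorter. You propose to identify the coefficient matrix of the \emph{polarized} polynomials $E_{\alpha,1}E_{\beta,1}\partial_{1,j}f$ and $E_{\alpha,1}E_{\beta,1}E_{1,1}^{(2)}f$ directly with $E_n$, $D_n$, $G_n$. But note that when $\alpha\neq\beta$ these polynomials live in the span of the $n^2$ monomials $x_{\alpha s}x_{\beta t}$, not the $\binom{n+1}{2}$ monomials $x_{1s}x_{1t}$ for which $E_n$, $D_n$, $G_n$ were built; so the claim ``row-equivalent to $E_n(a,b,c)$'' is not literally true and would need a separate rank argument. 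The paper sidesteps this entirely via the restitution identity (Lemma~\ref{PolarizacionesEikInyectivas}): given a linear relation
\[
\sum_{j=1}^{n}\lambda_{j}\,E_{\alpha,1}E_{\beta,1}\partial_{1,j}(f)+\mu\,E_{\alpha,1}E_{\beta,1}E_{1,1}^{(2)}(f)=0,
\]
apply $E_{1,\beta}E_{1,\alpha}$ to both sides; since each summand is homogeneous of degree $2$, Lemma~\ref{PolarizacionesEikInyectivas} gives back $4\sum_j\lambda_j\partial_{1,j}(f)+4\mu\,E_{1,1}^{(2)}(f)=0$, which is a relation among the \emph{unpolarized} polynomials whose coefficient matrix \emph{is} literally $E_n$ (resp.\ $D_n$, $G_n$). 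Then Theorem~\ref{TeoremaExcepcionGrado3} and Lemmas~\ref{LemaEn}, \ref{LemaDn}, \ref{LemaGn}, \ref{Lema136}, \ref{LemaRnAn} finish the three cases exactly as you describe. Your direct approach can be made to work, but the restitution trick removes the very bookkeeping obstacle you flagged.
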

\begin{proof}
We consider only assertions (7) and (8), because the proof of the other results is similar to the proof of Theorem \ref{TeoremaExcepcionGrado3} and Lemma \ref{LemaBaseGrado2}. We consider the associated matrix, let's say $M$, of each system of linear equations. Then we show that the matrix $M^{t}M$ is of the form $T_{n}$, we compute the determinant of  $M^{t}M$, and assertions (1)-(6) follow.  To prove assertion (7), we observe that the partial derivatives of an homogeneous polynomial of degree $d$ are all homogeneous polynomials of degree $d-1$. Also, the image of a homogeneous polynomial of degree $d$ by the operator $E_{1,1}^{(2)}$ is a homogeneous polynomial of degree $d-1$. Then we can use Lemma \ref{PolarizacionesEikInyectivas} to see that the linear combination
\[\sum_{j=1}^{n}\lambda_{j}E_{\alpha,1}E_{\beta,1}\partial_{1,j}(f)
+\mu\,E_{\alpha,1}E_{\beta,1}E_{1,1}^{(2)}(f)={\bold 0},\]
has only the trivial solution. To see this, we apply on both sides the operator $E_{1,\beta}E_{1,\alpha}$, and we get
\[ 4\cdot\sum_{j=1}^{n}\lambda_{j}\partial_{1,j}(f)
+4\cdot\mu\,E_{1,1}^{(2)}(f)={\bold 0}, \]
%on remarque que les d\'erivées partielles d'un polyn\^{o}me homog\`{e}ne de degr\'e $d$ sont des polyn\^{o}mes %homog\`{e}nes de degr\'e $d-1$. Aussi, l'image par l'op\'erateur $E_{1,1}^{(2)}$ d'un polyn\^{o}me 
%homog\`{e}ne est homog\`{e}ne. Alors, on peut utiliser le lemme 
%\ref{PolarizacionesEikInyectivas} pour voir que la combinaison lin\'eaire suivante a seulement la solution triviale. En %effet,
%\[\sum_{j=1}^{n}\lambda_{j}E_{\alpha,1}E_{\beta,1}\partial_{1,j}(f)
%+\mu\,E_{\alpha,1}E_{\beta,1}E_{1,1}^{(2)}(f)={\bold 0}\]
%on applique de deux c\^{o}t\'es l'opérateur de polarisation 
%$E_{1,\beta}E_{1,\alpha}$ et on obtient:
%\[ 4\cdot\sum_{j=1}^{n}\lambda_{j}\partial_{1,j}(f)
%+4\cdot\mu\,E_{1,1}^{(2)}(f)={\bold 0}, \]
since Lemma \ref{PolarizacionesEikInyectivas} implies that
\begin{align*}
&E_{1,\beta}E_{1,\alpha}\left(E_{\alpha,1}E_{\beta,1}\partial_{1,j}(f)\right)=4\cdot f,\\
&E_{1,\beta}E_{1,\alpha}\left(E_{\alpha,1}E_{\beta,1}E_{1,1}^{(2)}(f)\right)=4\cdot E_{1,1}^{(2)}(f).
\end{align*}
Since $[a:b:c]$ is not a $n$-exception, Theorem \ref{TeoremaExcepcionGrado3} implies that the set
\[ \big\{\partial_{11}(f),\ldots,\partial_{1n}(f),E_{1,1}^{(2)}(f)\big\} \] 
is linearly independent. Then, $\mu=0$ and $\lambda_{j}=0$ for all $j$. Therefore, the set 
\[ \big\{E_{\alpha,1}E_{\beta,1}\partial_{1,j}f\,:\,1\leq j\leq n\big\} \cup\{E_{\alpha,1}E_{\beta,1}E_{1,1}^{(2)}(f)\} \] 
is also linearly independent. To prove assertion (8) we use again Lemma \ref{PolarizacionesEikInyectivas} and the last part of Theorem \ref{TeoremaExcepcionGrado3}. Finally, observe that the sets described in (1)-(6) are all formed by homogenenous polynomials of total degree 1. So it is easy to check directly that they are fixed by polarization operators of the form $E_{s,i}$, and for any $p>1$ they are killed by the operator $E_{s,i}^{(p)}$. So any homogeneous component of total degree 1 is mapped to another homogeneneous component of total degree 1.
\end{proof}
\begin{lemma}\label{EllemaVentisiete}
Let $f({\bold x}_1)$ be an homogeneous symmetric polynomial of degree 3 in $n$ variables ${\bold x}_1=x_{11},\ldots,x_{1n}$. An homogeneous linear basis of $\mathcal{M}_f$ is described by the following rule:
\begin{equation*}
\mathcal{B}_{\bold 0}:=\{1\},
\end{equation*}
%sinon $a=b=0$ implique $c\neq 0$ et dans ce cas
%\begin{align*}
%\mathcal{B}_{{\bold e}_i}&:=\left\{x_{ij}\,:\,1\leq j\leq n\right\},
%\end{align*}
\underline{\textbf{Case 1:}} \ If $\big[a:b:c\big]=[1:3:6]$ we use Lemma \ref{Lema5}. \\
\underline{\textbf{Case 2:}} \ If $\big[a:b:c\big]\neq[1:3:6]$ we have the cases below for a basis $\mathcal{B}_{{\bold e}_i}$ of $\mathcal{V}_{{\bold e}_{i}}$.
\begin{enumerate}
\item If $a=b=0$, then $c\neq 0$ and by Lemma \ref{Lema110035} the set $\{x_{i1},x_{i2},\ldots,x_{in}\}$ is a basis of $\mathcal{V}_{{\bold e}_{i}}$. 
%%Observe that $E_{s,i}\left(\{x_{i1},x_{i2},\ldots,x_{in}\}\right)=\{x_{s1},x_{s2},\ldots,x_{sn}\}$. If $p>1$ then $E_{s,i}^{(p)}\big(\{x_{i1},x_{i2},\ldots,x_{in}\}\big)={\bold 0}$. 
\item If $a=0$ and $b\neq 0$, we use the basis
\begin{equation*}
\{E_{i,1}\partial_{1j}^2 f\,\vert\, 1\leq j\leq  n\}.
\end{equation*}
\item If $a\neq 0 $ and $b\neq 3a$, we use the basis 
\begin{align*}
\left\{E_{i,1}\partial_{1,j}^{2}(f)\,:\,\,1\leq j\leq n-1\middle\}\bigcup\middle\{E_{i,1}^{(3)}(f)\right\}.
%&=\left\{2(3a-b)\,x_{ij}+2b\,\sum_{t=1}^{n}x_{it}\,:\,1\leq j\leq n-1\middle\}\bigcup\middle\{6a\,\sum_{r=1}^{n}x_{ir}\right\},
\end{align*}
\item If $a\neq 0$, $b=3a$ and $c\neq 6a$, we use the basis  
\begin{equation*}
\{E_{i,1}\partial_{1j}\partial_{11}f\,:\,1\leq j\leq n\}.
\end{equation*}
\end{enumerate}
Notice that the homogeneous component $V_{{\bold e}_i}$ is closed by general polarization operators $E_{s,i}^{(p)}$. In fact, we can check that 
$E_{s,i}(\mathcal{B}_{{\bold e}_i})=\mathcal{B}_{{\bold e}_s}$ and $E_{s,i}^{(p)}(\mathcal{B}_{{\bold e}_i})=\{\bold 0\}$ for any $p>1$.\\
%Observons que si $b\neq 3a$ alors l'ensemble $\mathcal{B}_{{\bold %e}_i}$ est lin\'eairement ind\'ependant \ et \ %$\big\vert\mathcal{B}_{{\bold e}_i}\big\vert=n$. Si $b=3a$ alors %$\big\vert\mathcal{B}_{{\bold e}_i}\big\vert=1$.

For a basis $\mathcal{B}_{{\bold e}_{s,t}}$ of $\mathcal{V}_{{\bold e}_{s,t}}$ we have two cases:
\begin{enumerate}
\item If $\big[a:b:c\big]$ is a $n$-exception we have two cases:\\
\begin{enumerate}
\item If $R_{n}(a,b,c)\neq 0$ we have the basis:
\begin{align*}
\left\{E_{s,1}E_{t,1}\partial_{1,j}(f)\,:\,\,1\leq j\leq n\right\},
\end{align*}
\item If $R_{n}(a,b,c)=0$ we have the basis:
\begin{align*}
\left\{E_{s,1}E_{t,1}\partial_{1,j}(f)\,:\,\,1\leq j\leq n-1\right\}
\cup\{E_{1,1}^{(2)}(f)\},
\end{align*}
\end{enumerate}
\item If $\big[a:b:c\big]$ is not an $n$-exception we have the basis:
\begin{align*}
\left\{E_{s,1}E_{t,1}\partial_{1,j}(f)\,:\,\,1\leq j\leq n\middle\}\bigcup\middle\{E_{s,1}E_{t,1}E_{1,1}^{(2)}(f)\right\}.
\end{align*}
\end{enumerate}
The two assertions above are justified by parts 7 and 8 of Lemma \ref{LemaDeLI}. Finally, a basis $\mathcal{B}_{{\bold e}_{p,q,r}}$ of 
$\mathcal{V}_{{\bold e}_{p,q,r}}$ is given by 
\begin{equation*}
\mathcal{B}_{{\bold e}_{p,q,r}}:=\left\{E_{p,1}E_{q,1}E_{r,1}(f)\right\}.
\end{equation*}
\end{lemma}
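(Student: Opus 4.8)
The plan is to analyse $\mathcal{M}_f$ one multidegree at a time. Since $\deg f=3$, partial derivatives strictly lower the total degree while the operators $E_{i,k}^{(p)}$ preserve it, so $\mathcal{M}_f=\bigoplus_{|{\bold d}|\le 3}\mathcal{V}_{\bold d}$ with $\mathcal{V}_{\bold d}=0$ once $|{\bold d}|\ge 4$; and the explicit formulas for $\partial_{1j}(f)$, $\partial_{1j}^{2}(f)$, $\partial_{1r}\partial_{1s}(f)$, $E_{i,1}^{(p)}(f)$, $E_{\alpha,1}E_{\beta,1}\partial_{1j}(f)$, $E_{s,1}E_{t,1}E_{1,1}^{(2)}(f)$ and $E_{p,1}E_{q,1}E_{r,1}(f)$ computed just above present each $\mathcal{V}_{\bold d}$ as the real span of the explicit finite families $\mathcal{V}_{\bold 0}$, $\mathcal{V}_{{\bold e}_i}$, $\mathcal{V}_{{\bold e}_{s,t}}$, $\mathcal{V}_{{\bold e}_{p,q,r}}$ recorded before the statement. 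For each of these four shapes of component I then need two things: (a) that the sub-family singled out in the statement is linearly independent, and (b) that it still spans — which, since we already have an explicit spanning set, reduces to a dimension count.

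For the degree-$0$ part I would note that, as $f\neq 0$, some iterated third partial derivative of $f$ is a nonzero constant ($\partial_{11}^{3}f=6a$, or $\partial_{11}^{2}\partial_{12}f=2b$, or $\partial_{11}\partial_{12}\partial_{13}f=c$), so $1\in\mathcal{M}_f$ and $\mathcal{B}_{\bold 0}=\{1\}$. For the degree-$1$ components $\mathcal{V}_{{\bold e}_i}$ I would split exactly along the vanishing pattern of $a$, $3a-b$, $c$ used in the statement. The formulas above put $\mathcal{V}_{{\bold e}_i}\subseteq\mathbb{R}[x_{i1},\dots,x_{in}]$, and a short computation — the relevant matrices being of the type $\alpha I+\beta J$ or $T_n$ — shows the listed sub-family has full rank $n$ in each case; the linear independence itself is Lemma~\ref{LemaDeLI}(2)--(4), once one notes that $E_{1,1}$ acts as multiplication by the degree, so for $i=1$ the operator $E_{i,1}$ is harmless. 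The point $[1:3:6]=[p_1^{3}]$ is dealt with by Lemma~\ref{Lema5} (recall $p_1=e_1$), and the point $a=b=0$, where $f$ is a multiple of $e_3$, by Lemma~\ref{Lema110035} with $d=3$, which returns exactly $\{x_{i1},\dots,x_{in}\}$ as a basis of $\mathcal{V}_{{\bold e}_i}$.

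For the degree-$2$ components, the crucial remark is that when $s=t=1$ the operator $E_{s,1}E_{t,1}=E_{1,1}^{2}$ acts on degree-$2$ polynomials as a nonzero scalar, so $\mathcal{V}_{2{\bold e}_1}$ is the real span of $\{\partial_{11}(f),\dots,\partial_{1n}(f),E_{1,1}^{(2)}(f)\}$ — precisely the object in the definition of an $n$-exception. Hence by Theorems~\ref{TeoremaExcepcionGrado3} and \ref{EcuacionesExcepciones} this span has dimension $n+1$ when $[f]$ is not an $n$-exception and $n$ when it is, and the trichotomy for $\mathcal{B}_{{\bold e}_{s,t}}$ ($R_n\neq 0$ versus $R_n=0$) is exactly the case split in the proof of Theorem~\ref{TeoremaExcepcionGrado3}, governed by the ranks of $D_n$, $E_n$, $G_n$ through Lemmas~\ref{LemaDn}, \ref{LemaEn}, \ref{LemaGn} together with the root computations for $Q_n$, $R_n$, $A_n$ (Lemmas~\ref{Lema136}, \ref{RaicesDeRn}, \ref{LemaRnAn}). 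For a general pair $(s,t)$ I would transport this picture by polarization: Lemma~\ref{PolarizacionesEikInyectivas} makes $E_{s,1}E_{t,1}$ injective on polynomials of degree $3$ in ${\bold x}_1$, so a family of such polynomials is linearly independent (resp. spans its span) if and only if its image under $E_{s,1}E_{t,1}$ is — this is the content of Lemma~\ref{LemaDeLI}(7)--(8). Finally each degree-$3$ component $\mathcal{V}_{{\bold e}_{p,q,r}}=\mathbb{R}[E_{p,1}E_{q,1}E_{r,1}(f)]$ is one-dimensional: applying $E_{1,r}E_{1,q}E_{1,p}$ and Lemma~\ref{PolarizacionesEikInyectivas} returns a nonzero multiple of $f$, so $E_{p,1}E_{q,1}E_{r,1}(f)\neq 0$ and $\mathcal{B}_{{\bold e}_{p,q,r}}$ is a singleton.

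I expect the main obstacle to be step (b) for the degree-$2$ components in the $n$-exception regime: one has to decide whether the extra generator $E_{s,1}E_{t,1}E_{1,1}^{(2)}(f)$ is, or is not, already a linear combination of the $E_{s,1}E_{t,1}\partial_{1j}(f)$, which is exactly the question of whether $R_n(a,b,c)$ vanishes, i.e. of the rank of $D_n^{t}D_n$ as against that of $G_n^{t}G_n$. With those rank computations available, what remains is a routine organisation of the explicit formulas above alongside the linear-independence statements of Lemma~\ref{LemaDeLI}.
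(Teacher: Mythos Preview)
Your proposal is correct and follows essentially the same route as the paper: the paper's ``proof'' of this lemma is really the collection of references embedded in the statement itself (Lemma~\ref{Lema5}, Lemma~\ref{Lema110035}, and above all the eight parts of Lemma~\ref{LemaDeLI}), and you have organised exactly these ingredients in the same way, component by component. If anything you are more explicit than the paper on the spanning side --- your observation that $\mathcal{V}_{{\bold e}_i}$ sits inside the $n$-dimensional space of linear forms in $x_{i1},\dots,x_{in}$, so that $n$ independent elements automatically span, and your use of Lemma~\ref{PolarizacionesEikInyectivas} to transport the $(s,t)=(1,1)$ analysis to a general pair, fill in steps the paper leaves to the reader.
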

%%Les deux derni\`eres affirmations ci-haut sont justifi\'ees par la partie (5) du lemme \ref{LemaDeLI}. Finalement, une base 
%%$\mathcal{B}_{{\bold e}_{p,q,r}}$ 
%%de la composante homog\`ene de degr\'e ${\bold e}_{p,q,r}$ 

\subsection{Computing $\mathcal{M}_{f}({\bold q},{\bold w})$ when the degree of $f$ is 3}

\begin{lemma}\label{Lema2421082014}
Let $f$ be an homogeneous symmetric polynomial $f$ of degree 3, in $n$ variables ${\bold x}_1=x_{11},\ldots,x_{1n}$. The graded $\mathfrak{S}_n$-character of $\mathcal{M}_f$ is given by
\begin{enumerate}
\item ${\chisota}_{\mathcal{V}_{{\bold 0}}}(\sigma)=1,\ \forall\,\sigma\in\mathfrak{S}_n,$
\item ${\chisota}_{\mathcal{V}_{{\bold e}_i}}(\sigma)
=\begin{cases}
\big\vert\mathrm{Fix}(\sigma)\big\vert & \text{if}\ \big[a:b:c\big]\neq\big[1:3:6\big],\\
1 &  \text{otherwise.}
\end{cases},\ \forall\,\sigma.$
\item ${\chisota}_{\mathcal{V}_{{\bold e}_{s,t}}}(\sigma)
=\begin{cases}
1 & \text{if}\ \big[a:b:c\big]=\big[1:3:6\big],\\
\big\vert\mathrm{Fix}(\sigma)\big\vert & \text{if}\ \big[a:b:c\big]\ \text{is a $n$-exception,}\\
1+\big\vert\mathrm{Fix}(\sigma)\big\vert & \text{if}\ \big[a:b:c\big]\ \text{is not a $n$-exception.}
\end{cases}\ \forall\,\sigma.$
\item ${\chisota}_{\mathcal{V}_{{\bold e}_{p,q,r}}}(\sigma)=1, \ \ \forall\,\sigma.$
\end{enumerate}
\end{lemma}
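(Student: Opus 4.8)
The plan is to compute the graded $\mathfrak{S}_n$-character of $\mathcal{M}_f$ component by component, using the explicit bases for each homogeneous component $\mathcal{V}_{\mathbf d}$ furnished by Lemma \ref{EllemaVentisiete}. The key observation throughout is that for a basis $\mathcal{B}_{\mathbf d}$ of $\mathcal{V}_{\mathbf d}$ which is \emph{permuted} by the diagonal action of $\mathfrak{S}_n$ (i.e.\ $\sigma\cdot g\in\mathcal{B}_{\mathbf d}$ for every $g\in\mathcal{B}_{\mathbf d}$), the character value $\chisota_{\mathcal{V}_{\mathbf d}}(\sigma)$ equals the number of fixed points of that permutation action on $\mathcal{B}_{\mathbf d}$; this is formula (\ref{CaracterDeLaRepresentacionPorPermutacion}) together with the bookkeeping device $w\vert_{v_j}$ introduced in the Remark of Section~\ref{EjemplosDetallados}. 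So in each case I would identify, among the several bases offered by Lemma \ref{EllemaVentisiete}, one that is $\mathfrak{S}_n$-stable (or passes to one), and then count fixed points.

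First I would dispatch the trivial components: $\mathcal{V}_{\mathbf 0}=\mathbb{R}$ has constant character $1$, and $\mathcal{V}_{\mathbf e_{p,q,r}}=\mathbb{R}[E_{p,1}E_{q,1}E_{r,1}(f)]$ is one-dimensional spanned by a diagonally symmetric polynomial (since $f$ is symmetric and polarization commutes with $\sigma$ by (\ref{201504201232c1})), so its character is also $1$; this gives parts (1) and (4). For part (2), the component $\mathcal{V}_{\mathbf e_i}$: when $[a:b:c]=[1:3:6]$ the relevant basis from Lemma \ref{Lema5} is a single diagonally symmetric polynomial, giving character $1$; when $[a:b:c]\neq[1:3:6]$, I would use the basis $\{x_{i1},\ldots,x_{in}\}$ (available whenever the module is large enough — in the sub-cases $a=b=0$ directly from Lemma \ref{Lema110035}, in the sub-case $a=0,b\neq0$ by noting $\{\partial_{1j}^2f\}$ spans the same space as $\{x_{1j}\}$, and similarly in the remaining sub-cases one checks the spans coincide with the span of the linear monomials $x_{ij}$). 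The diagonal action permutes $\{x_{ij}\}_j$ by $\sigma$, so $\chisota_{\mathcal{V}_{\mathbf e_i}}(\sigma)=|\mathrm{Fix}(\sigma)|$, exactly as in the degree-$2$ computation of Lemma \ref{201504221304pm}.

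The substantive part is (3), the component $\mathcal{V}_{\mathbf e_{s,t}}$, and this is where I expect the main obstacle. When $[a:b:c]=[1:3:6]$ the basis is again a single diagonally symmetric polynomial, giving character $1$. When $[a:b:c]$ is not an $n$-exception, Lemma \ref{LemaDeLI}(7) gives the basis $\{E_{s,1}E_{t,1}\partial_{1j}(f):1\leq j\leq n\}\cup\{E_{s,1}E_{t,1}E_{1,1}^{(2)}(f)\}$; the diagonal action permutes the first $n$ vectors by $\sigma$ (using (\ref{201504201232c1}) and (\ref{201504201232c2}): $\sigma\cdot E_{s,1}E_{t,1}\partial_{1j}(f)=E_{s,1}E_{t,1}\partial_{1,\sigma(j)}(\sigma\cdot f)=E_{s,1}E_{t,1}\partial_{1,\sigma(j)}(f)$ since $f$ is symmetric) and fixes the last vector, so $\chisota_{\mathcal{V}_{\mathbf e_{s,t}}}(\sigma)=|\mathrm{Fix}(\sigma)|+1$. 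When $[a:b:c]$ is an $n$-exception, the delicate point is that the set $\{E_{s,1}E_{t,1}\partial_{1j}(f)\}\cup\{E_{s,1}E_{t,1}E_{1,1}^{(2)}(f)\}$ is now linearly \emph{dependent} (precisely because $E_{1,1}^{(2)}(f)$ lies in the span of the $\partial_{1j}(f)$, by definition of $n$-exception), so the correct basis from Lemma \ref{LemaDeLI}(8) drops either one $\partial$ vector or uses $\{E_{s,1}E_{t,1}\partial_{1j}(f):1\leq j\leq n\}$ alone; in either form the basis is a set of $n$ vectors permuted by $\sigma$, giving $\chisota_{\mathcal{V}_{\mathbf e_{s,t}}}(\sigma)=|\mathrm{Fix}(\sigma)|$. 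The care needed here is to check that when we drop the $j=n$ derivative and keep $E_{1,1}^{(2)}(f)$, the resulting $n$-element set is still $\mathfrak{S}_n$-permuted up to the sign bookkeeping — this is the same subtlety handled in case (ii) of the proof of Lemma \ref{ElLemasVentiSeis}, where the coefficient extraction $\sigma\cdot g\vert_g$ still sums to $|\mathrm{Fix}(\sigma)|$ because the "missing" basis vector gets re-expressed via $E_{1,1}^{(2)}(f)$. Assembling the four component characters yields the stated piecewise formula.
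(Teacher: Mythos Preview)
Your proposal is correct and follows essentially the same strategy as the paper's proof: use the explicit bases of each homogeneous component supplied by Lemma~\ref{EllemaVentisiete}, observe that (after the identities (\ref{201504201232c1}) and (\ref{201504201232c2})) these bases are permuted by the diagonal $\mathfrak{S}_n$-action, and read off the character as a fixed-point count via (\ref{CaracterDeLaRepresentacionPorPermutacion}).

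There is one minor methodological difference worth noting. For part~(2) in the case $[a:b:c]\neq[1:3:6]$, the paper keeps the four sub-cases of Lemma~\ref{EllemaVentisiete} separate and computes the character with each specific basis (in sub-case~(i) this requires the same $\pm1$ bookkeeping as in Lemma~\ref{ElLemasVentiSeis}). You instead observe once and for all that every such basis consists of $n$ independent vectors lying in $\mathrm{span}\{x_{i1},\ldots,x_{in}\}$, so $\mathcal{V}_{\mathbf e_i}$ simply \emph{is} that span and you may replace the basis by $\{x_{ij}\}_j$ uniformly. This is a clean shortcut and is justified by the explicit linear formulas for $E_{i,1}\partial_{1,j}^2(f)$, $E_{i,1}^{(3)}(f)$, $E_{i,1}\partial_{1,r}\partial_{1,s}(f)$ displayed before Lemma~\ref{EllemaVentisiete}. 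For part~(3) you are in fact more explicit than the paper about the two exception sub-cases $R_n\neq0$ versus $R_n=0$; your reference to the mechanism of Lemma~\ref{ElLemasVentiSeis}(ii) for the latter is exactly the right analogy, and the paper's own proof leaves this point implicit.
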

\begin{proof}
We consider only assertions (2) and (3), because the others follow directly from the definition. To verify assertion (2), we first notice that the case $\big[a:b:c\big]=\big[1:3:6\big]$ is clear by Theorem \ref{ProposicionUNO}. So we can suppose that 
$\big[a:b:c\big]\neq\big[1:3:6\big]$. We have four cases:
\begin{enumerate}
\item[(i)] If $a\neq 0$ and $b\neq 3a$, we use the basis of $\mathcal{V}_{{\bold e}_i}$ from Lemma \ref{EllemaVentisiete}, 
that is,
\begin{equation*}
\mathcal{B}_{{\bold e}_i}=\left\{2(3a-b)\,x_{ij}+2b\,\sum_{t=1}^{n}x_{it}\,:\,1\leq j\leq n-1\middle\}\bigcup\middle\{6a\,\sum_{r=1}^{n}x_{ir}\right\}.
\end{equation*}
Then, as in the proof of Lemma \ref{ElLemasVentiSeis}, we have
%%alors, comme dans la preuve du Lemme \ref{ElLemasVentiSeis}, on a que
\begin{align*}
{\chisota}_{\mathcal{V}_{{\bold e}_{i}}}(\sigma)
&=\sum_{g\in\mathcal{B}_{{\bold e}_{i}}}(\sigma\cdot g)(X)\bigg\vert_{g}\\
&=1+\sum_{j=1}^{n-1}\left\{\begin{array}{cc} 1 & \text{if} \ \sigma(j)=j,\ \,\text{and}\,\ \sigma(j)<n,\\ 0 & \text{if} \ \sigma(j)\neq j,\ \,\text{and}\,\ \sigma(j)<n,\\ -1\ & \sigma(j)=n\end{array}\right.=\big\vert{\rm Fix}(\sigma)\big\vert.
%%={\chisota}^{(n)}(\sigma)+{\chisota}^{(n-1,1)}(\sigma).
\end{align*} 
\item[(ii)] If $a=b=0$, then $c\neq 0$, and then we use the basis from Lemma \ref{Lema5}, that is, the basis of $\mathcal{V}_{{\bold e}_{i}}$ given by $\{x_1,x_2,\ldots,x_n\}$. So, we obtain
\[{\chisota}_{\mathcal{V}_{{\bold e}_{i}}}(\sigma)
=\sum_{j=1}^{n}x_{\sigma(j)}\bigg\vert_{x_j}=\big\vert{\rm Fix}(\sigma)\big\vert.\]
\item[(iii)] If $a=0$ and $b\neq 0$, then the basis $\mathcal{B}_{{\bold e}_i}$ from Lemma \ref{EllemaVentisiete} is given by
\begin{equation*}
\mathcal{B}_{{\bold e}_i}:=\{E_{i,1}\partial_{1j}^2 f\,\vert\, 1\leq j\leq  n\}=\{\partial_{ij}^{2}(f({\bold x}_i))\,\vert\, 1\leq j\leq n\}.
\end{equation*}
Since $f$ is symmetric and the operator $E_{i,1}$ is symmetric (see formula (\ref{201504201232c1})), polynomials in $\mathcal{B}_{{\bold e}_i}$ satisfy 
\[ \sigma\cdot \partial_{1j}^2 f=\partial_{1,\sigma(j)}^{2}(\sigma\cdot f)=\partial_{1,\sigma(j)}^{2}f. \] 
This implies that
\[ \sigma\cdot E_{i,1}\big(\partial_{1j}^2 f\big)=E_{i,2}\big(\sigma\cdot \partial_{1j}^2 f\big) =E_{i,1}\big(\partial_{1,\sigma(j)}^{2}f\big)\in\mathcal{B}_{{\bold e}_i}, \]
and therefore, the value of the character ${\chisota}_{\mathcal{V}_{{\bold e}_{i}}}$ is  ${\chisota}_{\mathcal{V}_{{\bold e}_{i}}}(\sigma)=\big\vert{\rm Fix}(\sigma)\big\vert$. This also follows from formula \ref{CaracterDeLaRepresentacionPorPermutacion}.
\item[(iv)] If $a\neq 0$ and $c\neq 6a$, then a basis $\mathcal{B}_{{\bold e}_i}$ for $\mathcal{V}_{{\bold e}_i}$ is for instance, the basis from Lemma \ref{EllemaVentisiete} given by
\begin{equation*}
\mathcal{B}_{{\bold e}_i}:=\big\{E_{i,1}\partial_{11}\partial_{1j}f\,:\,1\leq j\leq n\big\}=\{\partial_{i1}\partial_{ij}f({\bold x}_i)\,\vert\, 1\leq j\leq n\}.
\end{equation*}
By formula \ref{CaracterDeLaRepresentacionPorPermutacion}, we know that the value of the character is  ${\chisota}_{\mathcal{V}_{{\bold e}_{i}}}(\sigma)=\big\vert{\rm Fix}(\sigma)\big\vert.$
\end{enumerate}
To prove assertion (3), we use the basis $\mathcal{B}_{{\bold e}_{s,t}}$ of $\mathcal{V}_{{\bold e}_{s,t}}$ from Lemma \ref{EllemaVentisiete}, that is,
\begin{align*}
\mathcal{B}_{{\bold e}_{s,t}}&:=\left\{E_{s,1}E_{t,1}\partial_{1,j}(f)\,:\,\,1\leq j\leq n\middle\}\bigcup\middle\{E_{s,1}E_{t,1}E_{1,1}^{(2)}(f)\right\}.
%%&=\left\{\,:\,1\leq j\leq n\middle\}\bigcup\middle\{\right\},
\end{align*}
We observe two conditions coming from formulas \ref{201504201232c1}, \ref{201504201232c2} and the fact that $f$ is symmetric.
%on doit seulement remarquer deux affirmations qui se d\'ecoulent des formules \ref{201504201232c1} et \ref{201504201232c2} et le fait que $f$ %est sym\'etrique:
\begin{enumerate}
\item[(A1)] The polynomial $E_{s,1}E_{t,1}E_{1,1}^{(2)}(f)$ is diagonally symmetric,
\item[(A2)] For every permutation $\sigma\in\frak{S}_{n}$ we have
\begin{equation*}
\sigma\cdot E_{s,1}E_{t,1}\partial_{1,j}(f)
=E_{s,1}E_{t,1}\partial_{1,\sigma(j)}(f)\in\mathcal{B}_{{\bold e}_{s,t}},\ \ \text{since}\ \ \sigma\cdot f=f.
\end{equation*}
\end{enumerate}
%%si $\big[a:b:c\big]\neq[1:3:6]$ 
Then the two assertions above imply that
\begin{equation*}
{\chisota}_{\mathcal{V}_{{\bold e}_{s,t}}}(\sigma)
=\begin{cases}
%%1 & \text{si}\ \big[a:b:c\big]=\big[1:3:6\big],\\
\big\vert\mathrm{Fix}(\sigma)\big\vert & \text{if}\ \big[a:b:c\big]\ \text{is a $n$-exception,}\\
1+\big\vert\mathrm{Fix}(\sigma)\big\vert & \text{if}\ \big[a:b:c\big]\ \text{is not a $n$-exception.}
\end{cases}\ \forall\,\sigma\in\mathfrak{S}_n.
\end{equation*}
Also, the last identity is a consequence of formula (\ref{CaracterDeLaRepresentacionPorPermutacion}).
\end{proof}
\subsection*{Summary of general results for degree 3}
We have shown that, for a given homogeneous symmetric polynomial $f$ of degree 3, in the $n$ variables ${\bold x}_1$, given in the monomial basis as 
\[ f({\bold x}_1)=a\cdot m_{3}({\bold x}_1)+b\cdot m_{21}({\bold x}_1)+c\cdot m_{111}({\bold x}_1), \  a,b,c\in\mathbb{R},  \]
the graded Frobenius characteristic of $\mathcal{M}_{f}$  is one of the three cases below: 
\begin{enumerate}
\item[(1)] If $[a:b:c]=[1:3:6]$, then
\begin{align*}
\mathcal{M}_{f}({\bold q},{\bold w},n)&=\big(1+s_1({\bold q})+s_2({\bold q})+s_3({\bold q})\,\big)\cdot s_{n}({\bold w})
=\mathcal{M}_{p_1^{3}}({\bold q},{\bold w},n).
\end{align*} 
%%\ si \ $\big[a:b:c\big]=\big[1:3:6\big]$
\item[(2)] If $[a:b:c]$ \textbf{is a $n$-exception}, then
\begin{align*}
\mathcal{M}_{f}({\bold q},{\bold w},n)&=\big(1+s_1({\bold q})+s_2({\bold q})+s_3({\bold q})\,\big)\cdot s_{n}({\bold{w}})+\big(s_1({\bold q})+s_2({\bold q})\,\big)\cdot s_{n-1,1}({\bold{w}})\\
&=\mathcal{M}_{p_{3}}({\bold q},{\bold w},n).
\end{align*}
%%\ si $\big[a:b:c\big]=[0:0:1\big]$\ {ou}\ $\big[1:0:0\big]$,
\item[(3)] If $\big[a:b:c\big]$ \textbf{is not a $n$-exception}, we get 
\begin{align*}\mathcal{M}_{f}({\bold q},{\bold w})&=\big(1+s_1({\bold q})+\boldsymbol{2}\,s_2({\bold q})+s_3({\bold q})\,\big)\cdot s_{n}({\bold{w}})+\big(s_1({\bold q})+s_2({\bold q})\,\big)\cdot s_{n-1,1}({\bold{w}})\\
=&\mathcal{M}_{h_{3}}({\bold q},{\bold w},n).\\
\end{align*}
\end{enumerate}
%%\begin{proof}
%Un ensemble de g\'en\'erateurs pour l'espace 
%$\mathcal{M}_f$ est donn\'ee par les ensembles suivantes:
%\begin{equation*}
%\mathcal{V}_{\bf 0}=\mathbb{K},
%\end{equation*}
%\begin{equation*}
%\mathcal{V}_{{\bf e}_{i}}=\mathbb{K}\left[\{ E_{i,1}\partial_{1,j}^{2}(f):\ 1\leq j\leq n\}\ \bigcup\ \{ 
%E_{i,1}\partial_{1,r}\partial_{1,s}(f):\ 1\leq r\leq s\leq n\}\right],\ \ \textrm{avec}\ \ 
%1\leq i\leq \ell
%\end{equation*}
%\begin{equation*}
%\mathcal{V}_{{\bf e}_{s,t}}=\left[\{E_{s,1}E_{t,1}\partial_{1,j}(f):\ 1\leq j\leq n\}\bigcup
%\{E_{s,1}E_{t,1}E_{1,1}^{(2)}(f)\}\right],\ \ \textrm{pour touts}\ 1\leq s\leq t \leq \ell
%\end{equation*}
%\begin{equation*}
%\mathcal{V}_{{\bf e}_{p,q,r}}=\big[\{E_{p,1}E_{q,1}E_{r,1}(f)\}\big],\ \textrm{pour touts}\ 
%1\leq p\leq q\leq r \leq \ell
%\end{equation*}
%%\end{proof}

\begin{corollary}
The Hilbert series of $\mathcal{M}_f$ is given by only three possible cases:
\begin{enumerate}
\item[(1)] If $[a:b:c]=[1:3:6]$, then
\begin{align*}
\mathcal{M}_{f}({\bold q},n)=1+h_1({\bold{q}})+h_2({\bold{q}})+h_3({\bold{q}})=\mathcal{M}_{p_1^{3}}({\bold q},n).
\end{align*}
%%\ si $\big[a:b:c\big]=\big[1:3:6\big]$, 
\item[(2)] If $[a:b:c]$ is an $n$-exception, then 
\begin{align*}
\mathcal{M}_{f}({\bold q},n)&=1+n\cdot h_1({\bold{q}})+n\cdot h_2({\bold{q}})+h_3({\bold{q}})
=\mathcal{M}_{p_{3}}({\bold q},n).
\end{align*}
%%\ si \ $\big[a:b:c\big]=\big[0:0:1\big]$ \ ou \ $\big[1:0,0\big]$,
\item[(3)] If $[a:b:c]$ is not a $n$-exception, then
\begin{align*}
\mathcal{M}_{f}({\bold q},n)=1+n\cdot h_1({\bold{q}})+(n+1)\cdot h_2({\bold{q}})+h_3({\bold{q}})=\mathcal{M}_{h_{3}}({\bold q},n).
\end{align*}
\end{enumerate}
%%lorsque $n=4$ on obtient
%%$1+n\cdot s_1({\bold{q}})+n\cdot s_2({\bold{q}})+s_3({\bold{q}})$.
\end{corollary}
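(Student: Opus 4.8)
The cleanest route is to obtain the Hilbert series by specializing the graded Frobenius characteristic. Recall from the Preliminaries that the Hilbert series $\mathcal{M}_f({\bold q})$ is recovered from $\mathcal{M}_f({\bold q},{\bold w})$ by the substitution $s_{\lambda}({\bold w})\mapsto f^{\lambda}$, where $f^{\lambda}$ is the number of standard Young tableaux of shape $\lambda$. Since only the shapes $(n)$ and $(n-1,1)$ occur in the three cases established in the ``Resume of general results for degree 3'' (equivalently, in Theorem \ref{ProposicionTRES}), the only values needed are $f^{(n)}=1$ and $f^{(n-1,1)}=n-1$. I would also use the elementary identity $s_{j}({\bold q})=h_{j}({\bold q})$ for the one-row partition $j$, so that all the ${\bold q}$-coefficients can be written in the complete homogeneous basis.

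First I would treat the generic (non-$n$-exception) case: starting from
$\mathcal{M}_{f}({\bold q},{\bold w})=(1+s_1+2\,s_2+s_3)\,s_{n}({\bold w})+(s_1+s_2)\,s_{n-1,1}({\bold w})$,
the substitution gives
$(1+h_1+2h_2+h_3)+(n-1)(h_1+h_2)=1+n\,h_1+(n+1)\,h_2+h_3$,
which is case (3). The same substitution applied to the $n$-exception Frobenius series $(1+s_1+s_2+s_3)\,s_{n}({\bold w})+(s_1+s_2)\,s_{n-1,1}({\bold w})$ yields $1+n\,h_1+n\,h_2+h_3$, which is case (2); and applied to $(1+s_1+s_2+s_3)\,s_{n}({\bold w})$ for $[a:b:c]=[1:3:6]$ it yields $1+h_1+h_2+h_3$, which is case (1). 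This is the entire argument once the Frobenius characteristic is in hand.

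As an alternative (or a consistency check) I would compute the Hilbert series directly by summing dimensions of the homogeneous components, using the bases of Lemma \ref{EllemaVentisiete}: $\dim(\mathcal{V}_{\bold 0})=1$; each $\mathcal{V}_{{\bold e}_i}$ has dimension $n$ (resp.\ $1$ when $[a:b:c]=[1:3:6]$); each $\mathcal{V}_{{\bold e}_{s,t}}$ has dimension $n+1$, $n$, or $1$ according to whether $[a:b:c]$ is not an $n$-exception, is an $n$-exception, or equals $[1:3:6]$; and each $\mathcal{V}_{{\bold e}_{p,q,r}}$ has dimension $1$. Grouping the monomials ${\bold q}^{\bold d}$ by $|{\bold d}|$ and using $\sum_{|{\bold d}|=k}{\bold q}^{\bold d}=h_{k}({\bold q})$ reproduces the three formulas.

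The ``hard part'' is essentially already done: it lies in the classification of $n$-exceptions (Theorem \ref{EcuacionesExcepciones}) and in the character computation of Lemma \ref{Lema2421082014}, on which the three Frobenius cases rest; the Hilbert series statement itself is a routine specialization. The only point requiring a line of care is justifying $f^{(n-1,1)}=n-1$ and that no other irreducible $\mathfrak{S}_n$-type appears, which is immediate from Lemma \ref{Lema2421082014} since $\chisota_{\mathcal{V}_{\bold d}}(\sigma)$ is always a nonnegative combination of $\chisotazo^{(n)}$ and $\chisotazo^{(n-1,1)}$.
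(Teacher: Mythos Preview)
Your proposal is correct and follows exactly the approach the paper sets up: the Hilbert series is obtained from the Frobenius characteristic by the specialization $s_{\lambda}({\bold w})\mapsto f^{\lambda}$ stated in the Preliminaries, and since only $\lambda=(n)$ and $\lambda=(n-1,1)$ occur here, your substitution $f^{(n)}=1$, $f^{(n-1,1)}=n-1$ together with $s_j({\bold q})=h_j({\bold q})$ gives the three formulas immediately. The paper does not spell out a proof of this corollary beyond stating it, so your write-up (including the optional dimension-count cross-check via Lemma~\ref{EllemaVentisiete}) is at least as detailed as what the paper provides.
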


So, in degree 3, the possible dimensions of $\mathcal{M}_f$ are (recall that, we have the identification $[f]=[a:b:c]$.)

\begin{equation*}
\dim\left(\mathcal{M}_{f}\right)
=
\begin{cases} 
\displaystyle{1+\ell+{\ell+1\choose 2}+{\ell+2\choose 3}}={\ell+3\choose 3} & \text{if}\ [f]=[1:3:6], \\
&\\
\displaystyle{1+n\,\ell+n{\ell+1\choose 2}+{\ell+2\choose 3}} & \text{if}\ [f]\ \text{is a}\ \text{$n$-exception},\\
&\\
\displaystyle{1+n\,\ell+(n+1){\ell+1\choose 2}+{\ell+2\choose 3}} & \text{if}\ [f]\ \text{is not}\ \text{$n$-exception.}\\
\end{cases}
\end{equation*}

Finally, if we write the last formulas in terms of the functions $h_{\lambda}({\bold w})$, we find that its coefficients are $h({\bold q})$-positive, we have shown then the formulas of Theorem \ref{ProposicionTRES}: 
\begin{enumerate}
\item[(1)] If $[a:b:c]=[1:3:6]$, then  
\[\mathcal{M}_{f}({\bold q},{\bold w},n)=\big(1+h_1({\bold q})+h_2({\bold q})+h_3({\bold q})\big)\cdot h_{n}({\bold{w}}).\]
%%\ si \ $\big[a:b:c\big]=\big[1:3:6\big]$,
\item[(2)] If  $[a:b:c]$ is a $n$-exception, then
\[\mathcal{M}_{f}({\bold q},{\bold w},n)=\big(1+h_3({\bold q})\big)\cdot h_{n}({\bold{w}})+\big(h_1({\bold q})+h_2({\bold q})\big)\cdot h_{n-1,1}({\bold{w}}).\]
%%\ si \ $\big[a:b:c\big]=\big[0:0:1\big]$ \ {ou}\ $\big[1:0:0\big]$,
\item[(3)] If  $[a:b:c]$ is not a $n$-exception, then
\[\mathcal{M}_{f}({\bold q},{\bold w},n)=\big(1+h_2({\bold q})+h_3({\bold q})\big)\cdot h_{n}({\bold{w}})+\big(h_1({\bold q})+h_2({\bold q})\big)\cdot h_{n-1,1}({\bold{w}}).\]
\end{enumerate}

\section{Final remarks and further research directions}

This work may serve as a starting point for a deeper understanding and a complete classification of polarizations modules generated by any symmetric polynomial. For instance, an explicit description of the form of the Frobenius characteristic for polarization modules generated by homogeneous symmetric polynomials of degree 4 and 5 are given in Section 6 of \cite{Blandin}. We suggest as an homogeneous basis for the polarization module $\mathcal{M}_{\mathcal{C}}$ appearing in \cite{Blandin} the following set of polynomials
\[ B_{\mathcal{C}}:=\left\{\sum_{f:A\longrightarrow[r]}\prod_{j\in A}x_{f(j),j}\,:\, 1\leq r\leq\ell, A\subseteq[n],\, \vert A\vert\leq d \right\}.\]
For each vector ${\bold d}\in\mathbb{N}^{\ell}$, a basis for the homogeneous component of multidegree ${\bold d}=(d_1,\ldots,d_{\ell})$ with $d_1+\cdots+d_{\ell}=r$ is the set
\[ B_{\mathcal{C},{\bold d}}:=\left\{\sum_{\{f:A\longrightarrow[r]\,:\,\vert f^{-1}(i)\vert=d_i.\}}\prod_{j\in A}x_{f(j),j}\,:\, 1\leq r\leq\ell,\, A\subseteq[n],\, \vert A\vert\leq d \right\}. \]

\subsection{Polarization modules for complex reflections groups}

\noindent Let $W$ be any rank $n$ complex reflection group. The elements of $W$ can be considered as $n\times n$ matrices. The diagonal action of $W$ on $\mathcal{R}_{n}^{(\ell)}$ is  $(w\cdot f)(X)=f(Xw),\,\forall\ w\in W$. A polynomial $f$ is said to be \textbf{$W$-invariant} if $(w\cdot f)(X)=f(X)$.
Following \cite{Haiman1, Hunziker} we observe that the spaces $\mathcal{M}_{F}$ can be turned into $W\times GL_{\ell}(\mathbb{C})$-modules  in the following manner: Let $f_{1}({\bold x}),\dots,f_{n}({\bold x})$ be a set of basic invariants for $W$. We set  $f_{j}({\bold x}_i):=f_{j}(x_{i1},\dots,x_{in})$, for all $i$ such that $1\leq i\leq \ell$, and all $j$ with $1\leq j\leq n$. With this approach we consider in a more general context the \textbf{$W$-invariant polarization operators $E_{i,k;p}^{W}$} (see \cite{Hunziker,Haiman1}) given by
\begin{equation*}
E_{i,k;p}^{W}
:=\sum_{j=1}^{n}x_{ij}\cdot\frac{\partial f_{p}}{\partial x_{kj}}\left(\frac{\partial\ }{\partial{x_{{k1}}}},\ldots,\frac{\partial\ }{\partial{x_{{kn}}}}\right).
\end{equation*}
For a $W$-stable family of homogeneous polynomials, we can define $\mathcal{M}_{F}$ as the smallest $\mathbb{C}$-vector space closed by derivation and taking $W$-invariant polarization operators and containing the family $F$.

{\small
\begin{table}
\centering
\caption{Hilbert Series for degree 4}
\label{table:HilbertGrado4}
%%\begin{center}
\begin{tabular}{|c|c|}
\hline
\multirow{3}{*}{$1+s_1+s_2+s_3+s_4$} & \\ &$e_{1}^{4}$\\ &  \\
\hline
\multirow{3}{*}{$1+n\cdot s_1+n\cdot s_2+n\cdot s_3+s_4$} 
& \\ & $p_{4}$ \\ &
\\
\hline
\multirow{3}{*}{$1+n\cdot s_1+{n\choose 2}\cdot s_2+n\cdot s_3+s_4$} & 
\\ &$e_4$ \\ &  \\
\hline
\multirow{3}{*}{$1+n\cdot s_1+2n\cdot s_2+(n+1)\cdot s_3+s_4$} 
& \\ & $e_{31}$ \\ & \\
\hline
\multirow{3}{*}{$1+n\cdot s_1+{n+1 \choose 2}\cdot s_2+(n+1)\cdot s_3+s_4$} & 
\\ & {$s_{211},h_{22},m_{211}$}\\ & \\
\hline
\multirow{3}{*}{$1+n\cdot s_1+(n+1)\cdot s_2+(n-1)\cdot s_{11}+(n+1)\cdot s_{3}+s_{21}+s_4$} & \\ & $p_{211},e_{211},h_{211}$ \\ & \\
\hline
\multirow{3}{*}{$1+n\cdot s_1+2n\cdot s_2+(n-1)\cdot s_{11}+(n+1)\cdot s_{3}+s_{21}+s_4$} & \\ & $h_{31},m_{31},p_{31}$\\ & \\
\hline
\multirow{3}{*}{$1+n\cdot s_1+{n+1\choose 2}\cdot s_2+(n-1)\cdot s_{11}+n\cdot s_3+s_{21}+s_4$} & \\ & $m_{22}$\\  & \\
\hline
\multirow{3}{*}{$1+n\cdot s_1+{n+1 \choose 2}\cdot s_2+(n-1)\cdot s_{11}+(n+1)\cdot s_{3}+s_{21}+s_4$} & \\ & $s_4,s_{31},s_{2,2},e_{22},p_{22}$\\ & \\
\hline
\end{tabular}
%%\end{center}
\end{table}}

{\small
\begin{table}
\centering
\caption{Degree 4 Hilbert series in terms of $h({\bold q})$ functions}
\label{table:HilbertGrado4CompletasHomogeneas}
%%\begin{center}
\begin{tabular}{|c|c|}
\hline
\multirow{3}{*}{$1+h_1+h_2+h_3+h_4$} & \\ & $e_{1}^{4}$ \\ & \\
\hline
\multirow{3}{*}{$1+n\cdot h_1+n\cdot h_2+n\cdot h_3+h_4$} & \\ &
$p_{4}$ \\ & \\
\hline
\multirow{3}{*}{$1+n\cdot h_1+{n\choose 2}\cdot h_2+n\cdot h_3+h_4$}& \\ & $e_4$ \\ & \\
\hline
\multirow{3}{*}{$1+n\cdot h_1+2n\cdot h_2+(n+1)\cdot h_3+h_4$} & \\ & $e_{31}$ \\ & \\
\hline
\multirow{3}{*}{$1+n\cdot h_1+{n+1 \choose 2}\cdot h_2+(n+1)\cdot h_3+h_4$} & \\ & $s_{211},h_{22},m_{211}$ \\ & \\
\hline
\multirow{3}{*}{$1+n\cdot h_1+(n-1)\cdot h_1^2+2\cdot h_2+h_{21}+n\cdot h_3+h_4$} & \\ &
$p_{211},e_{211},h_{211}$ \\ & \\
\hline
\multirow{3}{*}{$1+n\cdot h_1+(n+1)\cdot h_2+(n-1)h_1^{2}+h_{21}+n\cdot h_3+h_4$} & \\ & 
$h_{31},m_{31},p_{31}$ \\ & \\
\hline
\multirow{3}{*}{$1+n\cdot h_1+(n-1)\cdot h_1^{2}+\left({n\choose 2}+1\right)\cdot h_2+(n-1)\cdot h_3+h_4$} & \\ & $m_{22}$ \\ & \\
\hline
\multirow{3}{*}{$1+n\cdot h_1+(n-1)\cdot h_1^2+\left({n\choose2}+1\right)h_2+n\cdot h_3+h_4$} & \\ & 
$s_{_4},s_{_{31}},s_{_{22}},e_{_{22}},p_{_{22}}$ \\ & \\
\hline
\end{tabular}
%%\end{center}
\end{table}}

\newpage

%\subsection*{S\'eries de Hilbert pour degr\'e 5}
%Soit $f$ un {\bf polyn\^ome sym\'etrique homog\`ene de degr\'e 5} \`{a} $n$ variables. On a trouv\'e les r\'esultats suivantes 

%\begin{landscape}
{\footnotesize
\begin{table}[ht!]
\centering
\caption{Hilbert Series for degree 5}
\label{table:HilbertGrado5}
%%\begin{center}
\begin{tabular}{|c|c|}
\hline
\multirow{3}{*}{$1+s_1+s_2+s_3+s_4+s_5$}
&\\
&$e_{1}^5$\\
&\\
\hline
\multirow{3}{*}{$1+n\cdot s_1+n\cdot s_2+n\cdot s_3+n\cdot s_4+s_5$} 
&\\ 
& $p_{5}$ \\
& \\ 
\hline
\multirow{3}{*}{$1+n\cdot s_1+{n\choose 2}\cdot s_2+{n\choose 2}\cdot s_3+n\cdot s_4+s_5$} 
&  \\ 
& $e_5$\\ 
&  \\ 
\hline
\multirow{3}{*}{$1+n\cdot s_1+{n+1 \choose 2}\cdot s_2+{n+1 \choose 2}\cdot s_3+(n+1)\cdot s_4+s_5$}    
& $m_{_{2111}},$ \\
& $s_{_{2111}},$\\ 
&$e_{41}$\\
\hline
\multirow{3}{*}{$1+n s_1+{n+1 \choose 2} s_2+(n-1)s_{11}+{n+1 \choose 2} s_3+ns_{21}+(n+1) s_4+s_5$} 
&  \\ 
& $s_{221},$ \\ 
& \\
\hline
\multirow{2}{*}{$1+n\cdot s_1+2n\cdot s_2+(n-1)\cdot s_{11}+2n\cdot s_3+n\cdot s_{21}
+(n+1)\cdot s_4+s_{31}+s_5$ } 
& $m_{41},$ \\ 
& $p_{41}$\\
\hline
\multirow{9}{*}{ 
$1+n\, s_1+{n+1\choose 2}\, s_2+(n-1)\, s_{11}+\frac{n(n+3)}{2}\, s_3+n\, s_{21}
+(n+1)\, s_4+s_{31}+s_5$}  
&$h_5,$ \\ & $h_{41},$\\
&$h_{32},$ \\ & $h_{221},$\\ 
&$p_{221},$ \\ & $s_{41},$\\ 
&$s_{32},$ \\ & $s_{311},$\\
&$e_{221},$ \\ & $m_{311}$\\
\hline
\multirow{4}{*}{$1+n\, s_1+{n+1\choose 2}\, s_2+(n-1)\, s_{11}+\left(\frac{n(n+3)}{2}-1\right)s_3+n\, s_{21}+(n+1)\, s_4+s_{31}+s_5$}  
&$p_{32},$ \\ & $e_{32},$ \\ 
&$m_{32},$ \\ & $m_{221}$\\
\hline
\multirow{3}{*}{$1+n\,s_1+(n+1)s_2+(n-1)s_{11}+(n+1)s_3+n\,s_{21}+(n+1)s_4+s_{31}+s_5$}
& $p_{2111},$\\ 
& $h_{2111},$\\ 
& $e_{2111}$\\
\hline
\multirow{3}{*}{$1+n\,s_1+2n\,s_2+(n-1)\,s_{11}+(2n+1)\,s_3+n\,s_{21}+(n+1)\,s_4+s_{31}+s_5$}
&$e_{311},$ \\
&$h_{311},$ \\
&$p_{311}$ \\
\hline
\end{tabular}
%%\end{center} 
\end{table}}

%%For degree 5 it seems that all the Hilbert series are $h$-positives
%Toutes les s\'eries de Hilbert des modules $\mathcal{M}_{f}$ %pour polyn\^omes sym\'etriques homog\`enes $f$ de degr\'e 5 sont %$h({\bold q})$-positives:
{\footnotesize
\begin{table}
\centering
\caption{Degree 5 Hilbert series in terms of $h({\bold q})$ functions}
\label{table:HilbertGrado5CompletasHomogeneas}
%%\begin{center}
\begin{tabular}{|c|c|}
\hline
\multirow{3}{*}{$1+h_1+h_2+h_3+h_4+h_5$}
& \\
& $e_{1}^5$ \\
&\\
\hline
\multirow{3}{*}{$1+n\cdot h_1+n\cdot h_2+n\cdot h_3+n\cdot h_4+h_5$} 
&\\
& $p_{5}$ \\
& \\ 
\hline
\multirow{3}{*}{
$1+n\cdot h_1+{n\choose 2}\cdot h_2+{n\choose 2}\cdot h_3+n\cdot h_4
+h_5$} 
& \\ 
& $e_5$ \\ 
&  \\ 
\hline
\multirow{3}{*}{$1+n\cdot h_1+{n+1 \choose 2}\cdot h_2+{n+1 \choose 2}\cdot h_3+(n+1)\cdot h_4+h_5$}    
& $m_{_{2111}},$ \\
& $s_{_{2111}},$\\ 
&$e_{41}$\\
\hline
\multirow{3}{*}
{$1+nh_1+\left(1+{n\choose 2}\right)h_2+(n-1)h_{1}^{2}+{n\choose 2}h_3+n\,h_{21}+n\,h_ {4}+h_1h_3+h_{5}$} 
&\\ 
& $s_{221}$  \\ 
&\\
\hline
\multirow{3}{*}
{$1+n\,h_1+(n+1)\,h_2+(n-1)h_1^2+n\,h_2h_1+n\,h_3+h_1h_3+n\,h_4+h_5$} 
&$s_{221},$ \\ 
& $m_{41},$ \\ 
& $p_{41}$\\
\hline
\multirow{9}{*}{$1+n\,h_1+(n-1)\,h_1^2+\left(1+{n\choose 2}\right)\,h_2+n\,h_2h_1+{n+1\choose 2}h_3+n\,h_4+h_1h_3+h_5$}  
&$h_5,$ \\ & $h_{41},$\\
&$h_{32},$ \\ & $h_{221},$\\ 
&$p_{221},$ \\ & $s_{41},$\\ 
&$s_{32},$ \\ & $s_{311},$\\
&$e_{221},$ \\ & $m_{311}$\\
\hline
\multirow{4}{*}{$1+n\,h_1+(n-1)\,h_1^2+\left(1+{n\choose 2}\right)\,h_2+n\,h_2h_1+\left({n+1\choose 2}+1\right)h_3+n\,h_4+h_1h_3+h_5$}  
&$p_{32},$ \\ & $e_{32},$ \\ 
&$m_{32},$ \\ & $m_{221}$\\
\hline
\multirow{3}{*}{$1+n\,h_1+(n-1)\,h_1^2+2\,h_2+n\,h_2h_1+h_3+n\,h_4+h_1h_3+h_5$}
& $p_{2111},$\\ 
& $h_{2111},$\\ 
& $e_{2111}$\\
\hline
\multirow{3}{*}{$1+n\,h_1+(n-1)\,h_1^2+2\,h_2+n\,h_2h_1
+(n+1)h_3+n\,h_4+h_1h_3+h_5$}
&$e_{311},$ \\
&$h_{311},$ \\
&$p_{311}$ \\
\hline
\end{tabular}
%%\end{center} 
\end{table}}
%\end{landscape}

%%%%%

\begin{table}
\centering
\caption{Frobenius characteristic for the space $\mathcal{M}_{h_{m}}$}
\label{table:FrobeniusCompletasHomogeneas}
{\tiny
\begin{tabular}{|c|c|}
\hline
\multirow{3}{*}{$h_1$}
& \\ & $\big(1+s_{1}\big)s_{n}({\bold w})$,\,$\forall n\geq 1$\\
& \\
\hline
\multirow{5}{*}{$h_2$}
& \\
& $\big(1+s_{1}+s_{2}\big)s_{1}({\bold w})$,\ \, {if}\ $n=1$,\\
&\\
& $\big(1+s_{1}+s_{2}\big)s_{n}({\bold w})+s_{1}s_{n-1,1}({\bold w})$,\ $\forall n\geq 2$\\
& \\
\hline
\multirow{5}{*}{$h_3$}
&\\
& $(1+s_1+s_2+s_3)s_{1}({\bold w})$, \, if \ $n=1$,\\
&\\
& $\big(1+s_{1}+\boldsymbol{2}\,s_{2}+s_{3}\big)s_{n}({\bold w})+\big(s_{1}+s_{2}\big)s_{n-1,1}({\bold w})$,\ \, $\forall n\geq 2$\\
&\\
\hline
\multirow{8}{*}{$h_4$}
&\\
& $(1+s_1+s_2+s_3+s_{4})s_{1}({\bold w})$, \, if\ $n=1$,\\
&\\
&$(1+s_1+\boldsymbol{2} s_2+\boldsymbol{2} s_3+s_{21}+s_4)\,s_{2}({\bold w})
+(s_1+\boldsymbol{2} s_2+s_{11}+s_3)s_{1,1}({\bold w})$, \ if $n=2$,\\
%\\&$+s_2 S_{n-2,2}$,\ \, $\forall n\geq 2$\\
&\\
& $(1+s_1+\boldsymbol{2} s_2+\boldsymbol{2} s_3+s_{21}+s_4)\,s_{n}({\bold w})
+(s_1+\boldsymbol{2} s_2+s_{11}+s_3)s_{n-1,1}({\bold w})$\\
&$+s_2 s_{n-2,2}({\bold w})$,\ \, $\forall n\geq 2$\\
&\\
\hline
\multirow{10}{*}{$h_5$}
&\\
& $(1+s_1+s_2+s_3+s_{4}+s_5)s_{1}({\bold w})$, \,  $n=1$,\\
&\\
& $(1+s_1+\boldsymbol{2}s_2+\boldsymbol{2}s_2+s_{21}+\boldsymbol{2}s_4+s_{31}+s_5)s_2({\bold w})
+(s_1+\boldsymbol{2}s_2+s_{11}+\boldsymbol{2}s_3+s_{21}+s_4)s_{1,1}({\bold w})$,\  $n=2$\\
&\\
&$(1+s_1+\boldsymbol{2}s_2+\boldsymbol{3}s_3+s_{21}+\boldsymbol{2}s_4+s_{31}+s_5)s_3({\bold w})
+(s_1+\boldsymbol{2}s_2+s_{11}+\boldsymbol{3}s_3+s_{21}+s_4)s_{2,1}({\bold w})$,\  $n=3$ \\
&\\
& $(1+s_1+\boldsymbol{2}s_2+\boldsymbol{3}s_3+s_{21}+\boldsymbol{2}s_4+s_{31}+s_5)s_n({\bold w})
+(s_1+\boldsymbol{2}s_2+s_{11}+\boldsymbol{3}s_3+s_{21}+s_4)s_{n-1,1}({\bold w})$\\
&$+(s_2+s_3)s_{n-2,2}({\bold w})$,\ \, $\forall n\geq 4$\\
&\\
\hline
\multirow{7}{*}{$h_6$}
&\\
& $(1+s_1+s_2+s_3+s_{4}+s_5+s_6)s_{1}({\bold w})$, \, if\ $n=1$,\\
&\\
& $\begin{array}{c}(1+s_1+\boldsymbol{2}s_2+\boldsymbol{2}s_3+s_{21}+\boldsymbol{3s}_4+s_{31}+s_{22}+\boldsymbol{2}s_{5}+s_{41}+s_{6})s_{2}({\bold w})\\
+(s_1+s_2+s_{11}+\boldsymbol{2}s_3+s_{21}+s_4+s_{31}+s_5)s_{11}({\bold w})\end{array}$, $n=2$\\
&\\
& $\begin{array}{c}(1+s_1+\boldsymbol{2}s_2+\boldsymbol{3}s_3+s_{21}+\boldsymbol{4}s_4+\boldsymbol{2}s_{31}+s_{22}+\boldsymbol{2}s_{5}
+s_{41}+s_{6})s_{3}({\bold w})
\\
+(s_1+\boldsymbol{2}s_2+s_{11}+\boldsymbol{3}s_3+\boldsymbol{3}s_{21}
+\boldsymbol{3}s_4+s_{31}+s_5)s_{2,1}({\bold w})
+(s_{11}+s_{3})s_{1,1,1}({\bold w})
\end{array}$,\ $n=3$\\
&\\
&$\begin{array}{c}
(1+s_1+\boldsymbol{2}s_2+\boldsymbol{3}s_3+s_{21}+\boldsymbol{4}s_4+\boldsymbol{2}s_{31}+s_{22}+\boldsymbol{2}s_5+s_{41}+s_6)s_{n}({\bold w})\\
+(s_1+\boldsymbol{2}s_2+s_{11}+\boldsymbol{4}s_3+\boldsymbol{3}s_{21}+\boldsymbol{3}s_{4}+s_{31}+s_5)s_{n-1,1}({\bold w})\\
+(s_2+s_3+s_{21}+s_4)s_{n-2,2}({\bold w})\\
+(s_{11}+s_3)s_{n-2,1,1}({\bold w}).
\end{array}$\ $\forall\, n\geq 4$.\\
\hline
\end{tabular}}
%%\end{center}
\end{table}

\clearpage

\subsection*{Acknowledgment}

Thanks to Fran\c cois Bergeron and Franco Saliola for their advice and support during the preparation of my Ph.D. thesis.  Thanks to Franco Saliola and Eduardo Blazek for good suggestions and many independent computations with the computer algebra system SAGE and helpful discussions. Thanks to Adolfo Rodr\'iguez for  help with Maple. Thanks to my colleagues Yannic Vargas, Alejandro Morales, Marco P\'erez, Amy Pang and Luis Serrano for many valuable suggestions to reach the final version of this paper. To my fiancee Oana-Andreea Kosztan for love and support during my Ph.D. thesis and help with \LaTeX. Thanks to Christian Remling for valuable help towards to a generalization of Theorem \ref{TeoremaExcepcionGrado3} to any degree $d\geq 4$.

\bibliographystyle{plain}
\bibliography{RevisedVersionHectorBlandin}

\begin{thebibliography}{10}

\bibitem{FBergeron2}
F.~Bergeron.
\newblock {\em Algebraic combinatorics and coinvariants spaces}.
\newblock CMS Treatise in Mathematics, A.K. Peters Publishers, 2009.

\bibitem{FBergeron}
Francois Bergeron.
\newblock Multivariate diagonal coinvariant spaces for complex reflection
  groups.
\newblock {\em Adv. in Math.}, 239:97--108, 2013.

\bibitem{FBergeronEminem}
Francois Bergeron and Louis-Francois Pr\'eville-Ratelle.
\newblock Higher trivariate diagonal harmonicis via tamari posets.
\newblock {\em Journal of Combinatorics}, 3(3):317--341, 2012.

\bibitem{BlandinExceptions}
H\'ector Blandin.
\newblock On $n$-exceptions of symmetric polynomials.
\newblock {\em In preparation}.

\bibitem{Blandin}
H\'ector Blandin.
\newblock Generalized {P}olarization {M}odules ({E}xtended {A}bstract).
\newblock {\em DMTCS proc. FPSAC'2015, 2015, 781-792}, 2015.

\bibitem{BlandinTesisPhD}
H\'ector Blandin.
\newblock {\em La conjecture de polarisation g\'en\'eralis\'ee}.
\newblock 2015.
\newblock Th\`ese. Montr\'eal, (Qu\'ebec, Canada). Universit\'e du Qu\'ebec a
  Montr\'eal, Doctorat en Math\'ematiques.

\bibitem{FultonHarris}
William Fulton and Joe Harris.
\newblock {\em Representation Theory A First Course}.
\newblock Graduate Texts in Mathematics 129, Springer Velarg, 2004.

\bibitem{Geramita}
Anthony Geramita, Andrew Hoefel, and David Wehlau.
\newblock Hilbert functions of $\mathfrak{S}_n$-stable artinian gorenstein
  ideals.
\newblock {\em arXiv:1407.7228 [math.AC]}, preprint.
\newblock Submitted 2014.

\bibitem{Haiman1}
Mark Haiman.
\newblock Conjectures on the quotient ring by diagonal invariants.
\newblock {\em J. Algebraic Combin.}, 3(1):17--76, 1994.

\bibitem{Haiman3}
Mark Haiman.
\newblock Vanishing theorems and character formulas for the {H}ilbert scheme of
  points of the plane.
\newblock {\em Invent. Math}, (149):371--407, 2002.

\bibitem{Humphreys}
J.E. Humphreys.
\newblock {\em Reflections groups and Coxeter groups}.
\newblock Cambridge Uiversity Press, 1997.
\newblock Cambridge Studies in Advanced Mathematics 29.

\bibitem{Hunziker}
Markus Hunziker.
\newblock Classical invariant theory for finite reflection groups.
\newblock {\em Transformations Groups}, 2(2):147--163, 1997.

\bibitem{ProcesiKraft}
H.~Kfrat and C.~Procesi.
\newblock {\em Classical Invariant Theory A primer}.
\newblock 1996.
\newblock preliminary version July 1996.

\bibitem{Procesi}
Claudio Procesi.
\newblock {\em Lie Groups An Approach through Invariants and Representations /
  Claudio Procesi}.
\newblock 2007.
\newblock (Universitex UTX), Springer.

\bibitem{MercedesRosas}
M.~H. Rosas.
\newblock Macmahon {S}ymmetric {F}unctions, the {P}artition {L}attice, and
  {Y}oung {S}ubgroups.
\newblock {\em J. Combin. Theory Ser. A}, 96:326--340, 2001.

\bibitem{BruceSagan}
Bruce Sagan.
\newblock {\em The Symmetric Group Representations, Combinatorial Algorithms,
  and Symmetric Functions (Second Edition)}.
\newblock Springer, 2001.
\newblock Graduate Texts in Mathematics 203.

\bibitem{HWeyl}
H.~Weyl.
\newblock {\em The Classical Groups: Their Invariants and Representations}.
\newblock Princeton University Press, Princeton-New York, 1939.

\end{thebibliography}

%\begin{thebibliography}{1}
%\bibitem{test} A. B. C. Test, \textit{On a Test.} J. of Testing
%\textbf{88} (2000), 100--120.
%\bibitem{latex} G. Gr\"atzer, \textit{Math into \LaTeX.} 3rd Edition,
%Birkh\"auser, 2000.
%\end{thebibliography}

% ------------------------------------------------------------------------
\end{document}